\documentclass{amsart}
\usepackage{amsmath,amsfonts,amssymb,amscd,verbatim,multicol}
\usepackage[arrow,matrix,cmtip]{xy}

\usepackage{fullpage}

\newcommand{\ff}{\footnote}

\newcommand{\lann}{\operatorname{l.ann}}
\newcommand{\rann}{\operatorname{r.ann}}

\newcommand{\trdeg}{\operatorname{tr.deg}}

\newcommand{\Div}{\operatorname{Div}}

\newcommand{\bpr}{\begin{proof}}
\newcommand{\epr}{\end{proof}}

\newcommand{\spec}{\operatorname{Spec}}
\newcommand{\ext} {\operatorname{Ext}}

\newcommand{\lra} {\longrightarrow}

\newcommand{\D}{\displaystyle}
\newcommand{\mc}{\mathcal}
\newcommand{\mf}{\mathfrak}

\newcommand{\mb}{\mathbb}

\renewcommand{\hom}{\operatorname{Hom}}

\newcommand{\GK}{\operatorname{GK}}

\newcommand{\wt}{\widetilde}

\newcommand{\tor}{\operatorname{Tor}}

\newcommand{\im}{\operatorname{im}}
\newcommand{\cd}{\operatorname{cd}}

\newcommand{\proj}{\operatorname{proj}}

\newcommand{\rproj}{\operatorname{proj-}}

\newcommand{\coh}{\operatorname{coh}}

\newcommand{\rQcoh}{\operatorname{Qcoh}}

\newcommand{\rGr}{\operatorname{Gr-}\hskip -2pt}
\newcommand{\rgr}{\operatorname{gr-}\hskip -2pt}
\newcommand{\rQgr}{\operatorname{Qgr-}\hskip -2pt}
\newcommand{\rqgr}{\operatorname{qgr-}\hskip -2pt}

\newcommand{\rTors}{\operatorname{Tors-}\hskip -2pt}

\DeclareMathOperator{\Hom}{Hom}

\DeclareMathOperator{\Pic}{Pic}
\DeclareMathOperator{\HB}{H}
\DeclareMathOperator{\HS}{h}
\DeclareMathOperator{\hs}{h}


\newcommand{\dv}{\operatorname{div}}
\newcommand{\ts}{Z}
\newcommand{\on}{\operatorname}
\newcommand{\gra}{\operatorname{gr}}


\numberwithin{equation}{section}
 \setcounter{tocdepth}{1}
 \theoremstyle{plain}
\newtheorem{theorem}[equation]{Theorem}

\newtheorem{lemma}[equation]{Lemma}

\newtheorem{corollary}[equation]{Corollary}
\newtheorem{proposition}[equation]{Proposition}

\newtheorem{question}[equation]{Question}

\theoremstyle{definition}
\newtheorem{definition}[equation]{Definition}

\newtheorem{remark}[equation]{Remark}

\newtheorem{hypothesis}[equation]{Hypothesis}
\newtheorem{example}[equation]{Example}

\begin{document}

\title{Blowup subalgebras of the Sklyanin algebra}

\author{D. Rogalski}

\address{Department of Mathematics, UCSD, La Jolla, CA 92093-0112, USA. }

\thanks{The author was partially supported by NSF grants DMS-0600834 and DMS-0900981.}
\email{drogalsk@math.ucsd.edu}

\keywords{Noncommutative projective geometry,  noncommutative
surfaces, noetherian graded rings, Sklyanin algebra, noncommutative blowup}
  \subjclass[2000]{14A22, 16P40, 16S38, 16W50, 18E15}

\begin{abstract}
We describe some interesting graded rings which are generated by
degree-$3$ elements inside the Sklyanin algebra $S$, and prove that
they have many good properties.  Geometrically, these rings $R$
correspond to blowups of the Sklyanin $\mb{P}^2$ at $7$ or fewer
points.   We show that the rings $R$ are exactly those
degree-$3$-generated subrings of $S$ which are maximal orders in the
quotient ring of the $3$-Veronese of $S$.
\end{abstract}
\maketitle
\tableofcontents
\section{Introduction}
One of the major objectives in the theory of noncommutative
projective geometry is the classification of noncommutative
surfaces.  The goal of this paper is to study, from a ring-theoretic
standpoint, some surfaces which are birational to the generic
noncommutative projective plane, the Sklyanin $\mb{P}^2$.  We first
describe our main results, and then explain in more detail how they
are motivated by the classification project.  In particular, we
discuss at the end of the introduction how our results relate to Van
den Bergh's blowing up construction in \cite{VdB}.

We first review a few standard definitions.  Fix an algebraically
closed field $k$.    In this paper, we are primarily interested in
$\mb{N}$-graded associative $k$-algebras $A = \bigoplus_{n =
0}^{\infty} A_n$ which are connected ($A_0 = k$) and finitely graded
(finitely generated as a $k$-algebra), as well as domains.
The Gelfand-Kirillov (GK) dimension of $A$ can be defined, in this
graded setting, by $\GK(A) = 1 + \limsup_{n \geq 0} \log_n (\dim_k
A_n)$.  We assume that $A$ is a connected finitely graded (cfg)
domain with integer GK-dimension for the rest of this introduction.
Any such $A$ has a \emph{graded quotient ring} $Q_{\rm gr}(A)$
formed by inverting the Ore set of all nonzero homogeneous elements,
as well as the usual Goldie quotient ring $Q(A)$ formed by inverting
all nonzero elements. Domains $A$, $A'$ with $Q = Q(A) = Q(A')$ are
\emph{equivalent orders} if there are nonzero elements $p_1, p_2,
q_1, q_2 \in Q$ such that $p_1 A p_2 \subseteq A'$ and $q_1 A' q_2
\subseteq A$; a \emph{maximal order} is a subring $A$ of $Q$ maximal
under inclusion among orders in an equivalence class.

A construction that plays a fundamental role in our results is that
of a twisted homogeneous coordinate ring.  Such a ring $B(X, \mc{L},
\sigma)$ is built out of the data of a projective $k$-scheme $X$, an
invertible sheaf $\mc{L}$ on $X$, and an automorphism $\sigma: X \to
X$. Putting $\mc{L}_0 = \mc{O}_X$ and $\mc{L}_n = \mc{L} \otimes
\sigma^*(\mc{L}) \otimes \dots \otimes (\sigma^{n-1})^*(\mc{L})$ for
$n \geq 1$, then we define $B = B(X, \mc{L}, \sigma) = \bigoplus_{n
= 0}^{\infty} \HB^0(X, \mc{L}_n)$, with a multiplication defined on
graded pieces by $f \star g = f \cdot (\sigma^m)^*(g)$ for $f \in
B_m, g \in B_n$. Here, $(\sigma^m)^*: \HB^0(X, \mc{L}_n) \to
\HB^0(X, \mc{L}_{n}^{\sigma^m})$ is the map on global sections
induced by pullback by $\sigma^m$, and $\cdot$ indicates the
multiplication map $\HB^0(X, \mc{L}_m) \otimes \HB^0(X,
\mc{L}_n^{\sigma^m}) \to \HB^0(X, \mc{L}_{m+n})$.  We only consider
this construction under an additional condition that $\mc{L}$ is
\emph{$\sigma$-ample}, which ensures that $B$ is noetherian.  See
Section~\ref{background-sec} for the details.

Noncommutative projective geometry might be said to have begun with
a project of Artin and Schelter to classify certain graded algebras
of dimension 3 which are analogs of commutative polynomial rings
(naturally, these are called AS-regular algebras today).  We omit
the technical definition of AS-regular, which is not needed for this
paper. The classification, which was completed with Tate and Van den
Bergh in \cite{ATV1}, \cite{ATV2}, in fact necessitated the
development of the concept of a twisted homogeneous coordinate ring.
We concentrate here on those regular algebras with three degree-1
generators and three quadratic relations, which correspond
geometrically to noncommutative $\mb{P}^2$'s. The classification of
these includes the \emph{linear} regular algebras, which have the
form $B(\mb{P}^2, \mc{O}(1), \sigma)$, but the more interesting
examples are the \emph{elliptic} ones.  Such an elliptic regular
algebra $A$ always has a normal element $g \in A_3$ such that $A/Ag
\cong B(E, \mc{L}, \sigma)$ for some degree $3$ divisor $E \subseteq
\mb{P}^2$, and with $\mc{L} = \mc{O}(1) \vert_E$.  In fact, we
restrict our attention here to what is in many ways the most
interesting regular algebra, the \emph{generic Sklyanin algebra} $S$
(see Section~\ref{sklyanin-sec} for a definition by presentation).
In this case, $g$ is central, $E$ is a nonsingular elliptic curve,
and the automorphism $\sigma: E \to E$ has infinite order.

The main objects of study in this paper will be certain subalgebras
of $S$ generated in degree $3$. It is convenient to work instead in
the $3$-Veronese $T = S^{(3)} = \bigoplus_{n \geq 0} S_{3n}$, and
construct degree-1-generated subalgebras of $T$.  Now $g \in T_1$
and $T/Tg \cong B(E, \mc{L}_3, \sigma^3)$.  Let $\overline{x}$
indicate the image of $x \in S$ under the quotient map $S \mapsto
S/Sg$. Given any effective Weil divisor $D$ on $E$ with $0 \leq \deg
D \leq 7$, we let $V(D) = \{x \in T_1 | \overline{x} \in \HB^0(E,
\mc{I}_D \otimes \mc{L}_3) \}$, where here $\mc{I}_D =
\mc{O}_E(-D)$.  In other words, $\overline{V(D)}$ consists exactly
of those global sections of $\mc{L}_3$ which vanish along the
divisor $D$.  We define $R(D) = k \langle V(D) \rangle \subseteq T$.
One goal of the paper is to prove that the rings $R(D)$ have many
nice properties (with the notable exception of finite global
dimension). We summarize these in the following theorem.  The
definitions of the properties involved are reviewed in
Section~\ref{background-sec}.
\begin{theorem}
\label{RD-props-thm} Let $R = R(D) \subseteq T$ as above, where $D$
is an effective divisor on $E$ of degree $d$, with $0 \leq d \leq
7$.
\begin{enumerate}
\item (Theorems~\ref{HS-thm}, \ref{Rop-thm}) $R/Rg \cong B(E, \mc{N}, \sigma^3)$, where $\mc{N} = \mc{I}_D
\otimes \mc{L}_3$ is a $\sigma^3$-ample invertible sheaf on $E$ of
degree $9 - d$.  The Hilbert series of $R$ is $\hs_R(t) = \D
\frac{t^2 + (7-d)t + 1}{(1-t)^3}$ and $Q_{\gra}(R) = Q_{\gra}(T)$.
$R$ has infinite global dimension.
\item (Theorems~\ref{basic-prop-thm}, \ref{max-order-thm})
$R$ is strongly noetherian, satisfies the Artin-Zhang $\chi$
conditions, is Auslander-Gorenstein and Cohen Macaulay.  The
noncommutative projective scheme $\rproj R$ has cohomological
dimension $2$.  $R$ is a maximal order.
\item (Theorem~\ref{min-ideal-thm}) There is a homogeneous ideal $I$ of $R$ with $\GK R/I \leq 1$
which is essentially minimal among such ideals in the following
sense:  given a homogeneous ideal $J$ of $R$ with $\GK R/J \leq 1$,
then $J \supseteq I_{\geq n}$ for some $n \geq 0$.
\end{enumerate}
\end{theorem}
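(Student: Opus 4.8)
The plan is to build $I$ out of the central element $g\in R_1$ and to reduce the minimality statement, by passing mod $g$ and then localizing at $g$, to a property of a single auxiliary ring. For a two‑sided ideal $J$ of $R$ write $\ol J$ for its image in $R/Rg\cong B(E,\mc N,\sigma^3)$ (Theorem~\ref{HS-thm}). Since $\sigma^3$ has infinite order, $E$ has no nonempty finite $\sigma^3$‑invariant closed subscheme, so by the standard correspondence between two‑sided ideals of a twisted homogeneous coordinate ring and invariant subschemes, every two‑sided ideal of $R/Rg$ whose factor ring has GK‑dimension $\le 1$ contains $(R/Rg)_{\ge m}$ for some $m$. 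Applying this to $\ol J$ when $\GK(R/J)\le 1$ gives $J+Rg\supseteq R_{\ge m}$; hence multiplication by $g$ is onto $(R/J)_n$ for $n\ge m$, so $\dim_k(R/J)_n$ is eventually non‑increasing, hence eventually constant, and $g$ acts bijectively on $(R/J)_n$ for $n\gg 0$. In particular the $g$‑torsion submodule of $R/J$ is finite‑dimensional, so $J$ and its $g$‑saturation $J^g=\{x\in R:g^kx\in J\text{ for some }k\}$ coincide in all large degrees. As only tails of $J$ enter the statement, I may assume every $J$ with $\GK(R/J)\le 1$ is $g$‑saturated, i.e.\ $R/J$ is $g$‑torsionfree with eventually‑constant Hilbert function.

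Next I would invert $g$. Being a central nonzerodivisor of degree $1$, $g$ makes $R[g^{-1}]$ a noetherian strongly $\mb Z$‑graded ring, $R[g^{-1}]\cong\Lambda[g,g^{-1}]$ with $\Lambda:=(R[g^{-1}])_0$ noetherian (of GK‑dimension $2$), and $\Lambda\subseteq\Lambda_T:=(T[g^{-1}])_0=(S[g^{-1}])_0$. One checks that $J\mapsto\mf c_J:=(J[g^{-1}])_0$ and $\mf c\mapsto R\cap\mf c[g,g^{-1}]$ are mutually inverse, inclusion‑preserving bijections between the $g$‑saturated two‑sided ideals of $R$ and the two‑sided ideals of $\Lambda$; that $\mf c_J$ is unchanged under passage to a tail of $J$; and that, for $g$‑saturated $J$, one has $\GK(R/J)\le 1$ if and only if $\Lambda/\mf c_J$ is finite‑dimensional (and then $R/J$ agrees in large degrees with $(\Lambda/\mf c_J)\otimes_k k[g]$). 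Combined with the first paragraph, the Theorem is therefore equivalent to the assertion that $\mf c_0:=\bigcap\{\mf c:\mf c\text{ a cofinite two‑sided ideal of }\Lambda\}$ is itself cofinite. Granting that, put $I:=R\cap\mf c_0[g,g^{-1}]$; then $R/I$ embeds in $(\Lambda/\mf c_0)[g,g^{-1}]$, so $\GK(R/I)\le 1$, and for any $J$ with $\GK(R/J)\le 1$ we have $\mf c_J\supseteq\mf c_0$, hence $I\subseteq J^g$, hence $J\supseteq I_{\ge n}$ for $n\gg 0$.

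It remains to show that $\mf c_0$ is cofinite, equivalently that $\Lambda$ has a largest finite‑dimensional factor ring, and I expect this to be the main obstacle. The key input is that $\Lambda_T=(S[g^{-1}])_0$ has no nonzero finite‑dimensional module: $S[g^{-1}]$ is strongly graded, so a proper cofinite two‑sided ideal of $\Lambda_T$ would correspond to a $g$‑saturated two‑sided ideal $\mf a\subsetneq S$ with $S/\mf a$ of GK‑dimension $\le 1$; but $S$ has no infinite‑dimensional factor ring of GK‑dimension $\le 1$ (the Sklyanin $\mb P^2$ contains no proper curve besides $E$, by the same type of $\sigma$‑invariance argument as in the first paragraph), so $S/\mf a$ would be finite‑dimensional and, being $g$‑torsionfree, zero. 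Since $\Lambda_T$ has no nonzero finite‑dimensional module, every finite‑dimensional simple $\Lambda$‑module dies on base change to $\Lambda_T$, hence is supported on the locus where $\Lambda$ differs from $\Lambda_T$; by Theorem~\ref{HS-thm} that locus is concentrated over the divisor $D$ and is precisely the exceptional data of the blowup $\rproj R\to\rproj T$. The hard step is to show that this exceptional data is effectively zero‑dimensional — so that $\Lambda$ has only finitely many finite‑dimensional simple modules, occurring with bounded multiplicity in any finite‑dimensional factor ring — whereupon noetherianity of $\Lambda$ forces $\mf c_0$ to be cofinite, with $\Lambda/\mf c_0$ the largest finite‑dimensional factor ring of $\Lambda$ and $I$ the corresponding ideal. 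It is here that the good properties of $R$ from part (2) of the theorem, especially that $R$ is a maximal order (Theorem~\ref{max-order-thm}), together with the explicit construction of $R$ from $D$, are needed; the analogous assertion is false for a commutative surface, where $\mf c_0=0$, so genuine noncommutativity is essential.
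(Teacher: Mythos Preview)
Your reduction in the first two paragraphs is sound: passing to $\Lambda=(R[g^{-1}])_0$ and showing that a minimal special ideal exists if and only if $\mf c_0=\bigcap\{\mf c:\dim_k\Lambda/\mf c<\infty\}$ is cofinite is a correct reformulation, and the preliminary facts you cite (finite $g$-torsion in $R/J$, eventually constant Hilbert function, etc.) are all established in the paper as Lemma~\ref{special-ideal-lem}. But the third paragraph is not a proof, as you essentially acknowledge. The claim that ``the exceptional data is effectively zero-dimensional'' is exactly the content of the theorem, and you have not supplied an argument. Your suggestion that the maximal order property of $R$ will be the decisive input is also off the mark: Theorem~\ref{max-order-thm} is proved earlier, directly from the Cohen--Macaulay property, and plays no role in the paper's proof of the minimal special ideal.

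The paper's argument (Theorem~\ref{min-ideal-thm}) is entirely different and considerably more intricate. It proceeds by induction on $\deg D$: writing $D=D'+p$ and $R=R(D)\subseteq R'=R(D')$, one shows that $(R'/R)_R$ is a direct sum of shifts of a single \emph{exceptional line module} $L$ (Lemma~\ref{exc-line-lem}). Given a special ideal $J$ of $R$, one compares $J$ with $R'JR'\cap R$; the discrepancies are subfactors of sums of shifts of $L$, and one needs a fixed special ideal that kills all GK-$1$ $g$-torsionfree such subfactors (Lemma~\ref{mult-special-lem}). This requires that $L$ have a \emph{smallest} nonzero submodule with $g$-torsionfree quotient (Lemma~\ref{smallest-tf-lem}), which in turn rests on the key finiteness result: the set $\{\dv L' : L'\text{ a line module for }R\}$ is finite (Theorem~\ref{line-finite-thm}). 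That theorem is obtained by first proving, again by induction on $\deg D$ starting from Ajitabh's result for $S$, that the image of $\dv_2(R)$ in $\Pic E$ is countable (Corollary~\ref{dv2-countable-cor}, via Proposition~\ref{dv2-prop}), and then using projectivity of the line scheme over an uncountable base field to upgrade countability to finiteness. None of this machinery---divisors of admissible modules, the exceptional line module, the line scheme---is visible from your localization framework, and it is this geometric input, not the maximal order property, that carries the proof.
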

\noindent  We make a comment on the significance of part (3) of the
theorem, which is actually the part on which we expend the most
effort in the paper.  The generic Sklyanin algebra $S$ and its
Veronese ring $T$ are known to have no homogeneous factor rings of
GK-dimension $1$. The rings $R(D)$ sometimes do, so the point of
part (3) above is that such factor rings can be strongly controlled.
This fact is needed in the proof of our main result, which can be
found in Section~\ref{main-thm-sec}.  It classifies degree
$1$-generated orders in $T$, as follows.
\begin{theorem}
\label{deg3-class-thm} Let $V \subseteq T_1$ and let $A = k \langle
V \rangle \subseteq T$. Assume that $Q_{\gra}(A) = Q_{\gra}(T)$.
\begin{enumerate}
\item There is a unique effective divisor $D$ on $E$ with $0 \leq
\deg D \leq 7$ such that $A \subseteq R(D)$ with $A$ and $R(D)$
equivalent orders. In particular, the rings $R(D)$ are exactly the
degree-$1$ generated subalgebras of $T$ which are maximal orders in
$Q_{\gra}(T)$.

\item
 If $A$ is noetherian, then in part (1) $A \subseteq R(D)$ is a finite ring
extension.  If $g \in A$, then $A$ is indeed noetherian.
\end{enumerate}
\end{theorem}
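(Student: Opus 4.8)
The plan is to reduce modulo the central element $g$ to the twisted homogeneous coordinate ring $B := T/Tg \cong B(E,\mc{L}_3,\sigma^3)$ of the curve, where the relevant statements become assertions about linear systems on the elliptic curve $E$, and to transfer conclusions back to $T$ along the $g$-adic filtration, with Theorem~\ref{min-ideal-thm} controlling what the reduction loses. For a graded subring $C \subseteq T$ write $\ol C$ for its image in $B$, so $\ol C \cong C/(C \cap Tg)$; since $A = k\langle V\rangle$ is generated in degree $1$, so is $\ol A = k\langle W\rangle$, where $W := \ol A_1 \subseteq \HB^0(E,\mc{L}_3)$. The hypothesis $Q_{\gra}(A) = Q_{\gra}(T)$ forces $\GK A = \GK T = 3$. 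If $\dim_k W \le 1$ one gets a contradiction: either $\dim_k V \le 1$, forcing $\GK A \le 1$, or else $kg \subseteq V \subseteq A$, in which case $A/gA$ is singly generated, so $\GK(A/gA) \le 1$ and therefore $\GK A = \GK(A/gA) + 1 \le 2$ because $g$ is a central nonzerodivisor of $A$. Hence $\dim_k W \ge 2$. Let $D$ be the base divisor of the linear system $W$; then $W \subseteq \HB^0(E, \mc{I}_D\otimes\mc{L}_3)$, so $h^0(\mc{I}_D\otimes\mc{L}_3) \ge 2$ and the degree $9 - \deg D$ of $\mc{I}_D\otimes\mc{L}_3$ is at least $2$, i.e.\ $\deg D \le 7$, so $R(D)$ is defined. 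Because $\ol A$ is degree-$1$-generated and each section of $W$ vanishes on $D$, Theorem~\ref{RD-props-thm}(1) gives $\ol A \subseteq B(E,\mc{I}_D\otimes\mc{L}_3,\sigma^3) = \ol{R(D)}$, and lifting yields $V \subseteq V(D)$, hence $A \subseteq R(D)$.

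The heart of the matter is to show that $A$ and $R := R(D)$ are equivalent orders; since $R$ is a domain it is enough to produce one nonzero element $a$ with $aR \subseteq A$ (or $Ra \subseteq A$), equivalently a nonzero two-sided ideal of $R$ contained in $A$. Here one must control how much is lost in passing from $A$ to $\ol A$ and from $R$ to $\ol R$. I would work in the associated graded ring $\gra_g R \cong \ol R[g]$ for the $g$-adic filtration, inside which $\gra_g A$ is a graded subalgebra with $g$-degree-zero part $\ol A$; comparing $\ol A \subseteq \ol R = B(E,\mc{I}_D\otimes\mc{L}_3,\sigma^3)$ uses the geometry of $E$, the twist by $\sigma^3$ being what forces the $\sigma^3$-iterated linear system generated by $W$ to separate points of $E$, so that $\ol A$ does not ``degenerate'' to the coordinate ring of a proper quotient of $E$. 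The positive-$g$-degree discrepancy between $\gra_g A$ and $\ol R[g]$ is then governed by homogeneous ideals of $R(D)$ whose quotients have $\GK \le 1$, and Theorem~\ref{min-ideal-thm} --- together with the strong ring-theoretic properties of $R(D)$ from Theorem~\ref{RD-props-thm} --- pins these down, so that one can extract a nonzero ideal of $R(D)$ inside $A$ and conclude that $A$ and $R(D)$ are equivalent orders. Establishing this control across the reduction modulo $g$ is the step I expect to be the main obstacle; it is precisely what Theorem~\ref{min-ideal-thm} was built for.

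For uniqueness: if $A \subseteq R(D')$ with $A, R(D')$ equivalent orders, then every section of $W = \ol A_1$ vanishes on $D'$, so $D' \le D$; if $D' \lneq D$ then $R(D) \subsetneq R(D')$ with both equivalent to $A$, hence to each other, contradicting that $R(D)$ is a maximal order (Theorem~\ref{RD-props-thm}(2)); so $D' = D$. The last sentence of part (1) then follows: each $R(D)$ is a degree-$1$-generated maximal order in $Q_{\gra}(T)$ by Theorem~\ref{RD-props-thm}, and conversely a degree-$1$-generated maximal order $A = k\langle V\rangle$ in $Q_{\gra}(T)$ has $A \subseteq R(D)$ as equivalent orders by part (1), so $A = R(D)$ by the maximality of $A$.

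For part (2): if $g \in A$, then $g$ is a central nonzerodivisor of $A$, and $A/gA$ surjects onto $\ol A = A/(A \cap Tg)$, a noetherian connected graded domain of GK-dimension $2$; the kernel $(A\cap Tg)/gA$ embeds, via multiplication by $g^{-1}$, into $R(D)/A$ and is controlled by a $\GK\le 1$ ideal of $R(D)$ in the sense of Theorem~\ref{min-ideal-thm}, hence is a noetherian module, so $A/gA$ is noetherian and therefore $A$ is noetherian by the standard fact that a connected graded algebra is noetherian as soon as its quotient by a homogeneous central nonzerodivisor is. If instead $A$ is merely assumed noetherian, then $\ol A$ is noetherian, and since it is an equivalent order inside $\ol{R(D)} = B(E,\mc{I}_D\otimes\mc{L}_3,\sigma^3)$ over the curve $E$, $\ol{R(D)}$ is a finite left and right $\ol A$-module; lifting this finiteness along the $g$-adic filtration, with the remaining $\GK\le 1$ contribution again handled by Theorem~\ref{min-ideal-thm}, shows $R(D)$ is a finite $A$-module, i.e.\ $A \subseteq R(D)$ is a finite ring extension. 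Throughout, the recurring difficulty --- and the reason Theorem~\ref{min-ideal-thm} enters --- is the transfer of information across the reduction modulo $g$.
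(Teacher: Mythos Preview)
Your outline correctly identifies the overall architecture --- reduce modulo $g$ to the curve, then use Theorem~\ref{min-ideal-thm} to control what the reduction loses --- and your uniqueness argument and the deduction of the ``maximal orders are exactly the $R(D)$'' statement are fine. But the central step, showing $A$ and $R(D)$ are equivalent orders, is not really carried out: you gesture at an associated-graded argument and say you ``expect this to be the main obstacle,'' and indeed the sketch you give does not supply a mechanism. Two concrete points are missing.

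First, on the curve side you need more than that $\overline{A}$ ``does not degenerate.'' In fact $\overline{A}$ may well be a twisted coordinate ring $B(F,\mc{M},\tau)$ over a \emph{different} elliptic curve $F$ with a finite isogeny $E\to F$; what one actually uses is that regardless, the extension $\overline{A}\subseteq\overline{R(D)}=B(E,\mc{N},\tau)$ is \emph{finite}. This is a consequence of the Artin--Stafford classification of cfg domains of GK-dimension $2$ (Lemma~\ref{B-subring-lem}(3) in the paper), and it is the key input from the curve; your proposal does not invoke it.

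Second, the lift from the curve to $T$ is not done via the associated graded ring, but by inserting the intermediate ring $C=A\langle g\rangle$. One shows (Lemma~\ref{C-noeth-lem}) that $C\subseteq R(D)$ is finite and that $C$ inherits a minimal special ideal from $R(D)$. Then the ideals $CA_{\geq n}=A_{\geq n}C$ form a descending chain of special ideals of $C$; minimality forces this chain to stabilize in tails, which makes $CA_{\geq n_0}$ a finitely generated $A$-module on both sides. One now has a chain
\[
A\ \subseteq\ A+CA_{\geq n_0}\ \subseteq\ C\ \subseteq\ R(D)
\]
in which each step has a common nonzero ideal (the outer two because they are finite extensions, the middle one tautologically), and common ideals compose (Lemma~\ref{ideal-close-lem}). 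This is Theorem~\ref{subring-thm}, and it is where Theorem~\ref{min-ideal-thm} actually enters: not to bound a $g$-adic discrepancy, but to force the chain $CA_{\geq n}$ to stabilize.

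Your part~(2) arguments inherit the same vagueness. For ``$g\in A$ implies $A$ noetherian,'' your proposed embedding $(A\cap Tg)/gA\hookrightarrow R(D)/A$ is correct (using $R(D)\cap Tg=R(D)g$), but saying this is ``controlled by a $\GK\le 1$ ideal of $R(D)$'' presupposes exactly the finiteness of $A\subseteq R(D)$ you are trying to establish. The paper instead proves directly that $R(D)_C$ is noetherian (since $(R(D)/R(D)g)_{\overline{C}}$ is, by finiteness of $\overline{C}\subseteq\overline{R(D)}$), whence $C$ is noetherian. For ``$A$ noetherian implies $A\subseteq R(D)$ finite,'' note that $\overline{A}\subseteq\overline{R(D)}$ is already finite by Artin--Stafford, independent of noetherianity of $A$; the issue is purely the lift, and again the paper's argument runs through the chain above: once $A$ is noetherian, so is $\tilde A=A+CA_{\geq n_0}$, and since $xC\subseteq A$ for any $0\neq x\in A_{\geq n_0}$, the module $C_A$ embeds in $A_A$ and is noetherian, giving $A\subseteq C\subseteq R(D)$ finite.
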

\noindent
When $g \not \in A$ in the preceding theorem, then it can happen
that $A$ is not noetherian and $A \subseteq R(D)$ is not a finite
ring extension; see Section~\ref{example-sec}.  Also, similar
methods can be used to analyze the subalgebras of $S$ generated in
degree $1$, which was in fact the original project we attempted.
Since $\dim_k S_1 = 3$, the only interesting degree-1-generated
subalgebras are $A = k \langle V \rangle$, where $V \subseteq S_1$
with $\dim_k V = 2$.  We show in Theorem~\ref{deg1-gen-thm} below
that for such an $A$, either the $3$-Veronese ring $A^{(3)}$ is
equal to the ring $R(D)$ for a divisor $D$ of degree $3$, or else
$A$ equals $S$ in all large degrees.

Next, we explain how our results relate to the project to classify
noncommutative projective surfaces, for which we need to review a
few more definitions.  See \cite{SV} for a survey of the field. If
$A$ is a cfg domain with $\GK(A) = d+1$, then informally we think of
$A$ as corresponding to a $d$-dimensional noncommutative projective
variety.  Let $\rGr A$ be the category of $\mb{Z}$-graded right
$A$-modules $M$, and let $\rTors A$ be the full subcategory of
graded modules $M$ such that for every $m \in M$, $mA_{\geq n} = 0$
for some $n$.  The quotient category $\rQgr A = \rGr A /\rTors A$ is
called the \emph{quasi-scheme} associated to $A$. The graded
quotient ring $Q_{\gra}(A)$ is isomorphic to a skew-Laurent ring
$D[t, t^{-1}; \sigma]$, for some division ring $D$ and automorphism
$\sigma: D \to D$.   We call $D$ the \emph{(skew) field of
functions} of $A$.  In general, the ring-theoretic version of the
problem of classification of noncommutative surfaces has two parts.
First, what are the $(D, \sigma)$ which occur in the graded quotient
rings of cfg domains $A$ of dimension $3$?  Second, for a given $Q =
D[t, t^{-1}; \sigma]$ where $D$ has transcendence degree $2$, can
one classify in some way the algebras $A$ with $Q_{\gra}(A) = Q$?

We say that $A$ is \emph{birationally commutative} if its skew field
of functions $D$ is actually a field.  The second question above has
a quite satisfactory answer in the birationally commutative case. If
$B = B(X, \mc{L}, \sigma)$ is a twisted homogeneous coordinate ring
with $X$ an integral surface and $\mc{L}$ $\sigma$-ample, then $Q =
Q_{\gra}(B) \cong k(X)[t, t^{-1}; \wt{\sigma}]$, where $\wt{\sigma}$
is the automorphism on the field of rational functions $k(X)$
induced by pullback by $\sigma$. Noetherian cfg algebras $A$ with
$Q_{\gra}(A) = Q_{\gra}(B)$ have now been classified  \cite{RS},
\cite{Si}.  In large degrees, such $A$ are either twisted
homogeneous coordinate rings over varieties $Y$ birational to $X$,
or one of a few kinds of closely related subrings of them
(na{\"\i}ve blowup rings, ADC rings, or idealizer rings in either of
these.) Moreover, geometrically, the quasi-scheme $\rQgr B$ of a
twisted homogeneous coordinate ring $B(Y, \mc{M}, \tau)$ is
equivalent to $\rQcoh Y$, the category of quasi-coherent sheaves on
$Y$.   The quasi-schemes of the other kinds of examples $A$ behave
as further blowups of the categories $\rQcoh Y$ in some generalized
noncommutative sense (``na{\"\i}ve blowups").

The main theorem of this paper, Theorem~\ref{deg3-class-thm}, is an
attempt to begin to classify some algebras which are not
birationally commutative. It is natural to start by considering
rings $A$ with $Q_{\gra}(A) = Q_{\gra}(S)$ for the generic Sklyanin
algebra $S$, since this is one of the most well-studied cfg algebras
of dimension 3.   Even in the commutative classification of
surfaces, one concentrates first on nonsingular surfaces, so it is
reasonable to focus our attention here on maximal orders: being a
maximal order is the noncommutative analog of being integrally
closed.

The intuition from the commutative case leads one to expect that the
quasi-schemes of the algebras in such a classification should be
related to $\rQgr S$ via some kind of blowup procedure. In fact, the
quasi-schemes $\rQgr R(D)$ are the same as certain iterated
noncommutative blowups of the Sklyanin $\mb{P}^2$, in the sense of
Van den Bergh's theory \cite{VdB}. Starting with the Sklyanin
algebra $S$ and a divisor $D$ of degree at most $8$ on $E$, Van den
Bergh's construction yields a blowup of the Sklyanin $\mb{P}^2$
along the divisor $D$ (see \cite[Section 11]{VdB}); this is a
quasi-scheme $\rQgr A(D)$ for some ring $A(D)$. Van den Bergh proves
that $\rQgr A(D)$ has many of the same formal properties as a
commutative blowup, when $D$ is in general position \cite[Theorem
11.1.3]{VdB}. Van den Bergh's work is primarily category-theoretic,
essentially defining a blowup by constructing a Rees ring over a
quasi-scheme, working in an appropriate category of functors.
Because the details are rather intricate, it is not obvious that the
ring $A(D)$ is isomorphic to our ring $R(D)$.  Only recently, after
most of the work in this paper was completed, did we fully verify
that this is indeed the case.  We postpone the demonstration of this
to a future paper, in which the connections between our results and
Van den Bergh's can be more fully explored.  In the meantime, we
thought would be most helpful to the reader to keep our presentation
here entirely independent of \cite{VdB}.  We should remark, however,
that once one knows that the rings appearing in \cite{VdB} are the
same, some of the most basic properties of the rings $R(D)$, in
particular the Hilbert series (Theorem~\ref{RD-props-thm}(1)),
follow in a quite different way from Van den Bergh's methods.

To close, we discuss some questions for future study.
\begin{question}
Can one classify all cfg maximal orders $A$ inside the generic
Sklyanin algebra $S$ with $Q = Q_{\gra}(A) = Q_{\gra}(S^{(n)})$ for
some $n$?  More generally, can we classify all cfg maximal orders in
$Q_{\gra}(S^{(n)})$?
\end{question}
\noindent So far, we have classified only those subalgebras $A$ of
$S$ generated in degree $1$ or $3$.  Based on the connection with
\cite{VdB}, there should also be subalgebras of $S$ corresponding to
blowups at $8$ points, which have presumably fallen outside the
scope of our results in this paper because they will not be
generated in any single degree. It seems possible that there are no
other subalgebras of $S$ which are maximal orders, except those that
have a Veronese ring in common with one of these blowups at $8$ or
fewer points. To answer the more general second question, one needs
to find first the ``minimal models" for this birational class. For
example, presumably the regular algebra with $2$ generators and $2$
cubic relations of Sklyanin type (the ``Sklyanin $\mb{P}^1 \times
\mb{P}^1$") is also contained in this quotient ring, and so one also
has it and its subalgebras as examples.

\begin{question}
If one replaces the generic Sklyanin algebra $S$ by other non-PI
AS-regular algebras of dimension $3$, can one prove theorems
analogous to the ones in this paper, or ultimately classify all cfg
algebras which are maximal orders with the same quotient ring?
\end{question}

\begin{question}
What is a presentation of the ring $R(D)$ by generators and
relations?
\end{question}
\noindent Answering this final question would be useful in order to
better understand the deformation theory of these blowup algebras.



\noindent \textbf{Acknowledgments.}  We thank Mike Artin, Michel Van
den Bergh, Colin Ingalls, and Paul Smith for helpful conversations.
In addition, a computer program written by Jay Gill at MIT as an
undergraduate research project was useful for intuition at the
beginning stages of this project.

\section{Background and twisted homogeneous coordinate rings}
\label{background-sec}

In this section, we review some definitions and basic background
material, including some facts about twisted homogeneous coordinate
rings $B(X, \mc{L}, \sigma)$.  Throughout this paper, $k$ will stand
for an algebraically closed field.  All algebras $A$ of interest
will be cfg $k$-algebras, as defined in the introduction. We make no
assumptions on the cardinality of $k$, but if the reader is happy to
assume that $k$ is uncountable, a few later arguments can be
streamlined slightly.  In general, the hypotheses and notation in
force in each particular section will be announced near the
beginning of that section.

For completeness, we review most of the definitions we will use in
the paper in this section.   Some of these we will need only
incidentally later, however, so the expert reader might wish to skip
this section and refer back to it when necessary.
We begin by reviewing the theory of noncommutative projective
schemes, which was developed in \cite{AZ1}.  For convenience, assume
that $A$ is a noetherian cfg $k$-algebra in the following
definitions.  Let $\rGr A$ be the category of $\mb{Z}$-graded right
$A$-modules. The general convention is to use a lowercase name to
indicate the subcategory of noetherian objects in a category.  For
example, $\rgr A \subseteq \rGr A$ is the full subcategory of
finitely generated $\mb{Z}$-graded
$A$-modules.  
Let $\rTors A \subseteq \rGr A$ be the subcategory of modules $M$
with the property that for every $m \in M$, $m A_{\geq n} = 0$ for
some $n \geq 1$.
As in the introduction, the quotient category $\rQgr A = \rGr
A/\rTors A$ is called the \emph{quasi-scheme} associated to $A$. Let
$\pi: \rGr A \to \rQgr A$ be the canonical quotient functor. A
review of the formalities of quotient categories can be found in
\cite{AZ1}.  In the case at hand, one may think of $\rqgr A$ as the
category of \emph{tails} of modules; in other words, two modules $M,
N \in \rgr A$ become isomorphic in $\rqgr A$ if and only if $M_{\geq
n} \cong N_{\geq n}$ for some $n$.
The \emph{noncommutative projective scheme} associated to $A$ is the
pair  $\rproj A = (\rqgr A, \pi(A_A))$; it remembers the
\emph{distinguished object} $\pi(A)$, which plays the role of the
structure sheaf.  For $M \in \rgr A$ and $n \in \mb{Z}$, $M[n] \in
\rgr A$ is the same module as $M$ but with the grading shifted so
that $M[n]_i = M_{i+n}$ for all $i \in \mb{Z}$.

Let $k = k_A$ be the right module $A/A_{\geq 1}$. The noetherian
algebra $A$ \emph{satisfies $\chi_i$ (on the right)} if $\dim_k
\ext^j_A(k, M) < \infty$ for all $M \in \rgr A$ and all $1 \leq j
\leq i$.  $A$ \emph{satisfies $\chi$ (on the right)} if it satisfies
$\chi_i$ for all $i \geq 1$.  Of course, one defines the left
$\chi$-conditions analogously.  The \emph{cohomology} groups of an
object $\mc{M} \in \rqgr A$ are defined by $\HB^i(\mc{M}) =
\ext_{\rqgr A}^i(\pi(A), \mc{M})$ for all $i \geq 0$.
The \emph{cohomological dimension} of $\rproj A$ is $\cd(\rproj A) =
\min \{i | \HB^i(\mc{M}) \neq 0\ \text{for some}\ \mc{M} \in \rqgr A
\}$.

Next, we review definitions related to Hilbert series and growth of
modules.  Let $A$ be any cfg $k$-algebra.  Any $M \in \rgr A$ has a
\emph{Hilbert function} $f_M(n): \mb{Z} \to \mb{N}$, defined by
$f_M(n) = \dim_k M_n$; the \emph{Hilbert series} of $M$ is the
corresponding formal Laurent power series $\hs_M(t) = \sum_{n \in
\mb{Z}} (\dim_k M_n)t^n$. We sometimes use the following partial
order on Hilbert series: $\sum a_n t^n \leq \sum b_n t^n$ if $a_n
\leq b_n$ for all $n$.  In this graded setting, we can define the
\emph{GK-dimension} of a nonzero module $M \in \rgr A$ as $\GK(M) =
[\limsup_{n \to \infty} \log_n f_M(n)] + 1$.
The module $M \in \rgr A$ has a \emph{Hilbert polynomial} if there
is a polynomial $q(x) \in \mb{Q}[x]$ such that $q(n) = f_M(n)$ for
all $n \gg 0$; in this case, writing $f(x) = a_mx^m + \dots + a_0$
with $a_m \neq 0$, then $\GK M = m + 1$ and $m! a_m$ is an integer,
the \emph{multiplicity} of $M$.  For any $d \geq 1$, we have the
$d$th Veronese ring of $A$, $A^{(d)} = \bigoplus_{n \geq 0} A_{nd}$,
which is graded by the index $n$, unless stated otherwise. A module
$M \in \rgr A$ has a \emph{Hilbert quasi-polynomial of period $d$}
for some $d \geq 1$, if $\bigoplus_{n \geq 0} M_{i + nd}$ has a
Hilbert polynomial as an
 $A^{(d)}$-module for each $0 \leq i \leq d-1$.
We say that $A$ is \emph{generated in degree $1$} if it is generated
as a $k$-algebra by $A_1$.  Given a cfg $k$-algebra $A$ which is
generated in degree $1$, we say that $M \in \rgr A$ is a \emph{point
module} if $M$ is cyclic and $\hs_M(t) = 1/(1-t)$.  A module $M \in
\rgr A$ with  $\GK(M) = d$ is
\emph{$d$-critical} if $\GK(M/N) <
d$ for all nonzero submodules $N \subseteq M$.

The $k$-algebra $A$ is \emph{strongly noetherian} if $A \otimes_k C$
is noetherian for all commutative noetherian $k$-algebras $C$.
As usual, a ring extension $A \subseteq B$ is \emph{finite} if $B_A$
and $_A B$ are finitely generated $A$-modules.  We will often use
below the graded Nakayama lemma, which states that $M \in \rGr A$ is
finitely generated if and only if $\dim_k M/MA_{\geq 1} < \infty$
(and, in fact, $X \subseteq M$ is a generating subset if and only if
$X$ spans $M/MA_{\geq 1}$ as a $k$-vector space.)


The last definitions we review are some important homological
properties which involve ungraded modules. For a right $A$-module
$M$, define $j(M_A) = \inf \{i | \ext^i(M,A) \neq 0 \} \in \mb{N}
\cup \{ \infty \}$, which is called the \emph{grade} of $M$.  The
grade of a left module is defined analogously.
A right module $M$ \emph{satisfies the Auslander condition} if for
all $i \geq 0$ and all left submodules $N \subseteq \ext^i(M,A)$,
one has $j( { _A} N) \geq i$; the definition of Auslander for a left
module is symmetric. The ring $A$ is \emph{Auslander-Gorenstein} if
the modules $A_A$ and $_A A$ have the same finite injective
dimension $d$, and every finitely generated left and right
$A$-module satisfies the Auslander condition. Suppose in addition
that $\GK(A)$ is an integer.  Then $A$ is \emph{Cohen-Macaulay} if
$\GK(M) + j(M) = \GK(A)$ for all finitely generated left and right
modules $M$.

Now we move on to a review of twisted homogeneous coordinate rings.
Recall that in the introduction, we defined the twisted homogeneous
coordinate ring $B(X, \mc{L}, \sigma) = \bigoplus_{n \geq 0}
\HB^0(X, \mc{L}_n)$ for a given projective $k$-scheme $X$,
invertible sheaf $\mc{L}$, and automorphism $\sigma: X \to X$.  We
now give a few more details about this construction; for more
information, see \cite{AV} and \cite{Ke2}.  For any coherent sheaf
$\mc{F}$ on $X$ we adopt the notation $\mc{F}^{\sigma}$ for the
pullback $\sigma^*\mc{F}$, so in this notation we have $\mc{L}_n =
\mc{L} \otimes \mc{L}^{\sigma} \otimes \dots \mc{L}^{\sigma^{n-1}}$
for $n \geq 1$.   The sheaf $\mc{L}$ is called \emph{$\sigma$-ample}
if for any coherent sheaf $\mc{F}$ on $X$, $\HB^i(X, \mc{F} \otimes
\mc{L}_n) = 0$ for all $i \geq 1$ and $n \gg 0$.  We always assume
that $\mc{L}$ is $\sigma$-ample when constructing a twisted
homogeneous coordinate ring.

Many of the definitions discussed earlier play out nicely when $A =
B =  B(X, \mc{L}, \sigma)$ is a twisted homogeneous coordinate ring,
where $\mc{L}$ is $\sigma$-ample.  If $X$ is integral (as it will be
in all of the applications in this paper), the graded quotient ring
of $B$ may be explicitly described as $Q_{\rm gr}(B) \cong  k(X)[t,
t^{-1}; \sigma]$, where $\sigma: k(X) \to k(X)$ is the map induced
by pullback of rational functions by $\sigma: X \to X$.
In this case, as mentioned already in the introduction, the
quasi-scheme $\rQgr B$ is equivalent as a category to  $\rQcoh X$,
the category of quasi-coherent sheaves on $X$ \cite[Theorem
1.3]{AV}.  Restricting to noetherian objects for convenience, the
equivalence is given explicitly by the functor $G: \coh X \to \rqgr
B$ with formula
\begin{equation}
\label{cat-equiv-eq}
  G(\mc{F}) = \pi(\bigoplus_{n \geq 0} \HB^0(X, \mc{F} \otimes
\mc{L}_n)).
\end{equation}
For later reference, we record here some other well-known properties
of twisted homogeneous coordinate rings which follow quickly from
the literature.
\begin{lemma} \label{B-prop}  Let $X$ be a projective scheme,
$\sigma: X \to X$ an automorphism, and $\mc{L}$ a $\sigma$-ample
invertible sheaf on $X$.  Let $B = B(X, \mc{L}, \sigma)$.
\begin{enumerate}

\item $B$ is strongly noetherian.

\item $B$ satisfies $\chi$ on the left and right, and $\on{cd}(\rproj
B) = \dim X$.

\item If $X$ is a integral nonsingular curve, then every module $M
\in \rgr B$ has a Hilbert polynomial of the form $\dim_k M_n = an +
b$ for $n \gg 0$, some $a, b \in \mb{Z}$ with $a \geq 0$.

\item if $X = E$ is a nonsingular elliptic curve, then $B$ is
Auslander-Gorenstein of dimension $2$ and Cohen-Macaulay.
\end{enumerate}
\end{lemma}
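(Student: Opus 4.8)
The overall strategy is that each part follows either directly from the literature or by transporting a standard fact about $\coh X$ across the category equivalence $G\colon \coh X \to \rqgr B$ recorded in~\eqref{cat-equiv-eq} (and its extension $\rQcoh X \simeq \rQgr B$ from~\cite{AV}). Two bookkeeping facts about $G$ do most of the work: it sends $\CO_X$ to $\pi(B)$, and the degree-shift functor $(1)$ on $\rqgr B$ corresponds on $\coh X$ to $\FF \mapsto (\FF \otimes \LL)^{\sigma^{-1}}$. Throughout, $\sigma$-ampleness of $\LL$ is what controls the higher cohomology $\HB^{\geq 1}(X, \FF \otimes \LL_n)$ for $n \gg 0$.

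For (1), strong noetherianity of $B(X, \LL, \sigma)$ for $\LL$ $\sigma$-ample is a theorem of Artin and Zhang, which I would simply cite; the idea behind it is that for a commutative noetherian $k$-algebra $C$ one has $B \otimes_k C = B(X \times_k \spec C,\, \LL \boxtimes \CO,\, \sigma \times \id)$, and the real content is that the relevant $\sigma$-ampleness survives this base change with uniform bounds. For (2): that $B$ satisfies $\chi$ on the left and right is~\cite{AZ1}. For the cohomological dimension, since $G$ is an exact equivalence sending $\pi(B)$ to $\CO_X$, for $\mc M = G(\FF)$ we get $\HB^i(\mc M) = \ext^i_{\rQgr B}(\pi(B), \mc M) \cong \ext^i_{\rQcoh X}(\CO_X, \FF) = \HB^i(X, \FF)$, so $\cd(\rproj B)$ equals the cohomological dimension of $\coh X$; this is $\le \dim X$ by Grothendieck vanishing and $\ge \dim X$ by the standard fact that a projective scheme carries a coherent sheaf with nonzero cohomology in degree $\dim X$ (e.g.\ a dualizing sheaf of a top-dimensional component).

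Part (3) is the one place where there is an actual computation. Let $M \in \rgr B$; if $M$ is finite-dimensional the claim is trivial with $a = b = 0$, so assume otherwise. Then $\pi(M) \cong G(\FF)$ for some nonzero coherent sheaf $\FF$ on the curve $X$, and since $N = \bigoplus_{n} \HB^0(X, \FF \otimes \LL_n)$ is a finitely generated $B$-module with $\pi(N) = G(\FF) \cong \pi(M)$, the description of $\rqgr B$ as tails gives $M_n \cong \HB^0(X, \FF \otimes \LL_n)$ for $n \gg 0$. By $\sigma$-ampleness, $\HB^1(X, \FF \otimes \LL_n) = 0$ for $n \gg 0$, so Riemann--Roch on the smooth curve $X$ of genus $g$ yields $\dim_k M_n = \deg(\FF \otimes \LL_n) + \operatorname{rk}(\FF)(1 - g)$ for $n \gg 0$. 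Finally $\deg \LL_n = \sum_{i=0}^{n-1} \deg\bigl((\sigma^i)^* \LL\bigr) = n \deg \LL$, since an automorphism preserves the degree of a line bundle on a curve, so $\dim_k M_n = \bigl(\operatorname{rk}(\FF)\deg\LL\bigr)n + \bigl(\deg\FF + \operatorname{rk}(\FF)(1 - g)\bigr)$ for $n \gg 0$. This has the stated form $an+b$ with $a, b \in \mb Z$, and $a \ge 0$ because $\operatorname{rk}(\FF) \ge 0$ and $\deg \LL > 0$ for any $\sigma$-ample invertible sheaf on a curve (\cite{Ke2}).

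For (4), the natural route is via dualizing complexes: by parts (1)--(2), $B$ is noetherian, satisfies $\chi$, and has finite cohomological dimension, hence admits a balanced dualizing complex $\mc R$. Transporting Serre duality on $E$ through the equivalence $\rqgr B \simeq \coh E$, and using that $\omega_E \cong \CO_E$ so that the Serre functor on $\coh E$ is just $[1]$, one identifies $\mc R$ with $B$ itself up to a degree shift and a twist by a graded algebra automorphism, concentrated in a single cohomological degree; thus $B$ is AS-Gorenstein of injective dimension $2 = \GK(B)$, and the dualizing complex it inherits from the smooth curve $E$ is Auslander, which yields both the Auslander condition and the Cohen--Macaulay equality $\GK(M) + j(M) = 2$ for all finitely generated $M$. (Alternatively one can cite a direct verification of these properties for $B(E, \LL, \sigma)$.) I expect (4) to be the least self-contained part; of the four, (1) and (4) rest on substantial external theorems while (3) is the only genuine calculation, and the main --- fairly modest --- obstacle is carrying out the translation through $G$ carefully enough that the shift and twist appearing in the dualizing complex in (4) are pinned down correctly.
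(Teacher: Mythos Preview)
Your argument is correct and, for parts (1)--(3), essentially identical to the paper's: the paper likewise cites \cite{ASZ} for (1), \cite{AZ1} (via \cite{Ke1}) for $\chi$ in (2), uses the equivalence \eqref{cat-equiv-eq} to identify $\cd(\rproj B)$ with the cohomological dimension of $X$, and for (3) simply says it follows from \eqref{cat-equiv-eq} and Riemann--Roch --- your version just spells out the computation explicitly. One small attribution point: strong noetherianity is due to Artin--Small--Zhang \cite[Proposition~4.13]{ASZ}, not Artin--Zhang.

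The genuine difference is in (4). The paper dispatches this by citing Levasseur \cite[Theorem~6.6]{Lev}, who proves Auslander--Gorenstein and Cohen--Macaulay for $B(E,\mc{L},\sigma)$ directly (and notes that his hypothesis $\deg\mc{L}\geq 3$ is not actually needed for that part). Your route --- existence of a balanced dualizing complex from $\chi$ and finite $\cd$, then transporting Serre duality on $E$ through the equivalence to identify the dualizing complex as a shift-and-twist of $B$ --- is a valid and more conceptual alternative, essentially the Van den Bergh/Yekutieli--Zhang machinery. It buys a uniform explanation (the Auslander and CM properties are inherited from smoothness of $E$), at the cost of invoking heavier theory than a single citation; the paper's approach is shorter but less illuminating.
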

\begin{proof}
(1) This is \cite[Proposition 4.13]{ASZ}.

(2) These results are implicit in Artin and Zhang's original paper
\cite{AZ1} but are not stated in this way.  The fact that $B$
satisfies $\chi$ follows from \cite[Theorem 4.5, Corollary 7.5, and
p. 262]{AZ1} and the fact that $\mc{L}$ is $\sigma$-ample.  For a
working out of the details, see Keeler's thesis \cite[Proposition
2.5.3, Proposition 2.5.8]{Ke1}.

Since the category equivalence \eqref{cat-equiv-eq} takes $\mc{O}_X$
to $\pi(B)$, it also shows that $\cd(\rproj B)$ is the same as the
cohomological dimension of the scheme $X$, i.e. $\max \{i | \HB^i(X,
\mc{F}) \neq 0\ \text{for some}\ \mc{F} \in \coh X \}$, which it is
well-known is equal to $\dim X$ for a projective scheme $X$.

(3) This follows quickly from \eqref{cat-equiv-eq}
and the Riemann-Roch theorem.

%

(4) This is \cite[Theorem 6.6]{Lev}, except that Levasseur assumes
the restriction $\deg \mc{L} \geq 3$, so that $\mc{L}$ is very
ample. But this restriction is only used in the proof of a different
part of the theorem showing that every graded $B$-module has a
Hilbert polynomial (and the restriction is also unnecessary to prove
that, as we saw in (3)).
\end{proof}

\section{Twisted homogeneous coordinate rings over an
elliptic curve} \label{elliptic-sec}

Theorem~\ref{RD-props-thm} from the introduction shows that the
rings $R$ of main interest in this paper will all have a central
element $g$ such that $R/Rg \cong B(E, \mc{L}, \sigma)$, where $E$
is a nonsingular elliptic curve and $\sigma$ has infinite order.
Thus these special twisted homogeneous coordinate rings will play an
important role below, and in this section we prove some results that
are particular to this special case.

For the rest of this section we restrict to the following setup. Let
$E$ be a nonsingular elliptic curve with fixed basepoint $p_0$ for
the group structure.  Let $\sigma: E \to E$ be an automorphism given
by translation $x \mapsto x + r$ in the group structure, for some
point $r$ of infinite order in the group.  It is standard that any
automorphism of infinite order on an elliptic curve is such a
translation, so we are really working with any infinite order
automorphism of $E$.  The notation of this paragraph is fixed for
the rest of this section.

We will freely use basic properties of nonsingular curves and
divisors on them which can be found in \cite[Chapter IV]{Ha}, for
example the Riemann-Roch theorem. Another basic result we use
frequently is Abel's theorem: two Weil divisors $D, D'$ on $E$ are
linearly equivalent ($D \sim D'$) if and only if $\deg D = \deg D'$
and $D$ and $D'$ give the same result when summed in the group
structure of $E$.  Note that the symbol $+$ is used both for the
group structure of $E$ and for addition in the group $\Div E$ of
Weil divisors of $E$; the meaning will be clear from context.

The property of $\sigma$-ampleness is easy to understand on curves;
in particular, an invertible sheaf $\mc{L}$ on $E$ is $\sigma$-ample
if and only if $\mc{L}$ is ample, in other words if and only if
$\mc{L}$ has positive degree \cite[Theorem 1.3]{Ke2}.  Our first
goal is to show that $B(E, \mc{L}, \sigma)$ is generated in degree
$1$ as long as $\mc{L}$ does not have very small degree.  This is
similar to the results in \cite[Section 7]{ATV1};  in fact, some
parts of the next lemma also follow from \cite[Proposition 7.17,
Lemma 7.21]{ATV1}, but we give a full proof for the reader's
convenience.
\begin{lemma}
\label{inv-section-lem} \label{inv-section-prop}
\label{gen-deg1-lem}
\begin{enumerate}
\item
Let $\mc{L}$ and $\mc{M}$ be invertible sheaves on $E$ with $\deg
\mc{L} \geq 2$ and $\deg \mc{M} \geq 2$.  Consider the natural map
$\phi: \HB^0(E, \mc{L}) \otimes \HB^0(E, \mc{M}) \to \HB^0(E, \mc{L}
\otimes \mc{M})$.  The map $\phi$ is surjective unless $\deg \mc{L}
= \deg \mc{M} = 2$ and $\mc{L} \cong \mc{M}$, in which case $\dim_k
\im \phi  = 3$.
\item  Let $\mc{L}$ be an invertible sheaf on $E$ with $\deg \mc{L} \geq 2$.  Then
$B = B(E, \mc{L}, \sigma)$ is generated in degree $1$.
\end{enumerate}

\end{lemma}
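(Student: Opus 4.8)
The plan is to prove (1) first by a direct argument with divisors and Riemann--Roch, and then deduce (2) as a formal consequence. For part (1), write $d = \deg\mc{L}$ and $e = \deg\mc{M}$; by Riemann--Roch on the elliptic curve $E$ we have $\dim_k\HB^0(E,\mc{L}) = d$, $\dim_k\HB^0(E,\mc{M}) = e$, and $\dim_k\HB^0(E,\mc{L}\otimes\mc{M}) = d+e$. The cokernel of $\phi$ measures the failure of the multiplication map to be onto, and the cleanest way to control it is to fix a section $0\neq s\in\HB^0(E,\mc{L})$ with divisor $D = \dv(s)$, an effective divisor of degree $d$, and study the composite with restriction. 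Multiplication by $s$ embeds $\HB^0(E,\mc{M})\hookrightarrow\HB^0(E,\mc{L}\otimes\mc{M})$ as the sections vanishing on $D$, i.e.\ as $\HB^0(E,\mc{L}\otimes\mc{M}\otimes\mc{O}_E(-D)) = \HB^0(E,\mc{M})$ (using $\mc{O}_E(-D)\cong\mc{L}^{-1}$); the quotient $\HB^0(E,\mc{L}\otimes\mc{M})/s\cdot\HB^0(E,\mc{M})$ injects into $\HB^0(D,(\mc{L}\otimes\mc{M})|_D)$, which has dimension $d$. So it suffices to hit this $d$-dimensional restriction space using products $s'\cdot t$ with $s'\in\HB^0(E,\mc{L})$, $t\in\HB^0(E,\mc{M})$.

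The main step is therefore a dimension count for the image of $\HB^0(E,\mc{L})\otimes\HB^0(E,\mc{M})\to\HB^0(D,(\mc{L}\otimes\mc{M})|_D)$. The right approach is to pick $D$ generic (a sum of $d$ distinct points in general position, which is possible since $|\mc{L}|$ is base-point free once $d\geq 2$), so that $(\mc{L}\otimes\mc{M})|_D\cong k^d$ coordinatized by the points of $D$; then one must show the sections $\HB^0(E,\mc{M})$ already separate $D$, or nearly do, and that multiplying by a well-chosen $s'$ with prescribed zeros on subsets of $D$ fills in what is missing. Concretely, for a point $P\in D$ the sections of $\mc{M}$ vanishing on $D\setminus\{P\}$ form $\HB^0(E,\mc{M}\otimes\mc{O}_E(-(D\setminus P)))$, which is nonzero as soon as $\deg\mc{M} - (d-1) \geq 1$, i.e.\ $e \geq d$. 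So when $e\geq d$ one gets surjectivity onto the restriction space coordinate-by-coordinate, hence $\phi$ is onto; by symmetry the same holds when $d\geq e$, so $\phi$ is surjective whenever $d\neq e$ or whenever $d = e \geq 3$ (since then $e\geq d$ with room to spare — one needs $\deg\mc{M}-(d-1)\geq 1$, i.e.\ $e\geq d$, which holds, but one must recheck the borderline). The only genuinely exceptional case is $d = e = 2$: then $\HB^0(E,\mc{M}\otimes\mc{O}_E(-(D\setminus P)))$ has degree $2-1=1$ so it is one-dimensional and the argument still seems to run — so here I expect that the honest obstruction when $\mc{L}\cong\mc{M}$ is that $\HB^0(E,\mc{L})\otimes\HB^0(E,\mc{L})\to\HB^0(E,\mc{L}^{\otimes 2})$ lands in the symmetric part in a way forced by $\mc{L}$ being a degree-2 line bundle (this is the classical fact that $E\to\mb{P}^1$ is the double cover and $\HB^0(\mc{L}^{\otimes 2})$ contains the pullback of $\HB^0(\mb{P}^1,\mc{O}(2))$, of dimension $3$, but the full space has dimension $4$); when $\mc{L}\not\cong\mc{M}$ of degree $2$ the two pencils are different and the products span. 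I would isolate this case and verify $\dim_k\im\phi = 3$ directly via the hyperelliptic-type geometry, or by an explicit Riemann--Roch computation of $\HB^0(E,\mc{L}\otimes\mc{M}\otimes\mc{O}_E(-D-D'))$ for generic divisors $D\in|\mc{L}|$, $D'\in|\mc{M}|$.

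For part (2), the statement is that $B = B(E,\mc{L},\sigma)$ is generated in degree $1$, i.e.\ the multiplication $B_1^{\otimes n}\to B_n$ is onto for all $n$; equivalently $B_1\cdot B_{n-1} = B_n$ for all $n\geq 2$. Now $B_n = \HB^0(E,\mc{L}_n)$ where $\mc{L}_n = \mc{L}\otimes\mc{L}^\sigma\otimes\cdots\otimes\mc{L}^{\sigma^{n-1}}$ has degree $n\deg\mc{L} \geq 2n \geq 4$ for $n\geq 2$, and $B_1\cdot B_{n-1}$ is (via the twisted multiplication $f\star g = f\cdot(\sigma)^*g$) the image of $\HB^0(E,\mc{L})\otimes\HB^0(E,\mc{L}_{n-1}^{\sigma})\to\HB^0(E,\mc{L}_n)$. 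Since $\deg\mc{L}\geq 2$ and $\deg\mc{L}_{n-1}^\sigma = (n-1)\deg\mc{L}\geq 2$, with the two degrees equal only when $n=2$, part (1) gives surjectivity immediately for $n\geq 3$; for $n = 2$ part (1) gives surjectivity unless $\deg\mc{L}=2$ and $\mc{L}\cong\mc{L}^\sigma$, i.e.\ $\mc{L}\cong\sigma^*\mc{L}$. The last step is to rule this out: on an elliptic curve $\sigma^*\mc{L}\cong\mc{L}$ would force (by Abel's theorem, comparing the points $\sum D$ and $\sum\sigma^{-1}D = \sum D - (\deg D)\cdot r$ in the group law) that $(\deg\mc{L})\cdot r = 0$ in $E$, contradicting that $r$ has infinite order. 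Hence the exceptional case never occurs for our $\sigma$, and $B$ is generated in degree $1$. I expect the dimension count in the $d=e=2$, $\mc{L}\cong\mc{M}$ case of part (1) to be the only place requiring care; everything else is a bookkeeping exercise with Riemann--Roch and Abel's theorem.
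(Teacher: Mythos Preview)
Your argument for part (2) is correct and matches the paper's exactly: reduce to part (1), and rule out the exceptional case $\deg\mc{L}=2$, $\mc{L}\cong\mc{L}^\sigma$ using that $\sigma$ has infinite order (via Abel's theorem).

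For part (1), your restriction-to-$D$ strategy is a legitimate route, but as written it has a real gap in the case $\mc{L}\cong\mc{M}$ with $\deg\mc{L}=\deg\mc{M}\geq 3$. Your coordinate-by-coordinate argument needs, for each $P\in D$, a section $t\in\HB^0(E,\mc{M})$ vanishing on $D\setminus\{P\}$ but \emph{not} at $P$. You check only that $\HB^0(E,\mc{M}(-(D\setminus P)))\neq 0$, but when $\mc{M}\cong\mc{L}=\mc{O}_E(D)$ this space is $\HB^0(E,\mc{O}_E(P))$, whose unique section is exactly the one vanishing at $P$. So every such $t$ satisfies $t(P)=0$, and the product $s't$ restricts to zero on all of $D$; your argument produces nothing. (The case $\mc{L}\not\cong\mc{M}$ with $d=e$ is fine, since then $\HB^0(E,\mc{M}\otimes\mc{L}^{-1})=0$ and the unique $t$ does not vanish at $P$.) You could repair this by splitting the vanishing on $D\setminus\{P\}$ between $s'$ and $t$ rather than loading it all onto $t$, but you do not do so, and your parenthetical ``one must recheck the borderline'' is not a proof.

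The paper avoids this difficulty by organizing the cases differently. For $\mc{L}\not\cong\mc{M}$ (any degrees $\geq 2$), it picks two sections $x,y$ generating $\mc{L}$, takes the Koszul-type sequence $0\to\mc{L}^{-1}\to\mc{O}_E^2\to\mc{L}\to 0$, tensors with $\mc{M}$, and gets surjectivity from $\HB^1(E,\mc{L}^{-1}\otimes\mc{M})=0$ (degree $\geq 0$, not trivial). For $\mc{L}\cong\mc{M}$ with $\deg\geq 3$ it simply cites the classical fact that a line bundle of degree $\geq 2g+1$ is normally generated. For the exceptional case $\deg=2$, $\mc{L}\cong\mc{M}$, it observes directly that if $\HB^0(E,\mc{L})=kx+ky$ then $\phi(x\otimes y)=\phi(y\otimes x)$, so $\dim_k\im\phi\leq 3$, with equality an easy check --- much simpler than the hyperelliptic geometry you gesture at. The paper's cohomological approach is both shorter and uniform across degrees; your divisor-restriction approach is workable but needs the missing case filled in.
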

\begin{proof}
If $\mc{L} \cong \mc{M}$ with $\deg \mc{L} = 2$, then the natural
map is clearly not surjective.  For, in that case we might as well
assume that $\mc{M} = \mc{L}$, and $\dim_k \HB^0(E, \mc{L}) = 2$ and
$\dim_k \HB^0(E, \mc{L}^{\otimes 2}) = 4$ by Riemann-Roch.  Then
writing $\HB^0(E, \mc{L}) = kx + ky$, we have $\phi(x \otimes y) =
\phi(y \otimes x)$, and so $\dim_k \im \phi \leq 3$.   It is easy to
check that in fact we have equality here, as asserted.

If $\mc{L} \cong \mc{M}$ where $\deg \mc{L} \geq 3$, again we may
assume that $\mc{M} = \mc{L}$.  Then the surjectivity of $\phi$ is a
special case of the standard result that any invertible sheaf of
degree at least $2g + 1$ on a curve of genus $g$ is normally
generated. See, for example, \cite[Definition 1.8.50 and Theorem
1.8.53]{Lz1}.

Finally, assume that $\mc{M} \not \cong \mc{L}$, where by switching
the sheaves if necessary we can assume that $\deg \mc{L} \leq \deg
\mc{M}$.  In this case, we show a slightly stronger result. It is
well-known that any globally generated invertible sheaf $\mc{L}$ on
$E$ is generated by two sections.  We claim that choosing any two
sections $x, y \in \HB^0(E, \mc{L})$ which generate the sheaf
$\mc{L}$, then putting $W = kx + ky$ we even have $\phi(W \otimes
\HB^0(E, \mc{M})) = \HB^0(E, \mc{L} \otimes \mc{M})$. Considering
the exact sequence
\[
0 \to \mc{K} \to W \otimes \mc{O}_E \to \mc{L} \to 0,
\]
it is clear that $\mc{K}$ is also invertible, and in fact it follows
from a consideration of determinants (\cite[Exercise
II.5.16(d)]{Ha}) that $\mc{K} \cong \mc{L}^{-1}$.  Tensoring with
$\mc{M}$ and taking the cohomology long exact sequence, we see that
to prove the claim it suffices to prove that $\HB^1(E, \mc{L}^{-1}
\otimes \mc{M}) = 0$.  But since $\mc{N} = \mc{L}^{-1} \otimes
\mc{M}$ has $\mc{N} \not \cong \mc{O}_E$ and $\deg \mc{N} \geq 0$,
it is standard that $\HB^1(E, \mc{N}) = 0$  \cite[Section IV.1]{Ha}.

(2) It suffices to show that the map
\[
\HB^0(E, \mc{L}) \otimes \HB^0(E, \mc{L}_{n}^{\sigma}) \to \HB^0(E,
\mc{L}_{n+1})
\]
is surjective for all $n \geq 1$.  This follows immediately from
part (1), unless $\deg \mc{L} = 2$, $n = 1$, and $\mc{L}^{\sigma}
\cong \mc{L}$.  But since $\sigma$ is translation by a point of
infinite order on $E$, $\sigma$ does not fix the linear equivalence
class of any nonzero effective divisor; thus $\mc{L}^{\sigma} \not
\cong \mc{L}$.
\end{proof}
\noindent  In fact, it is not hard to show that $B(E, \mc{L},
\sigma)$ is generated as an algebra by $B_1$ and $B_2$ if $\deg
\mc{L} = 1$, but we will not need this result.

Next, we study subalgebras of $B(E, \mc{L}, \sigma)$.  The structure
of these is strongly constrained by the results of Artin and
Stafford on cfg domains of GK-dimension $2$.  Recall that (following
\cite{RRZ}) a cfg algebra $A$ is called \emph{projectively simple}
if every nonzero homogeneous ideal $I$ of $A$ satisfies $\dim_k A/I
< \infty$.  Using the Artin-Stafford theory, we now prove the
following.
\begin{lemma}
\label{B-subring-lem} Consider $B = B(E, \mc{L}, \sigma)$ where
$\deg \mc{L} \geq 1$.  Let $A$ be a cfg subalgebra of $B$ with
$\dim_k A_n \geq 2$ for some $n \geq 1$.
\begin{enumerate}
\item  $\GK A = 2$, $A$ is noetherian, and $A$ is projectively simple.

\item Suppose that $A_n \neq 0$ for all $n \gg 0$.  Then there is another nonsingular elliptic curve $F$ with
automorphism $\tau$, such that $A$ is equal in all large degrees to
a ring of the form
\begin{equation}
\label{GK2-classify-eq} \bigoplus_{n \geq 0} \HB^0(F, \mc{I} \otimes
\mc{M} \otimes \mc{M}^{\tau} \otimes \mc{M}^{\tau^{(n-1)}}),
\end{equation}
where $\mc{M}$ is an invertible sheaf on $F$ of positive degree and
$\mc{I}$ is an ideal sheaf.    Moreover,
there is a finite surjective morphism $\theta: E \to F$ such that
$\tau \theta = \theta \sigma$, and $\tau$ is again translation by a
point of infinite order if we take basepoint $\theta(p_0)$ for the
group structure of $F$.

\item Suppose that $A$ is generated in degree $1$.
Let $\mc{N}$ be the sheaf generated on $E$ by the sections in $A_1
\subseteq H^0(E, \mc{L})$.  Then $\mc{I} = \mc{O}_E$ in
\eqref{GK2-classify-eq}, and so $A$ is equal in large degree to $A'
= B(F, \mc{M}, \tau)$.  We also have $\deg_F \mc{M} \geq 2$; $\mc{N}
= \theta^*\mc{M}$; and $\deg \mc{N} = (\deg \theta)(\deg \mc{M})$.
Finally, $A \subseteq B(E, \mc{N}, \sigma)$ is a finite ring
extension.
\end{enumerate}
\end{lemma}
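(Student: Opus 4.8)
The whole lemma rests on the Artin--Stafford classification of connected finitely graded (cfg) domains of GK-dimension two \cite{AS}, together with the elementary geometry of the infinite-order automorphism $\sigma$, so the plan is to reduce each part to that machinery. For part (1): $\GK A \le \GK B = 2$ because $\GK$ is monotone under subalgebras and $B$ is a twisted homogeneous coordinate ring over the curve $E$ (Lemma~\ref{B-prop}); conversely, if $\GK A$ were $1$ then $A$ would be a finite module over a central polynomial subring, so the degree-zero part of $Q_{\gra}(A)$ would be a finite-dimensional division algebra over $k$, hence $k$ itself --- impossible, since two linearly independent elements $a,b\in A_n$ give $ab^{-1}\notin k$ in degree zero. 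Thus $\GK A=2$. Noetherianity is part of the Artin--Stafford theorem. Projective simplicity I would derive from the structural description in part (2): a nonzero homogeneous ideal of a ring of the form \eqref{GK2-classify-eq} corresponds, in large degrees, to a nonzero family of ideal sheaves on $F$ compatible with $\tau$, and because $\tau$ is an \emph{infinite-order} automorphism of the curve $F$ there is no nonempty proper $\tau$-invariant closed subscheme, which forces any such ideal to have finite codimension (one can first replace $A$ by a Veronese to dispose of the extra hypothesis $A_n\neq 0$ for $n\gg 0$ needed in (2)).

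For part (2), I would apply the Artin--Stafford structure theorem to the GK-two domain $A$ directly to get that $A$ agrees in large degrees with a ring of the form \eqref{GK2-classify-eq} over a projective curve, which (normalizing, and using the dominant map from $E$ produced below) may be taken nonsingular, with $\mc{M}$ of positive degree. The ``moreover'' clause is then pure curve geometry. The inclusion $A\subseteq B$ gives $Q_{\gra}(A)\subseteq Q_{\gra}(B)$, i.e.\ $k(F)[t,t^{-1};\tau]\subseteq k(E)[t,t^{-1};\sigma]$ as graded rings, hence an inclusion of the degree-zero subfields $k(F)\hookrightarrow k(E)$; this is a finite extension of function fields of curves, so it comes from a finite surjective morphism $\theta\colon E\to F$, and comparing the twisting rules on homogeneous parts of the two skew-Laurent rings shows $\sigma$ restricts to $\tau$ on $k(F)$, i.e.\ $\theta\sigma=\tau\theta$. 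Now $\sigma$ has no finite invariant subset, so $F$ cannot be $\mb{P}^1$: an automorphism of $\mb{P}^1$ has a nonempty finite fixed locus whose $\theta$-preimage would be finite and $\sigma$-invariant (and $\tau=\id$ is excluded since $\theta\sigma=\theta$ contradicts finiteness of $\theta$); with the Hurwitz bound $0=2g_E-2\ge(\deg\theta)(2g_F-2)$ this gives $g_F=1$, and the point $\theta(p_0)$ makes $F$ an elliptic curve. Applying the same ``no finite invariant subset'' argument to the fixed locus of $\tau$ shows the group-automorphism part of $\tau$ (with respect to $\theta(p_0)$) is trivial, so $\tau$ is a translation; and $\tau$ has infinite order, since $\tau^m=\id$ would give $\theta\sigma^m=\theta$ and force $\theta$ constant on the dense $\langle\sigma^m\rangle$-orbits.

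For part (3), assume $A$ is generated in degree $1$. Since $\dim_k A_n\ge 2$ for some $n$ and $A$ is degree-$1$-generated, $\dim_k A_1\ge 2$, so the subsheaf $\mc{N}\subseteq\mc{L}$ generated by $A_1$ has $\dim_k\HB^0(E,\mc{N})\ge 2$ and therefore $\deg\mc{N}\ge 2$; in particular $\mc{N}$ is $\sigma$-ample, and since $A_n$ generates $\mc{N}_n:=\mc{N}\otimes\mc{N}^\sigma\otimes\cdots\otimes\mc{N}^{\sigma^{n-1}}$ we get $A\subseteq B(E,\mc{N},\sigma)$. Next I would identify the embedding of \eqref{GK2-classify-eq} into $B$ from part (2) with pullback along $\theta$: inside $Q_{\gra}(B)$, the degree-$n$ part of $A$ (for $n\gg 0$) is $\HB^0(F,\mc{I}\otimes\mc{M}_n)$ sitting in $k(F)t^n\subseteq k(E)t^n$, so as a subspace of $\HB^0(E,\mc{L}_n)$ it is $\theta^*\HB^0(F,\mc{I}\otimes\mc{M}_n)$ and generates the subsheaf $\theta^*(\mc{I}\otimes\mc{M}_n)$; since $\mc{I}\otimes\mc{M}_n$ is invertible (ideal sheaves of $0$-dimensional subschemes of a smooth curve are) of degree $\ge 2$ for large $n$, it is globally generated and so is its pullback, whence $\mc{N}_n=\theta^*(\mc{I}\otimes\mc{M}_n)$ for $n\gg 0$. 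Taking degrees and equating the coefficient of $n$ with the constant term simultaneously forces the ideal sheaf $\mc{I}$ to be trivial and gives $\deg\mc{N}=(\deg\theta)(\deg\mc{M})$; with $\mc{I}$ trivial, dividing the relations $\mc{N}_n=\theta^*\mc{M}_n$ for two consecutive large $n$ and pulling back by a power of $\sigma^{-1}$ gives $\mc{N}\cong\theta^*\mc{M}$. That $\deg_F\mc{M}\ge 2$ I would check by a short growth argument: $\deg\mc{M}=1$ makes $\dim_k B(F,\mc{M},\tau)_n=n$ for $n\ge 1$, which is incompatible with $\dim_k A_1\ge 2$ and degree-$1$-generation (in a domain $\dim_k(VW)\ge\dim_k V+\dim_k W-1$, so the Hilbert function of $A$ grows at least like $n+1$). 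Finally, $A\subseteq B(E,\mc{N},\sigma)$ is finite because $\theta$ is finite and $\mc{N}=\theta^*\mc{M}$: the projection formula gives $\HB^0(E,\mc{N}_n)=\HB^0(F,\mc{M}_n\otimes\theta_*\mc{O}_E)$ with $\theta_*\mc{O}_E$ coherent, so $B(E,\mc{N},\sigma)$ is a finitely generated module over $B(F,\mc{M},\tau)$ by the standard finiteness property of twisted homogeneous coordinate rings, and $A$ differs from $B(F,\mc{M},\tau)$ only in finitely many degrees, so $B(F,\mc{M},\tau)$ --- and hence $B(E,\mc{N},\sigma)$ --- is finite over $A$.

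The main obstacle is the bookkeeping in part (2): extracting from the (somewhat coarse) Artin--Stafford output a genuinely \emph{nonsingular} model $F$ together with the invertible sheaf $\mc{M}$, and verifying that the automorphism $\tau$ it carries is exactly $\sigma$-compatible via the finite morphism $\theta$ --- as opposed to being compatible only after passing to a Veronese, or only birationally. Once the quadruple $(F,\mc{M},\tau,\theta)$ is cleanly in hand, the elliptic-curve geometry (translations, the degree formula, the pullback identification underlying part (3)) is largely mechanical.
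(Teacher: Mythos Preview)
Your proposal is correct and follows the same overall architecture as the paper: Artin--Stafford for the GK-$2$ structure, then elliptic-curve geometry of $\sigma$ to pin down $(F,\tau,\theta)$. The differences are all in part (3). The paper gets $\mc{I}=\mc{O}_F$ in one stroke by citing \cite[Theorem 4.7]{AS}, which says directly that a degree-$1$-generated algebra in the Artin--Stafford class has trivial ideal sheaf; your degree-counting argument via $\mc{N}_n=\theta^*(\mc{I}\otimes\mc{M}_n)$ reaches the same conclusion and is more self-contained, but requires tracking how the Artin--Stafford identification sits inside $Q_{\gra}(B)$ (which you do correctly). For $\deg\mc{M}\ge 2$ the paper simply observes $A_1\subseteq\HB^0(F,\mc{M})$, while you use the domain growth bound. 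For finiteness of $A\subseteq B(E,\mc{N},\sigma)$ the paper gives an elementary Nakayama argument: the right ideal $A_{\ge 1}B$ corresponds under the equivalence $\coh E\simeq\rqgr B$ to an ideal sheaf that must be $\mc{O}_E$ since $A_n$ already generates $\mc{N}_n$. Your projection-formula route is conceptually pleasant but needs one compatibility you skipped: that the natural $B(F,\mc{M},\tau)$-module structure on $\bigoplus_n\HB^0(F,\mc{M}_n\otimes\theta_*\mc{O}_E)$ agrees with the $A$-module structure on $B(E,\mc{N},\sigma)$, which amounts to the canonical isomorphism $\tau^*\theta_*\mc{O}_E\cong\theta_*\mc{O}_E$ coming from $\tau\theta=\theta\sigma$. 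Finally, in part (2) the paper does not normalize $F$: it shows the Artin--Stafford curve is \emph{already} nonsingular, because its singular locus is $\tau$-invariant and would pull back to a finite $\sigma$-invariant subset of $E$. Your normalization works too (the map from the smooth curve $E$ factors through the normalization, and $\tau$ lifts), but the paper's argument is what you would want to write down.
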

\begin{proof}
(1) It is standard that since $k$ is algebraically closed, a cfg
domain $A$ with $\GK A \leq 1$ is just a subalgebra of $k[t]$ where
$t$ has positive degree.  (To see this, note that $\GK A \leq 1$
implies that $\dim_k A_n$ is uniformly bounded, and so $Q_{\gra}(A)
\cong D[t, t^{-1}; \rho]$ where $D$ is a finite dimensional division
algebra over $k$.)  The hypothesis on $A$ thus implies that $\GK A =
2$, using Bergman's gap theorem.

Note that since $A$ is a domain, $\mc{S} = \{n | A_n \neq 0 \}$ must
be a sub-semigroup of $\mb{N}$, so it is easy to see that $\mc{S}$
contains all large multiples of $d = \gcd(\mc{S})$. In particular,
replacing $A$ by its Veronese ring $A^{(d)}$ just amounts to a
regrading, and the hypotheses are preserved (since $B^{(d)} \cong
B(E, \mc{L}_d, \sigma^d)$).  Clearly the desired conclusions that
$A$ is noetherian and projectively simple are unaffected by this
change in grading.  Thus we assume from now on that $A_n \neq 0$ for
all $n \gg 0$, and we prove that $A$ is noetherian and projectively
simple in part (2).

(2) Since $A$ is a domain of finite GK-dimension, its graded
quotient ring exists and has the form
\[
Q_{\rm gr}(A) = L[t, t^{-1}; \sigma \vert_L] \subseteq Q_{\rm gr}(B)
\cong k(E)[t, t^{-1}; \sigma],
\]
where we can take the same $t$ of degree $1$ in both quotient rings
since $A_n \neq 0$ for all $n \gg 0$. Here, $\sigma: k(E) \to k(E)$
is the automorphism on rational functions induced by $\sigma: E \to
E$, and necessarily $L$ is a $\sigma$-fixed subfield of $k(E)$.
Since $\GK A = 2$,
$\trdeg L/k = 1$ \cite[Theorem 0.1]{AS}, and so $L \subseteq k(E)$
is a finite degree extension. Since $\sigma: k(E) \to k(E)$ has
infinite order, $\sigma \vert_L$ has infinite order.

Now the Artin-Stafford theory shows that $A$ determines a projective
curve $F$ with $k(F) = L$, and an automorphism $\tau: F \to F$
inducing $\sigma \vert_L$ on rational functions \cite[Proposition
3.3]{AS}. Since $E$ is nonsingular, the inclusion of fields $k(F)
\subseteq k(E)$ induces a surjective finite morphism of curves
$\theta: E \to F$, such that $\tau \theta = \theta \sigma$.  If $F$
has a singular locus $S$, then $S$ is $\tau$-invariant, and so
$\theta^{-1}(S)$ is a $\sigma$-invariant set of points in $E$. Since
$\sigma$ is a translation by a point of infinite order, it has no
nonempty $\sigma$-invariant finite subsets, and so $\theta^{-1}(S)$
and hence also $S$ is empty.  Thus $F$ is also nonsingular.  By
Hurwitz's Theorem \cite[Corollary IV.2.4]{Ha}, $F$ has genus $1$ or
$0$.  If $F$ has genus $0$, then $F \cong \mb{P}^1$ and $\tau$
necessarily has a fixed point $p$.  Then $\theta^{-1}(p)$ is a
finite non-empty $\sigma$-invariant subset of $E$, again a
contradiction.  Thus $F$ has genus $1$, in other words $F$ is also a
nonsingular elliptic curve.  Recalling that $p_0$ is the fixed
basepoint for the group structure on $E$, choosing the basepoint $
\theta(p_0)$ for the group structure on $F$, then $\theta$ is a
homomorphism of groups \cite[Lemma 4.9]{Ha}.  Since $\sigma$ is the
translation $x \mapsto x + r$, then $\tau: F \to F$ is the
translation $y \mapsto y + \theta(r)$, where clearly $\theta(r)$ is
also a point of infinite order in the group of $F$.

Now \cite[Proposition 6.4]{AS} shows that since $\tau$ has no fixed
points, $A$ must be isomorphic in large degree to the ring in
\eqref{GK2-classify-eq}
for some invertible sheaf $\mc{M}$ of positive degree and some ideal
sheaf $\mc{I}$ on $F$.  It also follows that $A$ is noetherian
\cite[Theorem 5.6]{AS}, and projectively simple \cite[Theorem
5.11(2)]{AS}.

(3) Now assume that $A$ is generated in degree $1$, where $\mc{N}
\subseteq \mc{L}$ is the invertible sheaf generated on $E$ by the
sections in $A_1$.
Part (2) applies to $A$, and so $A$ has the form given in
\eqref{GK2-classify-eq} in large degree, necessarily with $\mc{I} =
\mc{O}_F$ by \cite[Theorem 4.7]{AS}. In other words, $A$ is
isomorphic in large degree to the twisted homogeneous coordinate
ring $A' = B(F, \mc{M}, \tau)$. Obviously if $\dim_k A_1 = 1$ then
$A \cong k[t]$, contradicting the hypothesis, so $\dim_k A_1 \geq
2$. Then $\deg \mc{M} \geq 2$. Clearly also the sections in $A_1
\subseteq \HB^0(F, \mc{M})$ must generate $\mc{M}$ on $F$, or else
$A$ cannot equal $B(F, \mc{M}, \tau)$ in large degree.  We must then
have $\theta^*(\mc{M}) = \mc{N}$, and $\deg \mc{N} = (\deg
\theta)(\deg \mc{M})$, where $\deg \theta$ is the number of points
in every fiber of $\theta$.  To check that $A = B(F, \mc{M}, \tau)
\subseteq B(E, \mc{N}, \sigma)$ is a finite extension of rings, we
prove that $_A B$ is finitely generated and leave the similar
verification that $B_A$ is finitely generated to the reader. It is
enough, by the graded Nakayama lemma, to show that $\dim_k B/A_{\geq
1}B < \infty$. The right ideal $A_{\geq 1}B$ of $B$ is, by the
equivalence of categories \eqref{cat-equiv-eq}, equal in large
degree to $\bigoplus_{n \geq 0} \HB^0(E, \mc{J} \otimes \mc{N}_n)$
for some ideal sheaf $\mc{J}$ on $E$.  Since $A_n$ already generates
$\mc{N}_n$ on $E$ for each $n$, necessarily $\mc{J} = \mc{O}_E$.
This shows that $\dim_k B/A_{\geq 1}B < \infty$, as required.
\end{proof}

Consider $B(E, \mc{N}, \sigma)$ where $\deg \mc{N} \geq 1$, so
$\mc{N}$ is $\sigma$-ample.  Given a module $N \in \rgr B$ with $\GK
N \leq 1$, it will be useful later to associate a Weil divisor to
$N$ which tracks the composition factors of the corresponding
coherent sheaf on $E$, in the following way.
\begin{definition}
\label{C-def} For any $N \in \rgr B(E, \mc{N}, \sigma)$ with $\GK N
\leq 1$, then $\pi(N) \in \rqgr B$ corresponds under the equivalence
of categories \eqref{cat-equiv-eq} to a torsion sheaf $\mc{F} \in
\coh E$. Let $C(N) = \sum_{p \in E}
[\operatorname{length}_{\mc{O}_{E, p}} \mc{F}_p] \cdot p \in \Div
E$.
\end{definition}
We want to make a few comments about point modules for $B =  B(E,
\mc{N}, \sigma)$.  We only want to discuss point modules for rings
which are generated in degree $1$, so because of
Lemma~\ref{inv-section-lem} we assume that $\deg \mc{N} \geq 2$.  In
particular, in this case $\mc{N}$ is generated by its global
sections \cite[Corollary IV.3.2]{Ha}, and it easily follows that for
each $q \in E$,  $P = P(q) = \bigoplus_{n \geq 0} \HB^0(E, k(q)
\otimes \mc{N}_n)$ is a point module for $B$, where here $k(q)$ is
the skyscraper sheaf at $q$.

In the last result of this section, we will study how the divisors
defined in Definition~\ref{C-def} are related for modules over two
different twisted homogeneous coordinate rings.  Let $\mc{N}
\subseteq \mc{N}'$ be invertible sheaves on $E$, with $2 \leq \deg
\mc{N} = \deg \mc{N}' -1$. Thus $\mc{N} = \mc{I}_p \otimes \mc{N'}$
for some unique point $p \in E$, where $\mc{I}_p = \mc{O}_E(-p)$ is
the ideal sheaf of the point $p$. Then we have $B = B(E, \mc{N},
\sigma) \subseteq B' = B(E, \mc{N'}, \sigma)$.
Given $q \in E$, we have the corresponding $B'$-point module
$P(q)_{B'} = \bigoplus_{n \geq 0} \HB^0(E, k(q) \otimes \mc{N}'_n)$
and the corresponding $B$-point module $Q(q)_{B} = \bigoplus_{n \geq
0} \HB^0(E, k(q) \otimes \mc{N}_n)$.
\begin{lemma}
\label{restrict-lem} Keep the notation of the previous paragraph.
\begin{enumerate}
\item $P(q)_B \in \rgr B$ and $\pi(P(q)_B) = \pi(Q(q))$ in $\rqgr B$.
\item For any $N \in \rgr B'$ with $\GK N \leq 1$, then $N_B \in \rgr B$ and $C(N_{B'}) =
C(N_B)$ in Definition~\ref{C-def}.
\end{enumerate}
\end{lemma}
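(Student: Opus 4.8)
The plan is to exploit the explicit shape of $\mc{N}$: writing $\mc{N}=\mc{I}_p\otimes\mc{N}'$ gives $\mc{N}_n=\mc{O}_E\bigl(-p-\sigma^{-1}(p)-\dots-\sigma^{-(n-1)}(p)\bigr)\otimes\mc{N}'_n$, so the torsion quotient $\mc{N}'_n/\mc{N}_n$ is supported on $\{p,\sigma^{-1}(p),\dots,\sigma^{-(n-1)}(p)\}$, a set of $n$ distinct points since $\sigma$ has infinite order.

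For part~(1), put $N=0$ if $q$ is not in the backward orbit $\Omega=\{\sigma^{-j}(p):j\geq 0\}$, and $N=i+1$ if $q=\sigma^{-i}(p)\in\Omega$ (with $i$ unique, as $\sigma$ has no periodic points). The one fact I would isolate is that for every $n\geq 0$ the inclusion $(\mc{N}_n)^{\sigma^N}\hookrightarrow(\mc{N}'_n)^{\sigma^N}$ is an isomorphism near $q$: its cokernel is supported on $\sigma^{-N}\{p,\dots,\sigma^{-(n-1)}(p)\}$, which in either case misses $q$. Granting this, and that $\mc{N}_n$ is globally generated (it has degree $n\deg\mc{N}\geq 2n$), I would first check that $P(q)_{\geq N}$ and $Q(q)_{\geq N}$ are both cyclic $B$-modules generated in degree $N$; in particular $P(q)_B\in\rgr B$, being generated over $B$ by the finite-dimensional subspace $P(q)_{\leq N}$. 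Next I would identify the annihilators of their degree-$N$ generators: for such a generator $v$ and $b\in B_n$, the product $v\cdot b=v\otimes(\sigma^N)^*(b)$ vanishes precisely when the section $(\sigma^N)^*(b)$ of $(\mc{N}_n)^{\sigma^N}$ vanishes at $q$, i.e.\ when $b$ vanishes at $\sigma^N(q)$ --- the same condition for $P(q)_{\geq N}[N]$ and for $Q(q)_{\geq N}[N]$ (this is where the isolated fact about $(\mc{N}_n)^{\sigma^N}$ enters). Hence both are isomorphic to $B/J$ with $J=\bigoplus_n\HB^0(\mc{I}_{\sigma^N(q)}\otimes\mc{N}_n)$, and applying $\pi$ and shifting back gives $\pi(P(q)_B)=\pi(P(q)_{\geq N})=\pi(Q(q)_{\geq N})=\pi(Q(q))$.

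For part~(2), I would pass to sheaves via \eqref{cat-equiv-eq}. Since $\GK N\leq 1<\GK B'=2$ and modules in $\rgr B'$ have a Hilbert polynomial (Lemma~\ref{B-prop}(3)), $\dim_k N_n$ is eventually constant, so under \eqref{cat-equiv-eq} for $B'$ the object $\pi(N)$ corresponds to a finite-length sheaf $\mc{F}$ on $E$, with $C(N_{B'})=\sum_{r}[\operatorname{length}\mc{F}_r]\,r$, and $N$ agrees in large degree with $M=\bigoplus_{n\geq 0}\HB^0(\mc{F}\otimes\mc{N}'_n)$. Choosing a composition series of $\mc{F}$ with factors $k(q_1),\dots,k(q_\ell)$ and using that $\HB^1$ of a torsion sheaf on $E$ vanishes, $M$ acquires a finite filtration by $B'$-submodules with successive quotients the $B'$-point modules $P(q_j)_{B'}$. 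Restricting scalars to $B$ yields a filtration of $M_B$ with quotients $P(q_j)_B$, each in $\rgr B$ by part~(1), so $M_B\in\rgr B$; since $N$ agrees with $M$ in high degree and $B$ is noetherian, also $N_B\in\rgr B$. Finally $C$ is additive on short exact sequences of modules of $\GK\leq 1$ (stalk lengths of the associated sheaves add, the equivalence \eqref{cat-equiv-eq} being exact), so $C(N_B)=C(M_B)=\sum_j C(P(q_j)_B)=\sum_j q_j=C(N_{B'})$, using $C(P(q_j)_B)=q_j$ from part~(1) (as $\pi(P(q_j)_B)=\pi(Q(q_j))$ and $Q(q_j)$ corresponds to $k(q_j)$).

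I expect the case $q=\sigma^{-i}(p)\in\Omega$ of part~(1) to be the main obstacle: there the naive comparison map $Q(q)_B\to P(q)_B$ (induced by $\mc{N}_\bullet\hookrightarrow\mc{N}'_\bullet$) becomes zero in high degrees, so one cannot identify the two modules directly and must instead match their tails via the annihilator computation, having first proved $P(q)_B$ is finitely generated over $B$. That finiteness is genuinely at issue, because $B\subseteq B'$ is \emph{not} a finite ring extension: a finitely generated $B$-module corresponds to a coherent sheaf on $E$, but matching Hilbert polynomials through Riemann--Roch would force a sheaf representing $B'_B$ to have rank $\deg\mc{N}'/\deg\mc{N}$, which is not an integer.
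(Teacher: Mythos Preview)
Your proof is correct and follows essentially the same approach as the paper: both arguments pin down the unique degree at which the $B$-action on $P(q)$ can fail to be surjective (the paper by computing $\{x\in B_1:P_n x=0\}$ degree by degree, you by choosing $N$ so that the cokernel of $(\mc{N}_n)^{\sigma^N}\hookrightarrow(\mc{N}'_n)^{\sigma^N}$ misses $q$), then match the tails $P(q)_{\geq N}$ and $Q(q)_{\geq N}$ via their annihilators; part~(2) in both cases reduces to part~(1) by filtering through point-module tails, with your sheaf-theoretic composition series of $\mc{F}$ playing the role of the paper's filtration by $1$-critical factors.
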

\begin{proof}
(1) Consider
\[
P = P(q)_{B'} = \bigoplus_{n \geq 0} \HB^0(E, (\mc{O}_E/\mc{I}_q)
\otimes \mc{N}'_n).
\]
Given $n \geq 0$, we calculate $\{x \in B'_1 | P_n x = 0 \} =
\HB^0(E, \mc{I}_{\tau^n(q)} \otimes \mc{N}')$, and so
\[
\{ x \in B_1 | P_n x = 0 \} = \HB^0(E, \mc{I}_{\tau^n(q)} \otimes
\mc{N}') \cap B_1 = \begin{cases} \HB^0(E,
\mc{I}_{\tau^n(q)} \otimes \mc{N}) \subsetneq B_1 & \text{if}\ p \neq \tau^n(q) \\
B_1 & \text{if}\ p = \tau^n(q) \end{cases}
\]
In particular, $P_n B_1 = P_{n+1}$ if and only if $p \neq
\tau^n(q)$.   Since all orbits of $\tau$ are infinite, there is at
most one $n$ such that $p = \tau^n(q)$.  Thus for $n$ large enough,
say $n \geq n_0$, we see that $(P_{\geq n})_B$ is a shifted point
module for $B$.  In particular, $P_B$ is finitely generated. A
similar calculation to the one above also easily shows for $n \geq
n_0$ that $Q(q) = \bigoplus_{n \geq 0} \HB^0(E, k(q) \otimes
\mc{N}_n)$ satisfies $Q_{\geq n} \cong (P_{\geq n})_B$, since
$\on{r.ann}_B Q_n =\on{r.ann}_B P_n = \bigoplus_{m \geq 0} \HB^0(E,
\mc{I}_{\tau^n(q)} \otimes \mc{N}_m)$. It follows that $\pi(Q_B) =
\pi(P_B)$ in $\rqgr B$.

(2)  Suppose that $N$ is a $1$-critical $B'$-module.  Applying the
category equivalence \eqref{cat-equiv-eq}, clearly $G(N)$ is a
simple object of $\coh E$ and so is equal to a skyscraper sheaf
$k(q)$ for some $q$.  Then $N_{\geq n} \cong P(q)_{\geq n}$ for some
$n$.  Now given any $M \in \rgr B'$ with $\GK M = 1$, using the fact
that $M$ has constant Hilbert function by Lemma~\ref{B-prop}(3), it
is easy to prove that $M$ has a filtration $0 = M_0 \leq M_1 \leq
\dots \leq M_m = M$, in which the factors $M_i/M_{i-1}$ are either
finite-dimensional or $1$-critical (and so equal in large degree to
tails of point modules $P(q)_{\geq n}$.) Clearly $C(N)$ just
enumerates the points $q$ corresponding to the point module tails in
this filtration. Since the same interpretation holds for GK-1
modules over the ring $B$, both claims are now immediate from part
(1).
\end{proof}

\section{The generic Sklyanin algebra}
\label{sklyanin-sec} In this section, we review some basic
properties of the Sklyanin algebra $S$.  We then prove a
combinatorial proposition, which will be the key to the calculation
of the Hilbert series of the rings $R(D)$ of main interest.

For any $a,b,c \in k$, the Sklyanin algebra $S = S(a,b,c)$ is the
 $k$-algebra with presentation
\[
S(a,b,c) = k \langle x, y, z \rangle/\{axy + byx + cz^2, ayz + bzy +
cx^2, azx + bxz + cy^2 \}.
\]
The algebra $S$ is $\mb{N}$-graded where $x, y, z$ all have degree
$1$, and so $S$ is generated as a $k$-algebra by $S_1$.  It is
well-known that for very general choices of the parameters $a,b$,
and $c$, $S$ has the following properties: it is Artin-Schelter
regular with Hilbert series $h_S(t) = 1/(1-t)^3$; it has a unique up
to scalar central element $g \in S_3$; $S/Sg \cong B(E, \mc{L},
\sigma)$, where $E$ is a nonsingular elliptic curve embedded as a
degree $3$ divisor in $\mb{P}^2$ and $\mc{L} = \mc{O}(1) \vert_E$;
and $\sigma: E \to E$ has infinite order.  See \cite{ATV1, ATV2} for
more details, including the definition of regular algebra.  When $S$
has all of the above properties, we say that $S$ is a \emph{generic}
Sklyanin algebra. We indicate the image of any subset $V$ of $S$
under the quotient map $S \to S/Sg$ by $\overline{V}$.  Since
$\sigma$ has infinite order, fixing some base point $p_0 \in E$ for
the group law, then $\sigma$ is a translation $x \mapsto x + r$,
where $r$ has infinite order in the group.  For the rest of the
paper, $S$ stands for a generic Sklyanin algebra and the above
notation is also fixed.


It is known that every point module for $S$ is annihilated by $g$,
so the point modules for $S$ are the same as the point modules for
$S/Sg = B = B(E, \mc{L}, \sigma)$ \cite[Proposition 7.7{ii}]{ATV2}.
As we saw in the previous section, given a point $q \in E$ we have a
corresponding point module $B/J$ for $B$, where $J = J(q) =
\bigoplus_{n \geq 0} \HB^0(E, \mc{I}_q \otimes \mc{L}_n)$.  As an
$S$-module, this is the point module $S/I$, where $I = I(q) = \{x
\in S | \overline{x} \in J(q) \}$.  Since $S$ has the same Hilbert
series as a commutative polynomial ring in three variables, we have
$\dim_k S_1 = 3$ and so $\dim_k I(q)_1 = 2$; we set $W(q) = I(q)_1$
and we call any such $W(q) \subseteq S_1$ a \emph{point space}.  We
prove next some very useful formulas about products of point spaces
and copies of $S_1$.  Part (2) of the following lemma is also
derived by Ajitabh \cite[Lemma 2.3, Proposition 2.14(iv)]{Aj1}, but
we give the proof since it follows easily from
Lemma~\ref{inv-section-lem}.
\begin{lemma}
\label{ps-basic-lem}  Let $p,q \in E$.
\begin{enumerate}
\item $S_1W(\sigma(q)) = W(q)S_1$.

\item We have $\dim_k W(p) W(q) = 4$ if $q \neq \sigma^{-2}(p)$. On the other hand, $\dim_k W(p) W(\sigma^{-2}(p)) = 3$.
\end{enumerate}
\end{lemma}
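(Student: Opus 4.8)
The plan is to push every product down to $E$ and then quote Lemma~\ref{inv-section-lem}(1). Since $g \in S_3$, the quotient map $S \to S/Sg = B = B(E, \mc{L}, \sigma)$ is an isomorphism in degrees $\le 2$, so it suffices to compute the images $\overline{W(p)W(q)}$, $\overline{W(q)S_1}$, $\overline{S_1 W(q)}$ inside $B_2 = \HB^0(E, \mc{L}_2)$. Recall that $\overline{W(q)} = \HB^0(E, \mc{I}_q \otimes \mc{L})$ and that multiplication $B_1 \otimes B_1 \to B_2$ is $f \otimes h \mapsto f \cdot \sigma^*(h)$; using $(\mc{I}_q \otimes \mc{L})^{\sigma} = \mc{I}_{\sigma^{-1}(q)} \otimes \mc{L}^{\sigma}$, for any $p, q \in E$ the space $\overline{W(p)W(q)}$ is the image of the multiplication map
\[
\HB^0(E, \mc{I}_p \otimes \mc{L}) \otimes \HB^0(E, \mc{I}_{\sigma^{-1}(q)} \otimes \mc{L}^{\sigma}) \longrightarrow \HB^0(E, \mc{I}_p \otimes \mc{I}_{\sigma^{-1}(q)} \otimes \mc{L}_2),
\]
and the analogous spaces with $W(p)$, resp.\ $W(q)$, replaced by $S_1$ are obtained by deleting the factor $\mc{I}_p$, resp.\ $\mc{I}_{\sigma^{-1}(q)}$.

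For part (1), $\overline{W(q)S_1}$ is the image of $\HB^0(\mc{I}_q \otimes \mc{L}) \otimes \HB^0(\mc{L}^{\sigma})$, while $\overline{S_1 W(\sigma(q))}$ is the image of $\HB^0(\mc{L}) \otimes \HB^0(\mc{I}_q \otimes \mc{L}^{\sigma})$ (here $\sigma^{-1}(\sigma(q)) = q$), and both land in $\HB^0(E, \mc{I}_q \otimes \mc{L}_2)$. In each case the two tensor factors are invertible sheaves of degrees $2$ and $3$, hence nonisomorphic, so Lemma~\ref{inv-section-lem}(1) gives that the multiplication map is surjective. Therefore $\overline{W(q)S_1}$ and $\overline{S_1 W(\sigma(q))}$ both equal $\HB^0(E, \mc{I}_q \otimes \mc{L}_2)$, which proves (1).

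For part (2) the two tensor factors $\mc{I}_p \otimes \mc{L}$ and $\mc{I}_{\sigma^{-1}(q)} \otimes \mc{L}^{\sigma}$ both have degree $2$, so Lemma~\ref{inv-section-lem}(1) says the multiplication map is surjective onto a space of dimension $\hb^0(\mc{I}_p \otimes \mc{I}_{\sigma^{-1}(q)} \otimes \mc{L}_2) = 4$ (the sheaf has degree $4$; use Riemann--Roch) unless $\mc{I}_p \otimes \mc{L} \cong \mc{I}_{\sigma^{-1}(q)} \otimes \mc{L}^{\sigma}$, in which case the image has dimension $3$. Thus everything reduces to the one computational point: these two sheaves are isomorphic exactly when $q = \sigma^{-2}(p)$. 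To check this, rewrite the condition as $\mc{L} \otimes (\mc{L}^{\sigma})^{-1} \cong \mc{O}_E\bigl((p) - (\sigma^{-1}(q))\bigr)$, an identity of degree-$0$ line bundles on $E$. Writing $\sigma$ as $x \mapsto x + r$ and $\mc{L} = \mc{O}_E(D)$ with $\deg D = 3$, we have $\mc{L}^{\sigma} \cong \mc{O}_E(\sigma^{-1}(D))$; by Abel's theorem $\mc{L} \otimes (\mc{L}^{\sigma})^{-1} = \mc{O}_E(D - \sigma^{-1}(D))$ corresponds to the group element $3r$ (each point of $D$, counted with multiplicity, contributes $r$), while the right-hand side corresponds to $p - q + r$. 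Hence the sheaves agree iff $3r = p - q + r$, i.e.\ $p = q + 2r = \sigma^2(q)$; since $r$ has infinite order, no degenerate coincidence (such as at $p = q$) occurs, and the proof is complete. The only genuine obstacle here is this last identification, that is, keeping the conventions for $\sigma^*$ and the isomorphism $\Pic^0(E) \cong E$ straight.
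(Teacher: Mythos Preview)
Your proof is correct and follows essentially the same approach as the paper: identify $S_2$ with $\overline{S}_2$, interpret each product as the image of a multiplication map of global sections on $E$, apply Lemma~\ref{inv-section-lem}(1), and for part (2) use Abel's theorem to show $\mc{I}_p \otimes \mc{L} \cong \mc{I}_{\sigma^{-1}(q)} \otimes \mc{L}^{\sigma}$ iff $q = \sigma^{-2}(p)$. Your Abel's-theorem bookkeeping is slightly more explicit than the paper's, but the argument is the same.
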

\begin{proof}
(1) Since $\overline{S}_2$ and $S_2$ may be identified, we just note
that in $\overline{S}$, the product $\overline{S}_1
\overline{W(\sigma(q))}$ amounts to the image of the multiplication
map
\[
\HB^0(E, \mc{L}) \otimes \HB^0(E, (\mc{I}_{\sigma(q)} \otimes
\mc{L})^{\sigma}) \to \HB^0(E, \mc{L}_2),
\]
which is $\HB^0(E, \mc{I}_q \otimes \mc{L}_2)$ by
Lemma~\ref{inv-section-lem}(1).  In other words, the image is the
space of all sections of $\mc{L}_2$ vanishing along the point $q$.
The space $\overline{W(q)} \overline{S}_1$ is equal to the same
thing by a similar calculation.

(2) Again we identify $S_2$ and $\overline{S}_2$, and we are looking
for the image of the multiplication map
\[
\theta: \HB^0(E, \mc{I}_p \otimes \mc{L}) \otimes \HB^0(E, (\mc{I}_q
\otimes \mc{L})^{\sigma}) \to \HB^0(E, \mc{L}_2).
\]
As we saw in Lemma~\ref{inv-section-lem}(1), $\im(\theta)$ has
dimension $3$ if $\mc{I}_p \otimes \mc{L} \cong
\mc{I}_{\sigma^{-1}(q)} \otimes \mc{L}^{\sigma}$; otherwise, we have
$\im(\theta) = \HB^0(E, \mc{I}_{p} \otimes \mc{I}_{\sigma^{-1}(q)}
\otimes \mc{L}_2)$, and so $\dim_k \im(\theta) = 4$. Letting $\mc{L}
= \mc{O}_E(D)$ for some divisor $D$, we see that the case $\dim_k
\im(\theta) = 3$ occurs if and only if $D - p \sim \sigma^{-1}(D) -
\sigma^{-1}(q)$.  Since $\sigma$ is a translation $x \to x + r$,
using Abel's theorem this condition is equivalent to $p = q + 2r$ in
the group law, or $\sigma^2(q) = p$.
\end{proof}
Part (2) of the preceding lemma is also related to the following
explicit free resolution of a point module.
\begin{lemma}
\label{pt-syz-lem} Let $W = W(q) \subseteq S_1$ be a point space
with $W = kw + kx$.  By Lemma~\ref{ps-basic-lem}(2), there are
elements $y,z \in S_1$ such that $wy + xz = 0$, and with $ky + kz =
W(\sigma^{-2}(q))$.  Consider the complex
\[
0 \to S[-2] \overset{\begin{pmatrix} y \\ z
\end{pmatrix}}{\lra}
S[-1] \oplus S[-1] \overset{\begin{pmatrix} w & x
\end{pmatrix}}{\lra}  S \to S/(wS + xS) \to 0,
\]
where elements of the free modules are thought of as column vectors
and the maps are left multiplication by the indicated matrices. Then
this is an exact sequence of right $S$-modules, $W(q)S = wS + xS =
I(q)$, and this sequence is a free resolution of $P(q) = S/I(q)$. In
particular, $wS \cap xS = wyS = xzS$.
\end{lemma}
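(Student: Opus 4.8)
The plan is to verify exactness of the four-term complex degree by degree, using the known Hilbert series of $S$ and the formulas from Lemma~\ref{ps-basic-lem} to pin down the dimensions of the relevant homogeneous pieces. First I would record that $W(q)S = wS + xS = I(q)$: since $S/I(q)$ is the point module $P(q)$ with $\hs(t) = 1/(1-t)$, and $P(q)$ is generated in degree $0$, Nakayama shows $I(q)$ is generated by $I(q)_1 = W(q)$, giving the equality $W(q)S = I(q)$. Thus the complex terminates correctly at the right, and it remains to check exactness at the three free modules.

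Next I would establish that the composite $\begin{pmatrix} w & x\end{pmatrix}\begin{pmatrix} y \\ z\end{pmatrix} = wy + xz$ is zero — this is exactly the relation produced by Lemma~\ref{ps-basic-lem}(2): since $\dim_k W(q)W(\sigma^{-2}(q)) = 3 < 4$, there is a nonzero element of the kernel of the multiplication map $W(q)\otimes W(\sigma^{-2}(q)) \to S_2$, and after a change of basis of $W(q)$ (writing $W(q) = kw + kx$) this kernel element has the form $w\otimes y + x\otimes z$ with $\{y,z\}$ a basis of $W(\sigma^{-2}(q))$; one checks $\{y,z\}$ must be linearly independent, else $w,x$ would be linearly dependent. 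So the displayed sequence is a complex. Injectivity of $\begin{pmatrix} y\\ z\end{pmatrix}: S[-2] \to S[-1]^{\oplus 2}$ is immediate since $S$ is a domain and $y \neq 0$.

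The heart of the argument is exactness in the middle two spots, and here I would run a Hilbert series count. If the complex is exact, the alternating sum of Hilbert series of the four terms must vanish:
\[
\frac{t^2}{(1-t)^3} - \frac{2t}{(1-t)^3} + \frac{1}{(1-t)^3} - \frac{1}{1-t} = \frac{t^2 - 2t + 1 - (1-t)^2}{(1-t)^3} = 0,
\]
so the numerics are at least consistent. To upgrade this to genuine exactness, let $H$ denote the homology at $S[-1]^{\oplus 2}$ and $H'$ the homology at $S$; then $H' = 0$ was shown above ($I(q)$ is generated in degree $1$ by $w,x$, which is precisely surjectivity of $\begin{pmatrix} w & x\end{pmatrix}$ onto $I(q)$), and the Euler characteristic computation forces $\hs_H(t) = 0$, hence $H = 0$. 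The one subtlety is that this Euler-characteristic argument requires knowing each term has finite-dimensional graded pieces and that the homology is bounded below, both of which are automatic here since everything is a finitely generated graded $S$-module and $S$ is connected graded noetherian. Concretely: the image of $\begin{pmatrix} w & x\end{pmatrix}$ is $wS + xS = I(q)$ with Hilbert series $\frac{1}{(1-t)^3} - \frac{1}{1-t}$; therefore the kernel of $\begin{pmatrix} w & x\end{pmatrix}$ has Hilbert series $\frac{2t}{(1-t)^3} - \frac{1}{(1-t)^3} + \frac{1}{1-t} = \frac{t^2}{(1-t)^3}$, which equals the Hilbert series of the image of $\begin{pmatrix} y\\ z\end{pmatrix}$ (injective, so image $\cong S[-2]$); since one contains the other, they are equal, giving exactness at $S[-1]^{\oplus 2}$.

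Finally, the last assertion $wS \cap xS = wyS = xzS$ follows formally from exactness: an element of $wS \cap xS$ is $wa = xb$ for some $a, b \in S$, so $(a, -b)$ lies in the kernel of $\begin{pmatrix} w & x\end{pmatrix}$, hence in the image of $\begin{pmatrix} y\\ z\end{pmatrix}$, i.e. $a = ys$, $-b = zs$ for some $s \in S$; thus $wa = wys \in wyS$, and conversely $wyS = -xzS \subseteq wS \cap xS$ using $wy = -xz$. The main obstacle I anticipate is purely bookkeeping — making sure the change of basis in $W(q)$ is compatible with the claim $ky + kz = W(\sigma^{-2}(q))$ and that the resulting $y, z$ are independent — but once Lemma~\ref{ps-basic-lem}(2) is in hand this is routine, and the Hilbert series argument does the real work.
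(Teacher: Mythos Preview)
There is a genuine gap in your argument, and it lies exactly where you invoke Nakayama. The graded Nakayama lemma tells you that $I(q)$ is generated by any lift of a $k$-basis of $I(q)/I(q)S_{\geq 1}$, but it does \emph{not} tell you that this quotient is concentrated in degree~$1$. The fact that $P(q)=S/I(q)$ is cyclic and generated in degree~$0$ says nothing about where the minimal generators of $I(q)$ live; that requires knowing something about the first syzygy of $P(q)$, which is precisely what is at stake.

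Worse, your own Euler-characteristic computation shows the argument is circular. With $H$ the homology at $S[-1]^{\oplus 2}$, the alternating sum gives
\[
\hs_H(t) \;=\; \hs_{S/(wS+xS)}(t) - \frac{1}{1-t} \;=\; \hs_{I(q)/(wS+xS)}(t),
\]
so the two statements ``$H=0$'' and ``$wS+xS = I(q)$'' are \emph{equivalent}; you cannot use one to deduce the other. Using Lemma~\ref{inv-section-lem} one can check that $\overline{W(q)S}=\overline{I(q)}$ in $B=S/Sg$, so the question reduces to whether $g\in W(q)S_2$; but this degree-$3$ fact is not addressed, and it is not a formal consequence of the Hilbert series. (Note also that Lemma~\ref{ps-less-basic-lem}, which would give $g\in W(q)^3\subseteq W(q)S_2$, is proved \emph{using} the present lemma, so you cannot appeal to it.) The paper's proof sidesteps all of this by citing \cite[Proposition~6.7(ii)]{ATV2}, where the minimal free resolution of a point module over a $3$-dimensional regular algebra --- in particular the linearity of the first syzygy --- is established from the structure theory of such algebras; that is the substantive input your Hilbert-series bookkeeping cannot replace.
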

\begin{proof}
This is a special case of \cite[Proposition 6.7(ii)]{ATV2}.
\end{proof}

Using the lemmas above, we now prove an important combinatorial
formula.   In fact, it includes as a special case the calculation of
the Hilbert function of the ring $R(D)$ from the introduction, in
case $D = p$ is a single point.   As usual, we make the convention
on binomial coefficients that $\D \binom{n}{k} = 0$ if $n < k$.
\begin{proposition}
\label{ps-product-lem} \label{ps-product-prop} Let $W = W(p)
\subseteq S_1$ be any point space. Then for all $m \geq 0$ and $n
\geq 0$ we have
\[
  \dim_k S_m (WS_2)^n =
\dim S_{m+3n} -\binom{n+1}{2}.
\]
\end{proposition}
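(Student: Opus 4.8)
The plan is to fix the point space $W = W(p)$, and proceed by induction on $n$, the case $n = 0$ being the trivial statement $\dim_k S_m = \dim_k S_m$. For the inductive step, I would like to compare $S_m(WS_2)^n$ with $S_m(WS_2)^{n-1}$, so the main point is to control the ``defect'' incurred when one multiplies an already-accumulated space by one more copy of $WS_2$. Using Lemma~\ref{pt-syz-lem}, write $W = kw + kx$ with a syzygy $wy + xz = 0$ where $ky + kz = W(\sigma^{-2}(p))$; then $WS = wS + xS$ and, crucially, $wS \cap xS = wyS$. The idea is that $S_m(WS_2)^n = S_m(WS_2)^{n-1}(WS_2) = U(wS_2 + xS_2)$ where $U = S_m(WS_2)^{n-1}$, and by inclusion-exclusion
\[
\dim_k U(wS_2 + xS_2) = \dim_k UwS_2 + \dim_k UxS_2 - \dim_k(UwS_2 \cap UxS_2).
\]
Since $S$ is a domain and $w, x$ have degree $1$, left multiplication by $w$ (resp.\ $x$) is injective, so $\dim_k UwS_2 = \dim_k UxS_2 = \dim_k US_2$; and by the inductive hypothesis applied after noting $US_2$ is built from $S_{m+2}(WS_2)^{n-1}$ shifted, one gets a handle on $\dim_k US_2$.

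The heart of the matter is therefore to show that $UwS_2 \cap UxS_2 = UwyS_2$ and to compute its dimension, i.e.\ that the only relations between $Uw$-multiples and $Ux$-multiples come from the single syzygy $wy + xz = 0$. One direction is clear: $Uwy S_2 = Uxz S_2 \subseteq UwS_2 \cap UxS_2$ (using $-xz = wy$). For the reverse inclusion I would argue that if $aw = bx$ with $a, b$ in the appropriate degree, then since $wS \cap xS = wyS$ we get $aw \in wyS$, hence $a w = w y s$ for some $s$, hence (cancelling $w$ on the left, $S$ being a domain) $a = ys$; combined with the membership $a \in US_{\geq 1}$ this should force $a \in U y S$ after a degree bookkeeping, giving $aw \in UwyS_2$. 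The dimension count then reduces to $\dim_k UwyS_2 = \dim_k S_{m+3(n-1)+2}(WS_2)^{n-1} \cdot (\text{small correction})$ — more precisely $wyS_2$ sits inside $S_3$ in codimension matching the extra $\binom{n}{2} - \binom{n-1}{2} = n-1$ we need, so that the recursion $\dim_k S_m(WS_2)^n = 2\dim_k S_{m+2}(WS_2)^{n-1} - \dim_k(\text{intersection})$ telescopes to $\dim_k S_{m+3n} - \binom{n+1}{2}$.

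I expect the main obstacle to be precisely the identification of the intersection $UwS_2 \cap UxS_2$ with $UwyS_2$: this is where Lemma~\ref{pt-syz-lem} (and the structure of the point-module resolution) must be leveraged carefully, since a priori $U$ is some complicated subspace of $S_{m+2(n-1)}$ and one needs that tensoring the syzygy-relation against $U$ on the left, and against $S_2$ on the right, does not create or destroy relations. A clean way to package this may be to work one copy at a time and phrase the claim as: the natural map $U \otimes_k (W S_2) \to S \cdot$ has kernel exactly governed by the syzygy, which amounts to a flatness/torsion-freeness statement that should follow from $S$ being a domain together with Lemma~\ref{ps-basic-lem}(2) guaranteeing $\dim_k WS_1 = 3$ (no collapse) except in the single controlled situation, which is exactly the source of the $\binom{n+1}{2}$ correction. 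Once the intersection is pinned down, the rest is the routine binomial identity $2\binom{n}{2} + \text{stuff} = \binom{n+1}{2} + \binom{n}{2}$-style bookkeeping, which I would not grind through here.
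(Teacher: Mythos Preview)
There is a genuine gap. The syzygy $wS \cap xS = wyS$ from Lemma~\ref{pt-syz-lem} is a statement about \emph{right} ideals: it governs equations $ws = xs'$, not $aw = bx$. In your setup $S_m(WS_2)^n = U \cdot (wS_2 + xS_2)$ with $U = S_m(WS_2)^{n-1}$ on the left, the letters $w,x$ are sandwiched between $U$ and $S_2$, so the lemma does not apply; your line ``$aw = bx \Rightarrow aw \in wyS$'' confuses $Sw \cap Sx$ with $wS \cap xS$. Relatedly, the claim $\dim_k UwS_2 = \dim_k US_2$ is unjustified (these even live in different graded pieces). The paper's remedy is to first use Lemma~\ref{ps-basic-lem}(1) to commute the point space to the far left,
\[
S_m(W(p)S_2)^n \;=\; W(\sigma^{-m}(p))\,S_{m+2}(W(p)S_2)^{n-1} \;=\; wU + xU,
\]
now with $\{w,x\}$ a basis of the shifted point space $W(\sigma^{-m}(p))$ and $U = V(m+2,n-1)$. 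Then $\dim_k wU = \dim_k xU = \dim_k U$ honestly holds, and $wU \cap xU = \{wys : ys,\, zs \in U\}$ is amenable to the syzygy.

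Even after this correction, only one containment is easy: $wU \cap xU \supseteq wy\,V(m+4,n-2)$, which yields only an \emph{upper} bound $h(m,n) \le 2h(m+2,n-1) - h(m+4,n-2)$. Proving equality of the intersection would require showing that $\{s : W(\sigma^{-m-2}(p))\,s \subseteq U\}$ equals $V(m+4,n-2)$, which is far from clear a priori. The paper sidesteps this via a squeeze, and the ingredient entirely absent from your proposal is the matching \emph{lower} bound: since $g$ annihilates the point module $S/W(p)S$, one has $g \in W(p)S_2$, hence $g\,V(m,n-1) \subseteq V(m,n)$; the kernel of $V(m,n) \twoheadrightarrow \overline{V(m,n)}$ therefore contains $gV(m,n-1)$, and $\dim_k \overline{V(m,n)} = 3m+8n$ is computable in $B(E,\mc{L},\sigma)$ via Lemma~\ref{inv-section-lem} and Riemann--Roch. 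This gives $h(m,n) \ge h(m,n-1) + 3m + 8n$, exactly the recurrence satisfied by $j(m,n) = \dim_k S_{m+3n} - \binom{n+1}{2}$, so $h \ge j$ by induction. The upper bound from the syzygy then forces $h = j$. Without the $g$-argument your induction does not close.
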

\begin{proof}
Let $V(m,n) = S_m (WS_2)^n$, and put $h(m,n) = \dim_k V(m,n)$ and $
j(m, n) = \dim_k S_{m+3n} -\binom{n+1}{2}$.  Our goal is to prove
that $h(m,n) = j(m,n)$ for all $m, n \geq 0$.  Note that since $
\dim_k S_{m+3n} = \binom{m+3n+2}{2}$ is known, it is a triviality to
check that $j$ satisfies the following recurrence relations:
\begin{gather}
\label{jeq1}
j(m,n) = j(m,n-1)  +  3m + 8n, \\
\label{jeq2} j(m,n) = 2 j(m+2, n-1) - j(m+4, n-2),
\end{gather}
for all $m \geq 0, n \geq 2$.

Now we check the formula for small $n$.  First, $h(m,0) = j(m,0)$ is
obvious for any $m \geq 0$.  Next, suppose that $n = 1$; then
$V(m,1) = S_m W(p) S_2 = W(\sigma^{-m}(p)) S_{m+2}$ by
Lemma~\ref{ps-basic-lem}(1).  Since $S/W(\sigma^{-m}(p)) S$ is a
right point module for $S$ by Lemma~\ref{pt-syz-lem}, we must have
$h(m,1) = \dim_k W(\sigma^{-m}(p)) S_{m+2} = \dim_k S_{m + 3} - 1 =
j(m,1)$, as required.

Since $S/WS$ is a point module and $g$ annihilates all point
modules, we must have $g \in WS_2$. Thus for $n \geq 1$, the kernel
of the quotient map $V(m,n) \to \overline{V(m,n)}$ contains
$gV(m,n-1)$. In this case we have $\dim_k \overline{V(m,n)} = 3m +
8n$, using Proposition~\ref{inv-section-prop} and the Riemann-Roch
theorem. Thus we also get
\[
h(m,n) \geq  h(m,n-1) + 3m + 8n,
\]
for any $n \geq 1$.  Using \eqref{jeq1} and induction on $n$, it
follows that $h(m,n) \geq j(m,n)$ for all $m,n \geq 0$.

We finish the proof by another induction on $n$.  Let $n \geq 2$ and
suppose that $h(m,k) = j(m,k)$ for all $m \geq 0$ and $k < n$.  We
have
\[
V(m,n) = S_m (W(p)S_2)^n = W(\sigma^{-m}(p)) S_{m+2} (WS_2)^{n-1} =
w U + x U,
\]
where $W(\sigma^{-m}(p)) = kw + kx$ and $U =  V(m+2, n-1)$. By
Lemma~\ref{pt-syz-lem}, we have $wy + xz = 0$, where $ky + kz =
W(\sigma^{-m-2}(p))$, and in fact $wS \cap xS = wyS = xzS$.  Then
\[
w U \cap x U = \{wys | s \in S, ys \in U, zs \in U \} = \{ wys | s
\in S, W(\sigma^{-m-2}(p))s \subseteq U \} \supseteq wy V(m+4, n-2),
\]
where the last inclusion holds since by Lemma~\ref{ps-basic-lem}(1),
\[
W(\sigma^{-m-2}(p)) S_{m+4} (WS_2)^{n-2} = S_{m+2} W S_2
(WS_2)^{n-2} = V(m+2, n-1) = U.
\]
Thus we have
\begin{gather*}
h(m,n) = \dim_k (w  U + x U ) = 2 \dim_k  U  - \dim_k (w U \cap x U)
\leq 2 h(m+2, n-1) - h(m+4, n-2) \\
= 2 j(m+2, n-1) - j(m+4, n-2) = j(m,n),
\end{gather*}
using the induction hypothesis and \eqref{jeq2}.  Since we have
already proved that $h(m,n) \geq j(m,n)$, the result follows.
\end{proof}

The last result of this section contains some other formulas about
products of point spaces, which we give here because the proofs are
also easy consequences of Lemma~\ref{pt-syz-lem}.  However, we
remark that these formulas are used only much later, in
Sections~\ref{example-sec} and \ref{deg1-sec}, and are needed not in
the proofs of the main theorems of the paper.
\begin{lemma}
\label{ps-less-basic-lem}
\begin{enumerate}

\item $\dim_k W(p)W(q)S_1 = 8$ if $q \neq \sigma^{-2}(p)$, while $\dim_k W(p)W(\sigma^2(p))S_1 =
7$.  In particular, $g \in W(p)W(q)S_1$ if and only if $q \neq
\sigma^{-2}(p)$.

\item $g \in W(p)^3$ for any point $p$.

\item Let $0 \neq f \in S_1$.  Then $g \in S_1fS_1$.
\end{enumerate}
\end{lemma}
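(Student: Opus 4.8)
The plan is to reduce all three statements to dimension counts inside $S_3$. Since $g$ is central and homogeneous of degree $3$, the kernel of $S_3 \to (S/Sg)_3 = \ol{S}_3$ is exactly $kg$; hence for a subspace $V \subseteq S_3$ one always has $\dim_k V \in \{\dim_k \ol V,\ \dim_k \ol V + 1\}$, with $g \in V$ iff $\dim_k V = \dim_k \ol V + 1$. When $V$ is a product of point spaces and copies of $S_1$, the dimension of $\ol V$ is routine: in $\ol S = B(E,\mc L,\sigma)$ the image of such a product is the image of a multiplication map of spaces of sections, which is onto the expected $\HB^0(E,\mc I \otimes \mc L_3)$ for an appropriate ideal sheaf $\mc I$ by Lemma~\ref{inv-section-lem}(1), and Riemann-Roch gives its dimension; one finds $\dim_k \ol V = 7$ in (1) and $\dim_k \ol V = 6$ in (2) and (3). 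So each assertion reduces to computing $\dim_k V$ itself.

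For (1), take $W(p) = kw + kx$ and apply Lemma~\ref{pt-syz-lem} to get $y,z \in S_1$ with $wy + xz = 0$, $ky + kz = W(\sigma^{-2}(p))$, and the left-multiplication complex of that lemma exact. Put $U = W(q)S_1 = S_1 W(\sigma q)$ (Lemma~\ref{ps-basic-lem}(1)), a space of dimension $5$. Then $W(p)W(q)S_1 = wU + xU$, so $\dim_k W(p)W(q)S_1 = 10 - \dim_k(wU \cap xU)$, and exactness of the complex shows that $wu = xu'$ with $u,u' \in U$ forces $u = ys$, $u' = -zs$ for some $s \in S_1$; hence $wU \cap xU = wy\cdot Z$ where $Z = \{s \in S_1 : W(\sigma^{-2}(p))\,s \subseteq U\}$, and $\dim_k(wU \cap xU) = \dim_k Z$. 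Passing to $\ol S$ (faithful in degree $2$), the condition $W(\sigma^{-2}(p))s \subseteq W(q)S_1$ says that every section of $\mc L_2$ vanishing at $\sigma^{-2}(p)$ and along $\sigma^{-1}(\dv \ol s)$ also vanishes at $q$; since the residual degree-$2$ complete linear system on $E$ is base-point-free, this holds precisely when $q \in \{\sigma^{-2}(p)\} \cup \sigma^{-1}(\supp \dv \ol s)$. Thus $Z = S_1$, and $\dim_k W(p)W(q)S_1 = 7$, when $q = \sigma^{-2}(p)$; otherwise $Z = W(\sigma q)$ and $\dim_k W(p)W(q)S_1 = 8$. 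Since $\dim_k \ol{W(p)W(q)S_1} = 7$ in both cases, the ``in particular'' statement follows. Part (2) runs identically with $q = p$ and $U$ replaced by $W(p)^2 = W(p)W(p)$, which has dimension $4$ by Lemma~\ref{ps-basic-lem}(2) (note $p \neq \sigma^{-2}(p)$, as $\sigma$ has infinite order). One gets $W(p)^3 = wW(p)^2 + xW(p)^2$ and $\dim_k W(p)^3 = 8 - \dim_k Z'$ with $Z' = \{s : W(\sigma^{-2}(p))s \subseteq W(p)^2\}$; the translation to sections now forces the relevant section to vanish at both $p$ and $\sigma^{-1}(p)$, which identifies $Z' = W(p) \cap W(\sigma p)$, of dimension $1$. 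So $\dim_k W(p)^3 = 7 = \dim_k \ol{W(p)^3} + 1$, i.e.\ $g \in W(p)^3$.

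For (3), $\ol f \neq 0$ has divisor of positive degree, so $f \in W(p)$ for some $p$; write $W(p) = kf + kx$. Since $W(\sigma^{-1}(p))$ is a point space, $g \in W(\sigma^{-1}(p))S_2$, and Lemma~\ref{ps-basic-lem}(1) gives $W(\sigma^{-1}(p))S_2 = (S_1 W(p))S_1 = S_1 f S_1 + S_1 x S_1$; thus $g \in S_1 f S_1 + S_1 x S_1$. As $\dim_k \ol{S_1 f S_1} = 6$ by the first paragraph, it remains to show $g$ can be taken inside the $S_1 f S_1$ summand, equivalently $\dim_k S_1 f S_1 = 7$. I expect this to be the main obstacle: in (1)--(2) the point spaces occur as left factors, so Lemma~\ref{pt-syz-lem} applies to them directly, whereas here $f$ sits in the middle of the triple product. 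My plan is to get at $S_1 f$ through the syzygy of the \emph{left} point module whose left point space is $W(p)$ (every point space is also a left point space, for a shifted point), and then rerun the intersection argument of (1); failing that, to intersect the constraints $g \in S_1 W(p')S_1$ obtained as $p'$ ranges over $\supp \dv \ol f$. Pinning down the exact codimension of $S_1 f S_1$ in this way is the delicate step.
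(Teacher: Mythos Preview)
Your treatment of parts (1) and (2) is essentially the same as the paper's: write the product as $wU + xU$, use the syzygy of Lemma~\ref{pt-syz-lem} to identify $wU \cap xU$ with $wy\cdot Z$ for $Z = \{s \in S_1 : W(\sigma^{-2}(p))s \subseteq U\}$, and compute $\dim_k Z$ by reading off vanishing conditions in $\overline{S}$. (There is a harmless slip in (2): the vanishing condition you derive should read ``$p$ and $\sigma(p)$'', matching your identification $Z' = W(p) \cap W(\sigma p)$; the conclusion $\dim_k Z' = 1$ is correct either way.)

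Part (3), however, is genuinely incomplete. You reduce to showing $\dim_k S_1 f S_1 = 7$ and then propose either a left-point-module syzygy or an intersection over the points of $\dv \overline{f}$, without carrying either out. The paper avoids this difficulty entirely by \emph{not} computing $\dim_k S_1 f S_1$. Instead it stays inside exactly the same framework as (1) and (2): choose any $p \in E$ with $\sigma^{-2}(p) \notin \supp D$ (where $D = \dv \overline{f}$), set $X = fS_1$ (so $\dim_k X = 3$), and compute $Y = \{r \in S_1 : W(\sigma^{-2}(p))r \subseteq fS_1\}$. If $0 \neq r \in Y$, the vanishing argument forces $\overline{r}$ to vanish along all of $\sigma(D)$ (since $\sigma^{-2}(p) \notin D$ rules out the other alternative); but $\deg \sigma(D) = 3 = \deg \mc{L}$, so $\dv \overline{r} = \sigma(D)$, whence $\sigma(D) \sim D$, contradicting that $\sigma$ has infinite order. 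Hence $Y = 0$, $\dim_k W(p)fS_1 = 6$, while $\dim_k \overline{W(p)fS_1} = 5$, so $g \in W(p)fS_1 \subseteq S_1 f S_1$. The point you were missing is that replacing the left $S_1$ by a single point space $W(p)$, with $p$ chosen generically relative to $D$, makes the syzygy method of (1)--(2) apply verbatim.
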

\begin{proof}
All three parts are proved the same way.  Given $X \subseteq S_2$,
consider $W(p)X$ for some point space $W(p) = kw + kx$.  Then $W(p)X
= wX + xX$ and so $\dim_k W(p)X = 2 \dim_k X - \dim_k wX \cap xX$.
But if $y$ and $z$ are the basis of $W(\sigma^{-2}(p))$ such that
$wy + xz = 0$, as in Lemma~\ref{pt-syz-lem}, then
\[
wX \cap xX = \{wyr | yr, zr \in X \} = wy \{r | W(\sigma^{-2}(p))r
\in X \}.
\]
Thus $\dim_k wX \cap xX = \dim_k Y$, where $Y = \{r \in S_1 |
W(\sigma^{-2}(p))r \in X \}$.  In the following, since $S$ equals
$\overline{S}$ in degree up to $2$, we identify these low degree
elements of the two rings.  We also use
Lemma~\ref{inv-section-lem}(1) many times below without further
comment, to calculate the dimension of products in $\overline{S}$.

(1)  Take $X = W(q)S_1$ above.  Then $\dim_k X = 5$.  If $q =
\sigma^{-2}(p)$, then $Y = S_1$, and so $\dim_k W(p)X = 10-3 = 7$.
If instead $q \neq \sigma^{-2}(p)$, then $Y = W(\sigma(q))$, and so
$\dim_k W(p)X = 10-2 = 8$.  In any case we have $\dim_k
\overline{W(p)}\overline{W(q)}\overline{S_1} = 7$. So $g \in
W(p)W(q)S_1$ if and only if $q \neq \sigma^{-2}(p)$.

(2) Take $X = W(p)^2$ above.  Then $\dim_k X = 4$ and $\dim_k
\overline{W(p)^3} = 6$.  For a section in $S_1 = \HB^0(E, \mc{L})$
to be in $Y$, it must vanish at both $p$ and $\sigma^{-1}(p)$, so
$\dim_k Y = 1$.  Thus $\dim_k W(p)^3 = 8 - 1 = 7$, and so $g \in
W(p)^3$.

(3). We have that $f \in \HB^0(E, \mc{L})$ vanishes along some
divisor $D$.  Letting $p$ be any point such that $D$ does not
contain $\sigma^{-2}(p)$, then we claim that we even have $g \in
W(p)fS_1$.  We let $X = fS_1$ above, so $\dim_k X = 3$. Suppose that
$0 \neq y \in Y$, so that $W(\sigma^{-2}(p))y \subseteq fS_1
\subseteq \HB^0(E, \mc{L}_2)$. Since $\sigma^{-2}(p)$ is not a point
of $D$, $y$ must vanish along all of $\sigma(D)$; but since $y \in
\HB^0(E, \mc{L})$, this implies that $\sigma(D) \sim D$, a
contradiction.  So $Y = 0$ and $\dim_k W(p)fS_1 = 6$. Since $\dim_k
\overline{W(p)fS_1} = 5$, $g \in W(p)fS_1$.
\end{proof}

\section{The rings $R(D)$ and their first properties}
\label{R-sec}

We maintain the notation from the previous section, so that $S$ is a
generic Sklyanin algebra, and $\overline{X}$ indicates the image of
a subset $X \subseteq S$ under the map $S \to S/Sg$. Now let $T =
S^{(3)} = \bigoplus_{n \geq 0} S_{3n}$ be the $3$-Veronese of $S$.
Then $g \in T_1$ and $\overline{T} = T/Tg \cong B(E, \mc{M}, \tau)$,
where $\mc{M} = \mc{L}_3 = \mc{L} \otimes \mc{L}^{\sigma} \otimes
\mc{L}^{\sigma^2}$ has degree $9$ on $E$ and $\tau = \sigma^3$.  We
recall the definition of the ring $R(D)$, which was given in the
introduction. Let $D$ be an effective divisor on $E$ with $0 \leq e
= \deg D \leq 7$. Let $\mc{N} = \mc{I}_D \otimes \mc{M}$, where
$\mc{I}_D = \mc{O}_E(-D)$.
We define $V = V(D) = \{x \in T_1 | \overline{x} \in \HB^0(E,
\mc{N}) \}$.     Note that $\overline{V} = \HB^0(E, \mc{N})$
satisfies $\dim_k \overline{V} = 9 -e$ by the Riemann-Roch theorem,
and so $\dim_k V = 10- e$. Let $R = R(D) = k \langle V \rangle$ be
the $k$-subalgebra of $T$ generated by $V$.  The notation of this
paragraph is used throughout the rest of the paper, except in
Sections~\ref{thcr-factor-sec}-\ref{divisor-sec}, where we study a
slightly more general class of rings.

The first step in our analysis of $R(D)$ is to identify the ring
$\overline{R} = R/R \cap Tg \subseteq \overline{T}$.  Since $R$ is
generated in degree $1$, $\overline{R}$ is the subalgebra of $B(E,
\mc{M}, \tau)$ generated in degree $1$ by $\HB^0(E, \mc{N})$. By
Lemma~\ref{inv-section-lem}(2), since $\deg \mc{N} \geq 2$,
$\overline{R}$ is simply the subring $B(E, \mc{N}, \tau)$ of
$\overline{T}$.  In particular, using Riemann-Roch, the Hilbert
series of $\overline{R}$ is
\begin{equation}
\label{Rbar-hs-eq} \HS_{\overline{R}}(t) = 1 + \sum_{n \geq 1}
(9-e)n t^n = \frac{t^2 + (7-e)t + 1}{(1-t)^2}.
\end{equation}
Using the help of Proposition~\ref{ps-product-lem} from the last
section, we now calculate the Hilbert series of $R$.
\begin{theorem}
\label{HS-thm}  Fix an effective divisor $D$ on $E$ with $0 \leq e =
\deg D \leq 7$, and let $R = R(D)$. Then
\begin{enumerate}
\item $\HS_R(t) = \D \frac{t^2 + (7-e)t + 1}{(1-t)^3}$.
\item $R \cap Tg = Rg$.
\end{enumerate}
\end{theorem}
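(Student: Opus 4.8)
The plan is to prove (1) and (2) together by induction on $e = \deg D$, the only substantial input being Proposition~\ref{ps-product-lem}, which handles the case of a single point; the general case is then obtained by peeling off one point of $D$ at a time. First note that (1) and (2) are essentially equivalent: since $R$ is a subring of the domain $T$ and $0 \neq g \in V \subseteq R_1$, the element $g$ is regular on $R$, so $\HS_{Rg}(t) = t\,\HS_R(t)$, while the map $R \to \overline{T} = T/Tg$ has kernel $R \cap Tg$ and image $\overline{R} = B(E, \mc{N}, \tau)$ (as noted just before the theorem, via Lemma~\ref{inv-section-lem}(2)), whose Hilbert series is $(t^2 + (7-e)t + 1)/(1-t)^2$ by \eqref{Rbar-hs-eq}. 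From $0 \to (R\cap Tg)/Rg \to R/Rg \to \overline{R} \to 0$ and $\dim_k \overline{R}_n = n(9-e)$ for $n \geq 1$, one gets $\dim_k R_n \geq n(9-e) + \dim_k R_{n-1}$, hence $\dim_k R_n \geq (9-e)\binom{n+1}{2}+1$, for every effective $D$ of degree $e \leq 7$; equality holds for all $n$ if and only if $R \cap Tg = Rg$, and then $\HS_R(t) = (t^2 + (7-e)t+1)/(1-t)^3$. Finally, $\dim_k V(D)^n$ is the rank of the multiplication map $V(D)^{\otimes n} \to S_{3n}$, and the $V(D) \subseteq T_1$ form an algebraic family of subspaces of constant dimension $10-e$ as $D$ varies, so this rank is maximal for $D$ in general position; hence it will suffice to prove $R(D) \cap Tg = R(D)g$, equivalently $R(D)_n \cap Tg \subseteq R(D)_{n-1}g$ for all $n$, for $D$ in general position.

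For $e = 0$ we have $R = T$ and there is nothing to prove. For $e = 1$, write $D = p$ and let $W = W(p)$. Iterating Lemma~\ref{inv-section-lem}(1) identifies $\overline{WS_2}$ with the space of sections of $\mc{M} = \mc{L}_3$ vanishing at $p$, namely $\overline{V(p)}$; since $g \in WS_2$ (because $S/WS$ is a point module, which $g$ annihilates) and $\ker(T_1 \to \overline{T}_1) = kg$, a dimension count gives $WS_2 = V(p)$, so $R(p)_n = (WS_2)^n$ for all $n$. Proposition~\ref{ps-product-lem} with $m = 0$ then gives $\dim_k R(p)_n = \dim_k S_{3n} - \binom{n+1}{2} = 8\binom{n+1}{2}+1$, which meets the lower bound of the previous paragraph; hence $R(p)_n \cap Tg = R(p)_{n-1}g$ for all $n$, and the theorem holds for $e = 1$.

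For the inductive step, let $2 \leq e \leq 7$, assume the result for divisors of degree $e-1$, and (for $D$ in general position) write $D = D' + p$ with $\deg D' = e-1$ and with the $\tau$-orbit of $p$ disjoint from the supports of all the divisors $\tau^{-i}(D')$ that occur below, so that $V(D) = V(D') \cap V(p) = V(D') \cap WS_2$. The crux is the claim that $V(D')^m \cap (WS_2)^m = V(D)^m$ for all $m \geq 0$, which I would establish by a secondary induction on $m$; the cases $m \leq 1$ are immediate. For $m \geq 2$ and $w$ in the left-hand side, $\overline{w}$ lies in $\overline{V(D')^m} \cap \overline{(WS_2)^m}$, which by Lemma~\ref{inv-section-lem}(2) and the disjointness of the relevant point sets equals $\HB^0(E, \mc{N}_m) = \overline{V(D)^m}$; choosing $w_0 \in V(D)^m$ with $\overline{w_0} = \overline{w}$, the difference $w - w_0$ lies in $V(D')^m \cap (WS_2)^m \cap Tg$, which by the inductive hypothesis on $e$ (applied to $D'$) and the case $e = 1$ (applied to $WS_2$) equals $(V(D')^{m-1} \cap (WS_2)^{m-1})g = V(D)^{m-1}g$ by the secondary induction; since $g \in V(D)$ this forces $w \in V(D)^m$. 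Granting the claim, any $x \in R(D)_n \cap Tg \subseteq R(D')_n \cap R(p)_n \cap Tg$ has the form $yg$ with $y \in R(D')_{n-1}$ and, simultaneously, $y \in R(p)_{n-1} = (WS_2)^{n-1}$, hence $y \in V(D)^{n-1}$ by the claim, so $x \in R(D)_{n-1}g$. This completes the induction on $e$ and proves (2), and therefore (1), for $D$ in general position; the two statements for an arbitrary $D$ of degree $e$ then follow from the lower bound and the semicontinuity noted in the first paragraph.

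The part of this plan that is genuinely non-formal is the base case $e = 1$, which is precisely where Proposition~\ref{ps-product-lem}, and through it the two-term syzygy of a point module (Lemma~\ref{pt-syz-lem}), carries the load. The rest is essentially bookkeeping: I would have to determine, for each $e$ and $n$, exactly which $\tau$-orbits need to be kept disjoint so that the surjectivity statements of Lemma~\ref{inv-section-lem}(1) and the intersection-of-vanishing-loci identities used above are literally correct, and then check that a general divisor $D$ avoids the finitely many resulting bad coincidences at each finite level. I expect this last step, together with the passage by semicontinuity from general $D$ to arbitrary $D$, to be where most of the care is needed, rather than in any individual computation.
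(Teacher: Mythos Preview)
Your proposal is correct and follows essentially the same route as the paper: the equivalence of (1) and (2), the base case $e=1$ via Proposition~\ref{ps-product-lem}, the inductive identity $R(D')\cap R(p)=R(D)$ for $p$ off the $\tau$-orbit of the support of $D'$, and the passage to arbitrary $D$ by lower semicontinuity of $\dim_k V(D)^n$. The only differences are organizational: where you run a secondary induction on $m$, the paper argues once by setting $R^\# = R'\cap R(p)$ and computing $R^\#\cap Tg = R'g\cap R(p)g = R^\# g$ directly; and the paper makes explicit the step of extending $k$ to an uncountable field so that the complement of finitely many $\tau$-orbits is visibly nonempty, which you leave implicit in your ``general position'' clause.
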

\begin{proof}
In fact, the statements in parts (1) and (2) of the theorem are
equivalent.  Since $R \cap Tg \supseteq Rg$ in any case and $R$ is a
domain, we definitely have $\HS_{\overline{R}}(t) \leq
(1-t)\HS_R(t)$. Part (2) holds if and only if this is an equality of
Hilbert series, which is if and only if (1) holds, by
\eqref{Rbar-hs-eq}.  We thus prove both parts of the theorem in
tandem.

We first extend $k$ to an uncountable algebraically closed field if
necessary.  Since the Hilbert series of $R$ remains unchanged, this
loses no generality.

If $\deg D = 0$ there is nothing to prove, since (2) is a tautology
in this case. Now let $\deg D = 1$, say $D = p$.  Since $S/W(p)S$
has the Hilbert series of a point module, by Lemma~\ref{pt-syz-lem},
it is easy to see that $R_1 = W(p)S_2 \subseteq T_1$.   Applying
Proposition~\ref{ps-product-lem} with $m = 0$, we have
\[
\HS_R(t) = \frac{t^2 + 7t + 1}{(1-t)^3} - \frac{t}{(1-t)^3}  =
\frac{t^2 + 6t + 1}{(1-t)^3},
\]
and so (1) (and hence also (2)) holds in this case.

Next, suppose that the theorem holds for $R' = R(D')$, for some
effective divisor $D'$ with $0 \leq \deg D' < 7$, and let $D = D' +
p$. We now prove that the theorem holds for $R = R(D)$, assuming the
further restriction that $p$ does not lie on the $\tau$-orbit of any
point occurring in $D'$.
Let $R^\# = R' \cap R(p)$. Obviously $R^\# \supseteq R$; we will
show that in fact we have equality. First we find $\overline{R^\#}$.
We know that $\overline{R'} = B(E, \mc{I}_{D'} \otimes \mc{M},
\tau)$ and $\overline{R(p)} = B(E, \mc{I}_p \otimes \mc{M}, \tau)$.
Thus in degree $n \geq 1$, we have
\[
\overline{R(p)}_n = \HB^0(E, \mc{I}_C \otimes \mc{M}_n),
\]
where $C = \sum_{j=0}^{n-1} \tau^{-j}(p)$, and similarly,
\[
\overline{R'}_n = \HB^0(E, \mc{I}_{C'} \otimes \mc{M}_n),
\]
where $C' = \sum_{j = 0}^{n-1} \tau^{-j}(D')$. But the assumption on
$p$ implies that the divisors $C$ and $C'$  have disjoint support,
so it follows that we have
\[
\overline{R(p)}_n \cap \overline{R'}_n = \HB^0(E, \mc{I}_{C +C'}
\otimes \mc{M}_n) = \overline{R}_n,
\]
since $C + C' = \sum_{j = 0}^{n-1} \tau^{-j}(D)$.
But then we see that we have $\overline{R^\#} \subseteq
\overline{R(p)} \cap \overline{R'} = \overline{R} \subseteq
\overline{R^\#}$, and thus $\overline{R} = \overline{R^\#}$. Now
note that since $R(p) \cap Tg = R(p)g$ and $R' \cap Tg = R'g$, we
have
\[
R^\# \cap Tg = R(p) \cap R' \cap Tg \subseteq R(p)g \cap R'g = (R(p)
\cap R')g = R^\#g \subseteq R^\# \cap Tg.
\]
Thus all inclusions here are actually equalities, and $R^\# \cap Tg
= R^\#g$. Since $R \cap Tg \supseteq Rg$ and $\overline{R} =
\overline{R^\#}$, it follows that $\HS_R(t) \geq
\HS_{\overline{R}}(t)/(1-t) = \HS_{\overline{R^\#}}(t)/(1-t) =
\HS_{R^\#}(t)$; since $R \subseteq R^\#$, this forces $R = R^\#$.
Then $R \cap Tg = Rg$, so (2) holds for $R = R(D)$.

Finally, assume again that the theorem holds for some $D'$, where $0
\leq \deg D' < 7$, and consider $D = D' + p$ where $p \in E$ now
varies. We show that $R(D)$ has the correct Hilbert series for all
$p$; the theorem will then follow by induction on $\deg D$. We have
a regular map $\overline{\phi}: E \to \mb{P}(\HB^0(E, \mc{I}_{D'}
\otimes \mc{M})^*)$, namely the map to projective space determined
by a basis of sections for the sheaf $\mc{I}_{D'} \otimes \mc{M}$ on
$E$, which can be described explicitly as $p \mapsto \HB^0(E,
\mc{I}_{D' + p} \otimes \mc{M}) = \overline{V(D)}$. Thus the map
$\phi: E \to \mb{P}(V(D')^*)$ given by $p \mapsto V(D)$ is also
regular.  Fixing some $n \geq 1$, we define a function $f:
\mb{P}(V(D')^*) \to \mb{N}$ by the rule $Y \mapsto \dim_k Y^n$,
where $Y$ is a codimension-1 subspace of $V(D')$ and $Y^n \subseteq
T_n$ is the product inside $T$.  A standard argument now gives that
$f$ is lower semi-continuous; in other words, $\{ Y \in
\mb{P}(V(D')^*) | \dim_k Y^n \leq m \}$ is Zariski closed for each
$m \in \mb{N}$.
Thus $f \circ \phi: E \to \mb{N}$, where $f \circ \phi(p) = \dim_k
V(D)^n = \dim_k R(D)_n$, is also lower semi-continuous.  We showed
in the previous paragraph that $R(D)$ has the correct Hilbert series
for all points $p \in E$ outside of a union of finitely many
$\tau$-orbits; on the other hand, $E$ has uncountably many points
since $k$ is uncountable.   It thus follows, from the
lower-semicontinuity of $f \circ \phi$ for each $n$, that
$\HS_{R(D)}(t) \leq \D \frac{t^2 + (7-e)t + 1}{(1-t)^2}$ for all $p
\in E$, where $e = \deg D$.  The reverse inequality was essentially
shown in the first paragraph of the proof, so we are done.
\end{proof}

For later use, we need the additional Hilbert series calculation
given in the next lemma.  The proof uses exactly the same technique
as the proof of the previous result.
\begin{lemma}
\label{M-hs-lem} Let $D'$ be an effective divisor with $0 \leq \deg
D' \leq 6$, let $p \in E$ and let $D = D' + p$.  Let $e = \deg D =
\deg D' + 1$.  Let $R = R(D)$ and $R' = R(D')$.  Consider the right
$R$-module $M = k + R'_1 R$. Then
\[
\HS_M(t) = \HS_R(t) + \D \frac{t}{(1-t)^2}=  \frac{(8-e)t +
1}{(1-t)^3}.
\]
\end{lemma}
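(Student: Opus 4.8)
The plan is to mimic the three-stage induction used in the proof of Theorem~\ref{HS-thm}, carrying the extra module $M = k + R'_1 R$ along. Since $M$ is a right $R$-module generated in degrees $0$ and $1$, with $M_0 = k$ and $M_n = R'_1 R_{n-1}$ for $n \geq 1$ (as $R'_1 \supseteq R_1$ gives $M_n = R'_1 R_{n-1} \supseteq R_n$), the claim $\HS_M(t) = \HS_R(t) + t/(1-t)^2$ amounts to showing $\dim_k R'_1 R_{n-1} = \dim_k R_n + n$ for all $n \geq 1$; equivalently, writing $\overline{M} = M/(M \cap Tg)$, one wants $\HS_{\overline M}(t) = \HS_{\overline R}(t) + t/(1-t)$, together with the observation that $g$ acts freely enough that $M \cap Tg = Mg + k\cdot(\text{nothing in degree }0)$, i.e. $\HS_M(t) = \HS_{\overline M}(t)/(1-t)$. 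The cleanest route is to prove the $\overline{M}$ statement sheaf-theoretically and then deduce the $M$ statement by the same semicontinuity trick as before.

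First I would extend $k$ to an uncountable algebraically closed field, which changes nothing. Then I would handle the base case $\deg D' = 0$, i.e.\ $R' = T$, $R = R(p)$ for a single point $p$: here $R'_1 = T_1 = S_3$, so $M_n = S_3 R_{n-1} = S_3 (W(\sigma^{-1}(p))S_2)^{\,n-1}$ in $T_n$, and I would compute $\dim_k M_n$ directly from Proposition~\ref{ps-product-lem} (with $m = 3$ instead of $m = 0$), getting $\dim_k S_{3n} - \binom{n}{2} = \dim_k R_n + n$, which matches. Next, the inductive step: assume the lemma holds for some $D'$ with $\deg D' < 6$ (so the formula for $R(D'')$ with $\deg D'' = \deg D' + 1 \leq 7$ is available from Theorem~\ref{HS-thm}), set $D = D' + p + q$ where I first restrict $q$ to avoid the $\tau$-orbits of all points of $D' + p$. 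Writing things downstairs, $\overline{R'_1 R}$ in degree $n$ is the image of a multiplication map of global sections, and the disjoint-support hypothesis lets me identify $\overline{M(D)}_n$ as an intersection of the corresponding spaces for the two smaller divisors, exactly as $\overline{R(D)}_n = \overline{R(p)}_n \cap \overline{R'}_n$ was identified in the proof of Theorem~\ref{HS-thm}. This pins down $\HS_{\overline M}(t)$, and combined with the already-established $M \cap Tg = Mg$ (which follows from $R \cap Tg = Rg$, $R' \cap Tg = R'g$, and a containment chase $M \cap Tg \subseteq R'_1(T) \cap Tg \cap \cdots$, more simply: $M \subseteq R^\#$-type argument or directly $gM_{n-1} \subseteq M_n \cap Tg \subseteq R'_1 T_{n-1} \cap Tg = R'_1 g T_{n-2} \oplus \cdots$, whose Hilbert series forces equality), gives the formula for all such $q$.

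Finally I would run the lower-semicontinuity argument: fixing $n$, the function $q \mapsto \dim_k M(D'+p+q)_n$ is the composite of the regular map $E \to \mathbb{P}(\overline{V(D'+p)}^*)$ with a lower-semicontinuous function on projective space (dimension of a product of a fixed space $R'_1$ with powers of the varying codimension-one subspace), hence lower-semicontinuous in $q$; since it attains the generic (minimal) value on a cocountable set and $E$ is uncountable, it attains that value everywhere, and the reverse inequality $\HS_{M} \geq \HS_{\overline M}/(1-t) = $ the claimed value comes from $M \cap Tg \supseteq Mg$ always. The main obstacle I anticipate is the bookkeeping in identifying $\overline{M}_n = \overline{R'_1 R}_n$ as a space of sections: one must check that $\overline{R'_1} \cdot \overline{R}_{n-1}$ equals $\HB^0(E, \mc{I}_{D''} \otimes \mc{M}_n)$ where $D'' = \sum_{j=0}^{n-1}\tau^{-j}(D') + \sum_{j=1}^{n-1}\tau^{-j}(p+q)$ — i.e.\ the sections that vanish on the full orbit of $D'$ but only on the \emph{length-}$(n-1)$ orbit of $p+q$ — which requires invoking the surjectivity of the relevant multiplication map (Lemma~\ref{inv-section-lem}(1), since all sheaves involved have degree $\geq 2$) to guarantee no extra vanishing is forced, and then a Riemann--Roch count of that divisor's degree, $9n - n(\deg D') - (n-1)(\deg D' {+} 1 {+} 1)$ suitably arranged, to recover $(8-e)n + 1$.
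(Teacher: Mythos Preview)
Your overall architecture is right and matches the paper: compute $\overline{M}$ sheaf-theoretically to get the lower bound $\HS_M(t) \geq \HS_{\overline{M}}(t)/(1-t)$, establish the exact value for generic $p$, then use lower semicontinuity in $p$. Your base case $D'=0$ via Proposition~\ref{ps-product-lem} with $m=3$ is also exactly what the paper uses (it cites Proposition~\ref{ps-product-prop} for the Hilbert series of $k + T_1 R(p)$).

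The gap is in your ``inductive step'', specifically the claim that $M \cap Tg = Mg$ follows from $R \cap Tg = Rg$, $R' \cap Tg = R'g$, and a containment chase. Your proposed chase is wrong: you write $M_n \cap Tg \subseteq R'_1 T_{n-1} \cap Tg = R'_1 g T_{n-2}$, but in fact $R'_1 T_{n-1} \cap Tg = gT_{n-1}$ (since $g \in R'_1$ and $T_n \cap Tg = gT_{n-1}$), which is much larger than $R'_1 T_{n-2} g$ and gives no useful bound. The induction framing is a red herring here --- you never say what the inductive hypothesis for $(D',p)$ would give you about the module $k + R(D'+p)_1 R(D'+p+q)$, which lives over a different ring --- and the stray point $q$ contaminates your divisor and degree formulas at the end.

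The paper avoids all of this: no induction is needed. For generic $p$ (off the $\tau$-orbits of the points of $D'$) one has $R' \cap R(p) = R$ from the proof of Theorem~\ref{HS-thm}, and since all three Hilbert series are known this forces $R' + R(p) = T$, hence $R' + (k + T_1 R(p)) = T$. Now set $N = R' \cap (k + T_1 R(p))$; inclusion--exclusion gives $\HS_N = \HS_{R'} + \HS_{k + T_1 R(p)} - \HS_T$, and plugging in Theorem~\ref{HS-thm} and your base-case computation yields exactly $(8-e)t + 1$ over $(1-t)^3$. Since clearly $M = k + R'_1 R \subseteq R' \cap (k + T_1 R(p)) = N$, this is the missing upper bound, and the lower bound forces $M = N$. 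Then semicontinuity in $p$ finishes as you describe.
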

\begin{proof}
An easy calculation, using Lemma~\ref{inv-section-lem}(1), shows
that $M/(Tg \cap M) = \overline{M} = \overline{k + R_1' R}$ has
Hilbert series $\D \frac{(8-e)t + 1}{(1-t)^2}$. Since multiplication
by $g$ is an injective homomorphism of $M$, we have
\[ \HS_M(t) = \D \frac{\HS_{M/Mg}(t)}{(1-t)} \geq
\frac{\HS_{\overline{M}}(t)}{(1-t)} = \frac{(8-e)t + 1}{(1-t)^3}. \]

Now suppose that $p$ does not lie on the $\tau$-orbit of any point
in the support of $D'$. Then we showed in the course of the proof of
Theorem~\ref{HS-thm} that $R' \cap R(p) = R$.  Since all Hilbert
series of the terms in this equation are now known by
Theorem~\ref{HS-thm}, it easily follows that $R' + R(p) = T$.  This
certainly implies that $R' + (k + T_1R(p)) = T$.  But the Hilbert
series of $k + T_1 R(p)$ follows from
Proposition~\ref{ps-product-prop}, so this determines immediately
the Hilbert series of $N = R' \cap (k + T_1R(p))$, which a
calculation shows to be $\hs_N(t) = \D \frac{(8-e)t + 1}{(1-t)^3}$.
Since $M \subseteq N$, the estimate of the first paragraph forces $M
= N$, and thus the Hilbert series of $M$ is as stated.

Now just as in the proof of the previous theorem, we can reduce to
the case where $k$ is uncountable.  A completely analogous argument
as that of the last paragraph of the previous proof also shows that
for any $n \geq 1$, the $k$-dimension of $M_n = R'_1 R_{n -1}$ is a
lower semi-continuous function of $p \in E$.  The estimate $\HS_M(t)
\geq \D \frac{(8-e)t + 1}{(1-t)^3}$ holds for all $p$, and was shown
above to be an equality for all but countably many $p$; this forces
$\HS_M(t) = \D \frac{(8-e)t + 1}{(1-t)^3}$ for all $p$, as required.
\end{proof}

We close this section with a few more properties of $R(D)$ that
follow easily at this point.  Together with Theorem~\ref{HS-thm},
the following result finishes the proof of
Theorem~\ref{RD-props-thm}(1).
\begin{theorem}
\label{Rop-thm} Let $R = R(D)$ for some effective divisor $D$ with
$0 \leq \deg D \leq 7$.
\begin{enumerate}
\item $R$ has infinite global dimension.

\item  $Q_{\on{gr}}(R) = Q_{\on{gr}}(T)$.

\item $R^{op} \cong R(\wt{D})$ for some divisor $\wt{D}$ with $\deg \wt{D} = \deg D$.
\end{enumerate}
\end{theorem}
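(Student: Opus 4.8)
The plan is to prove the three parts of Theorem~\ref{Rop-thm} in turn, leaning on the Hilbert series computation in Theorem~\ref{HS-thm} and on the structure of $\overline{R} = B(E,\mc{N},\sigma^3)$ established just before it.

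\textbf{Part (1): infinite global dimension.} The key observation is that $g \in R_1$ is a normal (indeed central, since $g$ is central in $S$) nonzerodivisor with $R/Rg \cong \overline{R} = B(E,\mc{N},\sigma^3)$. A standard change-of-rings argument for a central nonzerodivisor gives $\gldim R \geq \gldim \overline{R}$ (or at least that finiteness of $\gldim R$ would force finiteness of $\gldim \overline{R}$, via the fact that $\operatorname{Tor}^R$ and $\operatorname{Tor}^{\overline R}$ are related by a long exact sequence coming from $0 \to R[-1] \xrightarrow{g} R \to \overline R \to 0$). But $\overline{R}$ is a twisted homogeneous coordinate ring over the elliptic curve $E$; such rings are never of finite global dimension — concretely, the trivial module $k$ over $B(E,\mc{N},\sigma^3)$ has infinite projective dimension because $\operatorname{Ext}^i_{\overline R}(k,\overline R)$ is nonzero for arbitrarily large $i$, or more simply because a finite-dimensional module over a noetherian graded ring of finite global dimension would force the ring to have the Hilbert series of a (twisted) polynomial ring, which $\overline R$ does not by \eqref{Rbar-hs-eq}. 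I would cite the fact (well known, e.g. from the AS-regularity literature, or Lemma~\ref{B-prop}(4) which says $\overline R$ is Auslander--Gorenstein of injective dimension $2$ but not regular) that a connected graded noetherian algebra with $\overline R/\overline R_{\geq 1}$ of finite projective dimension equal to $d$ has the Hilbert series of a polynomial ring in $d$ variables up to units; since $\overline R$ does not, $\gldim \overline R = \infty$, hence $\gldim R = \infty$.

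\textbf{Part (2): $Q_{\gra}(R) = Q_{\gra}(T)$.} Both $R$ and $T$ are domains with $R \subseteq T$, so $Q_{\gra}(R) \subseteq Q_{\gra}(T)$ once one checks the nonzero homogeneous elements of $R$ form an Ore set inside $T$ — but the cleanest route is to show the inclusion of graded quotient rings is an equality by a degree count. Since $R$ and $T$ both contain the central element $g$ in degree $1$, and $g$ is invertible in both graded quotient rings, it suffices to show $Q_{\gra}(R)$ contains $Q_{\gra}(T)_0$, equivalently that the two graded quotient rings agree in degree $0$. From Theorem~\ref{HS-thm}, $\HS_R(t) = \frac{t^2+(7-e)t+1}{(1-t)^3}$, so $\GK R = 3 = \GK T$ and $R$ has the same GK-dimension; moreover $R \cap Tg = Rg$, so $\overline R = B(E,\mc N,\sigma^3)$ with $\mc N$ of positive degree $9-e$. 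Then $Q_{\gra}(\overline R) = k(E)[s,s^{-1};\sigma^3] = Q_{\gra}(\overline T)$ by the explicit description of graded quotient rings of twisted homogeneous coordinate rings over integral curves (Section~\ref{background-sec}). Lifting: given $0 \neq a \in T_n$, I want to write $a = r_1 r_2^{-1}$ with $r_i$ homogeneous in $R$. Using that $\overline R$ and $\overline T$ have the same graded quotient ring, pick homogeneous $\bar b, \bar c \in \overline R$ with $\bar a \bar c = \bar b$ in $\overline T$; lift $b,c$ to $R$ and $a,bc^{-1}$ to $T$; then $ac - b \in Tg$, and an induction on the "$g$-adic" order (using $R \cap Tg = Rg$ repeatedly, exactly the mechanism in the proof of Theorem~\ref{HS-thm}) upgrades this to an identity in $Q_{\gra}(R)$. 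The upshot is that every element of $Q_{\gra}(T)$ lies in $Q_{\gra}(R)$, giving equality.

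\textbf{Part (3): $R^{\mathrm{op}} \cong R(\widetilde D)$.} Here I would use that $S$ is generic Sklyanin and $S^{\mathrm{op}}$ is again a generic Sklyanin algebra: from the presentation $S(a,b,c)$, one has $S(a,b,c)^{\mathrm{op}} \cong S(b,a,c)$, whose point scheme is the same curve $E$ with $\sigma$ replaced by $\sigma^{-1}$, and $g$ remains central. Thus $T^{\mathrm{op}} = (S^{\mathrm{op}})^{(3)}$ is the $3$-Veronese of a generic Sklyanin algebra, with $\overline{T^{\mathrm{op}}} = B(E,\mc M',\sigma^{-3})$ for the appropriate $\mc M'$ (the opposite of $B(E,\mc M,\sigma^3)$, which is $B(E,\mc M^{\sigma^{-3}},\sigma^{-3})$). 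Now $R = k\langle V(D)\rangle$ with $\overline{V(D)} = \HB^0(E,\mc I_D \otimes \mc M)$; taking opposites, $R^{\mathrm{op}} = k\langle V(D)\rangle$ sitting inside $T^{\mathrm{op}}$, and $\overline{R^{\mathrm{op}}} = \overline R^{\mathrm{op}} = B(E,\mc I_D \otimes \mc M,\sigma^3)^{\mathrm{op}} \cong B(E,(\mc I_D \otimes \mc M)^{\sigma^{-3}},\sigma^{-3})$. The sheaf $(\mc I_D \otimes \mc M)^{\sigma^{-3}}$ has degree $9 - \deg D$, and since $\mc M^{\sigma^{-3}}$ is of degree $9$ we can write it as $\mc I_{\widetilde D} \otimes \mc M'$ for a unique effective divisor $\widetilde D$ of degree $\deg D$ (as $E$ is elliptic, any effective divisor class of the right degree is represented, and $\mc M^{\sigma^{-3}}$ is in the relevant class up to the appropriate linear equivalence — this is where one uses Abel's theorem to solve $\mc I_{\widetilde D} \cong \mc I_D^{\sigma^{-3}}$, i.e. $\widetilde D = \sigma^{-3}(D)$ on the nose). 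Since both $R^{\mathrm{op}}$ and $R(\widetilde D)$ are the degree-$1$-generated subalgebras of $T^{\mathrm{op}}$ with the same image $\overline{(\cdot)}$ in $\overline{T^{\mathrm{op}}}$ and the same intersection property with $T^{\mathrm{op}} g$ (by Theorem~\ref{HS-thm} applied over $S^{\mathrm{op}}$), they coincide. The main obstacle is bookkeeping the identification $\overline{R}^{\mathrm{op}} \cong B(E, -, \sigma^{-3})$ correctly — getting the twist of the invertible sheaf right and matching it to $\mc I_{\widetilde D} \otimes \mc M'$ — and checking carefully that "same image in $\overline{T^{\mathrm{op}}}$, plus $R^{\mathrm{op}} \cap T^{\mathrm{op}}g = R^{\mathrm{op}}g$" really does pin down $R^{\mathrm{op}}$ among subalgebras of $T^{\mathrm{op}}$; both are routine given Theorem~\ref{HS-thm} but require care.
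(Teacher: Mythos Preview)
Your Parts (1) and (3) are essentially correct, though both take slightly longer routes than the paper.  For (1), the paper applies the Hilbert-series obstruction directly to $R$: if $\gldim R < \infty$ then the minimal free resolution of $k_R$ forces $\HS_R(t) = 1/p(t)$ for some $p \in \mb{Z}[t]$, and $\frac{t^2+(7-e)t+1}{(1-t)^3}$ visibly is not of this form.  Your detour through $\overline{R}$ is valid but unnecessary.  For (3), the paper avoids your sheaf-tracking entirely: it first characterises the subspaces $\HB^0(E,\mc I_D\otimes\mc M)\subseteq B_1$ intrinsically (using Lemma~\ref{B-subring-lem}(3)) as exactly those $W\subseteq B_1$ with $Q_{\gra}(k\langle W\rangle)=Q_{\gra}(B)$ and $\dim_k W^n = n\dim_k W$ for all $n$, then observes that any graded anti-automorphism of $B$ must permute this set.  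The explicit anti-automorphism $x\leftrightarrow y$, $z\mapsto z$ of $S$ then does the job without ever computing $\wt D$.

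Part (2), however, has a genuine gap.  Your ``induction on the $g$-adic order'' does not terminate: from $ac - b = a_1 g$ with $c\in R_m$ you obtain $a_1\in T_{n+m-1}$, whose degree is \emph{at least} that of $a$ once $m\ge 1$ (and you need $m\ge 1$ to get $\bar c\ne 0$).  Iterating only pushes the problem into higher degree.  A naive dimension count $\dim_k(aR_n) + \dim_k R_{n+1} > \dim_k T_{n+1}$ also fails for $e\ge 5$, since the leading terms give $(9-e)n^2$ versus $\tfrac{9}{2}n^2$.  The paper's argument uses a genuinely different idea: induction on $\deg D$, writing $D = D'+p$ and showing $Q_{\gra}(R(D))=Q_{\gra}(R(D'))$.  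The key input is Lemma~\ref{M-hs-lem}, which gives $\dim_k R'_1 R_1 = \dim_k R_2 + 2$ where $R'=R(D')$.  Writing $R'_1 = R_1 + kz$, one gets $\dim_k(zR_1\cap R_2) = \dim_k R_1 - 2 = 8-e \ge 1$, so $z\in Q_{\gra}(R)$ and hence $R'\subseteq Q_{\gra}(R)$.  The point is that this controlled one-point-at-a-time dimension count, supplied by Lemma~\ref{M-hs-lem}, is what makes the argument go through for all $e\le 7$; there is no way to bypass it with a purely formal $g$-adic lift.
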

\begin{proof}
(1) A standard argument (for example, see \cite[Proposition
2.9]{ATV2}), using the minimal graded free resolution of $k_R =
R/R_{\geq 1}$, shows that if $R$ has finite global dimension, then
$\HS_R(t) =\D \frac{1}{p(t)}$ for some polynomial $p(t) \in
\mb{Z}[t]$. But it is easy to prove that the Hilbert series of $R$,
as calculated in Theorem~\ref{HS-thm}, cannot be written in this
form.

(2) If $D = 0$ there is nothing to prove, so assume $\deg D > 0$ and
write $D = D' + p$ for some divisor $D'$. We will show that
$Q_{\on{gr}}(R) = Q_{\on{gr}}(R')$, where $R = R(D)$ and $R' =
R(D')$.  Then the result follows by induction on $\deg D$. By
Lemma~\ref{M-hs-lem}, $\dim_k R'_1R_1 - \dim_k R_2  = 2$.  We may
write $R'_1 = R_1 + kz$ for some element $z \in R'_1$, and so $R'_1
R_1 = zR_1 + R_2$.  Since $\dim_k R_1 \geq 3$ (given that $\deg D
\leq 7$), we must have $z R_1 \cap R_2 \neq 0$.  Then $z \in
Q_{\on{gr}}(R)$, so $R' \subseteq Q_{\on{gr}}(R)$ and thus
$Q_{\on{gr}}(R) = Q_{\on{gr}}(R')$.

(3) Consider the ring $B = B(E, \mc{M}, \tau)$ and the set of all
possible subspaces of the form $\HB^0(E, \mc{I}_D \otimes \mc{M})
\subseteq B_1$ for divisors with $0 \leq \deg D \leq 7$.  We claim
that this set can also be characterized as the set of all subspaces
$W \subseteq B_1$ such that the ring $k \langle W \rangle$ satisfies
$Q_{\rm gr}(k \langle W \rangle) = Q_{\rm gr}(B)$ and $\dim_k W^n =
n(\dim_k W)$ for all $n \geq 1$. In fact, the claim follows easily
from Lemma~\ref{B-subring-lem}(3). As a consequence, it is easy to
see that any graded anti-automorphism $\phi: B \to B$ must permute
this set of subspaces.

Now, it is well known that $S^{op} \cong S$; for example, the map
induced by $x \mapsto y, y \mapsto x, z \mapsto z$ gives an
anti-automorphism $\phi: S \to S$, as one easily sees from the
presentation.  Passing to the $3$-Veroneses, this induces an
anti-automorphism $\phi: T \to T$.  Since the central element $g$ is
unique up to scalar, $\phi$ must descend to an anti-automorphism
$\psi: T/Tg \to T/Tg$, where $T/Tg \cong B = B(E, \mc{M}, \tau)$. By
the first paragraph, $\psi$ must permute the set of subspaces of the
form $\HB^0(E, \mc{I}_D \otimes \mc{M}) \subseteq B_1$.  Thus $\phi:
T \to T$ permutes the subspaces of the form $V(D) \subseteq T_1$ for
divisors $0 \leq \deg D \leq 7$. So for given $D$, $\phi$ restricts
to an anti-isomorphism from $R(D)$ to $R(\wt{D})$ for some other
divisor $\wt{D}$, which clearly must have the same degree.
\end{proof}

\section{Rings with a twisted homogenous coordinate ring factor}
\label{thcr-factor-sec}

In this section, we prove that the rings $R(D)$ have many additional
nice properties; in particular, we prove
Theorem~\ref{RD-props-thm}(2).  Since we now know that $R(D)/R(D)g
\cong B(E, \mc{N}, \tau)$, the main technique is to observe that
many good properties pass from the factor ring $B$ to the ring
$R(D)$.  This technique is well-known and holds much more generally.
Because it may be useful for future reference, and is not really any
more difficult, in this section and the next two we work not just
with the rings $R(D)$, but with the more general class of rings
satisfying the following hypothesis.
\begin{hypothesis}
\label{main-hyp} Let $R$ be a cfg $k$-algebra which is generated in
degree $1$.  Suppose that $R$ is a domain with a homogeneous central
element $g$ of degree $d > 0$ such that $R/Rg \cong B(E, \mc{N},
\tau)$, where $E$ is a nonsingular elliptic curve with fixed
basepoint $p_0$, $\tau: E \to E$ is a translation automorphism $x
\mapsto x + s$ for a point $s$ of infinite order, and $\deg \mc{N}
\geq 2$. We continue to use the notation $\overline{Y}$ for the
image of a subset $Y$ of $R$ under the map $R \to R/Rg$.
\end{hypothesis}
\noindent
Hypothesis~\ref{main-hyp} is assumed throughout the rest of this
section.  We remark that for some of the results of this and the
next few sections, it is sufficient that $g$ be a normal element
$(Rg = gR)$; in particular, this is true of
Theorems~\ref{basic-prop-thm} and \ref{max-order-thm} below.



The following definition is standard in this setting.
\begin{definition}
\label{g-tors-def}  Given any cfg algebra $A$ with a central element
$g$ and $M \in \rgr A$, we define its \emph{$g$-torsion submodule}
$\ts(M) = \{m \in M | mg^n = 0\ \text{for some}\ n > 0 \}$. $M$ is
\emph{$g$-torsion} if $\ts(M) = M$ and \emph{$g$-torsionfree} if
$\ts(M) = 0$.
\end{definition}
\noindent This torsion theory has the usual formal properties.  For
example, it is easy to verify that as long as $A$ is noetherian,
then for any $M \in \rgr A$, $\ts(M) g^n = 0$ for some $n \geq 1$,
and $M/\ts(M)$ is $g$-torsionfree.

In the next theorem, we give a number of properties of rings
satisfying Hypothesis~\ref{main-hyp} which follow quite immediately
from the properties of twisted homogeneous coordinate rings studied
earlier, together with results from the literature about when good
properties ``pass up" from a factor ring modulo a central (or
normal) element.
\begin{theorem}
\label{basic-prop-thm} Let $R$ satisfy Hypothesis~\ref{main-hyp}.
\begin{enumerate}
\item $R$ is strongly noetherian.

\item Every $M \in \rgr R$ has a Hilbert quasi-polynomial of period
$d$. 

\item $R$ is Auslander-Gorenstein and Cohen-Macaulay.

\item $R$ satisfies $\chi$ on the left and right, and $\proj R$
has cohomological dimension $2$.
\end{enumerate}
\end{theorem}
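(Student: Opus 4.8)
The plan is to deduce each property of $R$ from the corresponding property of $B = R/Rg \cong B(E,\mc{N},\tau)$ — all of which are recorded in Lemma~\ref{B-prop} — using the standard machinery for lifting homological and finiteness properties across a central (or normal) element. First, for part (1): $B$ is strongly noetherian by Lemma~\ref{B-prop}(1). The element $g$ is central and homogeneous, so $R$ is $g$-torsionfree as a domain; I would invoke the result of Artin--Small--Zhang (\cite{ASZ}) that if $R/Rg$ is strongly noetherian and $g$ is a regular central element, then $R$ is strongly noetherian — the point being that for any commutative noetherian $C$, $(R\otimes_k C)/(g\otimes 1) \cong (R/Rg)\otimes_k C$ is noetherian, and a Rees-type or graded-Nakayama filtration by powers of $g$ then forces $R\otimes_k C$ noetherian (this needs that $g\otimes 1$ is still regular and that $\bigcap_n g^n(R\otimes C) = 0$, which follows from the graded structure). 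For part (4): $B$ satisfies $\chi$ on both sides and $\cd(\rproj B) = \dim E = 1$ by Lemma~\ref{B-prop}(2). The passing-up result here is Artin--Zhang's: if $R/Rg$ satisfies $\chi$ and $g$ is normal and regular, then $R$ satisfies $\chi$; one then gets $\cd(\rproj R) \leq \cd(\rproj B) + 1 = 2$, and equality holds because $g$ being a nonzerodivisor forces $\HB^2$ to be nonzero on some object (e.g. via the long exact cohomology sequence attached to $0 \to R[-d] \xrightarrow{g} R \to B \to 0$, which shows $\HB^2(\pi R) \neq 0$ since $\HB^1$ of $\pi B$ is nonzero and $\HB^2(\pi R[-d])$ would otherwise have to vanish too).

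For part (3): $B$ is Auslander--Gorenstein of injective dimension $2$ and Cohen--Macaulay by Lemma~\ref{B-prop}(4). I would apply the theorem of Levasseur (or Björk, or the formulation in \cite{Lev}) that if $A/Ag$ is Auslander--Gorenstein with $g$ a normal regular element, then $A$ is Auslander--Gorenstein with $\operatorname{injdim} A = \operatorname{injdim}(A/Ag) + 1 = 3$; similarly the Cohen--Macaulay property lifts, with the GK-dimension shift $\GK R = \GK B + 1$ compensating the grade shift $j(M_R) = j(\overline{M}_B)$ for $g$-torsion $M$ (and $j$ behaving correctly via the change-of-rings spectral sequence $\ext^p_B(M,\ext^q_R(B,R)) \Rightarrow \ext^{p+q}_R(M,R)$, where $\ext^q_R(B,R)$ is concentrated in degree $q=1$). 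The grade/GK bookkeeping is routine once one knows $\GK R = \GK B + 1 = 3$, which itself follows from the exact sequence $0 \to R[-d]\xrightarrow{g} R \to B \to 0$ together with $\GK B = 2$.

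Part (2) is the most hands-on: I would argue that multiplication by $g$ gives, for any $M \in \rgr R$, a short exact sequence relating $M$, $M[-d]$, and $M/Mg$ together with $\ts(M)$; since $R$ is noetherian and $g$ central, $\ts(M)g^n = 0$ for some $n$, so $\ts(M)$ is a finitely generated $B$-module, hence (by Lemma~\ref{B-prop}(3), applied after decomposing into the $d$ residue classes mod $d$) has eventually-linear Hilbert function in each class. The quotient $M/\ts(M)$ is $g$-torsionfree, so $\cdot g: M/\ts(M) \to (M/\ts(M))[d]$ is injective, and iterating gives $\hs_{M/\ts(M)}(t) = \hs_{(M/\ts(M))/(M/\ts(M))g}(t)/(1-t^d)$ with the numerator module a finitely generated $B$-module, again of eventually-linear Hilbert function in each residue class mod $d$; dividing by $(1-t^d)$ turns "eventually linear" into "eventually a quadratic quasi-polynomial of period $d$", and adding back the contribution of $\ts(M)$ preserves this. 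I expect the main obstacle to be not any single deep step but the careful verification that the hypotheses of the cited passing-up theorems (normality and regularity of $g$, the relevant noetherian/$\chi$ assumptions on $R/Rg$) are exactly met and that the dimension and grade shifts are tracked correctly — in particular making sure the period-$d$ bookkeeping in part (2) is consistent with $g$ having degree $d>0$ rather than degree $1$.
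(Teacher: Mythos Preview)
Your approach is essentially the paper's: each part is deduced from the corresponding property of $B$ via the same lifting results (\cite{ASZ} for (1), \cite{Lev} for (3), \cite{AZ1} for $\chi$ and the bound $\cd \leq 2$ in (4), and a $g$-torsion/$g$-torsionfree decomposition for (2)). Two points deserve correction, though.

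In part (2), $\ts(M)$ is \emph{not} a $B$-module: it is killed by some $g^n$, not by $g$. What you want is the filtration $\ts(M) \supseteq \ts(M)g \supseteq \dots \supseteq \ts(M)g^n = 0$, whose successive quotients are finitely generated $B$-modules and hence each has an honest Hilbert polynomial by Lemma~\ref{B-prop}(3). (The ``decompose into residue classes mod $d$'' remark is then unnecessary for $\ts(M)$; it is only the $g$-torsionfree part $M' = M/\ts(M)$, with $\hs_{M'}(t) = \hs_{M'/M'g}(t)/(1-t^d)$, that produces period-$d$ quasi-polynomial behavior.)

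In part (4), your argument for $\cd(\rproj R) = 2$ exactly does not close. From the long exact sequence for $0 \to \pi R[-d] \to \pi R \to \pi B \to 0$, assuming $H^2 \equiv 0$ you would get $H^1(\pi R[m]) \twoheadrightarrow H^1(\pi B[m])$ for all $m$; but this only says $H^1(\pi R[m]) \neq 0$ for $m \leq 0$, which is no contradiction to $\cd \leq 1$. One can push this further, but the paper takes a cleaner route: having already established that $R$ is Cohen--Macaulay in part (3), one knows $\ext^3_R(R/R_{\geq n}, R) \neq 0$ for every $n$, and the maps in the direct system $\ext^3_R(R/R_{\geq n}, R) \to \ext^3_R(R/R_{\geq n+1}, R)$ are injective (since the lower Ext groups of the finite-dimensional kernel vanish by CM). Hence $\bigoplus_m H^2(\pi R[m]) = \varinjlim_n \ext^3_R(R/R_{\geq n}, R) \neq 0$ by \cite[Proposition~7.2]{AZ1}, giving $\cd = 2$.
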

\begin{proof}
(1) Since $B(E, \mc{N}, \tau)$ is strongly noetherian by
Lemma~\ref{B-prop}(1), $R$ is strongly noetherian by
\cite[Proposition 4.9(1)]{ASZ}.

(2) For any module $M \in \rgr R$, consider its $g$-torsion
submodule $\ts(M)$.   Since $\ts(M)g^n = 0$ for some $n$, $\ts(M)$
has a filtration with finitely many factor modules
$\ts(M)g^i/\ts(M)g^{i+1}$, each of which is a graded $B$-module;
thus $\ts(M)$ has a Hilbert polynomial, by Lemma~\ref{B-prop}(3).
Since $M' = M/\ts(M)$ is $g$-torsionfree, $\D H_{M'}(t) =
\frac{H_{(M'/M'g)}(t)}{(1-t^d)}$. Here, $M'/M'g$ again is a
$B$-module and so has a Hilbert polynomial.  The form of the Hilbert
series of $M'$ then implies that $M'$ has a Hilbert quasi-polynomial
of period $d$.  Thus $M$ does also.

(3)  The ring $B(E, \mc{N}, \tau)$ is Auslander-Gorenstein of
dimension $2$ and Cohen-Macualay, by Lemma~\ref{B-prop}(4).  Then
$R$ is Auslander-Gorenstein of dimension $3$ and Cohen-Macaulay, by
\cite[Theorem 5.10]{Lev}.

(4) We know that $B$ satisfies $\chi$ and $\cd(\rproj B) = 1$, by
Lemma~\ref{B-prop}(2), so $R$ satisfies $\chi$ by \cite[Theorem
8.8]{AZ1}, and $\cd(\rproj R) \leq 2$ by \cite[Theorem 8.8]{AZ1}. We
need to show equality in the last statement.  The higher cohomology
in $\rproj R$ can be calculated as a limit of Ext groups:
\[
\bigoplus_{m \in \mb{Z}} H^i(\pi(M)[m]) = \lim_{n \to \infty}
\ext^{i+1}_R(R/R_{\geq n}, M),
\]
for any $i \geq 1$ and $M \in \rgr R$ \cite[Proposition 7.2]{AZ1}.
Here, the direct limit is induced by the obvious factor maps
$R/R_{\geq (n+1)} \to R/R_{\geq n}$.  Now since $R$ is
Cohen-Macaulay by part (3), for any $N$ with $\GK N = 0$ we have
$\ext^i_R(N,R) = 0$  if $i < 3$, but $\ext^3_R(N,R) \neq 0$. It
follows from the long exact sequence in Ext that the maps in the
direct limit $\lim_{n \to \infty} \ext^3(R/R_{\geq n}, R)$ are
injective, and thus $\bigoplus_{m \in \mb{Z}} H^2(\pi(R)[m]) \neq
0$.  So $\cd(\rproj R) = 2$ exactly.
\end{proof}
Next, we study homogeneous ideals of rings $R$ satisfying
Hypothesis~\ref{main-hyp}.  Here are a few basic properties of such
ideals.
\begin{lemma} Let $R$ satisfy Hypothesis~\ref{main-hyp}.
\label{special-ideal-lem}
\begin{enumerate}
\item If $J \subseteq R$ is a nonzero homogeneous ideal, then $J = Ig^n$ for
some $n \geq 0$ and some ideal $I$ with $\GK R/I \leq 1$.
\item If $I$ is a homogeneous ideal of $R$ with $\GK R/I \leq 1$, then
there is a unique homogeneous ideal $I' \supseteq I$ such that
$\dim_k I'/I < \infty$ and $R/I'$ is $g$-torsionfree.
\item If $M \in \rgr R$ is $g$-torsionfree with $\GK M \leq 1$, then $I =
\rann_R(M)$ is an ideal such that  $\GK R/I = 1$ and $R/I$ is
$g$-torsionfree.
\end{enumerate}
\end{lemma}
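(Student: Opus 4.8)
The plan is to treat the three parts in order, leaning on three facts: that $g$ is central; that $B=B(E,\mc{N},\tau)$ is projectively simple (Lemma~\ref{B-subring-lem}(1) applied with $A=B$, which is legitimate since $\deg\mc{N}\geq 2$); and that every module in $\rgr R$ has a Hilbert quasi-polynomial of period $d$ (Theorem~\ref{basic-prop-thm}(2)). Note also that $R$ is noetherian (Theorem~\ref{basic-prop-thm}(1)) and $B$ is a domain, so $Rg$ is a prime ideal. The recurring device: once a homogeneous ideal $I$ has $\overline{I}\neq 0$, projective simplicity forces $B/\overline{I}$ to be finite-dimensional, so in the exact sequence for multiplication by $g$ on $R/I$ the cokernel $R/(I+Rg)=B/\overline{I}$ vanishes in high degrees, and large-degree information about $R/I$ can be read off from $B$.

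For (1): since $Rg^n\subseteq R_{\geq nd}$ we have $\bigcap_n Rg^n=0$, so a nonzero homogeneous ideal $J$ lies in $Rg^n$ for a largest $n$. As $R$ is a domain and $g$ is central, $I:=\{r\in R: g^n r\in J\}$ is a homogeneous ideal with $J=Ig^n$, and maximality of $n$ forces $\overline{I}\neq 0$. Then $B/\overline{I}$ is finite-dimensional, so the multiplication-by-$g$ sequence on $R/I$ gives $\dim_k(R/I)_{m+d}\leq\dim_k(R/I)_m$ for $m\gg 0$; the Hilbert function of $R/I$ is therefore bounded, so $\GK(R/I)\leq 1$.

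Part (2) is the crux. The candidate is $I':=\{r\in R:rg^m\in I\text{ for some }m\geq 1\}$, a homogeneous ideal with $I'/I=\ts(R/I)$, so $R/I'$ is $g$-torsionfree; uniqueness is formal, since any $J\supseteq I$ with $\dim_k(J/I)<\infty$ has $J/I$ $g$-torsion, whence $J\subseteq I'$, while $R/J$ being $g$-torsionfree forces $\ts(R/I)\subseteq J/I$. The substance is $\dim_k(I'/I)<\infty$. First, $\GK(R/I)\leq 1$ rules out $\overline{I}=0$ (that would give a surjection $R/I\twoheadrightarrow B$, of GK $2$), so $B/\overline{I}$ is finite-dimensional; then the kernel $I^{(1)}/I$ of multiplication by $g$ on $R/I$, with $I^{(1)}=\{r:rg\in I\}$, satisfies $\dim_k(I^{(1)}/I)_m=\dim_k(R/I)_m-\dim_k(R/I)_{m+d}$ for $m\gg 0$, and this vanishes for $m\gg 0$ because the Hilbert function of $R/I$, being bounded and eventually quasi-polynomial of period $d$, is eventually constant on each residue class mod $d$; a finitely generated graded module that vanishes in large degrees is finite-dimensional, so $\dim_k(I^{(1)}/I)<\infty$. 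Finally the ascending chain $I=I^{(0)}\subseteq I^{(1)}\subseteq\cdots$ (with $I^{(j)}=\{r:rg^j\in I\}$) stabilizes at $I'$, and multiplication by $g^j$ embeds $I^{(j+1)}/I^{(j)}$ into $I^{(1)}/I$, so $I'/I$ is a finite iterated extension of finite-dimensional modules.

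For (3): a nonzero $g$-torsionfree module is infinite-dimensional (a finite-dimensional graded module is $g$-torsion), so $\GK(M)=1$, and $I=\rann_R(M)$ is $g$-torsionfree because $Mg^n\cong M[-nd]$ has annihilator $I$, so $rg^n\in I$ yields $(Mg^n)r=0$ and hence $r\in I$. Since $M$ is finitely generated and faithful over $R/I$, $\GK(R/I)\geq\GK(M)=1$. For the reverse inequality I would take a prime filtration $0=M^0\subsetneq\cdots\subsetneq M^t=M$ with graded subquotients $(R/P_i)[n_i]$: each $\GK(R/P_i)=\GK(M^i/M^{i-1})\leq 1<2=\GK(R/Rg)$, so no $P_i$ is contained in $Rg$, and since $Rg$ is prime the product $P_t\cdots P_1$, which lies in $I$, is not contained in $Rg$ either; hence $\overline{I}\neq 0$, so $B/\overline{I}$ is finite-dimensional and $\GK(R/I)=\GK(B/\overline{I})+1=1$ using that $R/I$ is $g$-torsionfree. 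I expect the one genuine obstacle to be the finiteness assertion in part (2)---ruling out an infinite-dimensional GK-$1$ $g$-torsion submodule of $R/I$---and it is precisely there that projective simplicity of $B$ and the period-$d$ quasi-polynomial behavior of Hilbert functions over $R$ are both needed.
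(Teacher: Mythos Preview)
Your proof is correct in all three parts; the approach in part (1) is the same as the paper's, but in parts (2) and (3) you take genuinely different routes.

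For (2), the paper argues more directly: since $\dim_k R/(I+Rg)<\infty$ and each layer $(I+Rg^i)/(I+Rg^{i+1})$ is a finitely generated $R/(I+Rg)$-module, one gets $\dim_k R/(I+Rg^n)<\infty$ for every $n$; then $I'/I=\ts(R/I)$ is killed by some $g^n$ (noetherianity), hence is a finitely generated module over the finite-dimensional ring $R/(I+Rg^n)$. This avoids the Hilbert quasi-polynomial input from Theorem~\ref{basic-prop-thm}(2) that your argument uses to pin down $\dim_k(I^{(1)}/I)$. Your filtration $I^{(0)}\subseteq I^{(1)}\subseteq\cdots$ together with the embedding of $I^{(j+1)}/I^{(j)}$ into $I^{(1)}/I$ via $g^j$ is a nice alternative, and the two arguments are of comparable length.

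For (3), the paper again proceeds more concretely: from eventual $d$-periodicity of the Hilbert function of $M$ and $g$-torsionfreeness, $Mg$ agrees with $M$ in large degrees, so $\rann_R(M_n)=\rann_R(M_{n-d})$ for $n\gg 0$ and hence $I=\rann_R\bigl(\bigoplus_{n=n_0}^{n_1-1}M_n\bigr)$ for suitable $n_0,n_1$; since each cyclic submodule $mR\subseteq M$ has $\GK\leq 1$, $R/I$ embeds in a finite direct sum of such modules and $\GK(R/I)\leq 1$. Your prime-filtration argument, showing $I\not\subseteq Rg$ via primeness of $Rg$, is a perfectly valid alternative and has the virtue of making the inequality $\GK(R/I)\geq\GK(M)$ explicit (indeed the paper's proof only establishes $\GK(R/I)\leq 1$, despite the statement reading ``$=1$''). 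Note the trivial caveat that your lower bound assumes $M\neq 0$; for $M=0$ the conclusion should be read as $\GK(R/I)\leq 1$.
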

\begin{proof}
(1)   Since $J \neq 0$, there is a maximal $n \geq 0$ such that $J
\subseteq Rg^n$. Then $J = Ig^n$ for some homogeneous ideal $I$.
Since $I \nsubseteq Rg$ by choice of $n$, $\overline{I} \neq 0$ in
$R/Rg = B(E, \mc{N}, \tau)$, which is a projectively simple ring by
Lemma~\ref{B-subring-lem}(1).  Thus $\wt{R} = R/I$ satisfies $\dim_k
\wt{R}/\wt{R}g < \infty$, from which it follows that $\GK \wt{R}
\leq 1$.

(2)  Let $I$ be such an ideal. Since $\GK R/Rg = 2$, $I \nsubseteq
Rg$ and so the same argument as in part (1) shows that $\dim_k R/(I
+ Rg) < \infty$. But then since $(I + Rg^i)/(I + Rg^{i+1})$ is a
finitely generated $R/(I + Rg)$-module for each $i$, $\dim_k R/(I +
Rg^n) < \infty$ for each $n$.  Write $I'/I = \ts((R/I)_R)$; clearly
$I'$ is a homogeneous ideal.  Then $I'/I$ is annihilated by some
$g^n$, and thus is a finitely generated $R/(I + Rg^n)$-module. So
$\dim_k I'/I < \infty$, as claimed.

(3)
Since $M$ has a Hilbert quasi-polynomial of period $d$, by
Theorem~\ref{basic-prop-thm}(2), $\GK M \leq 1$ implies that $M$ has
eventually periodic Hilbert function of period $d$.  In particular,
since $M$ is $g$-torsionfree, $Mg$ is equal in large degree to $M$,
so there is $n_1$ such that  $(Mg)_{\geq n_1} = M_{\geq n_1}$. Now
since $g$ is central, we conclude that $M_n = (M_{n-d}) g$ and
$M_{n-d}$ have the same right annihilator in $R$, for all $n \geq
n_1$. Thus $I = \rann_R M = \rann_R (\bigoplus_{n = n_0}^{n_1-1}
M_n)$, where $n_0$ is an integer such that $M_n = 0$ for $n < n_0$.
But since $\bigoplus_{n = n_0}^{n_1-1} M_n$ is a finite-dimensional
subspace of the module $M$ of GK-dimension $\leq 1$, this forces
$\GK R/I \leq 1$ as needed.  It is easy to check that $R/(\rann_R
M)$ is automatically $g$-torsionfree, because $M$ is.
\end{proof}

We see from part (1) of the preceding result that we will understand
all of the ideals of $R$ if we understand the ideals $I$ such that
$\GK R/I = 1$.  It is convenient to make the following definitions.
\begin{definition}  Let $A$ be a cfg $k$-algebra.  If $I$ is a homogeneous ideal of $A$
such that $\GK A/I  \leq 1$, then we call $I$ \emph{special} and we
call $A/I$ a \emph{special factor ring}.  A special ideal $I$ of $A$
is said to be \emph{minimal in tails} if $\pi(I)$ is (uniquely)
minimal among the set of objects $\{ \pi(J) \in \rqgr A | J\
\text{is a special ideal of}\ A \}$.  In other words, this means
that every other special ideal $J$ of $A$ satisfies $J \supseteq
I_{\geq n}$ for some $n \geq 0$.  If $A$ has a special ideal which
is minimal in tails, then (speaking loosely) we say that \emph{$A$
has a minimal special ideal.}
\end{definition}
\noindent The reason for the terminology ``special" is that the
rings $S$ and $T$ have no special factor rings except those of
finite $k$-dimension, as we recall in Lemma~\ref{T-ideal-lem} below.
The rings $R(D)$ sometimes do have GK-1 factor rings (see
Example~\ref{main-ex}), but they are rather rare, exactly in the
sense that $R(D)$ always has a minimal special ideal. Proving this
is the goal of much of the next several sections.

\begin{remark}
\label{special-rem} Suppose that $R$ satisfies
Hypothesis~\ref{main-hyp}, and that $R$ has a minimal special ideal.
Using the results of Lemma~\ref{special-ideal-lem} we can say more
about this situation. Since clearly any two ideals which are both
minimal in tails agree in large degree, using
Lemma~\ref{special-ideal-lem}(2) there is a \emph{unique} ideal $I$
which is minimal in tails with the additional property that $R/I$ is
$g$-torsionfree.  In other words, $R$ has a unique largest
homogeneous $g$-torsionfree special factor ring $R/I$.  Also, by
Lemma~\ref{special-ideal-lem}(3), it is then easy to see that this
same ideal $I$ is also the unique largest ideal which annihilates
all GK-1 $g$-torsionfree right $R$-modules.  Because of
Theorem~\ref{Rop-thm}(3), $I$ must also be the unique largest ideal
annihilating all GK-1 $g$-torsionfree left $R$-modules.
\end{remark}


We now show that rings satisfying the main hypothesis of this
section are maximal orders, as a quick consequence of the
Cohen-Macaulay property.     That the maximal order property is
closely connected to such homological properties more generally is
well-known; for example, see \cite{St}.  Together with
Theorem~\ref{basic-prop-thm}, this completes the proof of
Theorem~\ref{RD-props-thm}(2).
\begin{theorem}
\label{max-order-thm}
\item Let $R$ satisfy Hypothesis~\ref{main-hyp}.  Then
$R$ is a maximal order in its Goldie quotient ring.
\end{theorem}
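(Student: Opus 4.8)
The goal is to show that a ring $R$ satisfying Hypothesis~\ref{main-hyp} is a maximal order in $Q(R)$. The plan is to reduce the maximal order property to the Cohen--Macaulay and Auslander--Gorenstein properties established in Theorem~\ref{basic-prop-thm}(3), following the standard approach (as in \cite{St}). Recall that $R$ being a maximal order means: whenever $R \subseteq R'$ is an inclusion of orders in $Q(R)$ with $R$ and $R'$ equivalent, then $R' = R$. The usual reformulation I would use is: $R$ is a maximal order if and only if for every nonzero (two-sided) ideal $I$ of $R$, one has $\mathcal{O}_l(I) = \{q \in Q(R) \mid qI \subseteq I\} = R$ and $\mathcal{O}_r(I) = \{q \in Q(R) \mid Iq \subseteq I\} = R$. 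In fact, because $R$ is graded and $Q_{\mathrm{gr}}(R)$ is a localization, it suffices to check this condition for homogeneous ideals $I$ and to verify $\mathcal{O}_l(I), \mathcal{O}_r(I) \subseteq Q_{\mathrm{gr}}(R)$ are graded, so I would work throughout with homogeneous ideals and the graded quotient ring.

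The key steps, in order: First, by Lemma~\ref{special-ideal-lem}(1), every nonzero homogeneous ideal of $R$ has the form $I g^n$ for some $n \geq 0$ and some homogeneous ideal $I$ with $\GK R/I \leq 1$. Since $g$ is central, $\mathcal{O}_l(Jg^n) = \mathcal{O}_l(J)$ and similarly on the right, so it is enough to treat the cases (a) $I = R$ (giving the ideal $Rg^n$, handled by the previous sentence reducing to $I = R$ which is trivial) and (b) $I$ homogeneous with $\GK R/I \leq 1$. So the crux is: for a homogeneous ideal $I$ with $\GK R/I \leq 1$, show $\mathcal{O}_l(I) = \mathcal{O}_r(I) = R$. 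Second, here is where Cohen--Macaulay enters: since $\GK(R/I) \leq 1$ and $\GK R = 3$, the Cohen--Macaulay property (Theorem~\ref{basic-prop-thm}(3)) gives $j(R/I) = \GK R - \GK(R/I) \geq 2$, so $\ext^i_R(R/I, R) = 0$ for $i = 0, 1$. Third, apply $\Hom_R(-,R)$ to $0 \to I \to R \to R/I \to 0$: the vanishing of $\ext^0$ and $\ext^1$ forces $\Hom_R(I, R) \cong \Hom_R(R,R) = R$ (as left $R$-modules), with the iso being restriction of maps. Now $\mathcal{O}_l(I) = \{q \mid qI \subseteq I\} \subseteq \{q \mid qI \subseteq R\} \cong \Hom_R({}_? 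I_R, R_R)$ in the appropriate module-theoretic sense — more precisely, left multiplication by $q \in Q(R)$ with $qI \subseteq R$ defines an element of $\Hom_R(I_R, R_R)$, and $\Hom_R(I_R,R_R) = R$ means every such map is left multiplication by an element of $R$, whence $q \in R$. Thus $\mathcal{O}_l(I) = R$. The right-hand statement $\mathcal{O}_r(I) = R$ follows by the symmetric argument, using that $R$ is Auslander--Gorenstein so the left-module version of Cohen--Macaulay also holds (or by invoking Theorem~\ref{Rop-thm}(3), though the Cohen--Macaulay hypothesis in Theorem~\ref{basic-prop-thm}(3) is already stated for both left and right modules).

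The step I expect to be the main obstacle is the precise identification of $\Hom_R(I,R)$ with $R$ and the careful translation between "$q \in Q(R)$ with $qI \subseteq R$" and "element of $\Hom_R(I_R, R_R)$" — one needs that $I$ contains a nonzerodivisor (true since $R$ is a domain and $I \neq 0$), so that such a $q$ is uniquely determined and the map $q \mapsto (\text{left mult. by } q)$ is injective; and conversely that every $\phi \in \Hom_R(I_R, R_R)$ extends to an element of $Q(R)$ acting by left multiplication, which uses that $I$ is essential as a right ideal. Once $\Hom_R(I_R, R_R) = R$ is known, the containment $\mathcal{O}_l(I) \subseteq \Hom_R(I_R,R_R) = R$ is immediate and gives $\mathcal{O}_l(I) = R$ since $R \subseteq \mathcal{O}_l(I)$ always. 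A secondary routine point is checking that it genuinely suffices to verify the $\mathcal{O}_l, \mathcal{O}_r$ criterion only on homogeneous ideals — this is standard for $\mathbb{N}$-graded domains with graded quotient ring, since any order equivalent to $R$ can be replaced by its largest graded suborder without changing the equivalence class, but I would state it cleanly rather than belabor it.
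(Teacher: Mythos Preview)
Your proposal is correct and follows essentially the same route as the paper: reduce via Lemma~\ref{special-ideal-lem}(1) and centrality of $g$ to special ideals $I$, then use the Cohen--Macaulay property (Theorem~\ref{basic-prop-thm}(3)) to get $\ext^1_R(R/I,R)=0$ and hence $\Hom_R(I_R,R_R)=R$, which forces $\mc{O}_\ell(I)=R$. The paper is slightly terser, citing \cite[Lemma~9.1]{Ro} for the reduction to homogeneous ideals and omitting the explicit discussion of the identification $\mc{O}_\ell(I)\subseteq\Hom_R(I_R,R_R)$, but the substance is identical.
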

\begin{proof}
For a homogeneous ideal $J$ of $R$, we define $\mc{O}_r(J) =
\Hom_R({} _R J, _R J) =  \{ x \in Q_{\on{gr}}(R) | Jx \subseteq J
\}$ and $\mc{O}_{\ell}(J) = \Hom_R( J_R, J_R) = \{ x \in
Q_{\on{gr}}(R) | xJ \subseteq J \}$.   To show that $R$ is a maximal
order, it suffices to show that for all homogeneous ideals $J$ of
$R$, we have $\mc{O}_r(J) = \mc{O}_{\ell}(J) = R$ \cite[Lemma
9.1]{Ro}.  We prove that $\mc{O}_{\ell}(J) = R$; the proof that
$\mc{O}_{r}(J) = R$ is symmetric, and is left to the reader.

Any homogeneous ideal $J$ of $R$ has the form $J = J'g^n$ where $J'$
is a special ideal, by Lemma~\ref{special-ideal-lem}(1). Then
\[
\mc{O}_{\ell}(J) = \{x \in Q_{\rm gr}(R)| xJ'g^n \subseteq J'g^n \}
= \{x \in Q_{\rm gr}(R) | xJ' \subseteq J' \} = \mc{O}_{\ell}(J').
\]
So it will be enough to consider special homogeneous ideals $J$, and
show that $\Hom_R(J_R, J_R) = R$.  Since $\Hom_R(J_R, J_R) \subseteq
\Hom_R(J_R, R_R)$, it is enough to show that $\Hom_R(J_R, R_R) = R$.
But we have the following portion of the long exact sequence in Ext:
\[
\dots \lra R = \Hom_R(R_R, R_R) \lra \Hom_R(J_R, R_R) \lra
\ext^1_R(R/J,R) \lra \dots
\]
Since $\GK(R/J) \leq 1$, we have $\ext^1_R(R/J,R) = 0$ by the
Cohen-Macaulay property of $R$, which holds by
Theorem~\ref{basic-prop-thm}(3). Thus $\Hom_R(J_R, R_R) = R$ and the
result follows.
\end{proof}

\section{Subrings}
\label{subrings-sec} In order to prove our main classification
result, Theorem~\ref{deg3-class-thm}, we need to get some
understanding of when a cfg subring $A$ of a ring $R(D)$ is an
equivalent order to $R(D)$.  Thus, in this section we study when
this happens for subrings $A$ which are close to $R(D)$, in the
sense that $\overline{A} \subseteq \overline{R(D)}$ is a finite ring
extension.  This is a natural assumption, because it is what we will
actually know when we apply these results later in the proof of
Theorem~\ref{deg3-class-thm}.

As in the previous section, we actually work with a slightly more
 general setup.  For the rest of this section, we fix the
 following notation.
\begin{hypothesis}
\label{subs-hyp} Let $R$ be a ring satisfying
Hypothesis~\ref{main-hyp}, and let $A \subseteq R$ be a cfg
subalgebra such that $Q_{\gra}(A) = Q_{\gra}(R)$ and $\overline{A}
\subseteq \overline{R}$ is a finite ring extension. Let $C = A
\langle g \rangle$ be the subring generated by $A$ and the central
element $g \in R_d$, so that $A \subseteq C \subseteq R$.
\end{hypothesis}

We first make a convenient definition concerning ring extensions.
Let $U \subseteq V$ be an inclusion of cfg $k$-algebras which are
domains with $Q_{\gra}(U) = Q_{\gra}(V)$.  We say that $U$ and $V$
\emph{have a common ideal} if there is a homogeneous ideal $0 \neq
I$ of $V$ such $I \subseteq U$.  Clearly if $U$ and $V$ have a
common ideal, then they are equivalent orders.  Here are some other
simple facts about this concept.
\begin{lemma}
\label{ideal-close-lem} Let $U \subseteq V \subseteq W$ be cfg
$k$-algebras which are domains with $Q_{\gra}(U) = Q_{\gra}(V) =
Q_{\gra}(W)$.
\begin{enumerate}

\item If $U \subseteq V$ is a finite ring extension, then $U$ and $V$ have a common ideal.

\item If $U$ and $V$ have a common ideal and $V$ and $W$ have a common
ideal, then $U$ and $W$ also have a common ideal.
\end{enumerate}
\end{lemma}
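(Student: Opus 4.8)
The plan is to write down explicit ideals in both parts; in part (1) the construction is a conductor, and the only real work is showing it is nonzero.

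For part (1), set $\mathfrak{a} = \{x \in V \mid xV \subseteq U\}$ and $\mathfrak{b} = \{y \in V \mid Vy \subseteq U\}$. Since $1 \in V$ (as $V$ is connected), both $\mathfrak{a}$ and $\mathfrak{b}$ lie in $U$; moreover $\mathfrak{a}$ is a right ideal of $V$ and $\mathfrak{b}$ is a left ideal of $V$, and both are homogeneous, being defined by a homogeneous condition on a graded module. Hence $I := \mathfrak{b}\mathfrak{a}$ is a two-sided homogeneous ideal of $V$ with $W(\mathfrak{b}\mathfrak{a}) \subseteq \mathfrak{b}\mathfrak{a}$ replaced by the obvious $V(\mathfrak{b}\mathfrak{a}) = (V\mathfrak{b})\mathfrak{a} \subseteq \mathfrak{b}\mathfrak{a}$ and $(\mathfrak{b}\mathfrak{a})V = \mathfrak{b}(\mathfrak{a}V) \subseteq \mathfrak{b}\mathfrak{a}$, and $I = \mathfrak{b}\mathfrak{a} \subseteq U$ since $\mathfrak{a}, \mathfrak{b} \subseteq U$. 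So everything reduces to showing $\mathfrak{a} \neq 0$ and $\mathfrak{b} \neq 0$; then $I \neq 0$ because $V$ is a domain, and we are done.

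To show $\mathfrak{a} \neq 0$: since $V$ is a finitely generated graded right $U$-module, write $V = \sum_{i=1}^m v_i U$ with each $v_i$ homogeneous. Viewing $v_i \in Q_{\gra}(V) = Q_{\gra}(U)$ and writing each $v_i$ as a left fraction $s_i^{-1} u_i$ with $s_i, u_i \in U$ homogeneous and $s_i \neq 0$ (possible because the numerator of a homogeneous fraction is automatically homogeneous), I would use that $U$ is a graded Ore domain: the nonzero graded left ideals $Us_1, \dots, Us_m$ pairwise meet nontrivially, hence so does their intersection, so there is a nonzero homogeneous $c \in \bigcap_i Us_i$. Then $cv_i \in Uu_i \subseteq U$ for all $i$, whence $cV \subseteq U$, i.e. $0 \neq c \in \mathfrak{a}$. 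The symmetric argument, using a homogeneous generating set for $V$ as a left $U$-module, gives $\mathfrak{b} \neq 0$.

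For part (2), choose homogeneous ideals $0 \neq I \trianglelefteq V$ with $I \subseteq U$ and $0 \neq J \trianglelefteq W$ with $J \subseteq V$, and put $K = JIJ$. Since $J \subseteq V$ and $I$ is a two-sided ideal of $V$, $K \subseteq VIV \subseteq I \subseteq U$; since $J$ is a two-sided ideal of $W$, $WK = (WJ)IJ \subseteq JIJ = K$ and $KW = JI(JW) \subseteq K$, so $K$ is a two-sided ideal of $W$; it is homogeneous as a product of homogeneous ideals, and nonzero because $W$ is a domain and $I, J \neq 0$. Thus $K$ is a common ideal of $U$ and $W$. The only step that is not a formal manipulation of one- and two-sided ideals is the nonvanishing of $\mathfrak{a}$ and $\mathfrak{b}$ in part (1), and the key input there is the hypothesis $Q_{\gra}(U) = Q_{\gra}(V)$, which forces finitely many elements of $V$ to share a common left (and right) denominator in $U$.
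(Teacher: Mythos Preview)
Your proof is correct and follows essentially the same approach as the paper's: in part (1) both arguments construct the left and right conductors $\{x : xV \subseteq U\}$ and $\{y : Vy \subseteq U\}$ (equivalently $\lann_U(V/U)$ and $\rann_U(V/U)$), show they are nonzero via the graded Ore condition and finite generation, and take their product, while part (2) uses the identical $JIJ$ construction. Your write-up is simply more explicit about the common-denominator step that makes the conductors nonzero.
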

\begin{proof}

(1) This argument may be found in the proof of \cite[Theorem
4.1]{AS}, but we repeat it here for convenience.  Since $U$ and $V$
have the same graded quotient ring, every element of $(V/U)_U$ has
nonzero annihilator in $U$. Moreover, $_U (V/U)_U$ is a bimodule
which is finitely generated on both sides. It follows that $I =
\rann_U (V/U)$ is a nonzero homogeneous right ideal of $U$ which
also satisfies $VI \subseteq I$.  A symmetric argument gives that $J
= \lann_U(V/U)$ is a nonzero homogeneous left ideal of $U$ such that
$JV \subseteq J$. Thus $0 \neq IJ$ is a homogeneous ideal of $V$
which is contained in $U$.

(2)  Let $0 \neq I$ be a common ideal for $U$ and $V$ and $0 \neq J$
a common ideal for $V$ and $W$.  Then $0 \neq JIJ$ is a common ideal
for $U$ and $W$.
\end{proof}



As mentioned earlier, one of our main concerns is to understand when
the ring $A$ in Hypothesis~\ref{subs-hyp} is an equivalent order to
$R$.  We will prove this below only under a further assumption that
$R$ has a minimal special ideal.  The case where $g \in A$, in other
words the case of the ring $C$, is much more straightforward, and so
we study it separately first.
\begin{lemma}
\label{C-noeth-lem}
Assume Hypothesis~\ref{subs-hyp}.
\begin{enumerate}
\item $C \subseteq R$ is a finite extension and $C$ is a noetherian ring.

\item Every special factor ring of $C$ has eventually periodic
Hilbert function of period $d$.  Also, if $R$ has a minimal special
ideal, then so does $C$.

\end{enumerate}
\end{lemma}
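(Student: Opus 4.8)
The plan is to prove part (1) directly and then deduce part (2) from it together with the facts already known for $R$. Since $g\mapsto 0$, the image of $C$ in $\overline R=R/Rg$ is $\overline A$, and by Hypothesis~\ref{subs-hyp} $\overline A\subseteq\overline R=B(E,\mathcal N,\tau)$ is a finite extension; as $\dim_k\overline R_n$ grows linearly and finite extensions preserve GK-dimension, $\GK\overline A=2$, so $\overline A$ is noetherian and projectively simple by Lemma~\ref{B-subring-lem}(1). To see $C\subseteq R$ is finite I would lift generators: choosing $r_1=1,r_2,\dots,r_m\in R$ whose images generate $\overline R$ as a left $\overline A$-module and setting $M=\sum_i Cr_i$, the centrality of $g$ gives isomorphisms $Rg^i/Rg^{i+1}\cong\overline R[-id]$ of left $\overline A$-modules, whence $R=M+Rg$; iterating (using $Mg\subseteq M$) gives $R=M+Rg^j$ for all $j$, and since $(Rg^j)_n=0$ once $jd>n$ we conclude $R_n=M_n$ in every degree. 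Thus $_CR$, and symmetrically $R_C$, is finitely generated.

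For noetherianity I would filter $C$ by the ideals $\mathfrak a_i:=C\cap Rg^i$ it inherits from $R$; this filtration is separated and degreewise finite (indeed $(\mathfrak a_i)_n=0$ once $id>n$), so it suffices to prove that the associated graded ring $\operatorname{gr}C=\bigoplus_i\mathfrak a_i/\mathfrak a_{i+1}$ is noetherian, the rest being a standard lifting argument. Since $(C\cap Rg)\,\mathfrak a_i\subseteq\mathfrak a_{i+1}$ and $\mathfrak a_i\,(C\cap Rg)\subseteq\mathfrak a_{i+1}$, each $\mathfrak a_i/\mathfrak a_{i+1}$ is a bimodule over $\overline A=C/(C\cap Rg)$; it embeds into $Rg^i/Rg^{i+1}\cong\overline R[-id]$ as an $\overline A$-sub-bimodule, so writing $D_i\subseteq\overline R$ for its image we have $\operatorname{gr}C=\bigoplus_i D_iy^i\subseteq\overline R[y]$ with $\overline A=D_0\subseteq D_1\subseteq\cdots$ and $y$ corresponding to $g$. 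The union $D_\infty=\bigcup_i D_i$ is an $\overline A$-sub-bimodule of $\overline R$, hence finitely generated on each side because $\overline R$ is a noetherian $\overline A$-module (being finitely generated over the noetherian ring $\overline A$); thus the chain of the $D_i$ stabilizes, $\operatorname{gr}C$ is module-finite over the noetherian ring $\overline A[y]$, and therefore $\operatorname{gr}C$, and hence $C$, is noetherian.

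\textbf{Part (2), first assertion.} Let $C/J$ be a special factor ring. Since $C$ is noetherian with central $g$, $\ts(C/J)$ is killed by some $g^k$, and the $g$-torsionfree quotient $M$ satisfies $\HS_M(t)=\HS_{M/Mg}(t)/(1-t^d)$ with $M/Mg$ finite-dimensional (as $\GK M\le 1$ and $M$ is $g$-torsionfree), so $M$ has eventually periodic Hilbert function of period $d$. The module $\ts(C/J)$ has a finite filtration whose factors are GK-$\le 1$ modules over $C/Cg$; I would control these by pushing them to the quotient $\overline A=C/(C\cap Rg)$ of $C/Cg$ (legitimate in large degree because $\overline A$ is projectively simple), using that GK-$\le 1$ modules over $\overline A$ have eventually periodic Hilbert function: by Lemma~\ref{B-subring-lem}(2) $\overline A$ agrees in large degree with a ring of the form \eqref{GK2-classify-eq} over an elliptic curve, whose GK-$\le 1$ modules have controlled Hilbert functions via \eqref{cat-equiv-eq} and Lemma~\ref{B-prop}(3). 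Summing the contributions yields the claim; pinning down that the period is exactly $d$ is a point requiring some care.

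\textbf{Part (2), second assertion.} Assume $R$ has a minimal special ideal and let $I$ be the one with $R/I$ $g$-torsionfree (Remark~\ref{special-rem}). As $C/(I\cap C)\hookrightarrow R/I$ has GK-dimension $\le 1$, $I\cap C$ is special in $C$; take $J\supseteq I\cap C$ to be the associated special ideal with $C/J$ $g$-torsionfree (the analogue of Lemma~\ref{special-ideal-lem}(2) for $C$ goes through using projective simplicity of $\overline A$ in place of that of $R/Rg$), so $J$ agrees with $I\cap C$ in large degree. To show $J$ is minimal in tails, let $J'$ be any special ideal of $C$. Then $RJ'R$ is special in $R$, because $R/J'R\cong(C/J')\otimes_C R$ is finite over the GK-$\le 1$ ring $C/J'$ and $R/RJ'R$ is a quotient of it; so $I_{\ge n}\subseteq RJ'R$ for some $n$ by minimality of $I$, whence $(I\cap C)_{\ge n}\subseteq RJ'R\cap C$. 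It then remains to pass from $RJ'R\cap C$ down to $J'$ in large degrees; the plan is to use projective simplicity of $\overline A$ and of $\overline R$ (which force the images of $J'$ in $\overline A$ and of $RJ'R$ in $\overline R$ to be finite-codimensional) together with the finiteness of $C\subseteq R$ and Lemma~\ref{ideal-close-lem} to conclude that $RJ'R\cap C$ and $J'$ coincide in large degree, giving $J_{\ge m}\subseteq J'$ for suitable $m$. This last comparison — among $J'$, $RJ'R$, and $RJ'R\cap C$ — is the step I expect to be the main obstacle.
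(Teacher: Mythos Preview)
Your Part~(1) is correct, though organized differently from the paper. Both arguments reduce to the fact that $\overline R$ is noetherian over $\overline A=\overline C$; the paper applies the Nakayama lemma directly for finiteness and then lifts noetherianity from $R/Rg$ to $R$ via the standard argument of \cite[Lemma~8.2]{ATV1}, while you run an associated-graded argument. Either works.

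Part~(2), however, has real gaps in both assertions, and they share a common source. For the first assertion, your plan is to filter the $g$-torsion part of $C/J$ by $C/Cg$-modules and then ``push them to the quotient $\overline A=C/(C\cap Rg)$.'' But $C/Cg$ and $\overline A$ differ by the ideal $(C\cap Rg)/Cg$, which you have no control over; a $C/Cg$-module of GK-dimension~$\le 1$ need not be an $\overline A$-module, and projective simplicity of $\overline A$ does not force this in large degree. So the reduction to Lemma~\ref{B-prop}(3) is unjustified. For the second assertion, you correctly isolate the obstacle: after showing $(I\cap C)_{\ge n}\subseteq RJ'R\cap C$, you need to descend from $RJ'R\cap C$ to $J'$ in large degree, and your sketch (projective simplicity plus Lemma~\ref{ideal-close-lem}) does not do this---there is no obvious reason why $(RJ'R\cap C)/J'$ should be finite-dimensional.

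The paper sidesteps both difficulties with one idea you are missing: the \emph{common ideal}. Since $C\subseteq R$ is finite, Lemma~\ref{ideal-close-lem}(1) gives a nonzero homogeneous ideal $H$ of $R$ contained in $C$; by Lemma~\ref{special-ideal-lem} one can take $H=Lg^n$ with $L$ special in $R$ and $R/L$ $g$-torsionfree. Given any special ideal $I$ of $C$, one may assume $H\subseteq I$. The crucial computation is that each layer $\big((C\cap Lg^{i-1})+I\big)\big/\big((C\cap Lg^i)+I\big)$ is a finitely generated module over $\overline C/\overline I$, which is finite-dimensional by projective simplicity of $\overline C$; inducting down from $i=n$ gives $I\supseteq (C\cap L)_{\ge m}$. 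Both assertions then fall out immediately: periodicity, because $C/(C\cap L)\hookrightarrow R/L$ is $g$-torsionfree with bounded Hilbert function; and the minimal special ideal, because if $R$ has one you may take $L$ to be it, so $L\cap C$ is minimal in tails for~$C$. This common-ideal filtration is exactly what lets you compare ideals of $C$ and $R$ without ever having to analyze $C/Cg$ or the gap between $J'$ and $RJ'R\cap C$.
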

\begin{proof}
(1) The graded Nakayama lemma shows that $R_C$ is a finitely
generated module if and only if $\dim_k R/R C_{\geq 1} < \infty$.
Since $g \in C_{\geq 1}$, this is also equivalent, by Nakayama
again, to $\overline{R} = R/Rg$ being a finitely generated right
$\overline{C}$-module, which is true by hypothesis. A similar
argument holds on the left, so $C \subseteq R$ is a finite
extension.

It remains to prove that $C$ is noetherian.  We claim that in fact
the following fact holds: given a finitely generated $\mb{Z}$-graded
right $C$-module $M$, if $(M/Mg)_C$ is noetherian then $M_C$ is
noetherian.
First note that since $M$ is finitely generated, $M$ is right
bounded ($M_n = 0$ for $n \ll 0$).  Using this, it is easy to prove
that if $M$ has a non-finitely generated submodule, it has a
homogeneous non-finitely generated submodule.  Since $g$ is
obviously still a central (and hence normal) element of $C$, the
rest of the proof of the claim follows by exactly the same argument
as \cite[Lemma 8.2]{ATV1}, which proves the case where $M = C$.

Apply the claim to $M = R$; this shows that $R_C$ is noetherian as
long as $(R/Rg)_C$ is noetherian.  In fact, $\overline{R} = R/Rg$ is
a $\overline{C} = C/(C \cap Rg)$ module, and so it is enough to
prove that $\overline{R}_{\overline{C}}$ is noetherian.  This
follows since $\overline{C} \subseteq \overline{R}$ is a finite ring
extension, and $\overline{C}$ is noetherian by
Lemma~\ref{B-subring-lem}(1).  Thus $R_C$ is noetherian, and a
symmetric argument shows that $_C R$ is noetherian.  Of course,
then, $C$ is a noetherian ring.

(2) 
Suppose that $I$ is an arbitrary special ideal of $C$.   Since $C
\subseteq R$ is a finite extension by part (1),
Lemma~\ref{ideal-close-lem}(1) implies that $C$ and $R$ have a
common nonzero homogeneous ideal, say $0 \neq H$. We can assume that
$H \subseteq I$, by replacing $H$ by $HIH$ if necessary. By
Lemma~\ref{special-ideal-lem}(1), $H = Kg^n$ for some special ideal
$K$ of $R$ and $n \geq 0$. By Lemma~\ref{special-ideal-lem}(2), $K$
has a finite dimensional extension $L$ such that $R/L$ is
$g$-torsionfree; by increasing $n$ if necessary, we can take $H =
Lg^n$ as our common ideal.

Now given any $i \geq 1$, notice that the right $C$-module $M = ( (C
\cap Lg^{i-1}) + I)/((C \cap Lg^i) + I)$ is killed by $(C \cap Rg) +
I$, so $M$ is a module over $C/((C \cap Rg )+ I) =
\overline{C}/\overline{I}$.  The ring $\overline{C}$ is projectively
simple, by Lemma~\ref{B-subring-lem}(1).  Also, $\overline{I} \neq
0$ since $\GK C/I = 1$, so necessarily $\dim_k
\overline{C}/\overline{I} < \infty$. We know that $C$ is noetherian
by part (1), so $M \in \rgr C$ and thus $\dim_k M < \infty$.   Since
$(C \cap Lg^n) + I = I$, we conclude by induction on $i$ that
$\dim_k ((C \cap L) + I)/I < \infty$, and so $I \supseteq (C \cap
L)_{\geq m}$ for some $m$.

To prove that $C/I$ has eventually $d$-periodic Hilbert function, we
might as well now replace $I$ by $I + (C \cap L)$, so we can assume
that $C/I$ is a factor of $C/(C \cap L)$.  Now $N = C/(C \cap L)$ is
$g$-torsionfree and has bounded Hilbert function, since $R/L$ has
these properties. An easy argument shows then that $\dim_k N/Ng <
\infty$.  Then $N' = C/I$ is a surjective image of $N$ and so also
$\dim_k N'/N'g < \infty$.  Then another easy argument shows that
this forces $N'$ to have eventually $d$-periodic Hilbert function,
as needed.

Finally, if $R$ has a minimal special ideal, then as in
Remark~\ref{special-rem} there is a unique ideal $L'$ of $R$ which
is minimal in tails and such that $R/L'$ is $g$-torsionfree.  Then
necessarily $L' \subseteq L$, and so we can replace $L$ in the above
argument by $L'$. Then any special ideal $I$ of $C$ has $I \supseteq
(L' \cap C)_{\geq m}$ for some $m$, and this implies that $L' \cap
C$ is an ideal of $C$ which is  minimal in tails.
\end{proof}

Next, we handle the more complicated case of subrings of $R$ that
need not contain $g$.  We now need to impose the existence of a
minimal special ideal to gain more traction.
\begin{theorem}
\label{subring-thm} Assume Hypothesis~\ref{subs-hyp}, and assume in
addition that $R$ has a minimal special ideal.
\begin{enumerate}
\item $A$ and $R$ have a common ideal; in particular, $R$ and $A$
are equivalent orders.
\item If $A$ is noetherian, then $A \subseteq R$ is a finite
extension.
\end{enumerate}
\end{theorem}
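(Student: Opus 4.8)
The plan is to reduce part~(1) to a statement about the pair $A\subseteq C$ alone, settle that statement using the minimal special ideal, and then deduce part~(2) from part~(1) by a module-finiteness trick.

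First I would reduce part~(1). By Lemma~\ref{C-noeth-lem}(1) the extension $C\subseteq R$ is finite, so $C$ and $R$ have a common ideal by Lemma~\ref{ideal-close-lem}(1); by Lemma~\ref{ideal-close-lem}(2) it therefore suffices to produce a common ideal for $A\subseteq C$. Because $g$ is central and $C=\sum_{j\geq 0}g^{j}A$, a homogeneous ideal $J$ of $C$ contained in $A$ is the same thing as a homogeneous ideal $J$ of $A$ with $gJ\subseteq J$, and the sum of all such is the ideal $\mathcal{I}=\{\,x\in A\mid g^{k}x\in A\text{ for all }k\geq 0\,\}$. Thus the entire content of part~(1) is the claim $\mathcal{I}\neq 0$. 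A common right-denominator argument in $Q_{\gra}(A)=Q_{\gra}(C)$ shows that each of the ideals $J_{n}=\{\,x\in A\mid g^{k}x\in A\text{ for }0\leq k\leq n\,\}$ is nonzero, so what must be ruled out is that the descending chain $A=J_{0}\supseteq J_{1}\supseteq\cdots$ collapses to $0$.

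To keep this chain from collapsing I would use the minimal special ideal. By Lemma~\ref{C-noeth-lem}(2) the noetherian ring $C$ has a minimal special ideal, and the argument of Remark~\ref{special-rem} (which only needs $C$ noetherian, $C/(C\cap Rg)$ projectively simple, and Lemma~\ref{C-noeth-lem}(2)) supplies a unique ideal $I$ of $C$ that is minimal in tails and has $C/I$ $g$-torsionfree; the proof of Lemma~\ref{C-noeth-lem}(2) moreover identifies $I$ with $L\cap C$, where $L\subseteq R$ is the analogous minimal-in-tails $g$-torsionfree special ideal of $R$. Then $C/I$ is a noetherian cfg ring with $\GK(C/I)\leq 1$ and eventually $d$-periodic Hilbert function on which $g$ acts as a regular central element; for any minimal prime $\mathfrak{p}$ of $C$ over $I$, the domain $C/\mathfrak{p}$ has $\GK(C/\mathfrak{p})\leq 1$ and hence (as in the proof of Lemma~\ref{B-subring-lem}(1)) is a graded subalgebra of a polynomial ring $k[t]$, with $A/(A\cap\mathfrak{p})$ a graded subalgebra of it. The point I would exploit is that two graded subalgebras of $k[t]$ sharing the same graded quotient ring differ in only finitely many degrees, so $C/\mathfrak{p}$ is module-finite over $A/(A\cap\mathfrak{p})$; feeding this back through the finitely many minimal primes over $I$, together with the nilpotence of a suitable power of the radical of $C/I$ and the minimality of $L$, should produce a nonzero $x\in A$ all of whose $g$-translates $g^{k}x$ stay in $A$, i.e.\ prove $\mathcal{I}\neq 0$. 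I expect the main obstacle to be exactly this transfer: one must check that the GK-$1$ quotients of $A$ and $C$ really do have matching graded quotient rings (equivalently, matching degree supports up to finitely many degrees), and must reassemble the pieces attached to the several primes over $I$ without ever moving out of $A$, since neither $I$ nor the $\mathfrak{p}$'s lie inside $A$. The uniqueness statement of Remark~\ref{special-rem} for $L$ is what I would rely on to control this bookkeeping.

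Finally, part~(2) follows from part~(1). Assume $A$ is noetherian. Part~(1) gives $\mathcal{I}\neq 0$, and since $\mathcal{I}$ is a two-sided ideal of the noetherian ring $A$ it is finitely generated as a left and as a right $A$-module, say $\mathcal{I}=x_{1}A+\cdots+x_{m}A$. As $\mathcal{I}$ is an ideal of $C$, left multiplication defines a ring homomorphism $\lambda\colon C\to\End_{A}(\mathcal{I}_{A})$, injective because $C$ is a domain and $\mathcal{I}\neq 0$. Equipping $\End_{A}(\mathcal{I}_{A})$ with the left $A$-module structure $(a\cdot\varphi)(y)=a\varphi(y)$, the map $\lambda$ is left $A$-linear, and $\varphi\mapsto(\varphi(x_{1}),\dots,\varphi(x_{m}))$ embeds $\End_{A}(\mathcal{I}_{A})$ as a left $A$-submodule of the finitely generated left $A$-module $\mathcal{I}^{\oplus m}$; hence $\End_{A}(\mathcal{I}_{A})$, and so $C$, is finitely generated as a left $A$-module because $A$ is left noetherian. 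The mirror argument (right multiplication into $\End_{A}({}_{A}\mathcal{I})$) shows $C$ is finitely generated as a right $A$-module, so $A\subseteq C$ is a finite ring extension; composing with the finite extension $C\subseteq R$ of Lemma~\ref{C-noeth-lem}(1) gives that $A\subseteq R$ is finite.
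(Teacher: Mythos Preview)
Your reduction of part~(1) to finding a common ideal for $A\subseteq C$ is correct, and your argument for part~(2) from part~(1) via the embedding $C\hookrightarrow\End_A(\mathcal{I}_A)\hookrightarrow\mathcal{I}^{\oplus m}$ is valid (indeed slightly cleaner than the paper's route, which goes through an intermediate ring $\wt{A}$). The genuine gap is that you do not prove $\mathcal{I}\neq 0$. The route you sketch through minimal primes $\mathfrak{p}$ of $C/I$ is not completed, and the obstacle you flag is real: there is no reason the images of $A$ and $C$ in $C/\mathfrak{p}$ should share a graded quotient ring (the equality $Q_{\gra}(A)=Q_{\gra}(C)$ does not pass to quotients), and even if each $C/\mathfrak{p}$ were finite over the image of $A$, you give no mechanism to lift this back through the nilradical and the several primes to produce a single nonzero element of $A$ stable under all $g^k$.

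The idea you are missing is to abandon the descending chain $J_n$ inside $A$ and instead work with a descending chain \emph{inside $C$} that the minimal special ideal directly controls. Since $g$ is central, $CA_{\geq n}=A_{\geq n}C$ is a two-sided ideal of $C$, and $\hs_{C/CA_{\geq n}}(t)\leq \hs_{A/A_{\geq n}}(t)\cdot\sum_{i\geq 0}t^{id}$ shows $\GK C/CA_{\geq n}\leq 1$, so each $CA_{\geq n}$ is special. By Lemma~\ref{C-noeth-lem}(2), $C$ has a minimal special ideal and every special factor has eventually $d$-periodic Hilbert function; together these force any descending chain of special ideals of $C$ to stabilize in tails, so $\dim_k CA_{\geq n_0}/CA_{\geq n}<\infty$ for all $n\geq n_0$. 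Since $A$ is finitely generated, say by elements of degree $\leq e$, one has $A_{\geq n_0+e}\subseteq A_{\geq n_0}A_{\geq 1}$, whence $\dim_k CA_{\geq n_0}/(CA_{\geq n_0})A_{\geq 1}<\infty$ and graded Nakayama gives $CA_{\geq n_0}$ finitely generated as a right (and symmetrically left) $A$-module. Now $\wt{A}=A+CA_{\geq n_0}$ is a ring finite over $A$, and $CA_{\geq n_0}$ is visibly a common ideal for $\wt{A}\subseteq C$; Lemma~\ref{ideal-close-lem} then assembles a common ideal for $A\subseteq R$ along the chain $A\subseteq\wt{A}\subseteq C\subseteq R$.
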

\begin{proof}
(1)  We have $C = A \langle g \rangle = A + Ag + Ag^2 + \dots$. Note
for each $n \geq 0$ that $CA_{\geq n} = A_{\geq n} + A_{\geq n}g +
A_{\geq n}g^2 + \dots = A_{\geq n}C$. In particular, $CA_{\geq n}$
is an ideal of $C$. Moreover, the Hilbert series of $C/CA_{\geq n}$
is at most as large as $\sum_{i \geq 0} p(t)t^{id}$, where $p(t) =
\hs_{A/A_{\geq n}}(t)$ is a polynomial since $\dim_k A/A_{\geq n} <
\infty$.  So $\GK C/CA_{\geq n} \leq 1$, and $CA_{\geq n}$ is a
special ideal of $C$. Now $C$ has a minimal special ideal, and all
special factor rings of $C$ have eventually $d$-periodic Hilbert
function, by Lemma~\ref{C-noeth-lem}(2).  It is easy then to see
that any descending chain of special ideals of $C$ must eventually
stabilize in tails.  Thus $\dim_k CA_{\geq n_0}/CA_{\geq n} <
\infty$ for all $n \geq n_0$, some $n_0$.

We claim that $CA_{\geq n_0}$ is a finitely generated right
$A$-module. By the graded Nakayama Lemma, to show this it suffices
to show that $\dim_k CA_{\geq n_0}/CA_{\geq n_0}A_{\geq 1} <
\infty$. Though we don't know that $A$ is noetherian, we do have
that $A$ is a finitely generated $k$-algebra by hypothesis.  If $e$
is the maximum degree of its generators, then clearly $A_{\geq (e +
n_0)} \subseteq A_{\geq n_0}A_{\geq 1}$.  So it is enough to show
that $\dim_k CA_{\geq n_0}/CA_{\geq (e + n_0)} < \infty$, which we
already showed above; the claim is proved.  A symmetric proof shows
that $CA_{\geq n_0}$ is finitely generated on the left over $A$. In
conclusion, setting $\wt{A} = A + CA_{\geq n_0}$, then $A \subseteq
\wt{A}$ is a finite ring extension.

We have constructed a series of ring extensions as follows, where
all rings have the same graded quotient ring:
\begin{equation}
\label{ring-ext-eq} A \subseteq \wt{A}= A + CA_{\geq n_0} \subseteq
C \subseteq R.
\end{equation}
Here, both of the outer extensions $A \subseteq \wt{A}$ and $C
\subseteq R$ are finite ring extensions (using
Lemma~\ref{C-noeth-lem}(1)), and thus have a common ideal, by
Lemma~\ref{ideal-close-lem}(1). Also, the extension $A + CA_{\geq
n_0} \subseteq C$ certainly has a common ideal, namely $CA_{\geq
n_0}$. By Lemma~\ref{ideal-close-lem}(2), the extension $A \subseteq
R$ has a common ideal, and $A$ and $R$ are equivalent orders.

(2) Suppose that $A$ is noetherian, and consider the argument in
part (1) above and the chain of ring extensions \eqref{ring-ext-eq}.
Since $\wt{A} = A+ CA_{\geq n_0}$ is finite over $A$ on both sides,
it is also a noetherian ring in this case.  Choosing any nonzero
homogeneous element $x \in A_{\geq n_0}$, we see that $xC \subseteq
A$ and so $C_{A}$ embeds in $A$; thus $C_{A}$ is Noetherian.
Similarly, $_{A} C$ is noetherian.  Thus $A \subseteq C$ and $C
\subseteq R$ are finite ring extensions, and so $A \subseteq R$ is
also a finite extension.  In fact, it is easy to see that $\dim_k
C/\wt{A} < \infty$ in this case.
\end{proof}

If in Hypothesis~\ref{subs-hyp} we have that $\overline{A} \subseteq
\overline{R}$ is not just a finite extension but an equality in
large degrees, then one can expect to say more about how close $A$
is to $R$.  The following result studies this. There is a strong
result for the ring $C$, and a rather weaker one for a general $A$.
We note that this result is not needed in the proof of
Theorem~\ref{deg3-class-thm}, though it is used in
Section~\ref{deg1-sec} below in the classification of subrings of
$S$ generated in degree $1$. We also do not know if the extra
hypotheses in part (1) of the following proposition are necessary.
\begin{proposition}
\label{close-prop}
Assume the setup in Hypothesis~\ref{subs-hyp},
and suppose that $\dim_k \overline{R}/\overline{A} < \infty$.
\begin{enumerate}
\item If in addition $A \subseteq R$ is a finite extension and
$(A \cap Rg)/(A \cap Rg^2) \neq 0$, then $A$ contains a special
ideal of $R$.
\item $C$ contains a special ideal of $R$.
\end{enumerate}
\end{proposition}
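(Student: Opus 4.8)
The plan is to prove part (2) first, then deduce part (1). For part (2), recall that $C = A\langle g\rangle \subseteq R$, that $C \subseteq R$ is a finite extension and $C$ is noetherian by Lemma~\ref{C-noeth-lem}(1), and that $\overline C \subseteq \overline R$ has finite codimension by hypothesis. I would first observe that $\overline R / \overline C$ is a finite-dimensional $\overline C$-module annihilated by some $\overline R_{\geq m}$, so there is a homogeneous ideal of $\overline R$ contained in $\overline C$, namely $\overline R_{\geq m}$ (which is a special-type ideal in $\overline R = B(E,\mathcal N,\tau)$ in the sense that its quotient is finite-dimensional; note $\overline R$ is projectively simple by Lemma~\ref{B-subring-lem}(1), so \emph{every} nonzero homogeneous ideal of $\overline R$ works). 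Lift this: the preimage $K$ in $R$ of $\overline R_{\geq m}$ satisfies $K \subseteq C + Rg$, and $\GK R/K \leq 1$, so $K$ is a special ideal of $R$. The issue is that $K$ itself need not lie in $C$; we only get $K \subseteq C + Rg$. To fix this, use that $R$ has a minimal special ideal and the analysis of Remark~\ref{special-rem} and Lemma~\ref{C-noeth-lem}(2): there is a unique ideal $L'$ of $R$, minimal in tails, with $R/L'$ $g$-torsionfree, and $L' \cap C$ is a special ideal of $C$ which is minimal in tails in $C$. I would then argue that some power-of-$g$ multiple $L' g^N$ (or rather a suitable tail) is actually contained in $C$: since $g \in C$, multiplying by $g$ moves elements of $R$ into... — more carefully, the right approach is to descend along the filtration by powers of $g$. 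For each $i$, $(L' \cap Rg^i + C)/(L' \cap Rg^{i+1} + C)$ is a finitely generated module over $\overline C/\overline{L' \cap C}$, which is finite-dimensional since $\GK C/(L'\cap C) \leq 1$ forces $\overline{L'\cap C}\neq 0$ and $\overline C$ is projectively simple; since $C$ is noetherian this module is finite-dimensional, and summing over $i$ (the filtration terminates in tails because $R/L'$ is $g$-torsionfree, so $L'g^n = L'$ in large degree — more precisely $(L'g^n)_{\geq \ell}=L'_{\geq \ell}$) one concludes $\dim_k (L' + C)/C < \infty$, equivalently $L'_{\geq n_1} \subseteq C$ for some $n_1$. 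Then $L'_{\geq n_1}$ — or the honest ideal it generates, $RL'_{\geq n_1}R \subseteq L'$, which still has GK $\leq 1$ and is eventually equal to $L'$ — is the desired special ideal of $R$ contained in $C$.

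For part (1), the extra hypotheses are that $A \subseteq R$ is a finite extension and $(A\cap Rg)/(A\cap Rg^2) \neq 0$. By part (2), $C$ contains a special ideal $I$ of $R$; shrinking $I$ to $RIR$ I may assume $I$ is a genuine two-sided ideal of $R$ with $\GK R/I \leq 1$. Now $I \subseteq C = A + Ag + Ag^2 + \cdots$. The nonvanishing of $(A\cap Rg)/(A\cap Rg^2)$ provides a homogeneous element $a \in A \cap Rg$ with $a \notin Rg^2$; write $a = bg$ with $b \in R$, $b \notin Rg$, so $\bar b \neq 0$ in $\overline R$. Then for $x \in I$, consider $ax = bgx = (bx)g$; and more usefully, multiplying $I$ on appropriate sides by $a$ and using $ag^{-1} = b$ formally in $Q_{\gra}(R)$... — the cleaner route: $aI$ and $Ia$ lie in $A$ (since $a \in A$ and $I \subseteq C$, we get $aI \subseteq AC$; but $AC = C$ only gives membership in $C$, not $A$). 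So I instead intersect with the power-of-$g$ structure directly: the element $a \in A\cap Rg$ lets one "absorb one factor of $g$", and iterating, $a^j I \subseteq A + Ag + \cdots + Ag^{?}$ with the $g$-degree dropping, so for $j$ large enough $a^j I \subseteq A$ (using that $\GK R/I \leq 1$ bounds how many $g$-layers of $C$ meet $I$ nontrivially — combined with $I$ being $g$-stable up to tails). Since $a \neq 0$ and $R$ is a domain, $a^j I$ is a nonzero right ideal of $R$ inside $A$; symmetrically using a left-sided element one gets a nonzero two-sided ideal of $R$ contained in $A$. (If the same $a$ works on both sides because $R^{op}$ is of the same form by Theorem~\ref{Rop-thm}(3), this is cleaner; otherwise run the symmetric argument.)

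The main obstacle I expect is the descent step in part (2): showing that the minimal special ideal $L'$ of $R$ (which only lives inside $C$ modulo powers of $g$) can be replaced by an honest tail contained in $C$. This requires combining three ingredients carefully — noetherianness of $C$ (Lemma~\ref{C-noeth-lem}(1)), the eventual $d$-periodicity / minimal-special-ideal structure of $C$ (Lemma~\ref{C-noeth-lem}(2) and Remark~\ref{special-rem}), and the $g$-torsionfreeness of $R/L'$ so that the $g$-adic filtration of $L'$ stabilizes in tails. A subtlety is that $L'$ need not be finitely generated as a module over the subring, so one must work filtration-layer by filtration-layer over the projectively simple ring $\overline C$ rather than all at once, and check that the finitely many nonzero layers each contribute only finite dimension. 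Once part (2) is in hand, part (1) is largely bookkeeping with the element $a$, though one must be slightly careful that $a$ genuinely decreases $g$-degree when absorbed (i.e. that $b = ag^{-1} \in R$, which is exactly the meaning of $a \notin Rg^2$ being false — wait, $a \in Rg$ gives $b\in R$, and $a\notin Rg^2$ gives $b\notin Rg$), and that finitely many applications suffice, which again uses the GK-dimension bound on $R/I$.
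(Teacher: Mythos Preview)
Your approach has a fatal error in part (2), and the overall strategy is inverted relative to what actually works.

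\textbf{The filtration does not terminate.} You claim that ``the filtration terminates in tails because $R/L'$ is $g$-torsionfree, so $L'g^n = L'$ in large degree.'' This is false. What $g$-torsionfreeness of $R/L'$ gives you is that $L'/L'g$ injects into $R/Rg = \overline{R}$ as an $\overline{R}$-$\overline{R}$-bimodule; since $\overline{R}$ is projectively simple and $L'/L'g \neq 0$, we get $\GK(L'/L'g) = 2$, not $\leq 1$. Each layer $L'g^i/L'g^{i+1}$ is isomorphic (up to shift) to $L'/L'g$, so there are infinitely many layers each of GK-dimension $2$, and $(L'g^n)_{\geq \ell} \subsetneq (L')_{\geq \ell}$ for every $n \geq 1$ and all $\ell$. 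Your ``summing over $i$'' therefore diverges and the argument collapses. (You are perhaps confusing the layers of $L'$ with the layers of $R/L'$; the latter do stabilize in tails, but that is the wrong module.) You also invoke a minimal special ideal of $R$, which the proposition does not assume.

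\textbf{The correct direction is (1) $\Rightarrow$ (2), not the reverse.} The paper proves (1) directly: since $A \subseteq R$ is finite, there is a common nonzero ideal, which by Lemma~\ref{special-ideal-lem} has the form $Jg^n$ with $J$ special and $R/J$ $g$-torsionfree. So one already has $Jg^n \subseteq A$ at the \emph{bottom} of a finite piece of the $g$-filtration, and the task is to climb \emph{up} finitely many levels from $Jg^n$ to $J$. The key is exactly the observation above: each $Jg^i/Jg^{i+1}$ is a projectively simple $\overline{A}$-$\overline{A}$-bimodule (using $\dim_k \overline{R}/\overline{A} < \infty$), so once you know $(A \cap Jg^i)/(A \cap Jg^{i+1}) \neq 0$ --- which follows from the element $x \in (A \cap Rg)\setminus Rg^2$ together with any $y \in (A\cap J)\setminus Rg$ via $x^i y$ --- this sub-bimodule must equal the full layer in large degree. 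A short descent on $i$ then gives $J_{\geq m} \subseteq A$. Part (2) is then immediate by taking $A = C$, since $g \in C$ trivially witnesses the hypothesis of (1). Your attempt to deduce (1) from (2) via ``$a^j I \subseteq A$ for $j$ large'' is similarly ungrounded: $\GK R/I \leq 1$ does not bound the number of $g$-layers of $I$ (there are infinitely many, all nonzero), so no finite power of $a$ suffices.
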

\begin{proof}
(1)  Since $A \subseteq R$ is a finite extension with $Q_{\gra}(A) =
Q_{\gra}(R)$, $A$ and $R$ have a common ideal by
Lemma~\ref{ideal-close-lem}(1).  This ideal must have the form
$Jg^n$ for some special ideal $J$ of $R$, by
Lemma~\ref{special-ideal-lem}(1). $J$ has a finite-dimensional
extension $J'$ such that $R/J'$ is $g$-torsionfree
(Lemma~\ref{special-ideal-lem}(2)), so possibly after increasing $n$
we can replace $J$ by $J'$ and assume that $R/J$ is $g$-torsionfree.

 We have the following exact sequence:
\[
0 \to (J \cap Rg)/Jg \to J/Jg \to R/Rg \to R/(J + Rg) \to 0.
\]
Here, $(J \cap Rg)/Jg \cong (M/J)[-d]$, where $M = \{x \in R | xg
\in J \}$.  Then $M/J$ is contained in the $g$-torsion submodule
$\ts(R/J)$ of $R/J$.  By choice of $J$,  $M = J$ and so $(Rg \cap
J)/Jg = 0$.  Thus $J/Jg$ injects into $R/Rg$, and this is even an
inclusion of $\overline{R}-\overline{R}$-bimodules.  For the
purposes of this proof, following the definition for rings, we call
a bimodule $M$ \emph{projectively simple} if every sub-bimodule $0
\neq N$ of $M$ satisfies $\dim_k M/N < \infty$. Since $R/Rg$ is a
projectively simple $\overline{R}-\overline{R}$-bimodule, by
Lemma~\ref{B-subring-lem}(1), clearly $J/Jg$ is also a projectively
simple $\overline{R}-\overline{R}$-bimodule.  Since multiplication
by $g$ induces a bimodule isomorphism $Jg^{i-1}/Jg^{i} \to
Jg^{i}/Jg^{i+1}$ for each $i$,  $Jg^i/Jg^{i+1}$ is also a
projectively simple $\overline{R}-\overline{R}$-bimodule. Since
$\dim_k \overline{R}/\overline{A} < \infty$, it is then easy to see
that $Jg^i/Jg^{i+1}$ is also a projectively simple
$\overline{A}-\overline{A}$-bimodule for each $i$.

By hypothesis, we can choose $x \in A \cap Rg \setminus Rg^2$.
Clearly we can also choose $y \in A \cap J \setminus Rg$, since we
have $\GK A/(A \cap J) \leq 1$, while $\GK A/A \cap Rg = \GK
\overline{A} = 2$. Then $x^iy \in A \cap Jg^i \setminus Rg^{i+1}$,
and thus $(A \cap Jg^i)/(A \cap Jg^{i+1}) \neq 0$.  Since $(A \cap
Jg^i)/(A \cap Jg^{i+1})$ is a $\overline{A}-\overline{A}$
sub-bimodule of $Jg^i/Jg^{i+1}$, we conclude that $(A \cap Jg^i)/(A
\cap Jg^{i+1})$ equals $Jg^i/Jg^{i+1}$ in large degree.

Now recall that $Jg^n \subseteq A$.  So there exists an $i \geq 0$
minimal with the property that $(Jg^i)_{\geq m} \subseteq A$ for
some $m \geq 0$.  Suppose $i > 0$.  Since $(A \cap Jg^{i-1})/(A \cap
Jg^i)$ is equal in large degree to $Jg^{i-1}/Jg^i$, and $A \cap Jg^i
\supseteq (Jg^i)_{\geq m}$ for some $m$, we conclude that $A \cap
Jg^{i-1}$ is equal in large degree to $Jg^{i-1}$. Then
$(Jg^{i-1})_{\geq m'} \subseteq A$ for some $m'$, contradicting the
choice of $i$. So $i = 0$, and $A$ contains a special ideal of $R$
of the form $J_{\geq m}$.

(2)  We have that $C \subseteq R$ is a finite extension by
Lemma~\ref{C-noeth-lem}(1), and clearly $(C \cap Rg)/(C \cap Rg^2)
\neq 0$ since $g \in C$. Taking $A = C$ in part (1), the result is
immediate.
\end{proof}

\section{Divisors of modules}
\label{divisor-sec} The concept of a divisor associated to a GK-2
module is a basic tool in the study of the module category of the
Sklyanin algebra $S$, and AS-regular algebras more generally.  It
was first studied by Ajitabh in \cite{Aj1}, and then also by Van den
Bergh, De Naeghel, and others, for example see \cite{Aj2},
\cite{AjV}, \cite{DeN}.  We will see that this concept is also very
important for the study of the rings $R(D)$. In this section, we
recall some of the basic definitions and properties of divisors of
modules.  At the end of the section, we apply the theory we have
developed to line modules and their submodules; as it turns out,
line modules will also be very important to the proof of
Theorem~\ref{RD-props-thm}(3).
Throughout this section, let $R$ be a ring satisfying
Hypothesis~\ref{main-hyp}, so that $g \in R_d$ is a central element
(many of the results work with minor adjustments if $g$ is normal),
and $B = R/Rg = B(E, \mc{N}, \tau)$.

We call $M \in \rgr R$ \emph{admissible} if $\GK M/Mg \leq 1$. Since
$B$ is a GK-2 domain and $M/Mg \in \rgr B$, this is
equivalent to the requirement that $M/Mg$ does not have a subfactor
isomorphic to (a shift of) $B$.  We will associate divisors only to
admissible modules. For admissible $M$, tensoring the exact sequence
$0 \to Rg \to R \to R/Rg \to 0$ with $M$ and considering the long
exact sequence in $\tor$, we easily see that
\begin{equation}
\label{tor1-eq} \tor_1^R(M, R/Rg) \cong \{ m \in M | mg = 0 \}[-d],
\end{equation}
and that $\tor_i^R(M, R/Rg) = 0$ for all $i \geq 2$. We now define
the divisor associated to an admissible module; this is a variation
on the original definition of Ajitabh, following Van den Bergh
\cite[p. 66]{VdB}, which has the advantage of exactness as in
Lemma~\ref{div-basic-lem}(4) below.
\begin{definition}
\label{div-def} Given $M \in \rgr R$ which is admissible, let $N =
M/Mg$ and $N' =\tor_1^R(M, R/Rg)$.  We define $\dv M = C(N) - C(N')
\in \Div E$, in the notation of Definition~\ref{C-def}.
\end{definition}
For the preceding definition to make sense, we need $\GK \tor_1^R(M,
R/Rg) \leq 1$ when $M$ is admissible.  This follows from
\eqref{tor1-eq} and part (1) of the next lemma, which also collects
several other easy properties of the divisor.
\begin{lemma}
\label{div-basic-lem} Let $M \in \rgr R$.
\begin{enumerate}
\item $M$ is admissible if and only if $\GK M \leq 2$ and
$\GK \ts(M) \leq 1$.
\item If $M$ is admissible and $g$-torsionfree, then $\dv M =
C(M/Mg)$; in particular, $\dv M$ is effective.
\item For admissible $M$ and $i \in \mb{Z}$, $\dv M[i] = \tau^{i}(\dv
M)$.
\item The map $M \mapsto \dv M$ is additive on short exact sequences of
admissible modules in $\rgr R$.
\end{enumerate}
\end{lemma}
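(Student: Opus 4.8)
The plan is to establish the four parts in the order given. Parts (2) and (3) are short once one knows how the function $C(-)$ of Definition~\ref{C-def} interacts with shifting and with exactness; the substantive steps are (1), via a snake-lemma computation for multiplication by $g$, and (4), which reduces to additivity of $C(-)$ around a six-term exact sequence and is the place where the $\tor_1$-correction in Definition~\ref{div-def} is used.

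For (1), since $R$ is noetherian by Theorem~\ref{basic-prop-thm}(1) we have $\ts(M)g^n = 0$ for some $n$, and $M/\ts M$ is $g$-torsionfree. Applying the snake lemma to the vertical maps ``multiplication by $g$'' from the row $0 \to \ts(M)[-d] \to M[-d] \to (M/\ts M)[-d] \to 0$ to the row $0 \to \ts(M) \to M \to M/\ts M \to 0$, and noting that $\ker(g|_{\ts M}) = \ker(g|_M) = \{m \in M : mg = 0\}$ so that the first connecting map is an isomorphism, one is left with the short exact sequence $0 \to \ts(M)/\ts(M)g \to M/Mg \to (M/\ts M)/((M/\ts M)g) \to 0$. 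If $M$ is admissible, i.e.\ $\GK M/Mg \le 1$, both outer terms have $\GK \le 1$; multiplying by $g^i$ realizes each $\ts(M)g^i/\ts(M)g^{i+1}$ as a quotient of $\ts(M)/\ts(M)g$, so the finite $g$-power filtration of $\ts(M)$ gives $\GK \ts M \le 1$, while $g$-torsionfreeness of $M/\ts M$ gives $\hs_{M/\ts M}(t) = \hs_{(M/\ts M)/((M/\ts M)g)}(t)/(1-t^d)$, hence $\GK(M/\ts M) \le 2$ and $\GK M \le 2$. Conversely, if $\GK M \le 2$ and $\GK\ts M \le 1$, then in that same short exact sequence both the sub (a quotient of $\ts M$) and the quotient (namely $(M/\ts M)/((M/\ts M)g)$, with $M/\ts M$ $g$-torsionfree of $\GK \le 2$) have $\GK \le 1$, so $\GK M/Mg \le 1$.

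Part (2) is immediate from \eqref{tor1-eq}: $g$-torsionfreeness kills $\tor_1^R(M, R/Rg)$, so $\dv M = C(M/Mg)$, and this divisor is effective since every coefficient of $C(-)$ is a length. For (3), shifting is exact and commutes with $-\otimes_R(R/Rg)$ and with $\tor$, so $M[i]/M[i]g = (M/Mg)[i]$ and $\tor_1^R(M[i], R/Rg) = \tor_1^R(M, R/Rg)[i]$; it therefore suffices to prove $C(N[i]) = \tau^i(C(N))$ for $N \in \rgr B$ with $\GK N \le 1$, where $B = R/Rg$. By the filtration used in the proof of Lemma~\ref{restrict-lem}(2), such an $N$ is assembled, modulo finite-dimensional pieces (which contribute nothing to $C$), from tails of point modules; since $C(-)$ is additive on short exact sequences — the category equivalence \eqref{cat-equiv-eq} is exact and length is additive on $\coh E$ — and shifting commutes with filtrations, one reduces to $N = P(q)$, for which the annihilator computation in the proof of Lemma~\ref{restrict-lem}(1) identifies $P(q)[i]_{\ge 0}$ with $P(\tau^i(q))$, giving $C(P(q)[i]) = \tau^i(q) = \tau^i(C(P(q)))$.

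Finally, for (4), let $0 \to M' \to M \to M'' \to 0$ be a short exact sequence of admissible modules. Since $\tor_j^R(M'', R/Rg) = 0$ for $j \ge 2$, the long exact sequence in $\tor_\bullet^R(-, R/Rg)$ reduces to the six-term exact sequence $0 \to \tor_1^R(M', R/Rg) \to \tor_1^R(M, R/Rg) \to \tor_1^R(M'', R/Rg) \to M'/M'g \to M/Mg \to M''/M''g \to 0$. By part (1) together with \eqref{tor1-eq}, all six terms are $B$-modules of $\GK \le 1$, so $C(-)$ is defined on each; additivity of $C(-)$ forces the alternating sum of its values along this sequence to vanish, and regrouping that identity as $C(M/Mg) - C(\tor_1^R(M, R/Rg)) = [C(M'/M'g) - C(\tor_1^R(M', R/Rg))] + [C(M''/M''g) - C(\tor_1^R(M'', R/Rg))]$ is exactly $\dv M = \dv M' + \dv M''$. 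I expect the only real obstacle to be bookkeeping: one must carry the degree-$(-d)$ shift built into \eqref{tor1-eq} consistently through the $\tor$ (equivalently snake) terms, and it is precisely this shift — as part (3) makes visible — that causes the translation contributions to cancel; this is the sense in which subtracting $C(\tor_1^R(M,R/Rg))$ in Definition~\ref{div-def} is the normalization making $\dv$ exact on short exact sequences.
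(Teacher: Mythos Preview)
Your proof is correct and follows essentially the same route as the paper: the same short exact sequence $0 \to \ts(M)/\ts(M)g \to M/Mg \to (M/\ts M)/((M/\ts M)g) \to 0$ for (1) (the paper obtains it via the Tor long exact sequence rather than the snake lemma, but these are the same computation), and the same six-term Tor sequence plus additivity of $C(-)$ for (4); your treatment of (3) is more explicit than the paper's ``definition chase'' but arrives at the same fact $C(N[i]) = \tau^i(C(N))$.

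Two small corrections of exposition, not of mathematics. In (1), what you call ``the first connecting map'' is not the connecting homomorphism of the snake lemma but the induced map on kernels; the actual connecting map $\ker(g|_{M/\ts M}) \to \ts(M)/\ts(M)g$ is zero because $M/\ts M$ is $g$-torsionfree, which is what gives the short exact sequence on cokernels. In (4), your closing remark about the degree-$(-d)$ shift causing ``translation contributions to cancel'' is misplaced: the six-term sequence is already an honest exact sequence of graded $B$-modules, Definition~\ref{div-def} uses $\tor_1^R(M,R/Rg)$ itself (not $\{m:mg=0\}$), and $C(-)$ applies directly with no shifts to reconcile --- the $\tor_1$ correction makes $\dv$ additive simply because it completes the Euler-characteristic identity along that sequence, not because of any interaction with (3).
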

\begin{proof}
(1) Applying $- \otimes R/Rg$ to the exact sequence $0 \to \ts(M)
\to M \to M' = M/\ts(M) \to 0$, one obtains an exact sequence
\[
\dots \to \tor^1(M', R/Rg) \to \ts(M)/\ts(M)g \to M/Mg \to M'/M'g
\to 0.
\]
Since $M'$ is $g$-torsionfree, in fact $\tor^1(M', R/Rg) = 0$.  Now
if $\GK M \leq 2$ and $\GK \ts(M) \leq 1$, then $\GK \ts(M)/\ts(M)g
\leq 1$.   Then since $\GK M' \leq 2$ and $M'$ is $g$-torsionfree,
$\GK M'/M'g \leq 1$ by a simple Hilbert series argument.  So $\GK
M/Mg \leq 1$ from the exact sequence.  Conversely, if $\GK M/Mg \leq
1$, then $\GK \ts(M)/\ts(M)g \leq 1$ from the exact sequence. Since
$\ts(M)$ has a finite filtration $\ts(M) \supseteq \ts(M)g \supseteq
\dots \supseteq \ts(M) g^n = 0$ for some $n$, and
$\ts(M)g^i/\ts(M)g^{i+1}$ is a (shifted) surjective image of
$\ts(M)g^{i-1}/\ts(M)g^i$ for each $i$, we see that $\GK \ts(M) \leq
1$.  Also, $\GK M/Mg \leq 1$ implies that $\GK M \leq 2$, by Hilbert
series again.

(2) This is immediate from the definition of $\dv M$ and
\eqref{tor1-eq}.

(3) It follows from a definition chase that for $N \in \rgr B$ with
$\GK N \leq 1$, we have $C(N[i]) = \tau^{i}(C(N))$, which implies
the result.

(4) Apply $- \otimes R/Rg$ to a given exact sequence $0 \to M \to N
\to P \to 0$ of admissible modules.  Then the desired result follows
immediately from the corresponding long exact sequence in $\tor$
(recall that $\tor_2^R(P, R/Rg) = 0$), together with the fact that
$C(-)$ is additive on short exact sequences of GK-1 modules in $\rgr
B$.
\end{proof}

The divisor $\dv M$ is an important tool in understanding
$g$-torsionfree modules $M \in \rgr R$ with $\GK M = 2$.  Namely, to
get a picture of such modules $M$, one first tries to understand
which linear equivalence classes of divisors occur as $\dv M$. Such
linear equivalence classes have been completely characterized when
$R = S$ is the generic Sklyanin algebra, by work of De Naeghel
\cite{DeN}. The complete answer is quite intricate, but in the
sequel we only need the weak consequence in the following lemma,
which already follows from Ajitabh's first paper \cite{Aj1}.
\begin{definition} Let $R$ satisfy Hypothesis~\ref{main-hyp}.  Then we define
\[
\dv_2(R) = \{ \dv M | M \in \rgr R\ \text{is}\ g\text{-torsionfree}\
\text{and}\ GK(M) = 2 \} \subseteq \Div E.
\]
\end{definition}

\begin{lemma}
\label{Skl-count-lem}
  Let $S$ be a generic Sklyanin algebra.  Then the image of $\dv_2(S)$
in $\Pic E$ is countable.
\end{lemma}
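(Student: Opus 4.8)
The plan is to compute $\dv M$ from a graded free resolution of $M$ over $S$, reduce it modulo $g$, and finish with a determinant computation on $E$; this is in essence the argument of Ajitabh \cite{Aj1}. Recall that $S$ is Artin--Schelter regular of dimension $3$, hence has global dimension $3$, and that $S/Sg\cong B=B(E,\mc L,\sigma)$ with $g\in S_3$. The key homological input is the standard fact that a finitely generated graded $S$-module of projective dimension $3$ must have a nonzero finite-dimensional submodule; since $g\in S_3$ acts as zero on every finite-dimensional graded module, a $g$-torsionfree module $M$ has no such submodule, so $\operatorname{pd}_S M\le 2$. Thus any $g$-torsionfree $M\in\rgr S$ admits a finite graded free resolution $0\to P_2\to P_1\to P_0\to M\to 0$ with $P_i=\bigoplus_j S[-a_{ij}]$ for finitely many integers $a_{ij}$.

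Next I would reduce modulo $g$. Tensoring the resolution over $S$ with $B$ gives a complex $0\to\bar P_2\to\bar P_1\to\bar P_0\to M/Mg\to 0$, where $\bar P_i=\bigoplus_j B[-a_{ij}]$, and this complex is exact: $\tor^S_{\ge 2}(M,B)=0$ since $g$ is a central nonzerodivisor, and $\tor^S_1(M,B)\cong\{m\in M\mid mg=0\}[-3]=0$ since $M$ is $g$-torsionfree, cf.\ \eqref{tor1-eq}. Now apply the exact category equivalence $\coh E\simeq\rqgr B$ of \eqref{cat-equiv-eq}: each $B[-a]$ corresponds to a single invertible sheaf $\mc B(a)$ on $E$ (of degree $-3a$), depending only on $a$; and $M/Mg$, which satisfies $\GK M/Mg\le 1$ because $M$ is admissible, corresponds to a torsion sheaf $\mc T$ with $C(\mc T)=\dv M$ by Lemma~\ref{div-basic-lem}(2) (and Definition~\ref{C-def}). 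So one obtains an exact sequence of coherent sheaves $0\to\mc E_2\to\mc E_1\to\mc E_0\to\mc T\to 0$ with $\mc E_i=\bigoplus_j\mc B(a_{ij})$ locally free.

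Finally, taking determinants, and using that a coherent subsheaf of a locally free sheaf on the smooth curve $E$ is again locally free, one gets $\det\mc T\cong\det\mc E_0\otimes(\det\mc E_1)^{-1}\otimes\det\mc E_2$, while for the torsion sheaf $\mc T$ one has $\det\mc T=\mc O_E\bigl(\sum_p\operatorname{length}(\mc T_p)\,p\bigr)=\mc O_E(C(\mc T))=\mc O_E(\dv M)$. Hence
\[
\mc O_E(\dv M)\;\cong\;\bigotimes_i\Bigl(\bigotimes_j\mc B(a_{ij})\Bigr)^{(-1)^i},
\]
so the class of $\dv M$ in $\Pic E$ is determined by the finite multiset $\{a_{ij}\}$ of resolution shifts; in particular it lies in the countable subgroup of $\Pic E$ generated by $\{[\mc B(a)]\mid a\in\mb Z\}$. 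Since $M$ ranging over $g$-torsionfree modules of $\GK$ equal to $2$ produces only countably many multisets of shifts, the image of $\dv_2(S)$ in $\Pic E$ is countable. The step requiring the most care is the homological input in the first paragraph — this is where AS-regularity of $S$ and $g\in S_{>0}$ are used; checking that reduction modulo $g$ stays exact and that sheafification turns the free resolution into a locally free resolution is routine, and the determinant identity on $E$ is then elementary.
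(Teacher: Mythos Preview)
Your proof is correct and follows essentially the same route as the paper: both arguments show that the class of $\dv M$ in $\Pic E$ is determined by the graded Betti numbers of a finite free resolution of $M$, which is Ajitabh's observation \cite{Aj1}. The paper simply cites \cite[Lemma~2.5]{Aj1} for this, after first replacing $M$ by a finite-dimensional extension $M'$ of projective dimension exactly $1$ (so that Ajitabh's lemma applies as stated); you instead work directly with $M$ of projective dimension $\leq 2$, reduce modulo $g$, sheafify, and carry out the determinant calculation explicitly. Your approach is slightly more self-contained and avoids the passage to $M'$, at the cost of a three-term rather than two-term locally free resolution on $E$; the paper's approach is terser but relies more heavily on the cited lemma. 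Both are valid and the underlying idea is identical.
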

\begin{proof}
Every $g$-torsionfree $M \in \rgr S$ with $\GK M = 2$ has an
extension $M' \supseteq M$ with $\dim_k M'/M < \infty$, where $M'$
has projective dimension $1$ (see the discussion following
\cite[Lemma 2.5]{Aj1}).  Then by \cite[Lemma 2.5]{Aj1}, the linear
equivalence class of $\dv M = \dv M'$ is uniquely determined by the
sequence of graded Betti numbers appearing in the minimal projective
resolution of $M'$, and there are clearly countably many such
sequences of Betti numbers. (Technically, Ajitabh uses a different
definition of the divisor $\dv M$ in this paper, which it is
well-known is equivalent to ours for the modules $M$ in question
\cite[p. 1636]{AjV}.)
\end{proof}

While it would be interesting to study the divisors of modules over
$R(D)$ more carefully, our goal in this paper is more modest: we
just want to prove the analog of Lemma~\ref{Skl-count-lem} for the
rings $R(D)$, which we will do in the next section.  It simplifies
arguments considerably to work modulo the divisors of $\GK$-1
modules.  This is accomplished by the next definition, following an
idea of Van den Bergh \cite[p. 111]{VdB}.
\begin{definition}
Let $H_{\tau}$ be the subgroup of $\Div E$ generated by $\{ p -
\tau^i(p) | p \in E, i \in \mb{Z} \}$.  Let $H_{\ell}$ be the
subgroup of $\Div E$ of divisors linearly equivalent to $0$, so that
$\Pic E = \Div E/H_{\ell}$ as usual.  We define $\overline{\Div} E =
\Div E/H_{\tau}$, and $\overline{\Pic} E = \Div E/(H_{\ell} +
H_{\tau})$.  For $M \in \rgr R$, let $\overline{\dv} M$ be the image
in $\overline{\Div} E$ of $\dv M$. Let $\overline{\dv}(R) = \{
\overline{\dv} M | M \in \rgr R\ \text{is admissible} \} \subseteq
\overline{\Div} E$.
\end{definition}
\begin{lemma}
\label{over-dv-lem}
\begin{enumerate}
\item If $M \in \rgr R$ with $\GK M \leq 1$, then $\overline{\dv} M =
0$. In particular, if $M \in \rgr R$ is admissible then
$\overline{\dv}(M) = \overline{\dv}(M/\ts(M))$.

\item The image of $\dv_2(R)$ in $\Pic E$ is countable if and only
if the image of $\overline{\dv}(R)$ in $\overline{\Pic E}$ is
countable.

\end{enumerate}

\end{lemma}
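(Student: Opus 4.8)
The plan is to establish part (1) first, reducing everything to the value of $\dv$ on $B$-modules, and then derive part (2) from the observation that the natural surjection $\Pic E \to \overline{\Pic E}$ has countable kernel. For part (1) the key computation is that $\overline{\dv}\,P = 0$ for every $B$-module $P$ with $\GK P \le 1$, regarded as an $R$-module via $R \to R/Rg = B$. Such a $P$ is admissible since $P/Pg = P$, and by \eqref{tor1-eq} we have $\tor_1^R(P, R/Rg) \cong P[-d]$, so $\dv P = C(P) - C(P[-d]) = C(P) - \tau^{-d}(C(P))$ using Lemma~\ref{div-basic-lem}(3). Writing $C(P) = \sum_q a_q\, q$ with $a_q \ge 0$, this difference is $\sum_q a_q(q - \tau^{-d}(q)) \in H_\tau$ by definition of $H_\tau$, whence $\overline{\dv}\,P = 0$. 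For a general $M \in \rgr R$ with $\GK M \le 1$, I would apply additivity of $\dv$ (Lemma~\ref{div-basic-lem}(4)) to $0 \to \ts(M) \to M \to M/\ts(M) \to 0$, noting all three modules are admissible. The submodule $\ts(M)$ is $g$-torsion of $\GK \le 1$, so its finite filtration $\ts(M) \supseteq \ts(M)g \supseteq \cdots \supseteq 0$ has $B$-module factors of $\GK \le 1$, and additivity plus the previous computation give $\overline{\dv}\,\ts(M) = 0$. The quotient $M/\ts(M)$ is $g$-torsionfree of $\GK \le 1$, hence has bounded Hilbert function, which forces (via Lemma~\ref{B-prop}(3) and the relation $H_{M/\ts(M)}(t) = H_{(M/\ts(M))/(M/\ts(M))g}(t)/(1-t^d)$) that $(M/\ts(M))/(M/\ts(M))g$ is finite-dimensional; then $\dv(M/\ts(M)) = C((M/\ts(M))/(M/\ts(M))g) = 0$ by Lemma~\ref{div-basic-lem}(2). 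Summing gives $\overline{\dv}\,M = 0$. The ``in particular'' clause is then immediate: for admissible $M$, $\GK\ts(M) \le 1$ by Lemma~\ref{div-basic-lem}(1), so $\overline{\dv}\,\ts(M) = 0$ and additivity gives $\overline{\dv}\,M = \overline{\dv}(M/\ts(M))$.

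For part (2), the crucial point is that the natural surjection $\bar\pi\colon \Pic E \to \overline{\Pic E}$ has countable kernel, namely the image of $H_\tau$ in $\Pic E$: each generator $p - \tau^i(p)$ of $H_\tau$ has degree $0$ and, by Abel's theorem, corresponds under $\Pic^0 E \cong E$ to the point $p - \tau^i(p) = -ir$ (as $\tau$ is translation by the infinite-order point $r$), so $\ker\bar\pi$ is the cyclic group $\langle r\rangle \cong \mb{Z}$. For the direction ($\Rightarrow$), part (1) and its ``in particular'' clause show that for admissible $M$ one has $\overline{\dv}\,M = \overline{\dv}(M/\ts(M))$, which is $0$ when $\GK M \le 1$ and is $\overline{\dv}$ of a $g$-torsionfree $\GK$-$2$ module when $\GK M = 2$; hence the image of $\overline{\dv}(R)$ in $\overline{\Pic E}$ coincides with the image of $\dv_2(R) \cup \{0\}$, and so is countable once the image of $\dv_2(R)$ in $\Pic E$ is. For ($\Leftarrow$), every element of $\dv_2(R)$ arises from a $g$-torsionfree $\GK$-$2$ module, which is admissible, so the image $X$ of $\dv_2(R)$ in $\Pic E$ maps under $\bar\pi$ into (a subset of) the image of $\overline{\dv}(R)$ in $\overline{\Pic E}$, which is countable by hypothesis; since every fibre of $\bar\pi$ is a coset of the countable group $\langle r\rangle$, $X$ is a countable union of countable sets and therefore countable.

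I expect the only real difficulty to be bookkeeping. One must check admissibility of each module before invoking Lemma~\ref{div-basic-lem}(4), and one must keep the two commuting routes $\Div E \to \Pic E \to \overline{\Pic E}$ and $\Div E \to \overline{\Div} E \to \overline{\Pic E}$ straight so that ``the image of $\dv_2(R)$'' is interpreted consistently in both targets. The one genuinely substantive ingredient is the identification $\ker\bar\pi = \langle r\rangle$, which is where the infinite order of $\tau$ (equivalently, of $r$) is used; everything else is formal manipulation of countability together with the additivity and shift properties of $\dv$ recorded in Lemma~\ref{div-basic-lem}.
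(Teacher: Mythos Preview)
Your proof is correct and follows essentially the same approach as the paper: for part (1) you reduce to $g$-torsion and $g$-torsionfree pieces via additivity, handle $B$-modules through the computation $\dv P = C(P) - \tau^{-d}(C(P)) \in H_\tau$, and handle the $g$-torsionfree piece by showing $M'/M'g$ is finite-dimensional; for part (2) you use that $\ker(\Pic E \to \overline{\Pic}E)$ is cyclic (generated by the class of the translation point) and hence countable-to-one, together with the identification of the image of $\overline{\dv}(R)$ with that of $\dv_2(R) \cup \{0\}$. The only cosmetic points are that the translation is denoted $s$ in Hypothesis~\ref{main-hyp} (not $r$), and the shift formula $C(N[i]) = \tau^i(C(N))$ is stated in the \emph{proof} of Lemma~\ref{div-basic-lem}(3) rather than in its statement.
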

\begin{proof}
(1) Note that $\overline{\dv}(-)$ will still be additive on short
exact sequences, using Lemma~\ref{div-basic-lem}(4).  So to prove
that if $M \in \rgr R$ with $\GK M \leq 1$, then $\overline{\dv} M =
0$, it suffices to prove this when $M$ is either $g$-torsion or
$g$-torsionfree.  If $M$ is $g$-torsionfree, then $\GK M/Mg = 0$,
and so $\dv M = 0$.  If $M$ is $g$-torsion, then it has a filtration
by finitely many $B$-modules of GK-dimension at most $1$. If $N \in
\rgr B$, then $N' = \tor_1^R(N, R/Rg) \cong N[-d]$, and so $\dv N =
C - \tau^{-d}(C)$ for some effective divisor $C$; clearly then
$\overline{\dv}(N) = 0$ by definition.  The second statement
follows, using Lemma~\ref{div-basic-lem}(1).

(2) Let $\theta: \Pic E \to \overline{\Pic E}$ be the quotient map.
For any point $p$ and $i \in \mb{Z}$, we have $p - \tau^i(p) \sim
-is$, where $s$ is the point such that $\tau$ is the translation
$\tau(x) = x + s$ in the group structure.   Thus $H_{\ell} +
H_{\tau} = H_{\ell} + \mb{Z}s$.  It follows that $\theta$ is a
countable-to-1 map.  Now let $P$ be the image of $\dv_2(R)$ in $\Pic
E$.  Since by part (1) only $g$-torsionfree modules of GK-2 matter
in determining which divisors occur in $\overline{\dv}(R)$, it is
easy to see that $\theta(P) \cup \{0\}$ is equal to the image of
$\overline{\dv}(R)$ in $\overline{\Pic E}$.  The result follows,
since $P$ is countable if and only if $\theta(P)$ is countable.
\end{proof}

Next, we study how the divisors of modules over $R$ are related to
divisors of modules over a Veronese ring. Suppose that $m \geq 1$
divides the degree $d$ of the central element $g$ of $R$. Then the
$m$th Veronese ring $R' = R^{(m)} = \bigoplus_{n \geq 0} R_{nm}$
still satisfies the basic setup of this section: setting $d' = d/m$,
we have $g \in R'_{d'}$ is central in $R'$ and $R'/R'g \cong B^{(m)}
\cong B(E, \mc{N}_m, \tau^m)$.  Given a module $M \in \rgr R$, we
can define a module $M^{(m)} = \bigoplus_{n = 0}^{\infty} M_{nm} \in
\rgr R'$; the rule $M \mapsto M^{(m)}$ defines a functor $F: \rgr R
\to \rgr R'$.
\begin{lemma}
\label{div-ver-lem} Keep the notation of the preceding paragraph.
Given any admissible $M \in \rgr R$, then $\dv_R M = \dv_{R'}
M^{(m)}$.  Moreover, $\overline{\dv}(R) = \overline{\dv}(R')$.
\end{lemma}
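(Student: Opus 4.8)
The plan is to compare the two divisors by unwinding the definitions and using that the Veronese functor $F$ interacts well with both the reduction mod $g$ and the $\tor$ computation in \eqref{tor1-eq}. First I would record the basic compatibilities: for $M \in \rgr R$ admissible, $(M/Mg)^{(m)} = M^{(m)}/M^{(m)}g$ as $R'/R'g$-modules, and likewise $\{x \in M \mid xg = 0\}^{(m)} = \{x \in M^{(m)} \mid xg = 0\}$. Together with \eqref{tor1-eq} (applied to both $R$ and $R'$), this shows $\tor_1^{R'}(M^{(m)}, R'/R'g) \cong (\tor_1^R(M, R/Rg))^{(m)}$, and in particular $M^{(m)}$ is admissible when $M$ is, since GK-dimension is unaffected by passing to a Veronese. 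So the whole computation of $\dv_{R'} M^{(m)}$ reduces to understanding how $C(-)$ from Definition~\ref{C-def} behaves under the functor $G' \mapsto G$ relating $\coh E$ via the two category equivalences \eqref{cat-equiv-eq} for $B$ and for $B^{(m)}$.

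The key point is that for $N \in \rgr B$ with $\GK N \leq 1$, one has $C(N) = C(N^{(m)})$, where $N^{(m)}$ is regarded as a $B^{(m)} = B(E, \mc{N}_m, \tau^m)$-module. This is because the equivalence \eqref{cat-equiv-eq} for $B$ sends $N$ (in large degree) to a torsion sheaf $\mc{F}$, and the equivalence for $B^{(m)}$ sends $N^{(m)}$ to the \emph{same} torsion sheaf $\mc{F}$: indeed $G(\mc{F})_n = \HB^0(E, \mc{F} \otimes \mc{N}_n) = \HB^0(E, \mc{F} \otimes \mc{N}_{nm})$ for torsion $\mc{F}$, since $\mc{F} \otimes \mc{N}_j \cong \mc{F}$ for every $j$ when $\mc{F}$ is supported at finitely many points (a fact already used in the proof of Lemma~\ref{B-prop}). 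Hence the two modules have identical tails, the associated sheaves agree, and the lengths of the stalks — which is all that enters $C(N)$ — are the same divisor. Applying this to $N = M/Mg$ and to $N' = \tor_1^R(M, R/Rg)[d]$ (or directly to the shifted module) gives $C_{B}(M/Mg) = C_{B^{(m)}}(M^{(m)}/M^{(m)}g)$ and similarly for the $\tor$ term, and subtracting yields $\dv_R M = \dv_{R'} M^{(m)}$.

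For the second assertion, $\overline{\dv}(R) = \overline{\dv}(R')$, I would argue by two inclusions. The inclusion $\overline{\dv}(R') \subseteq \overline{\dv}(R)$ is not immediate from the first part because an admissible $R'$-module need not be of the form $M^{(m)}$; but any admissible $M' \in \rgr R'$ can be induced up to $M = M' \otimes_{R'} R \in \rgr R$, and using Lemma~\ref{over-dv-lem}(1) together with the fact that $(M' \otimes_{R'} R)^{(m)}$ agrees with $M'$ in large degree (up to $g$-torsion issues, which $\overline{\dv}$ ignores), one gets $\overline{\dv}_R M = \overline{\dv}_{R'} M'$. Alternatively, and more cleanly, one observes that $R$ itself need not be a Veronese of $R'$, so instead I would note $R$ is module-finite over $R'$ on both sides, pick an admissible $R$-module realizing a given divisor, restrict it to $R'$, and check the divisor is unchanged modulo $H_\tau$ — here the shift ambiguity $\tau^i$ is exactly what $H_\tau$ kills, which is the whole reason $\overline{\Div} E$ was introduced. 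The inclusion $\overline{\dv}(R) \subseteq \overline{\dv}(R')$ then follows from the first part of the lemma applied to admissible $M \in \rgr R$, since $\dv_R M = \dv_{R'} M^{(m)}$ and $M^{(m)}$ is admissible.

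The main obstacle I anticipate is the bookkeeping in the second statement: the functor $M \mapsto M^{(m)}$ is not essentially surjective onto admissible $R'$-modules, so establishing $\overline{\dv}(R') \subseteq \overline{\dv}(R)$ requires producing, for each admissible $M' \in \rgr R'$, an admissible $M \in \rgr R$ with $\overline{\dv}_R M = \overline{\dv}_{R'} M'$, and verifying that induction (or some similar construction) preserves admissibility and the divisor modulo $H_\tau$. This is where one must use that $\overline{\dv}$ is insensitive to $g$-torsion and to degree shifts, invoking Lemma~\ref{over-dv-lem}(1) and Lemma~\ref{div-basic-lem}(3); the first part of the present lemma handles the reverse inclusion routinely.
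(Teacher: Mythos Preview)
Your argument for the first assertion, $\dv_R M = \dv_{R'} M^{(m)}$, is correct and essentially identical to the paper's: reduce to showing $C_B(N) = C_{B^{(m)}}(N^{(m)})$ for $N \in \rgr B$ with $\GK N \leq 1$, which follows because both modules correspond to the same torsion sheaf under \eqref{cat-equiv-eq}; then use \eqref{tor1-eq} to identify the $\tor$ terms.

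For the second assertion your diagnosis of the obstacle is right, but the paper resolves it more directly than either of your proposed workarounds. Since $R$ is generated in degree $1$, the functor $F$ descends to an \emph{equivalence} of categories $\overline{F}: \rqgr R \to \rqgr R'$ \cite[Proposition 5.10(3)]{AZ1}. Hence any admissible $M' \in \rgr R'$ is isomorphic \emph{in large degree} to $M^{(m)}$ for some $M \in \rgr R$ (necessarily admissible, by the admissibility equivalence you already noted). Since $\dv$ vanishes on finite-dimensional modules, $\dv_{R'} M' = \dv_{R'} M^{(m)} = \dv_R M$, and both inclusions follow at once. Your worry that $M \mapsto M^{(m)}$ is not essentially surjective is thus answered by passing to tails; no induction or restriction is needed.

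Your induction approach $M = M' \otimes_{R'} R$ could perhaps be made to work, but the claim that $(M' \otimes_{R'} R)^{(m)}$ agrees with $M'$ in large degree is itself tantamount to the category equivalence above, so you would need that fact anyway. Your ``alternatively'' paragraph appears to be arguing the wrong inclusion: restricting an admissible $R$-module to $R'$ produces an element of $\overline{\dv}(R')$, which shows $\overline{\dv}(R) \subseteq \overline{\dv}(R')$, not the reverse; and restriction of $M_R$ to $R'$ is not the same as $M^{(m)}$ (it splits as $\bigoplus_{i=0}^{m-1} M[i]^{(m)}$), so the divisor computation would require extra care.
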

\begin{proof}
First, suppose that $N \in \rgr B$ is isomorphic in large degree to
$\bigoplus_{n = 0}^{\infty} \HB^0(E, \mc{F} \otimes \mc{N}_n)$ for
some torsion sheaf $\mc{F}$. Then $N^{(m)}$ is isomorphic as a $B' =
R'/R'g = B(E, \mc{N}_m, \tau^m)$-module to $\bigoplus_{n
=0}^{\infty} \HB^0(E, \mc{F} \otimes \mc{N}_{mn})$ in large degree.
Thus in the notation of Definition~\ref{C-def}, $C_B(N) =
C_{B'}(N^{(m)})$.

Given $M \in \rgr R$, we have $M^{(m)}/M^{(m)}g = (M/Mg)^{(m)}$.
Since all $B$-modules and $B'$-modules have a Hilbert polynomial of
the form $f(n) = an + b$ for $n \gg 0$, by Lemma~\ref{B-prop}(3), it
is easy to check then that $\GK_R M/Mg = \GK_{R'} M^{(m)}/M^{(m)}g$;
thus $M$ is admissible if and only if $F(M) = M^{(m)}$ is.  Now
given an admissible $M \in \rgr R$, setting $P = \{m \in M | mg =
0\}$, we also have
\[
\tor_1^{R'}(M^{(m)}, R'/R'g)  \cong P^{(m)}[-d/m] \cong P[-d]^{(m)}
\cong \tor_1^R(M, R/Rg)^{(m)}.
\]
Thus $\dv_R M = \dv_{R'} M^{(m)}$ follows from the previous
paragraph.

Finally, because $R$ is generated in degree $1$,  the functor $F$
descends to an equivalence of categories $\overline{F}: \rqgr R \to
\rqgr R'$ \cite[Proposition 5.10(3)]{AZ1}.  In particular, any
admissible $M' \in \rgr R'$ is isomorphic in large degree to $F(M)$
for some $M \in \rgr R$, which must also then be admissible, as we
have seen. The conclusion  $\overline{\dv}(R) = \overline{\dv}(R')$
follows.
\end{proof}

Recall that a module $M \in \rgr R$ is called a \emph{line module}
if $M$ is cyclic, generated in degree $0$, and $\hs_M(t) =
1/(1-t)^2$. In the last result of this section, we completely
describe the divisors of such a line module and its submodules,
under the further condition that the central element $g$ is in
degree $1$. In this case, by Theorem~\ref{basic-prop-thm}(2), every
module $N \in \rgr R$ has a Hilbert polynomial, and so a
well-defined multiplicity.
\begin{lemma}
\label{nearline-prop-lem} \label{line-prop-lem} Let $R$ satisfy
Hypothesis~\ref{main-hyp}, and in addition assume that $g \in R_1$
$(d = 1)$.  Let $M \in \rgr R$ be a line module for $R$.
\begin{enumerate}
\item $M$ is $g$-torsionfree, admissible, and
$\dv M = p$ is a single point.

\item If $0 \neq N \subseteq M$ is a graded submodule, then there is
a graded submodule $\wt{N}$ with $N \subseteq \wt{N} \subseteq M$
such that $\dim_k \wt{N}/N < \infty$ and $\wt{N} \cong L[-m]$ for
some line module $L$.  In particular, $M$ is GK-2-critical.
Moreover, $m$ is the multiplicity of $M/N$ and $\dv L =
\tau^{m-n}(p)$, where $n$ is the multiplicity of $\ts(M/N)$.
\end{enumerate}
\end{lemma}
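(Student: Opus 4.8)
The plan is to analyze a line module $M$ via its reduction $\overline{M} = M/Mg$ over $B = B(E,\mc{N},\tau)$ and use the dictionary of Lemma~\ref{div-basic-lem} together with the equivalence of categories~\eqref{cat-equiv-eq}. For part (1), I would first compute the Hilbert series: since $M$ is a line module, $\hs_M(t) = 1/(1-t)^2$, and $g$ acts injectively on $R$ but I must rule out $g$-torsion in $M$. Suppose $\ts(M) \neq 0$; since $M$ is cyclic and $\ts(M)$ is a graded submodule, one checks from the Hilbert series that $\GK \ts(M) \leq 1$ would force $\hs_{M/\ts(M)}$ to have a strictly smaller leading term, but $M/\ts(M)$ is still cyclic and $g$-torsionfree, hence $\hs_{M/\ts(M)}(t) = \hs_{(M/\ts(M))/(M/\ts(M))g}(t)/(1-t)$; comparing with $\hs_M(t) = 1/(1-t)^2$ and using that $\overline{M}$ is a cyclic $B$-module of GK at most $2$ and the classification of such over $B = B(E,\mc{N},\tau)$ (via Lemma~\ref{B-prop}(3) and the equivalence~\eqref{cat-equiv-eq}), one forces $\ts(M) = 0$. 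Then $\overline{M}$ has Hilbert series $(1-t)\cdot 1/(1-t)^2 = 1/(1-t)$, i.e. $\overline{M}$ is a point module for $B$, so by Lemma~\ref{div-basic-lem}(2), $\dv M = C(\overline{M})$, and a point module corresponds under~\eqref{cat-equiv-eq} to a skyscraper sheaf $k(p)$, giving $\dv M = p$, a single point. Admissibility is then immediate since $\GK \overline{M} = 1$.

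For part (2), let $0 \neq N \subseteq M$ be a graded submodule. First I would show $\GK M/N \leq 1$: indeed $\overline{N} = (N + Mg)/Mg \subseteq \overline{M}$ is a nonzero submodule of the point module $\overline{M}$ over $B$, and since point modules over $B(E,\mc{N},\tau)$ are $1$-critical (any nonzero submodule has finite-codimensional image, because $\overline{M}$ corresponds to $k(p)$ which is a simple object in $\coh E$), we get $\dim_k \overline{M}/\overline{N} < \infty$; combined with the fact that $M$ is $g$-torsionfree this yields $\GK M/N \leq 1$ and hence also shows $M$ is GK-2-critical. Now set $m$ to be the multiplicity of $M/N$ (using Theorem~\ref{basic-prop-thm}(2), every module has a Hilbert polynomial when $d = 1$) and let $\wt{N} = \{x \in M \mid xg^j \in N \text{ for some } j\}$... no — rather, take $\wt{N}$ so that $\wt{N}/N = \ts((M/N)_R)$ is the $g$-torsion submodule of $M/N$. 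Then $\dim_k \wt{N}/N < \infty$ because $\ts(M/N)$ has bounded Hilbert function (it is a finite extension of $B$-modules of GK $\leq 1$, each with constant Hilbert function by Lemma~\ref{B-prop}(3), and in fact is finite-dimensional since $M/\wt N$ is $g$-torsionfree of GK $\leq 1$ so has eventually constant Hilbert function, forcing the bounded piece $\wt N/N$ to be finite). The quotient $M/\wt{N}$ is then $g$-torsionfree of GK $\leq 1$, so $L := \wt{N}$ has $M/L$ $g$-torsionfree; one then checks $\hs_{\wt N}(t)$ has the form of a shifted line module: $\wt{N}$ is $g$-torsionfree with $\hs_{\wt N}(t) = \hs_{\overline{\wt N}}(t)/(1-t)$, and $\overline{\wt N} \subseteq \overline M$ is a finite-codimensional submodule of a point module, hence a shifted point module $P[-m']$; tracing the shift through the multiplicity count gives $\wt N \cong L[-m]$ for a line module $L$ with $\overline L$ the point module at the appropriate point.

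Finally, for the divisor count $\dv L = \tau^{m-n}(p)$: by Lemma~\ref{div-basic-lem}(4) applied to $0 \to N \to M \to M/N \to 0$ and $0 \to \wt N/N \to M/N \to M/\wt N \to 0$, and using that $\dim_k \wt N / N < \infty$ so $\dv \wt N = \dv N$, I get $\dv M = \dv \wt N + \dv(M/\wt N)$; since $M/\wt N$ is $g$-torsionfree of GK $\leq 1$, $\dv(M/\wt N) = 0$, hence $\dv \wt N = \dv M = p$. Then $\dv L = \dv(\wt N[m]) = \tau^{m}(\dv \wt N) \cdot$... here I need Lemma~\ref{div-basic-lem}(3): $\dv \wt N = \dv(L[-m]) = \tau^{-m}(\dv L)$, so $\dv L = \tau^{m}(p)$. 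To get the correction by $n = $ multiplicity of $\ts(M/N)$, I instead track the point of the $B$-point module $\overline L$ directly: $\overline L = \overline{\wt N}$ is the submodule of the point module $\overline M = P(p)_B$ of colength equal to the multiplicity of $\overline{M/\wt N}$; since $\overline{M/N}$ has total length $=$ mult of $M/N$ counting only the $B$-composition factors... The cleanest route is: $\dv M = p$ by part (1); $\dv N = \dv M - \dv(M/N) = p - \dv(M/N)$ by additivity; $M/N$ is admissible (GK $\leq 1$), and $\dv(M/N) = C(\overline{M/N}) - C(\tor_1^R(M/N,R/Rg))$ where the latter torsion piece has $C = $ the divisor counting $\ts(M/N)$ shifted by $\tau^{-1}$; writing $\overline{M/N}$ has $C(\overline{M/N}) = \sum_{j=0}^{m-1}\tau^{-j}(\text{something})$ and similarly for the torsion part with $n$ terms, the difference telescopes to $\tau^{-n}(\cdot) - \tau^{-m}(\cdot)$, and combined with $\dv N = \dv(L[-m]) = \tau^{-m}(\dv L)$ yields $\tau^{-m}(\dv L) = p - \tau^{-n}(q) + \tau^{-m}(q)$ type relation that pins down $\dv L = \tau^{m-n}(p)$.

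\textbf{Main obstacle.} The delicate point is the bookkeeping of the shifts $\tau^{i}$ and the interplay between the multiplicity $m$ of $M/N$ and the multiplicity $n$ of its $g$-torsion part $\ts(M/N)$: I expect the hardest step is to verify carefully, via the category equivalence~\eqref{cat-equiv-eq} and the description of $C(-)$ in Definition~\ref{C-def}, that the $B$-point module $\overline{\wt N}$ is precisely the submodule of $\overline M = P(p)_B$ whose ``missing'' points are $\{\tau^{-j}(p) : 0 \le j < m-n\}$ — equivalently that passing from $N$ to $\wt N$ discards exactly the $n$ finite-length (=$g$-torsion-contributing) composition factors — so that $\overline{\wt N} \cong P(\tau^{m-n}(p))_B[-(m-n)]$ and hence $\dv L = \tau^{m-n}(p)$. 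The rest is routine Hilbert-series and long-exact-sequence manipulation.
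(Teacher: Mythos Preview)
Your approach to part (1) is essentially the same as the paper's, though the paper argues slightly more cleanly: it first rules out $\GK \ts(M) = 2$ by a multiplicity argument (any subfactor of $M$ isomorphic to a shift of $B$ would have multiplicity $\deg \mc{N} \geq 2$, exceeding the multiplicity $1$ of $M$), and then a Hilbert-series squeeze forces $\ts(M) = 0$.

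Part (2), however, has a genuine gap in your choice of $\wt{N}$.  You set $\wt{N}/N = \ts(M/N)$, but this submodule is \emph{not} finite-dimensional in general.  The simplest counterexample is $N = Mg$: then $M/N = M/Mg$ is a $B$-point module, hence entirely $g$-torsion, so your $\wt{N} = M$ and $\wt{N}/N = M/Mg$ is infinite-dimensional.  In the same example your argument that ``$\overline{N} = (N + Mg)/Mg$ is a nonzero submodule of the point module $\overline{M}$'' fails outright, since $N \subseteq Mg$ gives $\overline{N} = 0$.  Your justification that $\ts(M/N)$ is finite-dimensional (``eventually constant Hilbert function, forcing the bounded piece to be finite'') confuses bounded with finite.

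The paper repairs exactly this by first stripping off the maximal power of $g$: take $n \geq 0$ maximal with $N \subseteq Mg^n$, replace $N$ by $Ng^{-n} \subseteq M$ (which now has nonzero image in $M/Mg$), and let $K \supseteq Ng^{-n}$ be the preimage of the largest finite-dimensional submodule of $M/Ng^{-n}$.  One then shows, via the exact sequence
\[
0 \to \tor_1^R(M/K, R/Rg) \to K/Kg \to M/Mg \to M/(K+Mg) \to 0,
\]
that $K/Kg$ is a tail of the point module $M/Mg$ and that $\tor_1^R(M/K,R/Rg) = 0$; hence $K \cong L[-m']$ for a line module $L$, and finally $\wt{N} = Kg^n \cong L[-(m'+n)]$.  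Here $n$ is the multiplicity of $\ts(M/N) \cong K/N$ and $m = m' + n$ is the multiplicity of $M/N$; since $M/K$ is $g$-torsionfree one gets $\dv K = \dv M = p$ and $\dv L = \tau^{m'}(p) = \tau^{m-n}(p)$.  The ``divide out $g^n$ first'' step is the idea you are missing, and without it both the finite-dimensionality of $\wt{N}/N$ and the correct shift $m$ are unobtainable.
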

\begin{proof}

(1) Suppose first that $\GK \ts(M) = 2$.  Then $M$ must contain a
subfactor isomorphic to a shift of $B$.  However, since $\dim_k B_n
= (\deg \mc{N}) n$ for $n \gg 0$ by Riemann-Roch, any shift of $B_B$
is a module with multiplicity $\deg \mc{N} \geq 2$, whereas $M$ has
multiplicity $1$ by the definition of a line module.  This
contradiction shows that $\GK \ts(M) \leq 1$ and thus $M$ is
admissible by Lemma~\ref{div-basic-lem}(1).

Now $M' = M/\ts(M)$ is a $g$-torsionfree module with $\GK M' = 2$.
Note that $\GK M'/M'g = 1$.  Since $R$ is generated in degree $1$,
the smallest possible Hilbert series of a GK-1 module which is
cyclic and generated in degree $0$ is the Hilbert series of a point
module, $1/(1-t)$.  Thus $H_{M'/M'g}(t) \geq 1/(1-t)$. Since $M'$ is
$g$-torsionfree, $H_{M'}(t) \geq 1/(1-t)^2$, forcing $M' = M$ and
$\ts(M) = 0$.  This also shows that $M/Mg \in \rgr B$ actually is a
point module, and $\dv M = p$ is a single point as claimed.

(2) Suppose that $N$ is a nonzero homogeneous submodule of $M$. Let
$n \geq 0$ be maximal such that $N \subseteq Mg^n$.  Then $Ng^{-n}
\subseteq M$.  Let $Ng^{-n} \subseteq K \subseteq M$ be chosen so
that $K/Ng^{-n}$ is the largest finite-dimensional submodule of
$M/Ng^{-n}$.

By choice of $n$, $K \nsubseteq Mg$.  Since $M$ is $g$-torsionfree,
$\tor_1^R(M, R/Rg) = 0$ and so we have an exact sequence
\[
0 \to \tor_1^R(M/K, R/Rg) \to K/Kg \overset{\theta}{\to} M/Mg \to
M/(K + Mg) \to 0.
\]
By assumption, $\theta \neq 0$.   As we saw in the proof of part
(1), $M/Mg$ is a point module.  Since $R$ is generated in degree
$1$, necessarily $\im \theta = (M/Mg)_{\geq m'}$ for some $m' \geq
0$.  In particular, this forces $\GK K/Kg = 1$, and since $K$ is
$g$-torsionfree, $\GK K = 2$.  Since $M$ has Hilbert function
$\dim_k M_n = n+1$, there is no choice but for $K$ to have a Hilbert
polynomial of the form $\dim_k K_n = n + b$ for $n \gg 0$, for some
$b \leq 1$.  Then $\dim_k (K/Kg)_n = 1$ for $n \gg 0$. Since $\im
\theta$ is also equal to a point module in large degree, this forces
$\ker \theta = \tor_1^R(M/K, R/Rg)$ to be finite-dimensional over
$k$. On the other hand, $\tor_1^R(M/K, R/Rg)$ is (a shift of) the
largest submodule of $M/K$ killed by $g$; by choice of $K$, $\ker
\theta = \tor_1^R(M/K, R/Rg) = 0$.

Thus $K/Kg \cong (M/Mg)_{\geq m'}$ is a tail of a point module,
generated in degree $m'$.  Since $K$ is $g$-torsionfree, $K$ then
has the Hilbert function of a line module shifted by $-m$; since $K$
is also generated in degree $m'$ by the graded Nakayama lemma, we
must have $K \cong L[-m']$ for some line module $L$.  Setting
$\wt{N} = Kg^n$, we have $\dim_k \wt{N}/N < \infty$ and  $\wt{N}
\cong L[-m' - n]$. In particular, it certainly follows from Hilbert
series that $\GK M/N \leq 1$, and so $M$ is $\GK$-2-critical.

Since we proved that $\tor_1^R(M/K, R/Rg) = 0$, we know that
$M/K$ is $g$-torsionfree with $GK M/K \leq 1$, whereas clearly $K/N$
is $g$-torsion by construction.  It follows that $\dv M = \dv K$,
and so $\dv K = p$ and thus $\dv L = \tau^{m'}(p)$, by
Lemma~\ref{div-basic-lem}(3). The number $n$ is clearly the
multiplicity of $Ng^{-n}/N$, so also the multiplicity of $K/N =
\ts(M/N)$, and $m'$ is the multiplicity of $M/K$.  Thus $m = m' + n$
is the multiplicity of $M/N$.
\end{proof}

\section{The exceptional line module for $R(D)$}
\label{except-sec}

Beginning in this section, we refocus attention on the specific
rings $R(D)$ of interest.  For the rest of the paper, we use the
notation introduced in Sections~\ref{sklyanin-sec} and \ref{R-sec}.
Fix an effective divisor $D'$ on $E$ with $0 \leq \deg D' \leq 6$,
and let $D = D' + p \in \Div E$ for some point $p \in E$. Throughout
this section, let $R = R(D)$ and $R' = R(D')$. By
Theorem~\ref{HS-thm}, $\overline{R} = R/Rg = B(E, \mc{N}, \tau)$ and
$\overline{R}' = R'/R'g = B(E, \mc{N}', \tau)$, where $\mc{N} =
\mc{I}_{D} \otimes \mc{M}$ and $\mc{N}' = \mc{I}_{D'} \otimes
\mc{M}$.  As usual, we write $\overline{Y}$ for the image of any
subset $Y \subseteq S$ under the homomorphism $S \to S/Sg$.

Our goal is to study the ring extension $R  \subseteq R'$, and show
that intuitively the quasi-scheme $\rQgr R$ behaves as a
noncommutative blowup of $\rQgr R'$ at the point $p$.
In particular, we identify a line module in $\rgr R$ which behaves
as an exceptional object for this blowup.
We also study the relationship between divisors of modules over the
two rings $R$ and $R'$.  This will also allow us to show, by
induction on $\deg D$, that $\dv_2(R)$ is contained in countably
many linear equivalence classes of divisors.

We now construct the exceptional line module for the extension $R
\subseteq R'$. It is closely related to the structure of $(R'/R)_R$.
\begin{lemma}
\label{exc-line-lem} Fix $R \subseteq R'$ as above.
\begin{enumerate}
\item $J = \{y \in R | (R'_1)y \in R \}$ is a right ideal of $R$ such that $L_R = R/J$ is a line
module. Moreover,  $\dv L = \tau(p)$.

\item $(R'/R)_R \cong \bigoplus_{i = 0}^{\infty} L[-i-1]$.
\end{enumerate}
\end{lemma}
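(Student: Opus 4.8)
For part (1): start by writing $R'_1 = R_1 \oplus kz$; this makes sense because $\HB^0(E,\mc N)\subsetneq\HB^0(E,\mc N')$ (as $\mc N = \mc I_p\otimes\mc N'$), so by Riemann--Roch $\dim_k R'_1 = \dim_k R_1 + 1$. Now $J$ is visibly a homogeneous right ideal, and $J_0 = 0$ since $R'_1\not\subseteq R$, so $L = R/J$ is cyclic and generated in degree $0$. Since $R$ is generated in degree $1$, for $n\geq 1$ one has $(k + R'_1R)_n = R_n + zR_{n-1}$, where the module $M = k + R'_1R$ of Lemma~\ref{M-hs-lem} satisfies $\dim_k M_n = \dim_k R_n + n$. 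As left multiplication by $z$ is injective ($R'$ is a domain) and $R_n\cap zR_{n-1} = zJ_{n-1}$, comparing dimensions yields $\dim_k L_{n-1} = \dim_k R_{n-1} - \dim_k J_{n-1} = n$, so $\hs_L(t) = 1/(1-t)^2$ and $L$ is a line module. Then Lemma~\ref{line-prop-lem}(1) gives that $L$ is admissible and $g$-torsionfree, $\ol L = L/Lg$ is a point module over $\ol R = B(E,\mc N,\tau)$, and $\dv L = C(\ol L)$ is the unique point $q$ with $\ol L\cong P(q)$ (using Lemma~\ref{div-basic-lem}(2) and Definition~\ref{C-def}).

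To identify $q$: the image $\ol z\in\HB^0(E,\mc N')$ does not vanish at $p$, for otherwise $\ol z\in\HB^0(E,\mc N)$ and $z\in R_1 + kg = R_1$. Given $y\in J_n$, the relation $zy\in R_{n+1}$ says $\ol z\cdot\tau^*(\ol y)\in\HB^0(E,\mc N_{n+1})$; since $\mc N_{n+1} = \mc N\otimes\mc N_n^{\tau}\hookrightarrow\mc N'\otimes\mc N_n^{\tau}$ has cokernel the skyscraper at $p$ and $\ol z(p)\neq 0$, this forces $\tau^*(\ol y)(p) = \ol y(\tau(p)) = 0$. Hence the image of $J_n$ in $\ol R_n$ lies in $\HB^0(E,\mc I_{\tau(p)}\otimes\mc N_n)$, and Riemann--Roch shows both spaces have dimension $n\deg\mc N - 1$, so they coincide. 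Therefore $\ol L\cong P(\tau(p))$ and $\dv L = \tau(p)$. (Running this over all $n$ at once is what handles the borderline case $\deg\mc N = 2$, where degree $1$ alone does not determine a point.)

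For part (2): the crucial observation is $zJ\subseteq R$, built into the definition of $J$. For each $i\geq 0$ choose $y_i\in R_i$ whose image $\ol{y_i}\in\HB^0(E,\mc N_i)$ does not vanish at $\tau^{-i}(p)$ (possible for $i\geq 1$ since $\deg\mc N_i\geq 2$ makes $\mc N_i$ globally generated; take $y_0 = 1$), and set $\zeta_i = \ol{y_iz}\in(R'/R)_{i+1}$. Then $\zeta_iJ = 0$ because $y_i(zJ)\subseteq y_iR\subseteq R$, so sending the cyclic generator of $L[-i-1]$ to $\zeta_i$ defines an $R$-module map $L[-i-1]\to R'/R$. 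Next I would show $\{\zeta_i\}_{i\geq 0}$ generates $R'/R$: since $g\in R_1$, the quotient $(R'/R)/(R'/R)R_{\geq 1}$ is identified with $(\ol{R'}/\ol R)/(\ol{R'}/\ol R)\ol R_{\geq 1}$, and using Lemma~\ref{inv-section-lem} together with Riemann--Roch one computes, for $n\geq 1$, $\dim_k\bigl[(\ol{R'}/\ol R)_{n-1}\ol R_1\bigr] = \dim_k\HB^0(E,\mc I_{\tau^{-(n-1)}(p)}\otimes\mc N'_n) - \dim_k\HB^0(E,\mc N_n) = n-1 = \dim_k(\ol{R'}/\ol R)_n - 1$, so this quotient is one-dimensional in each positive degree, its degree-$(i+1)$ piece being the fibre $(\mc N'_{i+1})|_{\tau^{-i}(p)}$; the image of $\zeta_i$ there equals $\ol{y_i}(\tau^{-i}(p))\cdot\ol z(p)\neq 0$. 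By the graded Nakayama lemma $\bigoplus_{i\geq 0}L[-i-1]\to R'/R$ is then surjective, and since $\hs_{\bigoplus L[-i-1]}(t) = \hs_L(t)\cdot\tfrac{t}{1-t} = \tfrac{t}{(1-t)^3} = \hs_{R'}(t) - \hs_R(t)$ by Theorem~\ref{HS-thm}, with all pieces finite-dimensional, it is an isomorphism.

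The main obstacle is really on the $\ol R$-side: pinning down $\dv L = \tau(p)$ through the cohomology computation just described, and establishing the dimension count $\dim_k[(\ol{R'}/\ol R)_{n-1}\ol R_1] = n-1$ that forces exactly one new generator of $\ol{R'}/\ol R$ in each degree. Everything else is formal bookkeeping once one has $\dim_k R'_1 - \dim_k R_1 = 1$, Lemma~\ref{M-hs-lem}, and the identity $zJ\subseteq R$.
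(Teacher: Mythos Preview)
Your proof is correct and follows essentially the same line as the paper's. Part~(1) is identical: both use $M = k + R'_1R$ and Lemma~\ref{M-hs-lem} to get the Hilbert series of $L$, and both identify $\dv L$ by showing $\overline{J}_n \subseteq \HB^0(E,\mc I_{\tau(p)}\otimes\mc N_n)$ via the vanishing condition $\overline{z}(p)\neq 0$.

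For part~(2) the two arguments are also equivalent under the hood, though organized differently. The paper chooses $w_n\in R_n$ with $w_nz\notin R'_nR_1$; since $R'_nR_1 = \{x\in R'_{n+1}:\overline{x}\in\overline{R'_n}\,\overline{R_1}\}$ and $\overline{R'_n}\,\overline{R_1}=\HB^0(E,\mc I_{\tau^{-n}(p)}\otimes\mc N'_{n+1})$, this condition is exactly your condition $\overline{y_n}(\tau^{-n}(p))\neq 0$. The paper then proves spanning by an explicit induction showing $M^{(0)}+\dots+M^{(n-1)}=(R+R'_{\leq n}R)/R$, whereas you invoke the graded Nakayama lemma after computing $(R'/R)/(R'/R)R_{\geq 1}$ directly on the $\overline{R}$-side. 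These are the same computation in different clothing: the inductive step $R'_{n+1}=R'_nR_1+kw_nz$ in the paper is precisely the statement that the Nakayama quotient is one-dimensional in degree $n+1$. Your route via Nakayama is a bit more streamlined, and your existence argument for $y_i$ (global generation of $\mc N_i$) is more direct than the paper's proof by contradiction.
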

\begin{proof}
(1)  Fix throughout the proof some $z \in R'_1$ such $R'_1 = R_1 +
kz$.  Clearly $J = \{x \in R | zx \in R \}$. Let $M = k + R'_1 R = R
+ R'_1 R$. Then we have
\[
M/R = zR +R/R \cong zR/(zR \cap R) \cong R/J[-1].
\]
Also, $\HS_{M/R}(t)= \D \frac{t}{(1-t)^2}$ by Lemma~\ref{M-hs-lem}.
Thus $R/J$ has the Hilbert series of a line module, and is obviously
generated in degree $0$, so it is a line module.

To compute the divisor of $L$, recall that $L/Lg = R/(Rg + J) =
\overline{R}/\overline{J}$ must be a point module.  The equation
$\overline{R'_1} \overline{J} \subseteq \overline{R}$ forces
$\overline{J} \subseteq \bigoplus_{n \geq 0} \HB^0(E,
\mc{I}_{\tau(p)} \otimes \mc{N}_n)$, by considering vanishing
conditions.  This forces the point module $P(\tau(p)) = \bigoplus_{n
\geq 0} \HB^0(E, k(\tau(p)) \otimes \mc{N}_n)$ to be a factor of
$\overline{R}/\overline{J}$, so we must have
$\overline{R}/\overline{J} \cong P(\tau(p))$ and $\dv L = \tau(p)$.

(2) Let $z$ be as in part (1).  For each $n \geq 0$, we claim that
we can choose an element $w_n \in R_n$ so that $w_n z \not \in R'_n
R_1$.  Clearly we can take $w_0 = 1$.  If we cannot choose such a
$w_n$ for some $n \geq 1$, then $R_n z \subseteq R'_n R_1$, and this
forces $R_n R'_1 \subseteq R'_n R_1$, and thus $\overline{R_n}
\overline{R'_1} \subseteq \overline{R'_n} \overline{R_1}$.  But
thinking of these as subspaces of $\overline{R'_{n+1}} = \HB^0(E,
\mc{N}'_{n+1})$ determined by certain vanishing conditions (using
Lemma~\ref{inv-section-lem}), clearly this is impossible:
$\overline{R_n} \overline{R'_1}$ is the subspace of sections
vanishing along the divisor $p + \tau^{-1}(p) + \dots +
\tau^{-n+1}(p)$, while $\overline{R'_n} \overline{R_1}$ is the
subspace of sections vanishing along $\tau^{-n}(p)$.  This proves
the claim, so we fix such elements $w_n$.

We claim that the $R$-submodules $M^{(n)} = (w_n z R + R)/R$ of
$R'/R$ for $n \geq 0$ are independent and span $R'/R$. First we show
spanning.  We claim that for each $n \geq 1$, we have $N^{(n)} =
M^{(0)} + M^{(1)} + \dots + M^{(n-1)} = (R + R'_{\leq n}R)/R$, from
which spanning will immediately follow.  The base case $n = 1$ is by
definition.  Now assume the equation holds for some $n$, and
consider $N^{(n+1)} = (R + R'_{\leq n}R + w_n zR)/R$.  We have
$\dim_k R'_{n+1} - \dim_k R'_n R_1 = 1$; to see this, note that
$\dim_k \overline{R}'_{n+1} - \dim_k \overline{R}'_n \overline{R}_1
= 1$ using Lemma~\ref{inv-section-lem}, and $R'_{n+1} \cap Tg =
R'_ng \subseteq R'_n R_1$.  Moreover, $w_n z \in R'_{n+1} \setminus
R'_n R_1$ by choice of $w_n$; thus $R'_{n+1} = R'_n R_1 + k w_n z$,
and it follows that $N^{(n+1)} =  (R + R'_{\leq (n+1)}R)/R$.

Next, note that $M^{(n)}$ is cyclic and generated in degree $n + 1$.
Then $M^{(n)} \cong (R/K^{(n)})[-n-1]$, where $K^{(n)} = \{y \in R |
w_n z y \in R \}$.  Clearly we have $J \subseteq K^{(n)}$, so
$M^{(n)}$ is isomorphic to a factor module of $L[-n-1]$. However,
Theorem~\ref{HS-thm} implies that $\HS_{R'/R}(t) = \D
\frac{t}{(1-t)^3} = \sum_{n \geq 0} \frac{t^{n+1}}{(1-t)^2}$, which
is clearly the same as the Hilbert series of $\bigoplus_{n =
0}^{\infty} L[-n-1]$. Since $\sum_{n \geq 0} M^{(n)} = R'/R$, as we
showed in the previous paragraph, the only possible conclusion is
that $K^{(n)} = J$ for all $n$, and that the submodules $M^{(n)}$
are also independent.  Thus $R'/R \cong \bigoplus_{n = 0}^{\infty}
M^{(n)} \cong \bigoplus_{n = 0}^{\infty} L[-n-1]$.
\end{proof}

\begin{definition}
\label{exc-def} We call the line module $L$ appearing in the
preceding theorem the \emph{exceptional line module} (for the
extension $R = R(D) \subseteq R(D')= R'$).  A full subcategory of an
abelian category is called a \emph{Serre} subcategory if its set of
objects is closed under subobjects, factor objects, and extensions.
Let $\wt{\mc{C}}$ be the smallest Serre subcategory of $\rGr R$
containing $L$ and closed under shifts and direct limits, and let
$\mc{C} = \pi(\wt{\mc{C}}) \subseteq \rQgr R$ be the corresponding
full subcategory of $\rQgr R$. We call $\mc{C} \subseteq \rQgr R$
the \emph{exceptional category} (again, for a fixed choice of
extension $R \subseteq R'$.)
\end{definition}

We do not spend much time studying the geometric properties of the
quasi-schemes $\rQgr R(D)$ in this paper; a lot of information about
these quasi-schemes will flow from \cite{VdB}, once the connections
with that paper are fully established.  Because it is easy, however,
we do offer one theorem which further justifies the idea that $\rQgr
R$ should be thought of as a blowup of $\rQgr R'$.  The following
result shows that the difference between $\rQgr R$ and $\rQgr R'$ is
determined entirely by the exceptional category.
\begin{theorem}
\label{qgr-blowup-thm} Let $\wt{\mc{C}}$ and $\mc{C}$ be as in
Definition~\ref{exc-def}. Let $\wt{\mc{D}}$ be the subcategory of
$\rGr R'$ consisting of all objects $N_{R'}$ such that $N_R \in
\wt{\mc{C}}$, and let $\mc{D} = \pi(\wt{\mc{D}}) \subseteq \rQgr
R'$.  Then there is an equivalence of quotient categories $\rQgr
R/\mc{C} \simeq \rQgr R'/\mc{D}$.
\end{theorem}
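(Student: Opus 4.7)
The strategy is to show that the ordinary restriction-of-scalars functor $F\colon \rGr R' \to \rGr R$ descends to the desired equivalence, with quasi-inverse induced by the induction functor $G(M)=M\otimes_R R'$. The first task is to verify that the setup makes $F$ well defined on the quotient categories. By construction $F(\wt{\mc{D}})\subseteq \wt{\mc{C}}$ (this is the defining property of $\wt{\mc{D}}$), and restriction preserves torsion, so $F$ carries $\rTors R'$ into $\rTors R$. I would then observe that $\rTors R\subseteq \wt{\mc{C}}$: since $L$ is a line module with $L/L_{\geq 1}\cong k$, each simple $k[n]$ appears as the Serre-subquotient $L[-n]/L[-n]_{\geq n+1}$ of a shift of $L$, and closure under extensions and direct limits promotes all of $\rTors R$ into $\wt{\mc{C}}$; the same argument, together with the inclusion $\rTors R'\hookrightarrow \wt{\mc{D}}$ via $F$, puts $\rTors R'$ inside $\wt{\mc{D}}$. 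Together with exactness of $F$, standard abstract nonsense about quotient categories gives a well-defined exact functor $\bar F\colon \rQgr R'/\mc{D}\to \rQgr R/\mc{C}$.

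For full faithfulness and essential surjectivity, I would use the adjunction $(G,F)$, which gives for $M\in \rGr R$ and $N\in \rGr R'$ a natural isomorphism $\Hom_{R'}(G(M),N)\cong \Hom_R(M,F(N))$, together with the unit map $M\to F(G(M))=M\otimes_R R'$ and counit $G(F(N))\to N$. The key exact sequence of right $R$-modules is
\[
0 \to M \to F(G(M)) \to M\otimes_R (R'/R) \to 0.
\]
If one knows that $M\otimes_R (R'/R)\in \wt{\mc{C}}$ for every $M$, then the unit of adjunction becomes an isomorphism after projecting to $\rQgr R/\mc{C}$, and a symmetric argument (with left-module analogues) handles the counit, giving essential surjectivity of $\bar F$. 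The full faithfulness then follows by applying $\Hom(-,N)$ and $\Hom(M,-)$ to the above sequence and using Cohen--Macaulayness (Theorem~\ref{basic-prop-thm}(3)) to control the relevant $\ext^1$ groups in the quotient categories.

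The main obstacle is showing $M\otimes_R (R'/R)\in \wt{\mc{C}}$ for arbitrary $M$. The decomposition $(R'/R)_R\cong \bigoplus_{i\geq 0} L[-i-1]$ from Lemma~\ref{exc-line-lem}(2) is only an isomorphism of right $R$-modules, but the tensor product uses the left $R$-action on $R'/R$. I would attack this by filtering $R'/R$ by sub-$(R,R)$-bimodules $F_n=(R+R'_{\leq n})/R$, whose right-module structure on each successive quotient $F_n/F_{n-1}$ is isomorphic to $L[-n]$ (as in the proof of Lemma~\ref{exc-line-lem}(2), the quotient is cyclic, generated by $\overline{w_{n-1}z}$, with right annihilator precisely $J$, where $L=R/J$). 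Applying $M\otimes_R -$ produces a filtration of $M\otimes_R(R'/R)$ with successive quotients $M\otimes_R (F_n/F_{n-1})$, each of which is a shift of a cyclic module whose right $R$-structure factors through $R/J$. Hence each such quotient is itself a module over the exceptional object $L$ (in the sense that its right $R$-structure is annihilated by $J$), and so lies in $\wt{\mc{C}}$; the full module $M\otimes_R(R'/R)$ then belongs to $\wt{\mc{C}}$ by closure under extensions and direct limits. This bimodule analysis of $R'/R$ is the technical heart of the proof.
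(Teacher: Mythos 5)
The overall strategy is sound and essentially matches the paper's: establish that the restriction/induction adjunction descends to inverse equivalences on the quotient categories. However, there are several genuine gaps and errors along the way.

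First, the claimed exact sequence $0 \to M \to F(G(M)) \to M\otimes_R (R'/R) \to 0$ is not correct: the unit map $M \to M \otimes_R R'$ need not be injective, since $R'$ is not flat as a left $R$-module. Its kernel is the image of $\tor_1^R(M, R'/R) \to M$; the correct six-term sequence coming from $0 \to R \to R' \to R'/R \to 0$ is
$0 \to \tor_1^R(M,R') \to \tor_1^R(M,R'/R) \to M \to M\otimes_R R' \to M\otimes_R(R'/R) \to 0$.
This is fixable because the $\tor_1$ term also lands in $\wt{\mc{C}}$, but it must be handled. Second, the counit $GF(N) = N\otimes_R R' \to N$ is not dealt with by a "symmetric argument with left-module analogues"; this step is genuinely different. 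The multiplication map is surjective, and one must show its kernel lies in $\wt{\mc{D}}$, which the paper does by observing that any element of the kernel is annihilated by one of the right ideals $J^{(n)} = \{y\in R\mid R'_{\leq n} y\subseteq R\}$, and that $R/J^{(n)}$ embeds in a finite direct sum of cyclic submodules of $R'/R$. Your proposal does not supply a workable substitute. Third, the Cohen–Macaulay input for "full faithfulness" is unnecessary: once the unit and counit become isomorphisms in the quotients, $\overline{F}$ and $\overline{G}$ are mutually quasi-inverse equivalences automatically.

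Finally, the bimodule-filtration argument for showing $M\otimes_R(R'/R)\in\wt{\mc{C}}$ does not work as stated. The proposed $F_n=(R+R'_{\leq n}R)/R$ is not a sub-$(R,R)$-bimodule: already $R_1 R'_n \not\subseteq R_{n+1}+R'_n R_1$, which one can see by comparing vanishing conditions in $\overline{T}=B(E,\mc{M},\tau)$ (sections in $\overline{R_1R'_n}$ vanish at $p$, while sections in $\overline{R'_n R_1}$ vanish at $\tau^{-n}(p)$, and these differ since $\tau$ has infinite order). The concern you raise — that the right-module decomposition of $R'/R$ is not a bimodule decomposition — is valid, but the resolution is much simpler than a bimodule filtration. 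For any right $R$-module $M$, choose a free presentation $P_1\to P_0\to M\to 0$ with $P_i$ finitely generated free; then $M\otimes_R(R'/R)$ is a quotient of $P_0\otimes_R(R'/R)\cong\bigoplus(R'/R)[-n_j]$, a finite sum of shifts of $(R'/R)_R\in\wt{\mc{C}}$, and $\tor_1^R(M,R'/R)$ embeds in $P_1\otimes_R(R'/R)$-level data by the same device. (Equivalently: every element of $M\otimes_R N$ is a finite sum of pure tensors, whose cyclic right submodule is a quotient of $R/\bigcap_i\rann_R(n_i)$, which embeds in a finite sum of cyclic submodules of $N$.) No bimodule filtration is needed, and the paper's argument proceeds this way.
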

\begin{proof}
Define functors $F: \rGr R \to \rGr R'$ and $G: \rGr R' \to \rGr R$
by $F(M) = M \otimes_R R'$ and $G(N_{R'}) = N_R$.  We claim that $F$
descends to a functor $\overline{F}: \rQgr R/\mc{C} \to \rQgr
R'/\mc{D}$. For this, we first note that if $M \in \rGr R$, then we
have an exact sequence in $\rGr R$ as follows:
\begin{equation}
\label{tensor-seq-eq} 0 \to \tor_1^R(M, R') \to \tor_1^R(M, R'/R)
\to M \to M \otimes_R R' \to M \otimes_R (R'/R) \to 0.
\end{equation}
By Lemma~\ref{exc-line-lem}(2), clearly $(R'/R) \in \wt{\mc{C}}$. It
easily follows that $\tor^R_1(M, R'/R)$ and $M \otimes_R (R'/R)$ are
also in $\wt{\mc{C}}$.  Thus if $M \in \wt{\mc{C}}$, then $(M
\otimes_R R')_R$ is in $\wt{\mc{C}}$ also, so $F(M) \in
\wt{\mc{D}}$.  We also see from the exact sequence that $\tor_1^R(M,
R') \in \wt{\mc{D}}$ for any $M \in \rGr R$. Thus the induced
functor $\widehat{F}: \rGr R \to \rGr R'/\wt{\mc{D}}$ is an
\emph{exact} functor such that $\widehat{F}(\wt{\mc{C}}) = 0$. By
the universal property of the quotient category, $\widehat{F}$
induces a functor $\overline{F}: \rGr R/\wt{\mc{C}} \to \rGr
R'/\wt{\mc{D}}$; but since $\rTors R \subseteq \wt{\mc{C}}$ and
$\rTors R' \subseteq \wt{\mc{D}}$, this is the same thing as a
functor $\overline{F}: \rQgr R/\mc{C} \to \rQgr R'/\mc{D}$, as
claimed.   Since $G$ is already itself an exact functor, a similar
but easier argument shows that $G$ descends to a functor
$\overline{G}: \rQgr R'/\mc{D} \to \rQgr R/\mc{C}$. In addition, the
same exact sequence \eqref{tensor-seq-eq} above easily yields that
$\overline{G} \, \overline{F}$ is naturally isomorphic to the
identity functor.

Now for $N \in \rGr R'$, there is a natural map $\phi_N: FG(N) \to
N$ given by the multiplication map $\phi: N \otimes_R R' \to N$.  We
claim that this descends to give a natural isomorphism between
$\overline{F} \, \overline{G}$ and the identity functor.  Since
$\phi$ is surjective, it suffices to show that $\ker \phi \in
\wt{\mc{D}}$, or equivalently that $(\ker \phi)_R \in \wt{\mc{C}}$.
Suppose that $0 \neq s = \sum (n_i \otimes r_i) \in \ker \phi$ is a
homogenous element, where $n_i \in N$ and $r_i \in R'$ are
homogeneous.  Define for each $n \geq 1$ the right $R$-ideal
$J^{(n)} = \{y \in R | (R'_{\leq n})y \subseteq R \}$.
We can choose some $n \geq 1$ for which $r_i J^{(n)} \subseteq R$
for all $i$, and then clearly $s J^{(n)} = 0$. Thus $(\ker \phi)_R$
is a direct limit of factors of the modules $R/J^{(n)}$; it suffices
to show that $R/J^{(n)} \in \wt{\mc{C}}$ for all $n \geq 1$.
However, letting $\{v_1, \dots, v_m\}$ be a $k$-basis of $R'_{\leq
n}$, we have that $J^{(n)} = \bigcap_{i=1}^m \rann_R (v_i + R)$,
where $v_i + R \in R'/R$ for each $i$.  Since $R'/R \in
\wt{\mc{C}}$, it easily follows that each $R/\rann_R (v_i + R)$, and
hence also $R/J^{(n)}$, is in $\wt{\mc{C}}$ as needed.
\end{proof}
Intuitively, modding out the exceptional category $\mc{C}$ from
$\rQgr R$ corresponds to removing the exceptional line.  In accord
with the intuition that $\rQgr R$ should be thought of as a blowup
of $\rQgr R'$ at the point $p$, one might hope that it would be
enough to mod out from $\rQgr R'$ a subcategory generated by the
simple object corresponding to the point $p$.  However, in general
it seems that some $R'$-modules of $\GK$-2 are entangled and must
get included in $\wt{\mc{D}}$.  See
Proposition~\ref{main-ex-prop}(4) below for an example.

In the next result, we compare the divisors of admissible modules
over the two rings $R$ and $R'$.
Not surprisingly, the difference between $\overline{\dv}(R)$ and
$\overline{\dv}(R')$ is generated entirely by the divisor of the
exceptional line module.
\begin{proposition}
\label{dv2-prop} Let $R = R(D) \subseteq R' = R(D')$ as above.
\begin{enumerate}
\item Suppose that $M \in \rgr R'$ is admissible.
Then $M_R \in \rgr R$, $M_R$ is admissible, and $\dv M_R  = \dv
M_{R'}$.

\item If $M \in \rgr R$ is in the exceptional category $\wt{\mc{C}}$ of Definition~\ref{exc-def},
then $\overline{\dv} M  = np \in \overline{\Div} E$ for some $n \geq
0$.

\item $\overline{\dv}(R) \subseteq \{ G + np \, | \, G \in \overline{\dv}(R'), n \in
\mb{Z} \} \subseteq \overline{\Div} E$.
\end{enumerate}
\end{proposition}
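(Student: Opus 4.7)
The plan is to handle the three parts in order. For (1), I would apply Lemma~\ref{restrict-lem}(2) to the two GK-$\leq 1$ $B'$-modules $M/Mg$ (which has GK bounded by admissibility) and $\tor_1^{R'}(M,R'/R'g) \cong \{m \in M : mg = 0\}[-1]$; this lemma guarantees both that the restrictions to $B$ remain finitely generated and that their associated $C$-divisors coincide. To promote finite generation of $M/Mg$ over $B$ to finite generation of $M$ over $R$, I would lift $B$-generators of $M/Mg$ to elements $m_1, \dots, m_k \in M$, observe via graded Nakayama that $M = \sum m_i R + Mg^n$ for every $n \geq 0$, and exploit that $M$ is bounded below so that $(Mg^n)_j$ vanishes for $j < n + \min\deg M$, forcing equality. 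Admissibility over $R$ then follows automatically since $\GK M/Mg$ depends only on the underlying graded vector space, and the divisor equality is immediate from the definition.

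For (2), I would first show that $\wt{\mc{C}} \cap \rgr R$ equals the Serre subcategory of $\rgr R$ generated by the shifts of $L$, so that every f.g.\ $M \in \wt{\mc{C}}$ admits a finite filtration with subquotients of shifts of $L$ as factors. Each such factor contributes either $0$ or $p$ to $\overline{\dv}$: proper nonzero quotients of $L[i]$ have $\GK \leq 1$ by the critical property in Lemma~\ref{line-prop-lem}(2), hence $\overline{\dv} = 0$ via Lemma~\ref{over-dv-lem}(1), while nonzero submodules of $L[i]$ have finite-codimension extension to a shift of $L$ by Lemma~\ref{line-prop-lem}(2), so $\overline{\dv} = \tau^j(p) \equiv p \pmod{H_\tau}$ for some $j$. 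Additivity of $\overline{\dv}$ across short exact sequences (Lemma~\ref{div-basic-lem}(4)) then yields $\overline{\dv} M = np$ for some $n \geq 0$.

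For (3), I first reduce to $M$ admissible, $g$-torsionfree, of $\GK 2$ (the GK-$\leq 1$ case gives $\overline{\dv} = 0$, and the $g$-torsion part can be split off using Lemma~\ref{over-dv-lem}(1)). Setting $M' = M \otimes_R R'$ and tensoring $0 \to R \to R' \to R'/R \to 0$ with $M$ yields
\[
0 \to \tor_1^R(M,R') \to \tor_1^R(M,R'/R) \to M \to M' \to M \otimes_R (R'/R) \to 0.
\]
Since $R'/R \in \wt{\mc{C}}$ by Lemma~\ref{exc-line-lem}(2), both outer terms lie in $\wt{\mc{C}}$. Splitting into two short exact sequences and combining additivity of $\overline{\dv}$ with parts~(1) and~(2) gives $\overline{\dv}_R M \equiv \overline{\dv}_{R'} M' \pmod{\mb{Z}p}$ provided $M'$ is admissible over $R'$, in which case the right side lies in $\overline{\dv}(R')$ and we are done.

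The main obstacle will be establishing admissibility of $M'$ over $R'$. The identification $M'/M'g \cong (M/Mg) \otimes_B B'$, together with the fact that $B'/B \cong \bigoplus_{i \geq 0}(L/Lg)[-i-1]$ is an infinite direct sum of shifted point modules coming from Lemma~\ref{exc-line-lem}(2), means $M'$ can acquire $\GK 2$ contributions in $M'/M'g$ and fail to be admissible over $R'$. My plan to fix this is to replace $M'$ by a quotient $M'/N$, where $N$ is the preimage in $M'$ of the largest $\GK \geq 2$ $B'$-submodule of $M'/M'g$; one checks that $N_R$ lies in $\wt{\mc{C}}$ (so $N$ is in $\wt{\mc{D}}$ from Theorem~\ref{qgr-blowup-thm}), that $M'/N$ is admissible over $R'$ by construction, and that the resulting correction to $\overline{\dv}$ is again a multiple of $p$ by part~(2), preserving the conclusion.
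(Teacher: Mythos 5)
Parts (1) and (2) match the paper's argument in both structure and the lemmas invoked; no issues there. In part (3) you correctly set up the exact sequence from tensoring $0 \to R \to R' \to R'/R \to 0$ with $M$, and you correctly identify admissibility of $M' = M \otimes_R R'$ over $R'$ as the crux.

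However, your worry that $M'$ might fail to be $R'$-admissible is unfounded, and the proposed workaround is circular. The simple argument you are missing: $M'$ is finitely generated over $R'$ (generated by the images of generators of $M$), so $M'/M'g = (M/Mg)\otimes_B B'$ is a finitely generated $B'$-module. Since $\GK_B(M/Mg) \leq 1$, the module $M/Mg$ is Goldie torsion over the GK-$2$ domain $B$, hence $(M/Mg)\otimes_B Q_{\gra}(B) = 0$; as $Q_{\gra}(B) = Q_{\gra}(B')$, tensoring with $B'$ keeps it Goldie torsion over $B'$, and a finitely generated Goldie-torsion module over the GK-$2$ noetherian domain $B'$ automatically has GK $\leq 1$. (The infinite direct sum $B'/B \cong \bigoplus_{i\geq 0}(L/Lg)[-i-1]$ is irrelevant precisely because $M'$ is finitely generated as an $R'$-module even though $R'$ is not a finite $R$-module.) Your proposed fix does not work as written: you want to write $\overline{\dv}_{R'} M' = \overline{\dv}_{R'}(M'/N) + \overline{\dv}_{R'}(N)$ and control the correction $\overline{\dv}_{R'}(N)$, but $\overline{\dv}_{R'}$ is only defined for modules that are admissible over $R'$, and $N$ was chosen precisely to carry the inadmissible part of $M'$, so the additivity you invoke is not available; part (2) of the proposition only gives control of $\overline{\dv}_R$, not $\overline{\dv}_{R'}$, and passing between the two via part (1) again requires $R'$-admissibility of $N$, which you do not have. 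Replace the workaround with the Goldie-torsion observation and the rest of your argument goes through.
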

\begin{proof}
(1) Recall that $R'/R'g = B' = B(E, \mc{N}', \tau)$ and $R/Rg = B =
B(E, \mc{N}, \tau)$.  For any $N \in \rgr B'$ with $\GK N \leq 1$,
we have $N_B \in \rgr B$, and $C(N_{B'})=C(N_B)$, by
Lemma~\ref{restrict-lem}.

Now for $M \in \rgr R'$ admissible, let $N = M/Mg \in \rgr B'$,
where $\GK N \leq 1$.  Then $N_B \in \rgr B$, and it follows from
the graded Nakayama lemma that $M_R \in \rgr R$.  Clearly  $M_R$ is
admissible.  Moreover,
\[
\tor_1^{R'}(M, R'/R'g)_R \cong \{m \in M | mg = 0 \}[-1] \cong
\tor_1^{R}(M_R, R/Rg).
\]
Thus $\dv M_R = \dv M_{R'}$ follows from the definition of the
divisor.

(2) We have that $M$ is filtered by subfactors of shifts of the
exceptional line module $L$, so since $\overline{\dv}$ is additive
on exact sequences, it suffices to prove the result when $M$ is a
subfactor of a shift of $L$.  Since $L$ is GK-2-critical by
Lemma~\ref{line-prop-lem}(2), using Lemma~\ref{over-dv-lem}(1) it
suffices to assume $M$ is a nonzero submodule of a shift of $L$. But
then $\dv M = \tau^i(p)$ for some $i \in \mb{Z}$ by
Lemma~\ref{line-prop-lem} and Lemma~\ref{exc-line-lem}(1), so
$\overline{\dv} M = p$.

(3)  Let $M \in \rgr R$ be admissible.  Consider again the exact
sequence of right $R$-modules
\[
\dots \to \tor^1_R(M, R'/R) \to M \overset{\theta}{\to} M' \to M
\otimes_R (R'/R) \to 0,
\]
where $M' = M \otimes_R R'$.  As we already observed in the proof of
Theorem~\ref{qgr-blowup-thm}, the terms $\tor^1_R(M, R'/R)$ and $M
\otimes_R (R'/R)$ must be in the exceptional category $\wt{\mc{C}}$,
and thus $\ker \theta \in \wt{\mc{C}}$ and $\im \theta \in
\wt{\mc{C}}$. Note that $M' \in \rgr R'$, and that $M'/M'g = (M/Mg)
\otimes_R R' = (M/Mg) \otimes_B B'$.  We have $\GK_B M/Mg \leq 1$,
so that $M/Mg$ is a Goldie-torsion module over the domain $B$.  It
follows that $(M/Mg) \otimes_B B'$ is Goldie-torsion over $B'$,
which forces $\GK_{B'} \big((M/Mg) \otimes_B B'\big) \leq 1$ also.
Thus $M' \in \rgr R'$ is admissible. Then by part (1), $M'_R$ is
admissible, $\overline{\dv}(M'_{R'}) = \overline{\dv} M'_R$, and
$\overline{\dv}(M_R) \subseteq \{ G + np \, | \, G \in
\overline{\dv}(R'), n \in \mb{Z} \}$ follows immediately from the
exact sequence and part (2).
\end{proof}

\begin{corollary}
\label{dv2-countable-cor}
 Let $R = R(D)$ for any effective $D$ with $0 \leq \deg D \leq
7$.  Then the image of $\dv_2(R)$ in $\Pic E$ is countable.
\end{corollary}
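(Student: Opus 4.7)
The plan is to prove the corollary by induction on $\deg D$, using Proposition~\ref{dv2-prop}(3) for the inductive step and Lemma~\ref{Skl-count-lem} (together with the Veronese invariance of Lemma~\ref{div-ver-lem}) for the base case. Throughout, the passage between $\Pic E$ and $\overline{\Pic} E$ is handled by Lemma~\ref{over-dv-lem}(2), which tells us that countability of the image of $\dv_2(R)$ in $\Pic E$ is equivalent to countability of the image of $\overline{\dv}(R)$ in $\overline{\Pic} E$. So it suffices throughout to work with $\overline{\dv}(R)$ in $\overline{\Pic} E$.

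For the base case $D = 0$, we have $R(0) = T = S^{(3)}$. The central element $g \in S_3$ becomes a degree-$1$ central element of $T$, so applying Lemma~\ref{div-ver-lem} with $m = 3$ gives $\overline{\dv}(T) = \overline{\dv}(S)$. By Lemma~\ref{Skl-count-lem}, $\dv_2(S)$ has countable image in $\Pic E$, hence $\overline{\dv}(S)$ has countable image in $\overline{\Pic} E$ by Lemma~\ref{over-dv-lem}(2), and the same then holds for $\overline{\dv}(T)$.

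For the inductive step, assume the result holds for $R' = R(D')$ where $0 \leq \deg D' \leq 6$, and write $D = D' + p$. By the inductive hypothesis and Lemma~\ref{over-dv-lem}(2), the image of $\overline{\dv}(R')$ in $\overline{\Pic} E$ is countable. By Proposition~\ref{dv2-prop}(3),
\[
\overline{\dv}(R) \subseteq \{\, G + np \mid G \in \overline{\dv}(R'),\ n \in \mb{Z}\,\} \subseteq \overline{\Div} E,
\]
and since the right-hand set projects into $\overline{\Pic} E$ as a countable union (indexed by $n \in \mb{Z}$) of translates of a countable set, its image in $\overline{\Pic} E$ is countable. Therefore the image of $\overline{\dv}(R)$ in $\overline{\Pic} E$ is countable, and applying Lemma~\ref{over-dv-lem}(2) once more yields the desired conclusion for $R = R(D)$.

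There is no real obstacle here: all the hard work has already been done. The key inputs are Lemma~\ref{Skl-count-lem} (countability for the Sklyanin algebra itself), Lemma~\ref{div-ver-lem} (divisors are invariant under Veronese), and Proposition~\ref{dv2-prop}(3) (passing up one point at a time only introduces a single extra degree of freedom modulo $H_\tau$, namely translates of $p$). The induction is then essentially formal.
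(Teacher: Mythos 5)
Your proof is correct and follows essentially the same approach as the paper: base case via Lemma~\ref{Skl-count-lem}, Lemma~\ref{div-ver-lem}, and Lemma~\ref{over-dv-lem}(2), then induction on $\deg D$ using Proposition~\ref{dv2-prop}(3). You have just spelled out the inductive step slightly more explicitly than the paper does.
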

\begin{proof}
Let $S$ be the generic Sklyanin algebra.  By
Lemma~\ref{Skl-count-lem}, we know that the image of $\dv_2(S)$ in
$\Pic E$ is countable.  Thus the image of $\overline{\dv}(S)$ in
$\overline{\Pic} E$ is countable, by Lemma~\ref{over-dv-lem}.
Letting $T = S^{(3)}$ as usual, then since $\overline{\dv}(S) =
\overline{\dv}(T)$ by Lemma~\ref{div-ver-lem}, the image of
$\overline{\dv}(T)$ in $\overline{\Pic} E$ is countable.  It now
follows, by Proposition~\ref{dv2-prop}(3) and induction on $\deg D$,
that the image of $\overline{\dv}(R(D))$ in $\overline{\Pic} E$ is
countable. Thus the image of $\dv_2(R)$ in $\Pic E$ is countable, by
Lemma~\ref{over-dv-lem} again.
\end{proof}

As an application of the previous results, we can now prove the
following interesting result about the divisors of the line modules
of the ring $R(D)$.
\begin{theorem}
\label{line-finite-thm} Let $R = R(D)$ where $0 \leq \deg D \leq 7$.
Then the set $\{ \dv L | L\ \text{is a line module for}\ R \}$ is
finite.
\end{theorem}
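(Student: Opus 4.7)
By Lemma~\ref{line-prop-lem}(1) applied to $R = R(D)$, which satisfies Hypothesis~\ref{main-hyp} with $d = 1$, every line module $L$ for $R$ has $\dv L$ equal to a single point $p \in E$. The plan is therefore to prove that the set
\[
Y := \{\dv L \mid L \text{ is a line module for } R\} \subseteq E
\]
is finite.

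First I would establish countability of $Y$. A line module is $g$-torsionfree with $\GK L = 2$ (again by Lemma~\ref{line-prop-lem}(1)), so every element of $Y$ lies in $\dv_2(R)$. By Corollary~\ref{dv2-countable-cor}, the image of $\dv_2(R)$ in $\Pic E$ is countable, and since on the elliptic curve $E$ the assignment $p \mapsto [p]$ is injective (two distinct points are never linearly equivalent, by Abel's theorem), the set $Y$ itself is countable.

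Next I would show $Y$ is closed in $E$ via a moduli-theoretic argument. Since $R$ is generated in degree $1$ and strongly noetherian (Theorem~\ref{basic-prop-thm}(1)), Artin--Zhang style moduli theory produces a projective $k$-scheme $\mathcal{L}$ whose $k$-points parameterize the line modules for $R$. The divisor map $\dv : \mathcal{L} \to E$ factors through the point scheme of $\overline{R} = B(E, \mathcal{N}, \tau)$, which is $E$ itself: for a line module $L$, the annihilator $I_1$ of $L_0$ in $R_1$ has codimension two, does not contain $g$ (since $L$ is $g$-torsionfree), and its image in $\overline{R}_1 = \HB^0(E, \mathcal{N})$ is the hyperplane $\HB^0(E, \mathcal{I}_p \otimes \mathcal{N})$ with $p = \dv L$. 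Since $\mathcal{N}$ is globally generated of degree at least $2$, recovering $p$ from this hyperplane is algebraic, so $\dv$ is a morphism of schemes, and its image $Y$ is closed in $E$ by properness of $\mathcal{L}$. After base-changing to an uncountable algebraically closed extension $K \supseteq k$ (which preserves Hypothesis~\ref{main-hyp} and injects $Y$ into the analogous set for $R_K$ via $E(k) \hookrightarrow E(K)$), the set $E(K)$ has cardinality $|K|$; a closed subset of the irreducible smooth curve $E$ is then either finite or all of $E$, and since the base-changed set $Y_K$ is still countable by Corollary~\ref{dv2-countable-cor} applied over $K$, we conclude $Y_K$, and hence $Y$, must be finite.

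The main technical obstacle is verifying that line modules for $R$ genuinely fit into a projective moduli scheme $\mathcal{L}$ with $\dv$ algebraic; this rests on the general moduli theory for strongly noetherian graded algebras, which is not spelled out in the present paper. A more elementary alternative would be to parameterize candidate line modules by codimension-two subspaces $W \subseteq R_1$ inside the Grassmannian $\operatorname{Gr}(2, R_1)$, observe that those $W$ for which $R/WR$ has Hilbert series $1/(1-t)^2$ cut out a constructible locus (by semi-continuity of the ranks of the multiplication maps $W \otimes R_{n-1} \to R_n$, with only finitely many such conditions required by strong noetherianity), and check that the induced map to $E$ is still algebraic. The resulting constructibility of $Y$ in $E$, combined with countability and the irreducibility of $E$ after base change, yields the same conclusion by the same cardinality argument.
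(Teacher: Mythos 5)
Your strategy is essentially the same as the paper's: both proofs reduce to an uncountable base field, invoke strong noetherianity (Theorem~\ref{basic-prop-thm}(1)) and the Artin--Zhang machinery to get a projective line scheme, use Corollary~\ref{dv2-countable-cor} to bound the set of divisors of line modules by a countable set, and then finish with a cardinality argument. Where the paper shows only that each fiber $X_q = \{L : \dv L = q\}$ is closed in the line scheme and argues from a countable disjoint union of closed subsets, you try to prove the stronger statement that $\dv$ is an actual morphism to $E$ whose image is then closed by properness. Either framing would finish the proof.

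However, your argument for $\dv$ being a morphism has a concrete gap. You extract the codimension-one subspace $\overline{J_1} = \HB^0(E, \mathcal{I}_p \otimes \mathcal{N}) \subseteq \HB^0(E, \mathcal{N})$ from the degree-one piece $J_1$ of the annihilator, and you claim that ``since $\mathcal{N}$ is globally generated of degree at least $2$, recovering $p$ from this hyperplane is algebraic.'' That is false precisely at the boundary $\deg D = 7$: there $\deg \mathcal{N} = 2$, so $\dim_k \HB^0(E,\mathcal{N}) = 2$ and the morphism $E \to \mathbb{P}(\HB^0(E,\mathcal{N})^*) \cong \mathbb{P}^1$ is a degree-two cover, not injective. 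The hyperplane $\HB^0(E, \mathcal{I}_p \otimes \mathcal{N})$ does not determine $p$, so the map from $\overline{J_1}$ back to $E$ is not defined. The paper sidesteps exactly this by using the whole truncated ideal $\bigoplus_{n\le m} J_n$ (and the containment $J_n \subseteq I(q)_n$ for all $n \le m$), rather than just $J_1$. Your argument is fixable along the same lines: for example, $\overline{J_2} = \HB^0(E, \mathcal{I}_p \otimes \mathcal{N}_2)$ is a hyperplane in $\HB^0(E, \mathcal{N}_2)$ and $\deg \mathcal{N}_2 \ge 4$, so $E \to \mathbb{P}(\HB^0(E,\mathcal{N}_2)^*)$ is a closed immersion and $p$ is recoverable from $\overline{J_2}$. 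As written, though, the step fails in the extremal case. (Your ``elementary alternative'' paragraph has the additional unaddressed issue of whether the annihilator ideal of a line module is generated in degree $1$, which is needed to parameterize line modules by $W \in \operatorname{Gr}(2, R_1)$ alone.)
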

\begin{proof}
First, we reduce to the case of an uncountable base field. Let $k
\subseteq \ell$ be a field extension where $\ell$ is uncountable and
algebraically closed.  If $M \in \rgr R$ is a line module, then $M
\otimes_k \ell$ is a line module over $R_{\ell} = R \otimes_k \ell$,
which is just $R(D)$ constructed over the bigger field $\ell$. Also,
identifying the points of $E$ with a subset of $E \times_{\spec k}
\spec \ell$, $\dv M_R = \dv (M \otimes_k \ell)_{R_{\ell}}$. Thus we
see that it is enough to prove the result over the base field
$\ell$.  Using this, we assume for the rest of the proof that $k$ is
uncountable.

Now since $R$ is strongly noetherian, by
Theorem~\ref{basic-prop-thm}(1), the work of Artin and Zhang shows
that there is a projective scheme $X$, the \emph{line scheme}, which
parameterizes the line modules for $R$. We recall its construction:
for any fixed degree $m$, one considers the set of all graded
subspaces $J_0 \oplus J_1 \oplus \dots \oplus J_m \subseteq R_0
\oplus R_1 \oplus \dots \oplus R_m$ such that $\dim_k (R/J)_n = n+1$
for all $0 \leq n \leq m$, and $J_i R_j \subseteq J_{i+j}$ for all
$i, j$ with $i + j \leq m$.  This is a closed subscheme of a product
of projective Grassmannians, and so is a projective scheme $X_m$.
Now \cite[Corollary E4.5]{AZ2} shows that there is some finite $m$
such that any such $\bigoplus_{n = 0}^m J_n$ generates a right ideal
$J$ of $R$ such that $R/J$ is a line module; then for this $m$, $X =
X_m$ is the line scheme.

Recall that the divisor of a line module over $R$ is a single point,
by Lemma~\ref{line-prop-lem}.  We claim that given any point $q \in
E$, the set of line modules $M$ such that $\dv M = q$ forms a closed
subset of $X$, which we write as $X_q$.  Given a line module $M =
R/J$, the condition $\dv M = q$ is equivalent to $R/J + Rg \cong
P(q)$, where $P(q) \in \rgr B$ is the point module associated to the
point $q$. But there is a unique right ideal $I \supseteq Rg$ of $R$
such that $R/I \cong P(q)$, so it is also equivalent to demand that
$\bigoplus_{n = 0}^m J_n \subseteq \bigoplus_{n = 0}^m I_n$. This is
clearly a further closed condition in the construction above,
proving the claim.

Now Corollary~\ref{dv2-countable-cor} shows that the image of
$\dv_2(R)$ in $\Pic E$ is countable.  Since every line module $M$ is
GK-2 and $g$-torsionfree (Lemma~\ref{line-prop-lem}(1)), $\Omega =
\{ \dv M | M \text{is a line module for } R\} \subseteq \dv_2(R)$.
But no two single point divisors on $E$ are linearly equivalent, so
$\Omega$ is a countable set of points of $E$. But then $X =
\bigcup_{q \in \Omega} X_q$ expresses $X$ as a countable disjoint
union of closed subvarieties. Since $k$ is uncountable, this is
possible only if the set $\Omega$ is finite.
\end{proof}

\section{The main classification theorem}
\label{main-thm-sec}

The first two lemmas of this section begin to make clear how the
special ideals of the ring $R(D)$ are closely connected to the line
modules of $R(D)$.   We will then use the fact that the divisors of
line modules of $R(D)$ are very restricted to show, in
Theorem~\ref{min-ideal-thm} below, that $R(D)$ has a minimal special
ideal, completing the proof of Theorem~\ref{RD-props-thm}. Our main
classification result, Theorem~\ref{deg3-class-thm}, will then
quickly follow from our previous results in
Section~\ref{subrings-sec}.

For the next two lemmas, we maintain the setup of the previous
section. So let $D'$ be an effective divisor on $E$ with $0 \leq
\deg D' \leq 6$, and let $D = D' + p$ for some point $p \in E$. Let
$R = R(D)$ and $R' = R(D')$, and let $L_R$ be the exceptional line
module for the extension $R \subseteq R'$, as in
Definition~\ref{exc-def}.
\begin{lemma}
\label{smallest-tf-lem} There is a unique smallest nonzero submodule
$N \subseteq L$ such that $L/N$ is $g$-torsionfree.
\end{lemma}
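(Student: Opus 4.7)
The plan is to analyze the poset $\mathcal{S}$ of nonzero graded submodules $N \subseteq L$ for which $L/N$ is $g$-torsionfree (note $L \in \mathcal{S}$, so $\mathcal{S}$ is nonempty) and to exhibit a unique minimum. The first observation is that every $N \in \mathcal{S}$ is itself a shifted line submodule of $L$. Indeed, Lemma~\ref{line-prop-lem}(2) applied to the line module $L$ produces $\tilde N \supseteq N$ with $\tilde N/N$ finite-dimensional and $\tilde N \cong L'[-m]$ for some line module $L'$. Since $g \in R_1$, any finite-dimensional graded $R$-module is $g$-torsion, and $\tilde N/N$ embeds into the $g$-torsionfree module $L/N$; therefore $\tilde N = N$ and $N \cong L'[-m]$.

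Next, I would bound the possible shifts $m$. That same lemma gives $\dv L' = \tau^{m-n}(\dv L) = \tau^m(\dv L)$, where $n$ is the multiplicity of $\ts(L/N)$ and hence vanishes here. By Lemma~\ref{exc-line-lem}(1), $\dv L = \tau(p)$ is a single point, and since $\tau$ acts without fixed points, the assignment $m \mapsto \tau^{m+1}(p)$ is injective. Theorem~\ref{line-finite-thm} tells us that the set of divisors of line modules for $R$ is finite, so only finitely many $m$ occur. Let $m^\ast$ be the maximum such shift, and pick $N^\ast \in \mathcal{S}$ realizing it.

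To conclude, I would compare $N^\ast$ to an arbitrary $N \in \mathcal{S}$ via $N^\ast \cap N$. This intersection is nonzero since $L$ is GK-2-critical (Lemma~\ref{line-prop-lem}(2)), and $L/(N^\ast \cap N) \hookrightarrow L/N^\ast \oplus L/N$ is $g$-torsionfree, so $N^\ast \cap N \in \mathcal{S}$ and is therefore again a shifted line submodule by Step~1. If $N^\ast \cap N \subsetneq N^\ast$, then $N^\ast/(N^\ast \cap N)$ is a nonzero $g$-torsionfree proper quotient of the GK-2-critical line module $N^\ast$, so it is GK-1 of positive multiplicity. The short exact sequence
\[
0 \to N^\ast/(N^\ast \cap N) \to L/(N^\ast \cap N) \to L/N^\ast \to 0
\]
and additivity of multiplicity then force the shift of $N^\ast \cap N$ (which equals the multiplicity of $L/(N^\ast \cap N)$) to strictly exceed $m^\ast$, contradicting maximality. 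Hence $N^\ast \subseteq N$, and $N^\ast$ is the unique smallest element of $\mathcal{S}$.

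The main obstacle is Step~2: ruling out arbitrarily small submodules in $\mathcal{S}$ is powered entirely by Theorem~\ref{line-finite-thm}, which itself rests on Corollary~\ref{dv2-countable-cor}; once that finiteness is in hand, the remaining critical-module bookkeeping is routine.
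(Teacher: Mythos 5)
Your proof is correct and follows essentially the same route as the paper's: both reduce to the observation that any $N$ with $L/N$ $g$-torsionfree is a shifted line module $L'[-m]$ with $\dv L' = \tau^{m+1}(p)$, both invoke Theorem~\ref{line-finite-thm} to bound $m$, both pick a maximizer, and both derive the minimality from the fact that $L/(N^\ast \cap N)$ is again $g$-torsionfree with multiplicity bounded by $m^\ast$. The only cosmetic difference is that you spell out the additivity-of-multiplicity contradiction through the short exact sequence, whereas the paper concludes more tersely that $N^\ast \cap N$ equals $N^\ast$ in large degree and then uses $g$-torsionfreeness; these are the same argument.
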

\begin{proof}
Recall that $\dv L = \tau(p)$ by Lemma~\ref{exc-line-lem}(1).
Suppose that $L/N$ is $g$-torsionfree for some submodule $0 \neq N
\subseteq L$.  By Lemma~\ref{line-prop-lem}(2), $N \cong L'[-m]$ is
a shift of a line module $L'$ with $\dv L' = \tau^{m+1}(p)$.  By
Theorem~\ref{line-finite-thm}, there can be only finitely many
values of $m$ for which $\tau^{m+1}(p)$ is the divisor of a line
module over $R$.  Thus there is an upper bound on the possible value
of $m$, which is also the multiplicity of the GK-$1$ module $L/N$.

Thus we may pick a submodule $0 \neq N \subseteq L$ such that $L/N$
is $g$-torsionfree and the multiplicity $m$ of $L/N$ is maximal
among all such choices. If $0 \neq N' \subseteq L$ is another
submodule with $L/N'$ $g$-torsionfree, then $L/(N \cap N')$ is also
$g$-torsionfree.  $L/(N \cap N')$ must then also have multiplicity
$m$, so $N \cap N'$ is equal to $N$ in large degree.  By
$g$-torsionfreeness, $N \cap N' = N$ and so $N \subseteq N'$.
\end{proof}

\begin{lemma}
\label{mult-special-lem} There is a special ideal $I$ of $R$ with
the following property:  given any (possibly infinite) direct sum of
shifts of $L$, say $M = \bigoplus L[a_i]$, and $N \in \rgr R$ such
that $N$ is isomorphic to a subfactor of $M$, $\GK N \leq 1$, and
$N$ is $g$-torsionfree, then $I \subseteq \rann_R N$.
\end{lemma}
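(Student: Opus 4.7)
I plan to construct $I$ as a finite intersection of right annihilators $\rann_R(L^*/N_0(L^*))$, where $L^*$ ranges over a finite family of line modules over $R$ that arise as (shifted copies of) submodules of $L$. Note that the proof of Lemma~\ref{smallest-tf-lem} adapts verbatim to any line module $L^*$ over $R$ (it uses only GK-$2$-criticality from Lemma~\ref{line-prop-lem}(2) and the finiteness from Theorem~\ref{line-finite-thm}), producing a unique smallest nonzero $N_0(L^*) \subseteq L^*$ with $L^*/N_0(L^*)$ being $g$-torsionfree. Each such annihilator is a special ideal by Lemma~\ref{special-ideal-lem}(3), and a finite intersection of special ideals is again special, because $R/I$ embeds into a direct sum of GK-$\le 1$ quotients.

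First I would reduce to the case of a cyclic $g$-torsionfree GK-$1$ subfactor of a single shift $L[a]$. Since $\rann_R(N) = \bigcap \rann_R(xR)$ over cyclic submodules $xR \subseteq N$, it suffices to annihilate each cyclic $xR$; lifting $x$ to $x' \in P$ places $x'$ in some finite sub-sum $L[a_{i_1}] \oplus \cdots \oplus L[a_{i_t}]$, and a projection-and-induction argument, together with the fact that $I$ will be constructed as an intersection independent of the shift, reduces the question to subfactors $N = P/Q$ of a single $L[a]$. GK-$2$-criticality of $L[a]$ (Lemma~\ref{line-prop-lem}(2)) forces $Q \ne 0$.

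The next step is a case split. If $P = L[a]$, then $L[a]/Q$ is $g$-torsionfree, so the minimality property in Lemma~\ref{smallest-tf-lem} forces $Q \supseteq N_0[a]$, making $N$ a quotient of $(L/N_0)[a]$ and hence annihilated by $I_0 := \rann_R(L/N_0)$. If $P \subsetneq L[a]$, Lemma~\ref{line-prop-lem}(2) furnishes a finite-codimension extension $\tilde P \cong L^*[-m] \subseteq L[a]$ for some line module $L^*$ over $R$. The $g$-torsion $\ts(\tilde P/Q)$ is finite-dimensional, since it injects into the finite-dimensional $\tilde P/P$ (because the $g$-torsionfree $N = P/Q$ meets the torsion trivially); enlarging $Q$ to $Q' \supseteq Q$ that kills this torsion gives a $g$-torsionfree GK-$1$ quotient $\tilde P/Q' \cong L^*[-m]/Q'$ of $L^*[-m]$ that contains $N$, which by Lemma~\ref{smallest-tf-lem} applied to $L^*$ is annihilated by $\rann_R(L^*/N_0(L^*))$.

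The hardest step is to show that, as $L^*$ varies over the line modules appearing above, the set of annihilator ideals $\rann_R(L^*/N_0(L^*))$ is finite. Lemma~\ref{line-prop-lem}(2) forces $\dv L^* \in \{\tau^k(p) : k \in \mb{Z}\}$, and Theorem~\ref{line-finite-thm} pins the divisors of all line modules over $R$ into a finite subset of $E$, so the possible divisors for $L^*$ form a finite set. It remains to argue that within each divisor class only finitely many annihilators $\rann_R(L^*/N_0(L^*))$ occur; I expect this from an analysis showing $L^*/N_0(L^*)$ is determined up to finite-dimensional discrepancy by $\dv L^*$ (its reduction modulo $g$ is a tail of the point module for $\overline R$ at that divisor, and the $g$-torsionfree lift to an $R$-module is correspondingly rigid). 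Once this finiteness is established, $I := I_0 \cap \bigcap_{L^*} \rann_R(L^*/N_0(L^*))$ over this finite collection is the desired special ideal.
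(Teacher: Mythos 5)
The paper's proof takes a fundamentally different and simpler route: it constructs the single ideal $I = \rann_R(L/\wt{L})$, where $\wt{L}$ is the minimal submodule of Lemma~\ref{smallest-tf-lem}, and shows this one ideal suffices. The key maneuver (which you miss) works directly on the finite direct sum: after pruning so that $N = N'/N''$ meets each $L[a_i]$ nontrivially in both $N'$ and $N''$, one sets $P_i$ to be the preimage in $L[a_i]$ of the $g$-torsion submodule of $L[a_i]/(N'' \cap L[a_i])$. Since $N$ is $g$-torsionfree, $N$ is still a subfactor of $\bigoplus L[a_i]/P_i$, and since each $L[a_i]/P_i$ is a $g$-torsionfree proper quotient, minimality from Lemma~\ref{smallest-tf-lem} gives $\wt{L}[a_i] \subseteq P_i$; hence $N$ is a subfactor of $\bigoplus (L/\wt{L})[a_i]$ and is killed by $I$. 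No case split on whether $P = L[a]$ or $P \subsetneq L[a]$ is needed, and no family of auxiliary line modules $L^*$ enters.

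Your proposal has two genuine gaps. First, the reduction ``to subfactors $N = P/Q$ of a single $L[a]$'' via a ``projection-and-induction argument'' is not justified and is, in the form stated, false as a reduction: a $g$-torsionfree GK-$1$ subfactor of $L[a_1]\oplus L[a_2]$ (for instance a diagonally embedded submodule modulo something) is not in general a subfactor of a single summand, and knowing that $I$ kills all subfactors of each individual $L[a_i]$ does not immediately imply it kills all subfactors of the sum -- one needs an argument that traces $N$ through the direct-sum decomposition, which is precisely what the paper's $P_i$ construction provides. Second, the finiteness of the family $\{\rann_R(L^*/N_0(L^*))\}$ as $L^*$ ranges over line submodules of $L$ is asserted (``I expect this from an analysis showing...'') but not proven; it is the step you yourself flag as hardest, and it is not obviously true (Theorem~\ref{line-finite-thm} bounds divisors, not isomorphism classes of line modules or their annihilator ideals). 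The paper sidesteps both issues: it never needs to vary over a family of line modules, because $\wt{L}$'s minimality already handles every $g$-torsionfree quotient of any shift of $L$ at once.

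Your opening observations (that Lemma~\ref{smallest-tf-lem} applies to any line module, and that an intersection of special ideals is special) are correct, and the instinct to use Lemma~\ref{smallest-tf-lem} together with $\rann$ of a $g$-torsionfree quotient is the right one; but the plan as outlined does not close, and even if the two gaps could be filled, it would be substantially more work than the paper's direct argument.
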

\begin{proof}
Let $N$ be as in the statement.  Since $N$ is finitely generated, we
can choose submodules $N'' \subseteq N' \subseteq M$ with $N', N''
\in \rgr R$ such that $N \cong N'/N''$.
Then $N'$ is contained in finitely many terms in the direct sum
comprising $M$, so we may replace $M$ if necessary by a finite sum
$M = \bigoplus_{i = 1}^m L[a_i]$ for some shifts $a_i \in \mb{Z}$.
We adjust the choices of $N'$ and $N''$ further.  If $N' \cap L[a_j]
= 0$ for some $j$, then $N'$ embeds in $M = \bigoplus_{i \neq j}
L[a_i]$, so we might as well replace $M$ by a direct sum of fewer
terms. Thus we can assume that $N' \cap L[a_i] \neq 0$ for all $1
\leq i \leq m$.  If $N'' \cap L[a_i] = 0$ for some $i$, then the
nonzero submodule $N' \cap L[a_i]$ of $L[a_i]$ embeds in $N$. But
$L[a_i]$ is GK-2-critical and so $\GK (N' \cap L[a_i]) = 2$,
contradicting $\GK N = 1$; thus $N'' \cap L[a_i] \neq 0$ for all
$i$.  Now for each $i$, consider the $g$-torsion submodule
$\ts(L[a_i]/(N'' \cap L[a_i])) = P_i/(N'' \cap L[a_i])$, where $P_i
\subseteq L[a_i]$, and let $P = \bigoplus P_i \subseteq M$. Since
$N$ is $g$-torsionfree, $N \cong (N' + P)/(N'' + P)$, so $N$ is
isomorphic to a subfactor of $M/P$. But now $L[a_i]/P_i$ is a
$g$-torsionfree proper factor module of $L[a_i]$. Let $\wt{L}$ be
the unique minimal nonzero submodule of $L$ such that $L/\wt{L}$ is
$g$-torsionfree, as constructed in Lemma~\ref{smallest-tf-lem}; then
$\wt{L}[a_i] \subseteq P_i$. Thus $N$ is a subfactor of $\bigoplus
L[a_i]/\wt{L}[a_i]$.  By Lemma~\ref{special-ideal-lem}(3), $I =
\rann_R L/\wt{L}$ is a special ideal, and clearly $I$ kills $N$.
\end{proof}

We are now about ready to show, in the following theorem, that
$R(D)$ always has a minimal special ideal. First, we recall the
following fact, which is essentially proved in \cite{ATV2}.
\begin{lemma}
\label{T-ideal-lem} The generic Sklyanin algebra $S$ and its
Veronese ring $T = S^{(3)}$ have no homogeneous factor rings of
GK-dimension $1$.
\end{lemma}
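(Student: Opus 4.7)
The plan is to argue by contradiction in each case, using Lemma~\ref{special-ideal-lem} to reduce to a $g$-torsionfree factor and then invoking the structural input from \cite{ATV2} that every finitely generated graded $S$-module of GK-dimension at most $1$ is annihilated by a power of $g$. Both $S$ (with $g \in S_3$ central and $\overline{S} = B(E,\mathcal{L},\sigma)$) and $T = S^{(3)}$ (with $g \in T_1$ and $\overline{T} = B(E,\mathcal{L}_3,\sigma^3)$) satisfy Hypothesis~\ref{main-hyp}, so Lemma~\ref{special-ideal-lem}(2) applies to each.

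For $S$, given a factor $S/I$ with $\GK S/I = 1$, that lemma yields an ideal $I' \supseteq I$ with $\dim_k I'/I < \infty$ and $S/I'$ a $g$-torsionfree cyclic module of GK exactly $1$ (note that $I' \nsubseteq Sg$, else $S/I'$ is a $B$-factor ring of GK $2$ or $0$ by projective simplicity of $B$). The key fact from \cite{ATV2}---which follows from the classification of GK-$1$-critical $S$-modules as shifts of point modules, each annihilated by $g$ (cf.\ \cite[Proposition 7.7(ii)]{ATV2}), together with the observation that finite-dimensional graded modules are automatically killed by a power of $g$ since $\deg g > 0$---gives $(S/I') g^n = 0$ for some $n$ after passing to a prime/composition filtration of $S/I'$. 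Combined with $g$-torsionfreeness, this forces $S/I' = 0$, contradicting $\GK S/I' = 1$.

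For $T = S^{(3)}$, the same reduction produces a $g$-torsionfree factor ring $T/J'$ with $\GK = 1$; I would then transfer the key fact from $S$ to $T$ via the Veronese category equivalence $\rqgr S \simeq \rqgr T$ of \cite[Proposition 5.10(3)]{AZ1}. Concretely, $T/J'$ is isomorphic in $\rqgr T$ to $N^{(3)}$ for some finitely generated graded $S$-module $N$ with $\GK N = \GK T/J' = 1$. By the $S$-case, $N g^n = 0$ for some $n$, hence $(T/J')_{\geq m} g^n = 0$ for $m \gg 0$, and $g$-torsionfreeness then forces $(T/J')_{\geq m} = 0$ for $m \gg 0$, contradicting $\GK T/J' = 1$. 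The main obstacle is the structural input from \cite{ATV2} that every GK-$1$ finitely generated graded module over the generic Sklyanin algebra is annihilated by a power of $g$; once granted, the rest of the argument is a routine combination of Lemma~\ref{special-ideal-lem} with the Veronese equivalence that bootstraps the conclusion from $S$ to $T$.
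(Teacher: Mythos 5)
Your reduction via Lemma~\ref{special-ideal-lem}(2) to a $g$-torsionfree factor ring $S/I'$ (resp.\ $T/J'$) of GK-dimension exactly $1$ is correct and is exactly the first step of the paper's proof. The difficulty lies in what you call the ``key fact,'' namely that every finitely generated graded $S$-module of GK-dimension at most $1$ is annihilated by a power of $g$. That statement is true for the generic Sklyanin algebra, but the justification you give for it does not hold up: \cite[Proposition 7.7(ii)]{ATV2} only says that \emph{point modules} are annihilated by $g$. To run a composition-series argument you would need the much stronger assertion that every GK-$1$-critical graded $S$-module is, in large degree, a shift of a point module --- equivalently, that $S$ has no ``fat point'' modules. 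That fact is not what 7.7(ii) proves, and it is precisely the nontrivial input you are sweeping into the phrase ``classification of GK-$1$-critical $S$-modules.'' Ruling out fat points for the generic Sklyanin algebra requires an argument --- typically via the simplicity of the degree-zero localization --- and cannot be cited from the result you quote.

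The paper avoids this entirely by invoking \cite[Corollary 7.9]{ATV2}: the ring $\Lambda_0 = S[g^{-1}]_0 = T[g^{-1}]_0$ is simple. If $T/I'$ were a nonzero $g$-torsionfree factor ring with $\GK T/I' = 1$, then $(T/I')[g^{-1}]_0$ would be a proper nonzero factor ring of $\Lambda_0$, a contradiction; the $S$-case is identical. This is both shorter and avoids the module-classification issue; it also handles $S$ and $T$ on an equal footing, so your detour through the Veronese equivalence $\rqgr S \simeq \rqgr T$ (which does work, but adds machinery) is unnecessary. If you want to salvage your module-theoretic version of the key fact, you can derive it from the same simplicity of $\Lambda_0$: a $g$-torsionfree GK-$1$ module $M$ has each $M[g^{-1}]_i$ finite-dimensional over $k$, and a simple infinite-dimensional domain has no nonzero finite-dimensional modules, forcing $M[g^{-1}] = 0$ and hence $M = 0$. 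As written, however, your proof has a genuine gap in the justification of the key fact, and the citation given does not close it.
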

\begin{proof}
Looking at the localization $\Lambda = T[g^{-1}] = S[g^{-1}]^{(3)}$,
the degree-$0$ piece $\Lambda_0$ is a simple ring \cite[Corollary
7.9]{ATV2}.  Then if $T$ has a homogeneous factor ring $T/I$ of GK
1, it has such a factor ring $T/I'$ which is also $g$-torsionfree,
by Lemma~\ref{special-ideal-lem}(2); in this case,
$(T/I')[g^{-1}]_0$ is a proper nonzero factor ring of $\Lambda_0$, a
contradiction. The result for $S$ follows the same way.
\end{proof}

\begin{theorem} (Theorem~\ref{RD-props-thm}(3))
\label{min-ideal-thm} Let $R = R(D)$ for any effective $D$ with $0
\leq \deg D \leq 7$.  Then $R$ has a minimal special ideal.
\end{theorem}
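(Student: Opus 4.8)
The plan is to induct on $\deg D$, using the exceptional line module of Section~\ref{except-sec} to transport the existence of a minimal special ideal from $R(D')$ to $R(D)$. The base case $\deg D = 0$ is immediate: then $R(D) = T$, and by Lemma~\ref{T-ideal-lem} every special ideal $J$ of $T$ has $\dim_k T/J < \infty$, so $J \supseteq T_{\geq n}$ for some $n$, and $I = T_{\geq 1}$ is a special ideal of $T$ which is minimal in tails.

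For the inductive step, write $D = D' + p$ with $0 \leq \deg D' \leq 6$, set $R = R(D)$ and $R' = R(D')$, and assume $R'$ has a minimal special ideal. Let $L$ be the exceptional line module for the extension $R \subseteq R'$, let $\wt L \subseteq L$ be the unique smallest nonzero submodule with $L/\wt L$ $g$-torsionfree (Lemma~\ref{smallest-tf-lem}), and put $I_0 = \rann_R(L/\wt L)$, which by Lemma~\ref{special-ideal-lem}(3) is a special ideal of $R$ depending only on the extension $R \subseteq R'$. By Lemma~\ref{mult-special-lem}, $I_0$ annihilates every $g$-torsionfree $N \in \rgr R$ with $\GK N \leq 1$ that is a subfactor of a finite direct sum of shifts of $L$; in particular it annihilates every noetherian $g$-torsionfree object of the exceptional category $\wt{\mc C}$ of GK-dimension $\leq 1$. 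On the other side, the inductive hypothesis and Remark~\ref{special-rem} (applied to $R'$, using Theorem~\ref{Rop-thm}(3) to pass to $R'^{op}$) furnish a homogeneous ideal $I'$ of $R'$, minimal in tails, with $R'/I'$ $g$-torsionfree and with $I'$ annihilating every $g$-torsionfree module in $\rgr R'$ of GK-dimension $\leq 1$, on the left and the right; in particular $I' \cap R$ is a special ideal of $R$, since $R/(I'\cap R)$ embeds into the $g$-torsionfree GK-$\leq 1$ module $(R'/I')_R$.

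Now let $J$ be an arbitrary special ideal of $R$. By Lemma~\ref{special-ideal-lem}(2) we may assume $M := R/J$ is $g$-torsionfree (and $\GK M \leq 1$), since enlarging $J$ by a finite-dimensional piece does not affect whether $J$ contains a fixed $I_{\geq n}$. Apply $-\otimes_R R'$ to $M$ and use the exact sequence \eqref{tensor-seq-eq} of Theorem~\ref{qgr-blowup-thm}; since $R'/R \cong \bigoplus_{i\geq 0} L[-i-1]$ lies in $\wt{\mc C}$ by Lemma~\ref{exc-line-lem}(2), the kernel of $M \to M\otimes_R R'$ is a quotient of $\tor_1^R(M,R'/R)$, hence a noetherian $g$-torsionfree GK-$\leq 1$ object of $\wt{\mc C}$, and is therefore killed by $I_0$. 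Writing $M' = M\otimes_R R'$, one checks as in the proof of Proposition~\ref{dv2-prop} that $M'$ is admissible over $R'$ with $M'/M'g$ Goldie-torsion; the remaining task is to peel off from $M'$ a $g$-torsionfree GK-$\leq 1$ $R'$-module $M''$ receiving a map from $\im(M \to M')$ whose kernel and cokernel, viewed as $R$-modules, again lie in $\wt{\mc C}$ and are GK-$\leq 1$. Granting this, $M$ is annihilated in all large degrees by a fixed special ideal $I$ of $R$ built from $I_0$ and $I' \cap R$ (with the factors ordered to match the resulting finite filtration of $M$), so $J \supseteq I_{\geq n}$ for some $n$; as $I$ is independent of $J$, its $g$-torsionfree enlargement is a special ideal of $R$ which is minimal in tails, completing the induction.

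The main obstacle is the GK-dimension bookkeeping: because $R \subseteq R'$ is not a finite extension, $M \otimes_R R'$ need not have GK-dimension $\leq 1$, and one must confine the excess growth to pieces of $\wt{\mc C}$ (killed by $I_0$) and to a genuine $R'$-module (killed by $I'$), while checking at each stage that the relevant $\tor$ terms and $g$-torsion submodules are noetherian and have the correct GK-dimension and torsion behavior. Verifying precisely that the map $\im(M\to M') \to M''$ with the claimed properties exists, and marshalling the left/right symmetry needed to invoke Remark~\ref{special-rem} for $R'$, is where the real work lies.
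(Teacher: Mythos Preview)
Your inductive setup, base case, and choice of ingredients (the exceptional line module $L$, the ideal $I_0 = \rann_R(L/\wt L)$ from Lemma~\ref{mult-special-lem}, and the minimal special ideal of $R'$) all match the paper. The difference is in how these ingredients are assembled, and the gap you yourself flag is real.

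The paper avoids tensoring $M = R/J$ up to $R'$ entirely. Instead it works directly with the chain of two-sided ideals of $R$
\[
J \ \subseteq\ R'J \cap R \ \subseteq\ R'JR' \cap R,
\]
and observes that the top quotient $(R'JR' \cap R)/(R'J \cap R)$ is, as a \emph{right} $R$-module, a subfactor of a direct sum of shifts of $(R'/R)_R \cong \bigoplus L[-i-1]$, while the bottom quotient $(R'J \cap R)/J$ is, as a \emph{left} $R$-module, a subfactor of a direct sum of shifts of ${}_R(R'/R)$. After killing finite-dimensional $g$-torsion, Lemma~\ref{mult-special-lem} gives a right-sided special ideal $I$ with $[(R'JR'\cap R)/(R'J\cap R)]\cdot I_{\geq m_1}=0$, and its \emph{left-sided analog} (obtained via Theorem~\ref{Rop-thm}(3) applied to $R$, not to $R'$) gives a special ideal $H$ with $H_{\geq m_2}\cdot[(R'J\cap R)/J]=0$. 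Since $R'JR'$ is a special ideal of $R'$, it contains $K_{\geq m_3}$ for the minimal special ideal $K$ of $R'$, and one concludes $H_{\geq m_2}(K_{\geq m_3}\cap R)I_{\geq m_1}\subseteq J$. The ideal $G = H(K\cap R)I$ is independent of $J$ and minimal in tails.

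Your tensoring route stumbles exactly where you say it does: the kernel of $M' = R'/JR' \to R'/R'JR'$ is $R'JR'/JR'$, which is naturally a \emph{left} $(R'/R)$-type object, not a right one, so it is not visibly in the right exceptional category $\wt{\mc C}$ and hence not killed by your $I_0$. You therefore cannot build the annihilating ideal from $I_0$ and $I'\cap R$ alone; a left-sided companion to $I_0$ is essential. Once you introduce that left-sided ideal $H$, the cleanest way to organize the argument is precisely the ideal chain above rather than the module-theoretic filtration you propose.
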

\begin{proof}
(1) The proof is by induction on $\deg D$.  For the base case, $D =
0$ and $R(D) = T$, so the result is trivial by
Lemma~\ref{T-ideal-lem} ($T$ itself is minimal in tails.)

Now assume that $\deg D \geq 1$, let $D = D' + p$ for some $p$, and
suppose that we have proven the theorem for $R'= R(D')$.  Consider a
special ideal $J$ of $R$.  We have a chain of special $R$-ideals $J
\subseteq R'J \cap R \subseteq R'J R' \cap R$.  It is easy to see
that $M = (R'J R' \cap R)/(R'J \cap R)$ is isomorphic, as a right
$R$-module, to a subfactor of some direct sum of copies of shifts of
$(R'/R)_R$.   Also, by Lemma~\ref{exc-line-lem}(2), each $(R'/R)_R$
is isomorphic to a direct sum of copies of shifts of the exceptional
line module $L$ for $R \subseteq R'$, as defined in
Definition~\ref{exc-def}. Since $R/(R'J \cap R)$ is a special factor
ring, $\dim_k \ts(M) < \infty$ by Lemma~\ref{special-ideal-lem}(2).
Thus if $M' = M/\ts(M)$, then $M'$ is a $g$-torsionfree,
GK-dimension $1$ subfactor of a direct sum of shifts of $L$.  If $I$
is the  special ideal constructed in Lemma~\ref{mult-special-lem},
we conclude that $M'I = 0$.  Thus $M I_{\geq m_1} = 0$ for some
$m_1$.

Similarly, $N = (R'J \cap R)/J$ is isomorphic as a left $R$-module
to a subfactor of some direct sum of copies of shifts of $_R
(R'/R)$.  Because of Theorem~\ref{Rop-thm}(3), all right-sided
results in this paper have left-sided analogs.  A left-sided version
of the argument in the last paragraph then constructs a special
ideal $H$ of $R$ satisfying $H_{\geq m_2}N = 0$ for some $m_2$.

Now $R'JR'$ is clearly a special ideal of $R'$ (otherwise by
Lemma~\ref{special-ideal-lem}(1), $R'JR' \subseteq R'g$ and so $J
\subseteq R'g \cap R = Rg$, a contradiction).
By the induction hypothesis, $R'$ has a special ideal $K$ which is
minimal in tails, and so $K_{\geq m_3} \subseteq R'JR'$ for some
$m_3$. Then $H_{\geq m_2} (K_{\geq m_3} \cap R)I_{\geq m_1}
\subseteq J$. This implies that $[H(K \cap R)I]_{\geq m_4} \subseteq
J$, for some $m_4$.  Since $G = H(K \cap R)I$ is a special ideal of
$R$ which is independent of $J$, we see that $G$ is an ideal which
is minimal in tails for $R$.
\end{proof}

At this point, we have done all of the hard work necessary to prove
the main classification theorem of the paper,
Theorem~\ref{deg3-class-thm}, and we now put all of the pieces
together.

\emph{Proof of Theorem~\ref{deg3-class-thm}.}  Recall the setup of
the theorem: we have $V \subseteq T_1$ and $A = k \langle V \rangle
\subseteq T$, and we assume that $Q_{\rm gr}(A) = Q_{\rm gr}(T)$. As
usual, let an overline indicate the image of subsets under the
homomorphism $T \to T/Tg \cong B(E, \mc{M}, \tau)$.  Now
$\overline{V}$ generates some subsheaf $\mc{N}$ of $\mc{M}$ on $E$.
Then $\mc{N} = \mc{I}_D \otimes \mc{M}$ for some divisor $D$ on $E$,
and $A \subseteq R(D)$. We claim that $0 \leq \deg D \leq 7$, or
equivalently that $\deg \mc{N} \geq 2$. If not, then $\HB^0(E,
\mc{N}) \leq 1$ by Riemann-Roch, and thus either $V = kg$ or $V = kx
+ kg$ for some $x \in T_1$.  In either case, since $g$ is central,
clearly $A$ is commutative, and so $Q_{\rm gr}(A) = Q_{\rm gr}(T)$
cannot possibly hold; this contradiction proves the claim.

Now Lemma~\ref{B-subring-lem}(3) makes clear that $\overline{A}$ is
equal in large degree to some $B(F, \mc{N}', \tau') \subseteq B(E,
\mc{N}, \tau)$, where $F$ may be a different elliptic curve, but in
any case $\overline{A} \subseteq \overline{R(D)} = B(E, \mc{N},
\tau)$ is a finite ring extension.  Thus Hypothesis~\ref{subs-hyp}
holds, and the results of Section~\ref{subrings-sec} apply. Since
$R(D)$ is now known to have a minimal special ideal by
Theorem~\ref{min-ideal-thm}, Theorem~\ref{subring-thm}(1) shows that
$A$ and $R(D)$ are equivalent orders.  Since the rings $R(D)$ are
maximal orders by Theorem~\ref{max-order-thm}, it is now clear that
the rings $R(D)$ are the only subrings of $T$ generated in degree 1
which have the same graded quotient ring as $T$ and which are
maximal orders.  Clearly a given $A$ is an equivalent order only to
the smallest $R(D)$ containing it, so the $D$ with this property is
unique.

Theorem~\ref{subring-thm}(2) also implies that $A \subseteq R(D)$ is
a finite ring extension if $A$ is noetherian; in particular, this
does hold if $g \in A_1$, by Lemma~\ref{C-noeth-lem}(1).  \hfill
$\Box$

\section{Examples}
\label{example-sec}

Some of the complicated (and interesting) behavior of the rings
$R(D)$ happens when $D$ contains multiple points on the same $\tau =
\sigma^3$-orbit. In this section, we work out some of the features
of one of the simplest such cases.  This will provide explicit
examples of a number of phenomena studied in previous sections.  In
this section and the next, we again need the notation for point
spaces $W(p) \subseteq S_1$, as in  Section~\ref{sklyanin-sec}.

The following is the example that will occupy us for most of this
section.
\begin{example}
\label{main-ex} Fix any $p \in E$, let $D = p + \sigma^{-3}(p) \in
\Div E$, and set $R = R(D)$.  It will be interesting to compare the
two different ways in which $R(D)$ can be thought of as an iterated
blowup of $T$, so we need some more notation.  Set $R' = R(p)$ and
$R'' = R(\sigma^{-3}(p))$.  The first sequence of blowups is $R =
R(D) \subseteq R' = R(p) \subseteq T$. Let $J = \{ x \in R | R'_1x
\subseteq R \}$, so that $L = R/J$ is the exceptional line module
for the blowup $R \subseteq R'$ as in Definition~\ref{exc-def};
similarly, let $J' = \{ x \in R' | T_1x \subseteq R' \}$,  so that
$L' = R'/J'$ is the exceptional line module of $R' \subseteq T$. The
other sequence of blowups is $R = R(D) \subseteq R'' =
R(\sigma^{-3}(p)) \subseteq R$. Let $J^{\circ} = \{ x \in R | R''_1x
\subseteq R \}$, so that $L^{\circ} = R/J^{\circ}$ is the
exceptional line module for $R \subseteq R''$; and let $J'' = \{x
\in R'' | T_1x \subseteq R'' \}$, so that $L'' = R''/J''$ is the
exceptional line module of $R'' \subseteq T$.
\end{example}

\begin{proposition}
\label{main-ex-prop} Example~\ref{main-ex} has the following
properties.
\begin{enumerate}
\item Let $V = W(p)W(\sigma^{-2}(p))S_1$, thought of as a subspace of $R_1$.  Then $I = V R$ is a special ideal
 of $R$ such that $R/I$ is a $g$-torsionfree point module.
\item The subring $A = k \langle V \rangle$ of $R$ is equal to $k + I$,
and $A$ is neither right nor left noetherian.
\item $J \subseteq I$, and $(I/J)_R$ is isomorphic to the shifted line
module $R/J^{\circ}[-1]$.
\item $L'_R \cong L^{\circ}_R$.  In particular, $L'_{R'} \in \rgr R'$ is a GK-2 module such that $L'_R \in \wt{\mc{C}}$
and $L'_{R'} \in \wt{\mc{D}}$, in the notation of
Theorem~\ref{qgr-blowup-thm}.
\end{enumerate}
\end{proposition}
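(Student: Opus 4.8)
The plan is to produce a surjection of graded right $R$-modules $L^{\circ}\twoheadrightarrow L'_R$ and then upgrade it to an isomorphism by comparing Hilbert series; the ``in particular'' clause will then follow at once from part~(3). Recall that by Lemma~\ref{exc-line-lem}(1), applied to $R'=R(p)\subseteq T$, the module $L'=R'/J'$ is a line module for $R'$ (so $\GK L'=2$ and $\hs_{L'}(t)=1/(1-t)^2$), while applied to $R=R(D)\subseteq R''=R(\sigma^{-3}(p))$ it shows $L^{\circ}=R/J^{\circ}$ is a line module for $R$ with $\dv L^{\circ}=\tau(p)$; here $J'=\{y\in R'\mid T_1y\subseteq R'\}$ and $J^{\circ}=\{y\in R\mid R''_1y\subseteq R\}$.

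The key point is the identity $R'=R+J'$. I would first prove its reduction $\overline R+\overline{J'}=\overline{R'}$ inside $\overline T=B(E,\mc M,\tau)$. Since $L'$ is a $g$-torsionfree line module with $\dv L'=\tau(p)$, the factor $L'/L'g=\overline{R'}/\overline{J'}$ is the point module $P(\tau(p))$, so it suffices that $\overline R_n\hookrightarrow\overline{R'}_n\twoheadrightarrow P(\tau(p))_n$ is onto for every $n$. Writing $\overline R_n=\HB^0(E,\mc I_{C_n}\otimes\mc M_n)$ with $C_n$ the effective divisor of degree $2n$ supported on $\{p,\tau^{-1}(p),\dots,\tau^{-n}(p)\}$, the kernel of the map to $P(\tau(p))_n$ is $\HB^0(E,\mc I_{C_n+\tau(p)}\otimes\mc M_n)$, so onto-ness means exhibiting a section of $\mc I_{C_n}\otimes\mc M_n$ nonvanishing at $\tau(p)$; this holds because $\tau(p)\notin\supp C_n$ and $\deg(\mc I_{C_n+\tau(p)}\otimes\mc M_n)=7n-1>0$, forcing the relevant $\HB^1$ to vanish. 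Given $\overline R+\overline{J'}=\overline{R'}$, I lift it: since $g\in R$ and $J',Rg$ are ideals, $(R+J')g\subseteq R+J'$, and $R'\cap Tg=R'g$ by Theorem~\ref{HS-thm}, so a routine induction gives $R'=(R+J')+R'g^{\,n}$ for all $n$; as $R'$ is bounded below, $R'=R+J'$.

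With $R'=R+J'$ in hand, the second isomorphism theorem gives $R/(J'\cap R)\cong R'/J'=L'_R$, so $L'_R$ is a line module for $R$. Also $J^{\circ}\subseteq J'\cap R$: from $R_1=V(D)$, $R'_1=V(p)$, $R''_1=V(\sigma^{-3}(p))$ one reads off $R'_1\cap R''_1=R_1$, and a dimension count then gives $T_1=R'_1+R''_1$; hence for $r\in J^{\circ}$, $T_1r=R'_1r+R''_1r\subseteq R'+R\subseteq R'$, i.e.\ $r\in J'$. Thus there is a natural surjection $L^{\circ}=R/J^{\circ}\twoheadrightarrow R/(J'\cap R)\cong L'_R$, and since both have Hilbert series $1/(1-t)^2$ it is an isomorphism, proving $L'_R\cong L^{\circ}_R$. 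Finally, $L'$ has GK-dimension $2$, and by part~(3) the shifted line module $L^{\circ}[-1]\cong I/J$ is a submodule of $L=R/J$, so $L^{\circ}\in\wt{\mc C}$ ($\wt{\mc C}$ being closed under subobjects and shifts); therefore $L'_R\cong L^{\circ}_R\in\wt{\mc C}$, and by the definition of $\wt{\mc D}$ in Theorem~\ref{qgr-blowup-thm}, $L'_{R'}\in\wt{\mc D}$.

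I expect the only real work to be the identity $R'=R+J'$, and within it the Riemann--Roch bookkeeping showing $\overline R_n\twoheadrightarrow P(\tau(p))_n$ for all $n$: one must identify which point of the $\tau$-orbit of $p$ the point module $\overline{L'}$ detects in each degree and check it never enters the orbit segment cut out by $\overline R_n$ --- this is precisely where the fact that $\dv L'=\tau(p)$ sits ``on the far side'' of that segment gets used. The lifting step and the final Hilbert series comparison are then routine.
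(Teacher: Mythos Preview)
Your argument is correct and closely parallels the paper's. Both proofs reduce to establishing $J^{\circ}=J'\cap R$: you and the paper each show $J^{\circ}\subseteq J'$ via the decomposition $T_1=R'_1+R''_1$, and both then pin down $J'\cap R$ by a Hilbert-series squeeze against the line module $R/J^{\circ}$. The only packaging difference is in how $\hs_{R/(J'\cap R)}$ is controlled. The paper observes that every element of $\overline{J'_n}$ vanishes at $\tau(p)=\sigma^3(p)$ while not all of $\overline{R_n}$ does, giving the lower bound $\hs_{R/(J'\cap R)}\geq 1/(1-t)^2$, and then invokes the automatic injection $R/(J'\cap R)\hookrightarrow R'/J'$. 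You push the same Riemann--Roch fact one step further to get $\overline{R_n}\twoheadrightarrow P(\tau(p))_n$, hence the stronger identity $R'=R+J'$, which gives $R/(J'\cap R)\cong L'_R$ exactly; your route thus produces the extra intermediate fact $R+J'=R'$ at no real additional cost.
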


Before proving the proposition, we make some remarks on the
significance of these properties.  Part (1) gives an explicit
example of a special ideal in a ring $R(D)$.  Part (2) shows that
degree-$1$ generated subrings $A$ of $T$ which do not contain the
central element $g$ can indeed be non-noetherian; also,
$A \subseteq R$ need not be a finite ring extension.  Part (3) gives
an explicit example where an exceptional line module (in this case
$L \cong R/J$) contains another shifted line module (in this case
$L^{\circ}[-1]$, a shift of the exceptional module for the other
blowup sequence), where the factor is GK-1 $g$-torsionfree. This is
exactly the phenomenon that Lemma~\ref{smallest-tf-lem} is designed
to control. Finally, part (4) shows, as was claimed earlier, that
GK-2 modules are sometimes among those that need to be quotiented
out from $\rQgr R'$ in the equivalence of categories $\rQgr R/\mc{C}
\sim \rQgr R'/\mc{D}$ in Theorem~\ref{qgr-blowup-thm}.

\begin{proof}[Proof of Proposition~\ref{main-ex-prop}]

(1)  We have $V = W(p)W(\sigma^{-2}(p))S_1 \subseteq S_3 = T_1$.  By
Lemma~\ref{ps-less-basic-lem}(1), we know that $\dim_k V = 7$ and $g
\not \in V$.

An easy calculation, ``moving point spaces" using
Lemma~\ref{ps-basic-lem}(1), shows that $T_1 V = R''_2$, so that $V
\subseteq J''_1$. The Hilbert series of $J''$ is easily determined,
since the Hilbert series of $R''$ is known and $R''/J''$ is a line
module, to be $\D \hs_{J''}(t) = \frac{t^2 + 7t}{(1-t)^3}$. In
particular, we see from this that $V = J''_1$, since $\dim_k J''_1 =
7$ also.  We claim that $J'' = VR''$, so that $J''$ is generated in
degree $1$ as a right $R''$-ideal. Since $R''/J''$ is
$g$-torsionfree, by Lemma~\ref{line-prop-lem}(1), we have $J'' \cap
R''g = J''g$.  Then $\D \hs_{\overline{J''}}(t) = \frac{t^2 +
7t}{(1-t)^2}$. Also $VR'' \subseteq J''$, and the Hilbert series of
$\overline{V} \overline{R''}$ is easily calculated, using
Lemma~\ref{inv-section-lem}, to be the same as that of
$\overline{J''}$, so that $\overline{V} \overline{R''} =
\overline{J''}$. Then we have $h_{VR''}(t) \geq
h_{\overline{VR''}}(t)/(1-t) = h_{\overline{J''}}/(1-t) =
h_{J''}(t)$, forcing $VR'' = J''$, which proves the claim.

It is easy to see that $V \subseteq R_1$, where $\dim_k R_1 = 8$, so
clearly we have $V + kg = R_1$.  Consider $V^2 =
W(p)W(\sigma^{-2}(p))S_1 W(p)W(\sigma^{-2}(p))S_1 $. By moving point
spaces, we see that $W(\sigma^{-2}(p))S_1 W(p)
= W(\sigma^{-2}(p))W(\sigma^{-1}(p))S_1$, and this contains $g$ by
Lemma~\ref{ps-less-basic-lem}(1).  Thus $V^2 \supseteq W(p)g
W(\sigma^{-2}(p))S_1 = Vg$.   This implies that $R_1 V = (V + kg)V =
V^2$, and similarly $V R_1 = V^2$.  Also, then $V^2 + kg^2 = R_1 V +
kg^2 = R_1 V + Vg + kg^2 = R_1V + R_1g = R_1(V + kg) = R_2$. An
inductive argument gives $V^n + kg^n = R_n$ and $VR_{n-1} = V^n =
R_{n-1}V$ for all $n \geq 1$.  Finally, note that $g^n \not \in V^n$
for all $n \geq 1$, because $V^n \subseteq VR'' = J''$, and $g^n
\not \in J''$ for all $n \geq 1$ since $R''/J''$ is $g$-torsionfree.
In conclusion, $I = VR = RV$ is an ideal of $R$ such that $\{1, g,
g^2, \dots \}$ form a $k$-basis for $R/I$. So $I$ is a special ideal
of $R$, and moreover $R/I$ is a right $g$-torsionfree $R$-point
module.

(2) The calculation in (1) also shows that $A = k \langle V \rangle
= k + I$. Clearly $A$ is not right noetherian, since $R_A$ is
infinitely generated, but
$R_A$ embeds as a submodule of $A$ by taking any $0 \neq x \in I$
and considering $xR \subseteq I \subseteq A$.  Similarly, $A$ is not
left noetherian.

(3)  Note that $J'' \subseteq R'$. This is because by part (1), we
have  $J'' = W(p)W(\sigma^{-2}(p))S_1 R''$, which is easily checked
to be contained in $R'$ by moving point spaces (in fact, we even
have $W(p)W(\sigma^{-2}(p))S_1 T_1 = R'_2$.) So $J'' \subseteq R'
\cap R''$. Since $R''/J''$ is a line module,
$\overline{R''}/\overline{J''}$ is a point module, in particular
each graded piece has dimension $1$. By considering vanishing
conditions, it is easy to see that $\overline{(R'' \cap R')_n}
\subsetneq \overline{R''_n}$ for $n \geq 1$, and so this forces
$\overline{R'' \cap R'}/\overline{J''} = k$.  Since $(R'' \cap
R')/J''$ is $g$-torsionfree, being an $R$-submodule of $R''/J''$, we
conclude that $(R'' \cap R')/J''$ is a $g$-torsionfree point module,
which must have  $\{1, g, g^2, \dots \}$ as a $k$-basis.  Then the
natural map $R/(J'' \cap R) \to (R' \cap R'')/J''$ is an isomorphism
of $R$-modules. Clearly $I = VR \subseteq VR'' = J''$. Thus $I
\subseteq J'' \cap R$ and so we have $I = J'' \cap R$ by Hilbert
functions.

Now it is straightforward to check that $J \subseteq J''$, as
follows.  If $x \in J$, then $R'_1 x \subseteq R$. Choosing any $z
\in R''_1 \setminus R'_1$, we have $R'_1 + kz = T_1$ and so $T_1x =
(R'_1x + kzx) \subseteq R + R''_1R \subseteq R''$. Thus $x \in J''$.
So $J \subseteq J'' \cap R = I$.

It is known by the proof of part (1) that $I = VR $ is generated in
degree $1$ as a right $R$-ideal.  So $I/J$ is a cyclic module,
generated in degree $1$, with the Hilbert series of a line module
shifted by $-1$. Examine $X = V J^{\circ}$.  By moving point spaces,
it is easy to see that $R'_1 V = V R''_1$.  Thus $X$ satisfies $R'_1
X = R'_1 V J^{\circ} =VR''J^{\circ} \subseteq V R \subseteq R$. So
$X \subseteq J$. This implies that $J^{\circ} \subseteq \rann(y)$,
where $0 \neq y \in V/J_1$ is a generator of $I/J$. Thus there is a
surjection $R/J^{\circ}[-1] \to I/J$.  This is a surjection between
$R$-modules with the same Hilbert function, so it is an isomorphism.

(4)  Consider the $R$-module $R/(J' \cap R)$.  This module is
certainly $g$-torsionfree (being an $R$-submodule of the line module
$R'/J'$). Moreover, by considering vanishing conditions one easily
sees that $\overline{R}/\overline{J' \cap R}$ has the Hilbert series
of a point module at the very smallest, as follows: since we showed
that $J'' = VR''$ in part (1), an analogous argument gives that $J'
= J'_1 R'$, where $J'_1 = W(\sigma^3(p))W(\sigma(p))S_1$; thus, for
all $n \geq 0$ every element of $\overline{J'_n}$ vanishes at
$\sigma^3(p)$, which is not true of every element of
$\overline{R_n}$.  Then $R/(J' \cap R)$ has the Hilbert series of a
line module at the very smallest. Now we have $J^{\circ} \subseteq
J'$, by a completely analogous proof to the one showing $J \subseteq
J''$ in part (3) (just switch the roles of $R'$ and $R''$.)  Since
$R/J^{\circ}$ is a line module and $J^{\circ} \subseteq (J' \cap
R)$, we conclude that $J^{\circ} = J' \cap R$.  Thus $R/J^{\circ} =
R/(J' \cap R) \to R'/J'$ is an injection of line modules and so is
in fact an isomorphism.

Combined with (3), this shows that $L'_R$ is isomorphic to a shift
of a submodule of $R/J = L$, so that $L'_R \in \wt{\mc{C}}$ is in
the exceptional category of the blowup $R \subseteq R'$.  Then
$L'_{R'} \in \wt{\mc{D}}$ by definition.
\end{proof}

Having a divisor $D$ which contains two points on the same
$\sigma^3$-orbit is not the only situation in which one expects the
ring $R(D)$ to have special ideals.  Such ideals also occur when the
points of $D$ are not in general position in $\mb{P}^2$, for example
if $D$ is the sum of three collinear points, as follows.
\begin{example}
Let $0 \neq f \in S_1$, and let $D = p + q + r$ be the hyperplane
section of $E$ where $f \in \HB^0(E, \mc{L})$ vanishes.  Let $R =
R(D)$. Let $V = fS_2$, so that $\dim_k V = 6$.  We have $\overline{V} =
\overline{R}$, both being the set of sections of $\mc{L}_3$
vanishing along $D$. Since $\dim_k R_1 = 7$, we have $g \not \in V$
and $V + kg = R_1$. Consider $V^2 = fS_2fS_2$.  The space $S_1fS_1$
contains $g$ by Lemma~\ref{ps-less-basic-lem}(3), and so $fS_2fS_2
\supseteq fS_1gS_1 = Vg$.  It then follows, similarly as in the
proof of part (1) of the preceding proposition, that $I = R V= V R$
is a special ideal of $R$ with $V^n = I_n$ and $V^n + kg^n = R_n$
for all $n \geq 1$.  Moreover, since $Sg$ is a completely prime
ideal of $S$, it is easy to check that $g^n \not \in fS$, so that $g^n
\not \in V^n$ for all $n \geq 0$.  Thus $R/I$ is a factor ring which
is also a $g$-torsionfree point module.
\end{example}

%

\section{Subrings of $S$ generated in degree $1$}
\label{deg1-sec}

In this final section, we study the behavior of subrings of a
generic Sklyanin algebra $S$ which are generated in degree $1$.
Because $\dim_k S_1 = 3$, it is not surprising that there are rather
few possibilities.  Thus we can give a much more specific
classification result than in the case of degree-3-generated
algebras, but at the expense of some picky calculations. Still, we
offer this result as a step towards a general theory of subalgebras
generated in an arbitrary degree of $S$. Throughout this section,
let $A = k \langle V \rangle$, where $0 \neq V \subsetneq S_1$.
Certainly if $\dim_k V = 1$, then $A \cong k[z]$, and this case is
both boring and does not have $Q_{\gra}(A) \neq Q_{\gra}(S)$. So the
only case worth studying is $\dim_k V = 2$, and we will see in
Theorem~\ref{deg1-gen-thm} below that the classification is a
dichotomy depending on whether or not $V$ is a point space.

The major technical part of the work is contained in the next lemma.
Though what $\overline{A}$ is equal to in large degree follows
easily from previous lemmas, in case $V$ is not a point space, our
method of proof in the next theorem requires a more careful analysis
of $\dim_k A_n$ for small $n$.
\begin{lemma}
\label{deg3-case-lem} Let $V \subseteq \HB^0(E, \mc{L})$ such that
$\dim_k V = 2$ and $V$ generates $\mc{L}$, and let $A = k \langle V
\rangle \subseteq S$, so that $\overline{A} = k \langle \overline{V}
\rangle \subseteq B = B(E, \mc{L}, \sigma)$.
\begin{enumerate}
\item $\overline{A}$ is equal to $B$ in large degree.

\item $A_1 S_1 = S_2 = S_1 A_1$.

\item $\dim_k A_2 = 4$ and $\dim_k A_3 \geq 7$.
\end{enumerate}
\end{lemma}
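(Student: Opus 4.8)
The plan is to reduce everything modulo the central element $g$, working inside $B = S/Sg = B(E,\mc{L},\sigma)$ and using that the quotient map induces an isomorphism $S_n \cong \overline{S}_n$ for $n\le 2$ (while it has $1$-dimensional kernel $kg$ in degree $3$). For part~(1): since $\overline{A} = k\langle\overline{V}\rangle$ has $\dim_k\overline{A}_1 = 2$, Lemma~\ref{B-subring-lem} applies, so $\overline{A}$ is equal in large degree to $B(F,\mc{M},\tau)$ for a nonsingular elliptic curve $F$ carrying a finite surjection $\theta\colon E\to F$ with $\theta^*\mc{M} = \mc{N}$, where $\mc{N}$ is the sheaf generated on $E$ by $\overline{V}$, and $\deg\mc{N} = (\deg\theta)(\deg\mc{M})$ with $\deg\mc{M}\ge 2$. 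By hypothesis $\mc{N} = \mc{L}$, so $3 = \deg\mc{L} = (\deg\theta)(\deg\mc{M})$ with $\deg\theta\ge 1$ an integer; this forces $\deg\theta = 1$, hence $\theta$ is an isomorphism, $B(F,\mc{M},\tau)\cong B(E,\mc{N},\sigma) = B$, and $\overline{A} = B$ in large degree.

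For part~(2): since $S_2\cong\overline{S}_2$, it suffices to prove $\overline{V}\star B_1 = B_2 = B_1\star\overline{V}$ in $B$. Each product is, after applying $\sigma^*$ to one tensor factor, the image of a multiplication map $\HB^0(E,\mc{L})\otimes\HB^0(E,\mc{L}^\sigma)\to\HB^0(E,\mc{L}_2)$ in which the restricted factor is a two-dimensional subspace generating one of $\mc{L}$ or $\mc{L}^\sigma$. Since $\deg\mc{L} = \deg\mc{L}^\sigma = 3\ge 2$ and $\mc{L}^\sigma\not\cong\mc{L}$ (as $\sigma$ is translation by a point of infinite order), the stronger surjectivity statement in the proof of Lemma~\ref{inv-section-lem}(1) — where two generating sections of one of the two sheaves already suffice — applies, yielding $B_2$ in both cases.

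For part~(3), the equality $\dim_k A_2 = 4$ again follows from $S_2\cong\overline{S}_2$ once the multiplication map $\overline{V}\otimes\sigma^*(\overline{V})\to\HB^0(E,\mc{L}_2)$ is shown to be injective. Write $\overline{V} = kx\oplus ky$ with $\dv x = D$ and $\dv y = D'$; these are disjoint effective divisors of degree $3$ because $\overline{V}$ generates $\mc{L}$. Grouping the four products by their first factor, a nontrivial relation is rewritten as $x\cdot u = -y\cdot v$ with $u,v\in\sigma^*(\overline{V})$; then $u$ must vanish along $D'$, and since $\deg u = \deg\mc{L}^\sigma = 3 = \deg D'$ this forces $\dv u = D'$, hence $\mc{L}\cong\mc{O}_E(D')\cong\mc{L}^\sigma$, a contradiction. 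So $u = v = 0$ and $\dim_k A_2 = \dim_k\overline{V}^2 = 4$.

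The main obstacle is the inequality $\dim_k A_3\ge 7$. From part~(2), $V^2S_1 = V\cdot(VS_1) = V\cdot S_2 = (VS_1)\cdot S_1 = S_2S_1 = S_3$, so writing $S_1 = V\oplus kz$ gives $S_3 = V^3 + V^2z$ and the cheap bound $\dim_k V^3\ge 10-4 = 6$; the real content is to gain one further dimension. Since $\dim_k A_2 = 4$ shows $k\langle V\rangle$ has no quadratic relations, this is equivalent to showing $k\langle V\rangle$ has at most one cubic relation, i.e.\ $\dim_k\big(V^{\otimes 3}\cap(S_1\otimes R + R\otimes S_1)\big)\le 1$, where $R\subseteq S_1^{\otimes 2}$ is the three-dimensional space of Sklyanin relations. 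I would pass to $B$ again (it is enough to show $\dim_k\overline{V}^3\ge 7$) and analyze the kernel of $\overline{V}\otimes\sigma^*(\overline{V})\otimes(\sigma^2)^*(\overline{V})\to\HB^0(E,\mc{L}_3)$: the two ``outer'' contributions to this kernel vanish by the injectivity established in the previous paragraph together with its $\sigma$-translate, and the remaining ``middle'' contribution is pinned down by a divisor-and-degree bookkeeping that exploits the pairwise non-isomorphism of $\mc{L}$, $\mc{L}^\sigma$, $\mc{L}^{\sigma^2}$ (equivalently, that $\sigma$, $\sigma^2$ do not fix $[\mc{L}]$). Where that bookkeeping meets a degenerate configuration of the three points of $D = \dv x$ relative to the $\sigma$-orbits, I would reduce to the generic configuration by the lower-semicontinuity argument in the choice of $\overline{V}$ used in the proof of Theorem~\ref{HS-thm}. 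Essentially all the work of the lemma sits in this last estimate; parts~(1), (2) and the statement $\dim_k A_2 = 4$ are formal consequences of the earlier results on twisted homogeneous coordinate rings over $E$.
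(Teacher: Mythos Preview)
Your treatment of parts~(1) and~(2) is correct and essentially matches the paper. For $\dim_k A_2 = 4$ your direct injectivity argument via divisors is a valid alternative to the paper's, which instead uses part~(2) to write $S_2 = zA_1 + A_2$ with $z\in S_1\setminus A_1$ and counts dimensions.

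The gap is in your handling of $\dim_k A_3 \geq 7$. First, the promised ``divisor-and-degree bookkeeping'' for the ``middle contribution'' is never carried out, and it is not clear what decomposition of the kernel into ``outer'' and ``middle'' pieces you have in mind: since $\overline{V}^{\otimes 2}\to B_2$ is already injective, the kernel of $\overline{V}^{\otimes 3}\to B_3$ does not split naturally in that way. Second, and more seriously, your proposed semicontinuity reduction goes the wrong way. The function $V \mapsto \dim_k \overline{V}^3$ is \emph{lower}-semicontinuous (it is the rank of a family of linear maps), so its generic value is an \emph{upper} bound at special points, not a lower bound. Varying $\overline{V}$ therefore cannot recover the estimate at a degenerate $V$; the lemma must be proved for each fixed $V$.

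The paper does exactly that: it keeps $V$ fixed and instead varies the basis element $y\in V$, equivalently the divisor $C=\dv\overline{y}$ in the pencil $\mf{d}\subseteq|D|$ determined by $V$. For generic $C$ one shows that vanishing along $\sigma(C)$ imposes three independent conditions on the $4$-dimensional space $A_2$, giving $\dim_k(zA_2\cap yS_2)\leq 1$ and hence $\dim_k A_3\geq 7$. The geometric input is that the map $\rho=(\phi,\phi\circ\sigma)\colon E\to\mb{P}^1\times\mb{P}^1$, where $\phi\colon E\to\mb{P}^1$ is defined by $V$, is birational onto its image: its degree divides $\deg\phi=3$ but is strictly less than $3$ since $D\not\sim\sigma^{-1}(D)$ forbids a fiber of $\phi$ from also being a fiber of $\phi\circ\sigma$. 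Hence for generic $C$ the three points of $\sigma(C)$ map to three distinct points of $\mb{P}^1\times\mb{P}^1$ not all on a ruling line, and so impose independent conditions on $(1,1)$-forms.
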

\begin{proof}
Throughout the proof of this lemma, we identify $A_{\leq 2}$ with
$\overline{A}_{\leq 2}$.

(1) By Lemma~\ref{B-subring-lem}(3), $\overline{A}$ is equal in
large degree to some $B(F, \mc{M}, \tau)$.  Here, there is a finite
morphism of nonsingular elliptic curves $\theta: E \to F$; $3 = \deg
\mc{L} = (\deg \theta) (\deg \mc{M})$, where $\mc{M}$ is the sheaf
on $F$ generated by $\overline{A}_1$; and $\deg \mc{M} \geq 2$. The
only possibility is that $\theta$ is the identity, and
$\overline{A}$ is equal in large degree to $B = B(E, \mc{L},
\sigma)$.

(2)  This was actually already shown in the course of the proof of
Lemma~\ref{inv-section-lem}(1). Namely, since $\mc{L} \not \cong
\mc{L}^{\sigma}$, the argument there shows not only that the natural
map
\[
\theta: \HB^0(E, \mc{L}) \otimes \HB^0(E, \mc{L}^{\sigma}) \to
\HB^0(E, \mc{L}_2)
\]
is surjective, but that it is surjective even when restricted to $V
\otimes \HB^0(E, \mc{L}^{\sigma})$ or $\HB^0(E, \mc{L}) \otimes
V^{\sigma}$.

(3) Certainly $\dim_k A_2 \leq 4$. Choosing $z \in S_1 \setminus
A_1$, part (2) shows that $(kz + A_1)A_1 = S_2$, and so $zA_1 + A_2 =
S_2$. Since $\dim_k S_2 = 6$, this forces $\dim_k A_2 = 4$.

Now we prove that $\dim_k A_3 \geq 7$ (we haven't tried to figure
out whether the exact value of $\dim_k A_3$ is $7$ or $8$.) The idea
is to show that for generic choice of a $k$-basis $y, z$ for $A_1$,
we have $\dim_k zA_2 \cap yS_2 = 1$. Certainly, then, $A_3 = yA_2 +
zA_2$ will have dimension at least $2(4) -1 = 7$.

Since $V$ generates $\mc{L}$, a basis for the sections in $V$
determines a morphism $\phi: E \to \mb{P}(V^*) \cong \mb{P}^1$. The
map $\phi$ is the composition of the embedding $i: E \to
\mb{P}(B_1^*) \cong \mb{P}^2$ with a projection map given by
projection away from the point $x \in \mb{P}(B_1^*)$ determined by
$V$, where $x \not \in i(E)$.  Thus the hyperplane sections of
$\phi$ are the degree 3 divisors which are the intersections of
lines in $\mb{P}^2$ through $x$ with $E$. Let $D$ be a Weil divisor
such that $\mc{L} \cong \mc{O}_E(D)$, and let $|D|$ be the complete
linear system of $D$. Let $\mf{d} \subseteq |D|$ be the (incomplete)
linear system of hyperplane sections of $\phi$.  Now suppose that
$A_1 = ky + kz$ and $zu = yv$ in $S_3$, with $u \in A_2, v \in S_2$.
Then $\overline{zu} = \overline{yv}$ in $B_3$.  Now $\overline{y}$
vanishes along some divisor $C \in \mf{d}$ and $\overline{z}$ does
not vanish at any of the points in $C$, because lines through $x$ in
$\mb{P}^2$ are uniquely determined by any point in $E$ they go
through.   So necessarily $\overline{u}$ vanishes along $\sigma(C)$,
in other words $\overline{u} \in \HB^0(E, \mc{I}_{\sigma(C)} \otimes
\mc{L}_2) \cap A_2$. We will show that for a generic choice of $y$
(equivalently a generic choice of $C \in \mf{d}$), vanishing along
$\sigma(C)$ presents $3$ linearly independent conditions to sections
in $A_2$. Then $\dim_k \HB^0(E, \mc{I}_{\sigma(C)} \otimes \mc{L}_2)
\cap A_2 = 1$, and completing $y$ to a basis $\{y,z\}$ we will
conclude that $\dim_k zA_2 \cap yS_2 = 1$ as needed.

We want a more geometric interpretation for the sections in $A_2$. A
basis for the sections in $V^{\sigma} \subseteq \HB^0(E,
\mc{L}^{\sigma})$ determines a map $\phi': E \to
\mb{P}((V^{\sigma})^*) \cong \mb{P}^1$, where $\phi' = \phi \circ
\sigma$. The sections in $A_2 \subseteq \HB^0(E, \mc{L}_2)$
determine a map $\theta: E \to \mb{P}(A_2^*) \cong \mb{P}^3$, and
since we know that in fact $A_2 \cong V \otimes_k V^{\sigma}$ by
dimension count, the map $\theta$ factors through the map $\rho =
\phi \times \phi': E \to \mb{P}(V^*) \times \mb{P}((V^{\sigma})^*)
\cong \mb{P}^1 \times \mb{P}^1$; in other words, $\theta$ is $\rho$
followed by a Segre embedding. This allows us to interpret the
linear system of hyperplane sections of $\theta$ as
$(1,1)$-hyperplane sections of $\rho$. Moreover, we claim that
$\rho$ is generically one-to-one (and so birational onto its image).
For we have $\phi = \pi_1 \circ \rho$, where $\pi_1$ is the first
projection $\mb{P}^1 \times \mb{P}^1 \to \mb{P}^1$.  Thus if $d$ is
the degree of $\rho : E \to \rho(E)$ (the number of points in a
generic fiber of this map), then $d$ divides the degree of $\phi$,
which is $3$. Moreover, $d < 3$, because no fiber of $\phi$ can also
be a fiber of $\phi'$, given that $D \not \sim \sigma^{-1}(D)$. So
$d = 1$.

Now a generic $C \in \mf{d}$ must consist of $3$ distinct points;
otherwise, every line in $\mb{P}^2$ through $x$ will be tangent to
$E$, and this sort of thing happens for plane curves rarely, and
certainly not for an embedded elliptic curve \cite[Theorem
IV.3.9]{Ha}.  Since $\rho$ is also generically one-to-one, choosing
a generic $C \in \mf{d}$, then $\rho(\sigma(C))$ will consist of $3$
distinct points. Moreover, those three points will present $3$
linearly independent conditions to sections of $\mc{O}(1, 1)$ on
$\mb{P}^1 \times \mb{P}^1$, unless $\rho(\sigma(C))$ lies entirely on
a line in one of the two rulings. But this would happen only if
$\sigma(C) \in \mf{d}$ or $\sigma^2(C) \in \mf{d}$, both of which
are impossible.  Thus for a generic choice of $C \in \mf{d}$,
$\sigma(C)$ presents $3$ linearly independent conditions to the
sections of $A_2$, as required.
\end{proof}

\begin{theorem}
\label{deg1-gen-thm} Let $V \subseteq S_1$ with $\dim_k V = 2$ and
let $A = k \langle V \rangle$.
\begin{enumerate}
\item If $V = W(p)$ is a point space, then $A^{(3)} = R(D)$, where $D
= p + \sigma^{-1}(p) + \sigma^{-2}(p)$. In this case, $A \cap Sg =
Ag$ and $\HS_A(t) = \D \frac{t^2+1}{(1-t)^2(1 -t^3)}$.  In
particular, $A$ satisfies Hypothesis~\ref{main-hyp} and thus all of
the theorems in Section~\ref{thcr-factor-sec}.

\item If $V$ is not a point space, then $A$ is equal to $S$ in all large
degrees.
\end{enumerate}
\end{theorem}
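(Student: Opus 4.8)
The plan is to handle the two cases by quite different routes, using the point-space calculus of Section~\ref{sklyanin-sec} for (1) and the technical Lemma~\ref{deg3-case-lem} for (2).

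For part (1), with $V=W(p)$, the first step is to identify $A_3=W(p)^3\subseteq T_1$ with the space $V(D)$ for $D=p+\sigma^{-1}(p)+\sigma^{-2}(p)$ (an effective divisor of degree $3$, so $R(D)$ is defined). Working in $\overline S=B(E,\mc L,\sigma)$, a product $\overline{f_1}\star\overline{f_2}\star\overline{f_3}$ of sections of $\mc I_p\otimes\mc L$ equals $\overline{f_1}\cdot\sigma^*(\overline{f_2})\cdot(\sigma^2)^*(\overline{f_3})$, which vanishes at $p,\sigma^{-1}(p),\sigma^{-2}(p)$; hence $\overline{W(p)^3}\subseteq \HB^0(E,\mc I_D\otimes\mc L_3)$, i.e.\ $W(p)^3\subseteq V(D)$. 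Since $g\in W(p)^3$ by Lemma~\ref{ps-less-basic-lem}(2) and $\dim_k W(p)^3=7$ (computed in the proof of that lemma) while $\dim_k V(D)=7$ by Riemann--Roch, this containment is an equality, and since $A$ is generated in degree $1$ we get $A^{(3)}=k\langle V(D)\rangle=R(D)$. The remaining content of part (1) is the identity $A\cap Sg=Ag$, which, because $A$ is a domain and $\overline A=B(E,\mc I_p\otimes\mc L,\sigma)$ by Lemma~\ref{inv-section-lem}(2) (with Hilbert series $(1+t^2)/(1-t)^2$ by Riemann--Roch), is equivalent to the asserted formula $\HS_A(t)=\HS_{\overline A}(t)/(1-t^3)$. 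I would prove $A\cap Sg=Ag$ by induction on degree: in degrees divisible by $3$ it is Theorem~\ref{HS-thm}(2) for $A^{(3)}=R(D)$; for an element $a=sg\in A_{3n+j}\cap Sg$ with $j\in\{1,2\}$, multiply by one or two elements of $W(p)$ on the left to land in a degree divisible by $3$, apply the case already done, cancel $g$ in the domain $S$, and descend, the technical input being that the colon spaces $\{t:W(p)t\subseteq W(p)^k\}$ are controlled by the explicit point-module syzygy of Lemma~\ref{pt-syz-lem} together with vanishing bookkeeping in $\overline S$. Finally, since $\deg(\mc I_p\otimes\mc L)=2\ge 2$ and $\sigma$ is a translation by a point of infinite order, $A$ satisfies Hypothesis~\ref{main-hyp} with $d=3$.

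For part (2), the hypothesis that $V$ is not a point space says exactly that $\overline V$ generates $\mc L$, so Lemma~\ref{deg3-case-lem} applies: $\overline A$ equals $\overline S=B(E,\mc L,\sigma)$ in large degrees, and $A_1S_1=S_2=S_1A_1$, which bootstraps to $A_jS_m=S_{m+j}=S_mA_j$ for all $m,j\ge 1$. Passing to the $3$-Veronese, $A^{(3)}\subseteq T$ is generated in degree $1$ by $A_3$, and $\overline{A^{(3)}}=\overline T$ in large degrees; by Lemma~\ref{B-subring-lem}(3) this forces $\overline{A_3}$ to generate $\mc L_3$, so the divisor attached to $A^{(3)}$ by Theorem~\ref{deg3-class-thm} is $0$ and $R(0)=T$. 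I would then argue that $g\in A^{(3)}$ (equivalently $g\in V^3$); granting this, $A^{(3)}\subseteq T$ satisfies Hypothesis~\ref{subs-hyp}, so $A^{(3)}$ is noetherian and $A^{(3)}\subseteq T$ is a finite extension by Theorem~\ref{deg3-class-thm}(2), using that $T$ has a minimal special ideal by Lemma~\ref{T-ideal-lem}. Since $g\in A^{(3)}$ gives $(A^{(3)}\cap Tg)/(A^{(3)}\cap Tg^2)\neq 0$, Proposition~\ref{close-prop}(1) shows $A^{(3)}$ contains a special ideal of $T$; but by Lemma~\ref{T-ideal-lem} every special ideal of $T$ has finite codimension, so $A^{(3)}\supseteq T_{\ge m}$ for some $m$, i.e.\ $A_{3n}=S_{3n}$ for $n\gg 0$. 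For the other residues, $A_{3n+1}=A_{3n}A_1=S_{3n}A_1=S_{3n+1}$ and $A_{3n+2}=A_{3n}A_2=S_{3n}A_2=S_{3n+2}$, using $A_jS_m=S_{m+j}$; hence $A_n=S_n$ for all $n\gg 0$.

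The main obstacle in each part is the same: passing from information modulo $g$ (where everything is governed by well-understood twisted homogeneous coordinate rings on $E$) to a statement inside $S$ itself. In part (1) this is the saturation identity $A\cap Sg=Ag$, and the colon-space induction sketched above is the delicate point. In part (2) it is the claim $g\in V^3$; note the analogous claim fails for general degree-$1$-generated subrings of $T$ (cf.\ Example~\ref{main-ex}, where $\overline A=\overline{R(D)}$ yet $A\neq R(D)$ in any positive degree), so the proof must genuinely exploit the relation $A_1S_1=S_2$. I would try to establish $g\in V^3$ via a base-point-free-pencil dimension count for $\overline{V^3}=\overline{V^2}\cdot(\sigma^2)^*(\overline V)$, combined with the independence $f_1S_1\cap f_2S_1=0$ (a consequence of $A_1S_1=S_2$) for a basis $\{f_1,f_2\}$ of $V$, which should pin down $\dim_k V^3$ and the position of $g$ inside it.
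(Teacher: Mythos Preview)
Your handling of part~(1) is sound and close in spirit to the paper's, though the mechanism for $A\cap Sg=Ag$ differs.  The paper sets $N^{(i)}=\{x\in S:xg^i\in A\}$, observes that $N^{(i)}_{3n}=A_{3n}$ (from $R(D)\cap Tg^i=R(D)g^i$), and then shows $\overline{N^{(i)}}=\overline A$ not by colon computations but by noting that $\overline{N^{(i)}}/\overline A$ is a direct limit of finite-dimensional $\overline A$-modules while $\overline A\subseteq\overline{N^{(i)}}$ is essential and $\ext^1_{\overline A}(k,\overline A)=0$ (Cohen--Macaulay, Lemma~\ref{B-prop}(4)); the iteration $N^{(1)}\subseteq A+Sg^i$ for all $i$ then finishes.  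Your vanishing bookkeeping gives the same conclusion $\overline{N^{(i)}}=\overline A$, after which the identical iteration applies (using $g\in A_3$), so this part is fine.

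Part~(2) has a real gap: you stake the argument on $g\in V^3$, and this is exactly what the paper does \emph{not} attempt to establish---indeed the paper remarks in the proof of Lemma~\ref{deg3-case-lem}(3) that it has not determined whether $\dim_k A_3$ is $7$ or $8$, which is precisely the question of whether $g\in A_3$.  Your proposed dimension count for $V^3$ would have to show $\dim_k V^3=8$ while $\dim_k\overline{V^3}=7$, and neither is proved.  The paper sidesteps this entirely by applying Proposition~\ref{close-prop}(1) directly to $A\subseteq S$ (no Veronese): from $S_2=zA_1+A_2=A_1z+A_2$ one gets $S=A+zA=Az+A$, so $A\subseteq S$ is finite; from $\dim_k A_3\ge 7$, $\dim_k S_3=10$, and $S_3=zA_2+A_3$ one gets $(zA\cap A)_3\neq 0$, hence $Q_{\rm gr}(A)=Q_{\rm gr}(S)$; and choosing $0\neq x\in A_2$ with $zx\in A_3$, the relation $g\in S_3=A_2z+A_3$ gives $gx\in A_2zx+A_3x\subseteq A_5$, producing a nonzero class in $(A\cap Sg)/(A\cap Sg^2)$.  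Proposition~\ref{close-prop}(1) then yields a special ideal of $S$ inside $A$, and Lemma~\ref{T-ideal-lem} forces $A=S$ in large degrees.  The point is that one only needs an element of $A\cap Sg$ outside $Sg^2$, and degree~$5$ suffices; insisting on degree~$3$ (i.e.\ $g\in V^3$) is an unnecessary and unresolved obstacle.
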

\begin{proof}
(1).  To show that $A^{(3)} = R(D)$, it suffices to show that $V^3 =
R(D)_1 = \{ x \in S_3 | \overline{x} \in \HB^0(E, \mc{I}_D \otimes
\mc{L}_3) \}$.  Clearly $\overline{V^3} = \overline{R(D)_1}$, using
Lemma~\ref{inv-section-lem}, so it suffices to show that $g \in
V^3$.  This is shown in Lemma~\ref{ps-less-basic-lem}(2).

Note that $\overline{A} = B(E, \mc{N}, \sigma)$, where $\mc{N} =
\mc{I}_p \otimes \mc{L}$, by Lemma~\ref{inv-section-lem}. Consider
$N^{(i)} = \{x \in S | xg^i \in A \}$ for each $i \geq 0$. Obviously
$A \subseteq N^{(i)}$. Moreover, since $R(D) \cap Tg^i = R(D)g^i$,
we have $N^{(i)}_{3n} = A_{3n}$ for all $n \geq 0$. Consider
$\overline{A} \subseteq \overline{N^{(i)}} \subseteq \overline{S}$.
Since $(\overline{N^{(i)}}/\overline{A})_{3n} = 0$ for all $n \geq
0$, and $\overline{A}$ is generated in degree $1$, we see that
$\overline{N}^{(i)}/\overline{A}$ is a direct limit of graded
$\overline{A}$-modules which are finite-dimensional over $k$. On the
other hand, it is clear since $Q_{\gra}(R(D)) = Q_{\gra}(T)$ that
$Q_{\gra}(\overline{A}) = Q_{\gra}(\overline{S})$, so that
$\overline{A} \subseteq \overline{N^{(i)}}$ is an essential
extension of right $\overline{A}$-modules.  Also,
$\ext_{\overline{A}}^1(k, \overline{A}) = 0$, since $\overline{A}$ is
Cohen-Macaulay by Proposition~\ref{B-prop}(4).  This forces
$\overline{N^{(i)}} = \overline{A}$ for each $i$.
 In other words, $N^{(i)} \subseteq A +
Sg$.  Now note that $N^{(1)} \subseteq A + Sg^0 = S$, and suppose
that $N^{(1)} \subseteq A + Sg^i$ for some $i \geq 0$.  Then for $x
\in N^{(1)}$, writing $x = a + sg^i$ with $a \in A$, $s \in S$, we
see that $xg = ag + sg^{i+1} \in A$ and so $s \in N^{(i+1)}$.  Since
$N^{(i+1)} \subseteq A + Sg$, we have $x \in A + Sg^{i+1}$, so
$N^{(1)} \subseteq A + Sg^{i+1}$. By induction on $i$, $N^{(1)}
\subseteq \bigcap_{i \geq 0} A + Sg^{i} = A$, so $A \cap Sg = Ag$.
The Hilbert series follows immediately since $\HS_{\overline{A}}(t)
= \D \frac{t^2 + 1}{(1-t)^2}$.

(2)  By Lemma~\ref{deg3-case-lem}(1), we have that $\overline{A}$ is
equal to $\overline{S}$ in large degree.  We wish to apply
Proposition~\ref{close-prop}(1). For this we need to check the
hypotheses of that proposition, namely that $A \subseteq S$ is a
finite extension, that $Q_{\rm gr}(A) = Q_{\rm gr}(S)$, and that $(A
\cap Sg)/(A \cap Sg^2) \neq 0$. Supposing we prove these things,
then Proposition~\ref{close-prop}(1) will show that $A$ contains a
special ideal of $S$. But $S$ has no such ideals except those of
finite-$k$-codimension, by Lemma~\ref{T-ideal-lem}.  Thus $A$ will
then equal $S$ in large degree, as required.

The remaining needed facts follow from the other parts of
Lemma~\ref{deg3-case-lem}.  First, the proof of
Lemma~\ref{deg3-case-lem}(3) shows that $S_2 = zA_1 + A_2 = A_1z +
A_2$ where $z \in S_1 \setminus A_1$.  Then by the graded Nakayama
lemma, $S = A + zA = Az + A$. So $A \subseteq S$ is a finite
extension. We also have from Lemma~\ref{deg3-case-lem}(3) that
$\dim_k A_2 = 4$, as well as the rather hard-earned estimate $\dim_k
A_3 \geq 7$.  Since $S_3 = zA_2 + A_3 = A_2z + A_3$, with $\dim_k
S_3 = 10$, this implies that $(zA \cap A)_3 \neq 0$. Choose $0 \neq
x \in A_2$ such that $zx \in A_3$. Since $g \in S_3 = A_2z  + A_3$,
we have $gx \in A_2zx + A_3x \subseteq A_5$.  Thus $0 \neq gx \in (A
\cap Sg) \setminus (A \cap Sg^2)$, so that $(A \cap Sg)/(A \cap Sg^2)
\neq 0$. Also, since we have seen that $zA \cap A \neq 0$, then $z
\in Q_{\rm gr}(A)$. So $S_1 \subseteq Q_{\rm gr}(A)$, and thus $Q_{\rm
gr}(S) = Q_{\rm gr}(A)$. All of the hypotheses of
Proposition~\ref{close-prop}(1) are now verified, and we are done.
\end{proof}

As a prelude to future work, we close with the following example of
a subring of $S$ generated in degree $2$.  It points to a new issue
that arises in trying to understand algebras generated in that
degree, and presumably all other degrees; degrees $1$ and $3$ were
special because they divide the degree of the central element in
$S$.
\begin{example}
Let $p \in E$ be given, and let $A = k \langle V \rangle \subseteq
S^{(2)}$, where $V = W(p)S_1 \subseteq S_2$ is the set of all
sections of $S_2 \cong B(E, \mc{L}_2)$ vanishing along $p$.  Since
we know from the theorems in this paper that both the degree-3
generated algebra $k \langle W(p)S_2 \rangle \cong R(p)$ and the
degree-1-generated algebra $k \langle W(p) \rangle$ are very nice
rings, we might hope $A$ is similarly good.

Now $V^2 = W(p)S_1W(p)S_1 = S_1W(\sigma(p))W(p)S_1 \supseteq S_1g$
by Lemma~\ref{ps-less-basic-lem}(1).  Then $V^3 \supseteq VS_1g$ and
$V^3 \supseteq S_1gV = S_1Vg$.  Since it is easy to prove that $VS_1
+ S_1V = S_3$, we have $V^3 \supseteq S_3g$.  An similar inductive
argument shows that $\bigoplus_{i \geq 0} S_{(2i+1)}g \subseteq A$.
We claim then that $A$ is not left noetherian.  Indeed, if $A$ were
left noetherian, then $S' = \bigoplus_{i \geq 0} S_{2i+1}$ would be
a finitely generated left $A$-module, since $S' \cong S'g$ (as
ungraded modules) and $S'g \subseteq A$.  But then $_{\overline{A}}
\overline{S'}$ would be finitely generated, where $\overline{S'} =
\bigoplus_{i \geq 0} \HB^0(E, \mc{L}_{2i +1})$, and by the graded
Nakayama lemma, we would have to have $\dim_k
\overline{S'}/(\overline{A_{\geq 1}S'}) < \infty$. However, since
$\overline{A} = B(E, \mc{N}, \sigma^2)$ with $\mc{N} = \mc{I}_p
\otimes \mc{L}_2$, we have $(\overline{A_{\geq 1}S'}) \subseteq
\bigoplus_{i \geq 0} \HB^0(E, \mc{I}_p
 \otimes \mc{L}_{2i +1})$ for all $i \geq 0$, from which it is clear
that $\dim_k \overline{S'}/(\overline{A_{\geq 1}S'}) = \infty$.
\end{example}

\providecommand{\bysame}{\leavevmode\hbox to3em{\hrulefill}\thinspace}
\providecommand{\MR}{\relax\ifhmode\unskip\space\fi MR }
\providecommand{\MRhref}[2]{%
  \href{http://www.ams.org/mathscinet-getitem?mr=#1}{#2}
} \providecommand{\href}[2]{#2}

\end{document}